\documentclass[a4paper,11pt,leqno]{amsart}
\usepackage[applemac]{inputenc}
\usepackage{epsfig,graphics}
\usepackage[all,knot]{xy}
\usepackage{amsthm,amssymb,amsbsy,bbm,a4wide,verbatim,url}
\usepackage{rotating}
\xyoption{knot}

\def\Z{\mathbbm{Z}}
\def\C{\mathbbm{C}}
\def\Q{\mathbbm{Q}}
\def\un{\mathbbm{1}}

\def\ii{\mathrm{i}}

\def\End{\mathrm{End}}

\def\Aut{\mathrm{Aut}}
\def\Ker{\mathrm{Ker}}

\def\Hom{\mathrm{Hom}}

\def\g{\mathfrak{g}}

\def\la{\lambda}

\def\sl{\mathfrak{sl}}
\def\so{\mathfrak{so}}
\def\sp{\mathfrak{sp}}

\def\eps{\varepsilon}
\def\into{\hookrightarrow}
\def\onto{\twoheadrightarrow}

\def\WBMW{\widetilde{BMW}}

\newtheorem{theor}[equation]{Theorem}
\newtheorem{lemma}[equation]{Lemma}
\newtheorem{prop}[equation]{Proposition}
\newtheorem{cor}[equation]{Corollary}
\newtheorem{conj}[equation]{Conjecture}

\newtheorem{remark}[equation]{Remark}
\newtheorem{defi}[equation]{Definition}
\numberwithin{equation}{section}

\usepackage{tikz}
\newcounter{braid}
\newcounter{strands}
\pgfkeyssetvalue{/tikz/braid height}{1cm}
\pgfkeyssetvalue{/tikz/braid width}{1cm}
\pgfkeyssetvalue{/tikz/braid start}{(0,0)}
\pgfkeyssetvalue{/tikz/braid colour}{black}
\pgfkeys{/tikz/strands/.code={\setcounter{strands}{#1}}}

\makeatletter
\def\cross{%
  \@ifnextchar^{\message{Got sup}\cross@sup}{\cross@sub}}

\def\cross@sup^#1_#2{\render@cross{#2}{#1}}

\def\cross@sub_#1{\@ifnextchar^{\cross@@sub{#1}}{\render@cross{#1}{1}}}

\def\cross@@sub#1^#2{\render@cross{#1}{#2}}

\def\render@cross#1#2{
  \def\strand{#1}
  \def\crossing{#2}
  \pgfmathsetmacro{\cross@y}{-\value{braid}*\braid@h}
  \pgfmathtruncatemacro{\nextstrand}{#1+1}
  \foreach \thread in {1,...,\value{strands}}
  {
    \pgfmathsetmacro{\strand@x}{\thread * \braid@w}
    \ifnum\thread=\strand
    \pgfmathsetmacro{\over@x}{\strand * \braid@w + .5*(1 - \crossing) * \braid@w}
    \pgfmathsetmacro{\under@x}{\strand * \braid@w + .5*(1 + \crossing) * \braid@w}
    \draw[braid] \pgfkeysvalueof{/tikz/braid start} +(\under@x pt,\cross@y pt) to[out=-90,in=90] +(\over@x pt,\cross@y pt -\braid@h);
    \draw[braid] \pgfkeysvalueof{/tikz/braid start} +(\over@x pt,\cross@y pt) to[out=-90,in=90] +(\under@x pt,\cross@y pt -\braid@h);
    \else
    \ifnum\thread=\nextstrand
    \else
     \draw[braid] \pgfkeysvalueof{/tikz/braid start} ++(\strand@x pt,\cross@y pt) -- ++(0,-\braid@h);
    \fi
   \fi
  }
  \stepcounter{braid}
}

\tikzset{braid/.style={double=\pgfkeysvalueof{/tikz/braid colour},double distance=1pt,line width=2pt,white}}

\newcommand{\braid}[2][]{%
  \begingroup
  \pgfkeys{/tikz/strands=2}
  \tikzset{#1}
  \pgfkeysgetvalue{/tikz/braid width}{\braid@w}
  \pgfkeysgetvalue{/tikz/braid height}{\braid@h}
  \setcounter{braid}{0}
  \let\dsigma=\cross
  #2
  \endgroup
}
\makeatother

\title[Markov traces on the BMW-algebras]{Markov traces on the Birman-Wenzl-Murakami algebras}

\author{{\large I\lowercase{van} M\lowercase{arin} \& E\lowercase{mmanuel} W\lowercase{agner}} }
\date{July 17, 2014}

\begin{document}

\setcounter{tocdepth}{2}
\maketitle

\begin{center}
LAMFA \\
Universit\'e de Picardie Jules-Verne \\
33 rue Saint Leu \\
80039 Amiens Cedex 1\\
France\\
\url{ivan.marin@u-picardie.fr}\\
--- \\
Institut de Math\'ematiques de Bourgogne UMR 5584 \\
Universit\'e de Bourgogne\\
7 Avenue Alain Savary BP 47870\\
21078 Dijon Cedex\\
France\\
\url{emmanuel.wagner@u-bourgogne.fr}
\end{center}
\bigskip

\begin{abstract}
We classify the Markov traces factoring through the Birman-Wenzl-Murakami (BMW) algebras. For
this purpose, we define a common `cover' for the two variations of the BMW-algebra
originating from the quantum orthogonal/symplectic duality, which are responsible for the so-called
`Dubrovnik' variation of the Kauffman polynomial.
For generic values of the defining parameters
of the BMW algebra, this cover is isomorphic to the BMW algebra itself, and this fact provides
a shorter defining relation for it, in the generic case. For a certain 1-dimensional family of special
values however, it is a non-trivial central extension of the BMW-algebra which induces a central extension of the Temperley-Lieb
algebra. Inside this 1-dimensional family, exactly two values provide possibly additional $\Q$-valued Markov traces.
We describe both of these potential traces on the (extended) Temperley-Lieb subalgebra. While we only conjecture
the existence of one of them, we prove the existence of the other by introducing a central extension of the Iwahori-Hecke algebra
at $q=-1$ for an \emph{arbitrary} Coxeter system, and by proving that this extension indeed admits an exotic Markov trace.
These constructions provide several natural non-vanishing Hochschild cohomology classes on these classical algebras.
\end{abstract}

\tableofcontents

\section{Introduction}

\subsection{Context}
When the Jones and Homfly polynomial appeared, they were first  described as Markov traces on the
group algebra of the braid group, and therefore as invariants of oriented links produced
by the virtue of Markov theorem. Indeed  this theorem says that an invariant of oriented
links is the same thing as a Markov trace on the tower of the group algebra of the braid group.
This approach has had a great descent, on the existence of quantum invariants originating from
a quantum trace, in the study of Markov traces on generalized
Hecke algebras, and the Khovanov homology in more recent years has a description in terms
of Soergel bimodules that can be seen as part of the same thread of thinking.

On the other hand, the construction of its cousin the Kauffman polynomial has a very different story. Indeed
this polynomial was first described as an invariant of regular isotopy by way
of \emph{un}oriented skein relations, and then turned into an invariant of (oriented) links by twisting the invariant of regular
isotopy of the underlying unoriented link by the writhe of the link. This approach was shown to be
also applicable to the Jones polynomial (and later on used in the definition of Khovanov homology) through the Kauffman bracket, whose description involves unoriented skein relations.

Conversely, there has been very soon an attempt to describe the Kauffman polynomial in terms
of Markov traces. This lead to the definition, independently by Murakami and Birman-Wenzl,
of the BMW-algebra. This algebra however, in spite of the quantum moto describing both the Hecke and BMW algebra
as `quantizations' of the symmetric group and of the Brauer algebra respectively, and
therefore as quantizations of the centralizer algebras of the tensor powers of the standard modules of the
most classical Lie algebras $\sl_n$ and $\so_n/\sp_n$, never reached the level of recognition of the
Hecke algebra.
As a sample test for a not so nice behavior, a completely satisfactory `categorification' is still lacking,
and the known generalizations of this algebra to other Coxeter or complex reflection groups have flatness issues (as modules
over the natural ring attached to this context).
 The most natural description of
the BMW-algebra is probably as an algebra of unoriented tangle diagrams, following the original
work of Kauffman. Then, $BMW_n$ can be described as an homomorphic image of the group
algebra of the braid group, by converting a given braid into a non-oriented tangle. 
Describing the kernel of this map as a finite set of meaningful relations presenting $BMW_n$ as a quotient
is the next natural goal.
Following the insight given by the tangle picture, relations on 2 strands are easily found to be given by a cubic
relation $(s_i-a)(s_i-b)(s_i-c)$ on the Artin generators, hence $BMW_n$ belongs to the
family of cubic quotients of the group algebra of the braid group $B_n$. In the
sequel we will denote $x = b+c$ and $y = bc$ since most definitions involving the BMW-algebras are symmetric
in $b,c$ (but not in $a,b,c$ !).  We will also assume $x \neq 0$ which is necessary in order to express the elementary tangles $e_i$ as linear
combinations of braids. As opposed to the quadratic quotients, for which the Hecke
algebra provides a universal finite-dimensional model, the `cubic Hecke algebra' does not
enjoy universal finiteness properties, because the factor group $B_n/s_i^3$ is infinite for $n \geq 6$. There are
thus additional relations on 3 strands, and it turns out that these relations on 3 strands are enough to define
the $BMW_n$ algebras for every single $n$.

\subsection{Presentation}

The origin of this work was the apparently innocent question to determine whether the `Kauffman trace', that is the
Markov trace affording the Kauffman polynomial, was indeed the only Markov
trace factoring through the BMW-algebra (in addition to the one factoring
to the smaller Hecke algebra quotient). When we tried to answer this question, we faced
another problem. First of all, it is well-known that there are two variations of
the Kauffman polynomial, one of them having been given the name of the
city of Dubrovnik. These two variations can be seen as Markov traces,
after specialization of the parameters $a,b,c$, subject to a polynomial relation
$a = bc$ or $a = -bc$. But they are not Markov trace on the same BMW-algebra ! More
precisely, although the two relevant BMW-algebras are isomorphic as algebras, they
are a priori not the same quotient of the group algebra of the braid group. Indeed, the additional
relation on 3 strands involves $a,b,c$ and depends on whether $a=bc$ or $a = -bc$.
Viewing the BMW-algebras as centralizer algebras for quantum groups actions, these two
specializations correspond to the difference between the orthogonal and symplectic
groups (see the end of \S \ref{secttwobmwagebras}). 

Because of this, we tried to dig deeper into the defining relations of the BMW-algebras. When
translated into braid words, the usual additional relation (in both cases), relating $e_i s_j e_i $ and $e_i$
for $|i-j| = 1$, involves 12 terms. We use another relation, that relates $s_j^{-1} e_i s_j^{-1}$ and $s_i e_j s_i$, involving only 6 terms when expanded into braid words, which holds inside both BMW-algebras, and which is enough to ensure
the finiteness of the dimension. Because of this, we can define
a finite-dimensional `cover' of the two BMW-algebras involved in the computation of the Kauffman polynomial,
as well as the ortho-symplectic quantum invariants, that we
call $\widetilde{BMW}_n$.

From this we get a satisfying algebraic setup to explore the possible Markov traces for every single value
of the parameters. We prove that, for generic values of the parameters, this cover is actually isomorphic to
the usual BMW-algebras, thus providing in these cases an even simpler definition (as a quotient of the group algebra of $B_n$) of the BMW-algebras 
(see propositions \ref{propisomBMWtilde} and \ref{propisomBMWtilde2}). However, for generic
values satifying $a^2 = y$, we find that $\widetilde{BMW}_3$ has one dimension more than expected.

Concerning Markov traces we first get (proposition \ref{prop:MT3bmw}) that, when $a^2 \neq y^2$ and $a^2 \neq y$ (and actually : also when $a^2 = y$ and $a^2 \neq y^2$,
see proposition \ref{propmarkovypas1}), the only Markov trace factoring through $\widetilde{BMW}_n$ is the Ocneanu trace, defined
on the Hecke algebra quotient. 

We specialize to the situation $a^2 = y^2$. Then,
we have in addition the Kauffman trace.
When $y \neq 1$, we prove that the only Markov traces factoring through our algebra lead either to the Kauffman polynomial or to the Homfly polynomial,
and that this algebra is actually isomorphic to the usual BMW-algebra. When $y = 1$, that is when $a^2 = y^2$ and $a^2 = y$,
we have first to exclude a very degenerate case, $x = -2a$, for which there is an infinite number of Markov traces,
namely the ones factoring through the group algebra of the symmetric group (see proposition \ref{propTracesXm2a}) ; it is well-known that
these ones detect only the number of components of the links. In the general case $y=1$,
we get an additional Markov trace $t_n^{\dagger \dagger}$ that basically provides the parity of the number of
components of the link. These 3 Markov traces exhaust all possible traces, and are linearly independent one
from the other, for generic $x$. The special values for which this does not hold, besides $x = -2a$, are $x=a$ and $x=2a$.

In order to understand what happens in these two special cases, we provide a description of the algebra
$\widetilde{BMW}_n$ when specialized at $a^2=y=1$. For this we define by generators and relations an algebra over $\Q[a,x,x^{-1}]/(a^2-1)$ that we denote $F_n$.
It is a free module of rank 1 more than the dimension of $BMW_n$ (corollary \ref{cordimFn}) and it can be viewed
as a central extension of $BMW_n$ by a 1-dimensional ideal spanned by some element that we call $C$. This element squares to 0, and
therefore the extension cannot split, precisely when $x=a$ and $x=2a$. We prove that it is indeed the specialization
we want of $\widetilde{BMW}_n$, as soon as $x \neq -2a$ (theorem \ref{theoisomddaggerFn}),
while the specialization of $\widetilde{BMW}_n$ for the case $x=-2a$ provides a larger algebra, of which we provide a
partly conjectural description (see section \ref{subsectm2a}). Finally, we use the structure of $F_n$ to check that the space of Markov traces factoring through $F_n$ has dimension at most 3 : there is at most one way to find
an additional Markov trace in the special cases $x=a$ and $x=2a$.

In passing, we deduce from the existence of $F_n$ a similar central extension $\widetilde{TL}_n$ of the Temperley-Lieb algebra,
which is a subalgebra of $F_n$. We find natural (diagrammatic) interpretations of the two (potential) additional Markovs trace when restricted to this
subalgebra. Finally, we manage to construct the expected additional Markov trace in the case $x=2a$ by constructing a central
extension of the classical Hecke algebra.

More generally, we prove that, for an arbitrary Coxeter system $(W,S)$, the usual Iwahori-Hecke algebra at $q=-1$
has a natural non-split central extension, of dimension $1+\# W$ if $W$ is finite, and that there existes a Markov trace on this
algebra when $W= \mathfrak{S}_n$ (see theorems \ref{theoHeckeExt} and \ref{theoexistencetracedeltanul}). This Markov trace provides what we need.
The case $x= a$ remains conjectural, although we are confident that the corresponding invariant exists. We guess that an algebraic proof
of the existence of the Kauffman trace similar to the one that Jones provided for the Ocneanu trace should be easy to
generalize to our central extension. However it appears that no one provided such a proof yet, and finding such a proof seems to us to be quite
more tricky than the Hecke algebra case.

\subsection{Organisation of the paper}

The plan of the paper is as follows. In \S 2 we compile a few results on the `cubic Hecke algebra' on 3 strands, namely
the quotient of the group algebra of $B_3$ by a generic cubic relations $(s_i -a)(s_i-b)(s_i-c) = 0$. The finite-dimensionality
as well as the symmetric algebra structure of this algebra is a crucial tool in the sequel. In \S 3 and \S 4 we explore the algebraic
structure of the BMW-algebra, and define a suitable cover of its two avatars appearing in the construction of the
Kauffman polynomial. This provides a suitable setting for studying the Markov traces, and we do this in \S 5. 
Inside \S 5, we rediscover the classical Markov traces, and describe an additional one when $y=a^2 =1$. We prove there
that these exhaust all possible traces, except when $x=a$ or $x=2a$. In \S 6 we introduce our central extensions of
the BMW-algebra, Hecke algebras and Temperley-Lieb algebras. We define two additional traces on these
extended Temperley-Lieb algebra, and one on the extended Hecke algebra.  Finally, \S 7 is devoted to the
exploration of the link invariants obtained in this way. We show that the additional trace for $y=a^2=1$ simply counts
the parity of the number of components of the link, and we tabulate the two special ones, for which an interpretation is
lacking.

Because of the large number of specializations that we are using, we provide here a table of the various rings involved in
the paper, as a common place for reference. The second table provide a list of the main algebras used in the paper, together
with the rings involved in their definition.
{}
$$
\begin{array}{|l|l|l|}
\hline
R = \Q[a,b,c,(abc)^{-1}] 
 & \overline{R} = R/(a^2-y^2) & R_{\pm} = R/(a \mp y) \\
\hline
S = R[(b+c)^{-1}] & \overline{S} = S/(a^2-y^2) & S_{\pm} = S/(a \mp y) \\
\hline
 \overline{S}' = \overline{S}[(bc-1)^{-1}] & \overline{S}'_{\pm} = \overline{S}'/(a \mp y)  & S^{\dagger} = S/(a^2 -y) \\
\hline
S^{\dagger\dagger} = S^{\dagger}/(a^2-1) & S^{\dagger\dagger}_{\pm} = S^{\dagger}/(a\mp1)&  \\
\hline
\end{array}
\ \ \ \begin{array}{|c|c|}
\hline 
\mbox{Algebra} & \mbox{Ring} \\
\hline 
H_n & R  \\ 
\WBMW_n & R \\
BMW_n^{\pm} & S_{\pm} \\
\overline{BMW}_n & \overline{R} \\
\hline
\end{array}
$$

{\bf Acknowledgements.} We thank S. Bouc, F. Digne and A. Zimmermann for discussions and references.

\section{Preliminaries on the cubic Hecke algebras on 3 strands}
\label{sectH3}
In order to insure the coherence of notations with the forthcoming sections,
we let 
$$
R = \Q[a,a^{-1},b,b^{-1},c,c^{-1}] = \Q[a,b,c,(abc)^{-1}],$$
although all the results of the present section are actually already valid with $R=\Z[a,b,c,(abc)^{- 1}]$.

 We let $H_n$ denote
the $R$-algebra defined as the quotient of the group algebra $R B_n$
of the braid group on $n$ strands by the relations $(s_i-a)(s_i-b)(s_i-c) = 0$
for $1 \leq i \leq n-1$ or, equivalently -- since each $s_i$ is conjugated to $s_1$ --
by the relation $(s_1-a)(s_1-b)(s_1-c)= 0$. It is known that $H_n$ is a
free $R$-module of finite rank for $n \leq 5$ (see \cite{CUBIC5}).
More precisely, for $n =3$, one may excerpt from \cite{CUBIC5} the
following result (see also \cite{BROUEMALLE, FUNAR,THESE} for related statements).

\begin{prop} {\ } \label{prop:H3libre}
\begin{enumerate}
\item The algebra $H_3$ is a free $H_2$-module of rank $8$, with basis the elements $1, s_2,s_2^{-1}$, $s_1^{\alpha} s_2^{\beta}$ for $\alpha,\beta \in \{ 1,-1 \}$,
$s_2 s_1^{-1} s_2$.
\item The algebra $H_3$ is a free $R$-module of rank 24, with basis the
elements 
$$
\begin{array}{lcl}
\mathcal{B}_1&=& (1,  s_1, s_1^{-1}, s_2, s_2^{-1}, s_1s_2, s_1s_2^{-1}, s_1^{-1}s_2, s_1^{-1}s_2^{-1}, s_1s_2s_1, s_1s_2s_1^{-1}, s_1^{-1}s_2s_1, 
  s_1^{-1}s_2s_1^{-1},\\ & & s_1s_2^{-1}s_1, s_1^{-1}s_2^{-1}s_1, s_2s_1, s_2^{-1}s_1, s_2s_1^{-1}, s_2^{-1}s_1^{-1}, s_1s_2^{-1}s_1^{-1}, 
  s_1^{-1}s_2^{-1}s_1^{-1}, s_2s_1^{-1}s_2,\\ & & s_1s_2s_1^{-1}s_2, s_1^{-1}s_2s_1^{-1}s_2 ).
  \end{array}
$$ {}
\end{enumerate}
\end{prop}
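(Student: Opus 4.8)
The goal is to show that $H_3$ is free over $H_2$ of rank $8$ (hence free over $R$ of rank $24$, since $H_2 = R[s_1]/(s_1-a)(s_1-b)(s_1-c)$ is obviously free of rank $3$). I will organize the argument around a spanning/straightening procedure followed by a freeness (non-degeneracy) check.

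First I would show that the proposed list of $8$ elements spans $H_3$ as a left $H_2$-module. Write $M$ for the $H_2$-submodule of $H_3$ generated by $1, s_2, s_2^{-1}, s_1^{\pm 1}s_2^{\pm 1}, s_2 s_1^{-1} s_2$. It suffices to check that $M$ is stable under right multiplication by $s_2$ and $s_2^{-1}$ (stability under $s_1^{\pm 1}$ on the left is automatic since $s_1^{\pm 1} \in H_2$), because then $M$ is a sub-$(H_2,H_2)$-bimodule of $H_3$ containing $1$, and since $s_1, s_2$ generate $H_3$ and $H_2 \subset M$, we get $M = H_3$. The stability check is a finite computation: one multiplies each of the $8$ basis words on the right by $s_2^{\pm 1}$, uses the cubic relation $(s_2-a)(s_2-b)(s_2-c)=0$ to reduce powers of $s_2$ back into the range $\{-1,0,1\}$ (this expresses $s_2^{\pm 2}$ in terms of $1, s_2, s_2^{-1}$ with coefficients in $R$), and uses the braid relation $s_1 s_2 s_1 = s_2 s_1 s_2$ together with the cubic relation on $s_1$ to push the resulting words into $M$. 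The only genuinely nontrivial case is reducing a word like $s_1^{\pm 1} s_2 s_1^{\pm 1}$: here one rewrites via $s_1 s_2 s_1 = s_2 s_1 s_2$ and then reduces the $s_2$-powers, and one sees that $s_2 s_1 s_2$, $s_2 s_1^{-1} s_2$ and $s_2^{-1} s_1 s_2^{-1}$, $s_2^{-1} s_1^{-1} s_2^{-1}$ all reduce into $M$ after using the cubic relation — essentially because $s_2^{-1} = y^{-1}(s_2^2 - x s_2 + \ldots)$-type identities let one trade an outer $s_2^{-1}$ for $s_2$ plus lower terms. This is the step I expect to be the main obstacle: it is the computational heart, and one must be careful that every reduction genuinely lands in the $R$-span of the listed $24$ words and that no word of length $\geq 4$ survives except the ones explicitly reducible via $s_1 s_2 s_1^{-1} s_2 = s_2^{-1} s_1 s_2 s_1$-type manipulations. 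I would present this as a closed reduction system (a confluent rewriting system on braid words modulo the relations) rather than case-by-case, citing \cite{CUBIC5} for the details.

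Next, having a spanning set of $24$ elements, I must prove $R$-linear independence, i.e. that $H_3$ has rank exactly $24$ and not less. For this I would invoke a faithful representation: exhibit an $R$-algebra homomorphism $\rho \colon H_3 \to \mathrm{End}_R(V)$, or rather into a matrix algebra over an extension of $R$, whose images of the $24$ words are linearly independent over $\Frac(R)$. Concretely, after extending scalars to the field $K = \Frac(R) = \Q(a,b,c)$, the algebra $H_3 \otimes_R K$ is known (from \cite{CUBIC5}) to be semisimple of dimension $24$, split with a known decomposition into matrix blocks (the cubic Hecke algebra on $3$ strands decomposes as a sum of blocks of sizes matching the three ``colours'' $a,b,c$, giving $1+1+1+2^2+2^2+2^2+3^2 = 24$, say). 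Linear independence over $K$ of the $24$ words then follows by evaluating them in this semisimple quotient and checking the $24\times 24$ change-of-basis matrix is invertible — again a computation recorded in \cite{CUBIC5}. Since the $24$ words span $H_3$ over $R$ and are independent over $\Frac(R)$, and $H_3$ is generated as an $R$-module by them, $H_3$ is $R$-free of rank $24$ with this basis; freeness over $H_2$ of rank $8$ follows because the $8$ listed $H_2$-module generators, multiplied by the $R$-basis $\{1,s_1,s_1^{-1}\}$ of $H_2$, yield exactly (up to reordering and elementary $R$-linear combination) the $24$-element basis $\mathcal{B}_1$.

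Finally I would note the bookkeeping: the $8$ elements $1, s_2, s_2^{-1}, s_1 s_2, s_1^{-1} s_2, s_1 s_2^{-1}, s_1^{-1} s_2^{-1}, s_2 s_1^{-1} s_2$ times $\{1, s_1, s_1^{-1}\}$ on the left do produce $24$ words, but not literally the words of $\mathcal{B}_1$; one checks that the transition matrix between $\{s_1^{\eps} w : \eps \in\{0,1,-1\}, w \text{ in the }8\text{-list}\}$ and $\mathcal{B}_1$ is upper-triangular with invertible diagonal after applying the braid relation and the cubic relations (e.g. $s_1(s_2 s_1^{-1} s_2) = s_1 s_2 s_1^{-1} s_2 \in \mathcal{B}_1$, while $s_1^{-1}(s_2 s_1^{-1} s_2) = s_1^{-1} s_2 s_1^{-1} s_2 \in \mathcal{B}_1$, and $s_1(s_1 s_2) = s_1^2 s_2 = (\text{linear in }1,s_1,s_1^{-1})\cdot s_2$, etc.). This confirms that freeness of rank $8$ over $H_2$ and freeness of rank $24$ over $R$ are equivalent formulations, completing the proof modulo the computations imported from \cite{CUBIC5}.
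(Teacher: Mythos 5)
Your overall plan (a spanning set of the right size plus an independence certificate) is reasonable, but at the decisive freeness step you take a genuinely different route from the paper, and as written that step is circular. The paper cites \cite{CUBIC5} (theorem 3.2) only for the generation statement, and then gets linear independence from a counting/specialization argument: the $24$ proposed elements are braid words which map onto the $24$ elements of $\Gamma_3=B_3/\langle s_i^3\rangle$, and by the argument of \cite{BMR} (see \cite{CYCLO}, proposition 2.4 (1)) a spanning set of cardinality $|\Gamma_3|$ is automatically an $R$-basis. You instead want to certify independence via the representation theory of $H_3\otimes_R K$, $K=\Frac(R)$; that can be made to work (and the paper itself uses the embedding $\Phi_{H_3}$ for later verifications), but you cannot import ``$H_3\otimes_R K$ is semisimple of dimension $24$'' from \cite{CUBIC5}, since that is essentially the statement being proved. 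The non-circular version is: the seven explicit representations $S_\alpha$, $U_{\alpha\beta}$, $V$ are defined over $R$, irreducible and pairwise non-isomorphic over $K$, so the induced map to $K^3\oplus M_2(K)^3\oplus M_3(K)$ is surjective, giving $\dim_K H_3\otimes K\geq 24$; combined with the $24$-element spanning set this forces equality and independence (or, equivalently, carry out the $24\times 24$ determinant check you mention, but as an actual computation, not a citation).

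Two further gaps. First, in the spanning step it does not suffice to check stability of $M$ under right multiplication by $s_2^{\pm1}$: $M$ is only a \emph{left} $H_2$-module, so right multiplication by $s_1^{\pm1}$ is not automatic, and left $s_1$-stability plus right $s_2$-stability only produces spans of words of the form $s_1^{*}\cdots s_2^{*}$; you must also verify $w\,s_1^{\pm1}\in M$ for each of the eight generators (this is where words such as $s_2s_1s_2$, reduced via the cubic relation applied to the middle letter, genuinely enter --- these are precisely the computations of \cite{CUBIC5}). Second, your final bookkeeping cannot work as stated: with the eight elements as printed, taken as a left $H_2$-basis, the $24$ left products collide, e.g.\ $s_1^{-1}\cdot(s_1s_2)=1\cdot s_2$ and your own example $s_1\cdot(s_1s_2)=s_1^2s_2$ is an $R$-linear combination of $1\cdot s_2$, $s_1\cdot s_2$, $s_1^{-1}\cdot s_2$; so the family $\{s_1^{\eps}w\}$ is linearly dependent and no invertible triangular transition matrix to $\mathcal{B}_1$ exists. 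The clean reconciliation is that $\mathcal{B}_1$ is \emph{exactly} the set $\{1,s_1,s_1^{-1}\}\cdot\{1,\,s_2^{\pm1},\,s_2^{\pm1}s_1^{\pm1},\,s_2s_1^{-1}s_2\}$, i.e.\ one must take the eight $H_2$-module generators with $s_1^{\pm1}$ to the \emph{right} of $s_2^{\pm1}$ (equivalently, work with the appropriate sidedness of the $H_2$-module structure); once this is fixed, the passage between statements (i) and (ii) is the immediate one the paper uses.
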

\begin{proof}  From \cite{CUBIC5} theorem 3.2 we know that $H_3$ is generated as a $H_2$-module by the $8$ elements on the first statement. Since $H_2$
is spanned by $1,s_1, s_1^{-1}$ it follows that $H_3$ is generated as a $H_3$-module by the $24$ elements of the
second statement. Since $\Gamma_3$ has 24 elements and by an a argument of \cite{BMR} (see also \cite{CYCLO}, proposition 2.4 (1)) it follows that these $24$ elements are 
a basis over $R$ of $H_3$. It readily follows that the $8$ original elements provide a basis of $H_3$ as a $H_2$-module.
\end{proof}

A consequence is that $H_3$ is a free deformation
of the group algebra $R \Gamma_3$,
where $\Gamma_n$ denotes the quotient of the braid group by the relations $s_i^3 = 1$,
and $H_3$ becomes isomorphic to it after extension of scalars to the algebraic
closure $\overline{K}$ of the field of fractions $K$ of $R$. Actually, one
has the stronger result $H_3 \otimes_R K \simeq K \Gamma_3$,
because the irreducible representations of $K H_3$ 
are absolutely irreducible.

We will use the following explicit matrix models for the representations, which
are basically the same which were obtained in \cite{BROUEMALLE}, \S 5B. We endow $\{a,b,c \}$
with the total order $a<b<c$. We denote
\begin{enumerate}
\item $S_{\alpha}$ for $\alpha \in \{a,b,c\}$ the 1-dimensional representation
$s_1,s_2 \mapsto \alpha$ 
\item $U_{\alpha,\beta}$ for $\alpha , \beta \in  \{a,b,c\}$ with $\alpha < \beta$ the 2-dimensional representation
$$
U_{\alpha,\beta} : 
s_1 \mapsto \left( \begin {array}{cc} \alpha&0\\ \noalign{\medskip}-\alpha&\beta\end {array} \right)
s_2 \mapsto \left( \begin {array}{cc} \beta&\beta\\ \noalign{\medskip}0&\alpha\end {array} \right)
$$
\item $V$ the $3$-dimensional irreducible representation 
$$
s_1 \mapsto \left( \begin {array}{ccc} c&0&0\\ \noalign{\medskip}ac+{b}^{2}&b&0\\ \noalign{\medskip}b&1&a\end {array} \right)
s_2 \mapsto \left( \begin {array}{ccc} a&-1&b\\ \noalign{\medskip}0&b&-ac-{b}^{2}\\ \noalign{\medskip}0&0&c\end {array} \right) 
$$
\end{enumerate}
We note the important feature that these representations are actually defined over $R$.
As a consequence, these formulas provide an explicit embedding
$$
\Phi_{H_3} : H_3 \into R^3 \oplus M_2(R)^3 \oplus M_3(R) 
$$
and the RHS is easy to identify with $R^{24}$ as a $R$-module.

For an algebra $A$, we let $[A,A]$ denote the submodule spanned by the $ab-ba$ for $a,b \in A$.

\begin{prop} \label{prop:H3AB}
$H_3/[H_3,H_3]$ is freely generated over $R$ by the $7$ elements
    $$
    1,s_1,s_1^{-1},s_1s_2,s_1s_2^{-1}, s_1^{-1} s_2^{-1}, s_1^{-1} s_2 s_1^{-1}s_2.
    $$
\end{prop}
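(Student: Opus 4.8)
The plan is to pin down the $R$-rank of $H_3/[H_3,H_3]$ by a Wedderburn argument, to show that the seven displayed elements generate it over $R$ by an explicit reduction of the basis $\mathcal B_1$ modulo commutators, and finally to deduce freeness from the fact that $H_3$ is a free (hence torsion-free) $R$-module.

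\emph{Rank.} Since $S_a,S_b,S_c,U_{a,b},U_{a,c},U_{b,c},V$ are absolutely irreducible and $3\cdot 1+3\cdot 4+9=24=\rk_R H_3$, Wedderburn gives $H_3\otimes_R K\cong K^3\oplus M_2(K)^3\oplus M_3(K)$. The functor $A\mapsto A/[A,A]$ commutes with the flat base change $R\to K$, and $M_d(K)/[M_d(K),M_d(K)]\cong K$, so $(H_3/[H_3,H_3])\otimes_R K\cong K^{7}$; equivalently, the $7$ characters afforded by the $7$ irreducibles span the dual. Thus $H_3/[H_3,H_3]$ has $R$-rank exactly $7$, and only its torsion-freeness is still in doubt.

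\emph{Generation.} Let $\Lambda\subseteq H_3$ be the $R$-submodule spanned by $1,s_1,s_1^{-1},s_1s_2,s_1s_2^{-1},s_1^{-1}s_2^{-1},s_1^{-1}s_2s_1^{-1}s_2$ together with $[H_3,H_3]$; I would check that $\mathcal B_1\subseteq\Lambda$, which forces $H_3=\Lambda$ and hence gives a surjection $R^7\twoheadrightarrow H_3/[H_3,H_3]$. The reduction uses only: (a) $uv\equiv vu\pmod{[H_3,H_3]}$; (b) conjugation by the half-twist $\Delta=s_1s_2s_1$ swaps $s_1\leftrightarrow s_2$, and since $x\equiv\Delta x\Delta^{-1}$ this yields $s_1^{-1}s_2\equiv s_2s_1^{-1}\equiv s_1s_2^{-1}$ by (a), and likewise for the other mixed length-$2$ words; (c) the identities $s_i^{\pm2}\in R\cdot 1+R\cdot s_i+R\cdot s_i^{-1}$ coming from the cubic relation (the transition matrix from $(1,s_i,s_i^2)$ to $(1,s_i,s_i^{-1})$ has determinant the unit $(abc)^{-1}$); and (d) the braid relation, e.g.\ $s_1s_2s_1^{-1}s_2\equiv s_2s_1s_2\cdot s_1^{-1}=s_1s_2s_1\cdot s_1^{-1}=s_1s_2$. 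Concretely: the words of $\mathcal B_1$ in a single generator reduce to $R\langle 1,s_1,s_1^{-1}\rangle$ by (c) and (a); each length-$2$ word reduces to one of $s_1s_2,s_1s_2^{-1},s_1^{-1}s_2^{-1}$ by (a)+(b); each length-$3$ word of $\mathcal B_1$ is cyclically rotated so that two equal generators become adjacent and then falls to (c) (for instance $s_2s_1^{-1}s_2\equiv s_2^2s_1^{-1}$ and $s_1s_2s_1^{-1}\equiv s_2\equiv s_1$); and the only length-$4$ words are $s_1^{-1}s_2s_1^{-1}s_2$ (on the list) and $s_1s_2s_1^{-1}s_2$ (handled by (d)). This identifies the seven classes with representatives of the seven conjugacy classes of $\Gamma_3\cong\mathrm{SL}_2(\mathbbm{F}_3)$.

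\emph{Freeness.} Applying $-\otimes_R K$ to $0\to\ker\to R^7\to H_3/[H_3,H_3]\to 0$ keeps it exact, and by the rank step the induced map $K^7\to K^7$ is a surjection of equal finite dimension, hence an isomorphism; so $\ker\otimes_R K=0$, i.e.\ $\ker$ is a torsion submodule of the torsion-free module $R^7$, whence $\ker=0$ and $H_3/[H_3,H_3]\cong R^7$ is free with the stated basis. Alternatively one can verify linear independence directly by evaluating, via $\Phi_{H_3}$, the $7\times 7$ matrix of the seven characters on the seven elements and checking its determinant is nonzero in $K$; or one can derive freeness of rank $7$ from the symmetric-algebra structure of $H_3$ together with $Z(H_3)$ being $R$-free of rank $7$, the generation step then pinning down the basis.

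The entire weight of the argument sits in the generation step: one must check, keeping track of the $R$-coefficients, that all $24$ words of $\mathcal B_1$ collapse into the $R$-span of the seven chosen representatives modulo $[H_3,H_3]$. The single reduction that is not formal bookkeeping is that of $s_1^{\pm1}s_2^{\mp1}$, which needs the symmetry $s_1\leftrightarrow s_2$; everything else is cyclic invariance plus the cubic relation, but running this through all cases (and not accidentally producing an eighth independent class) is the real labour.
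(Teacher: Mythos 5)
Your proof is correct and follows essentially the same route as the paper: reduce the $24$-element basis $\mathcal{B}_1$ modulo $[H_3,H_3]$ using cyclicity, conjugation by $s_1s_2s_1$ and the cubic relation to get generation by the seven elements, then use the dimension count $\dim_K\bigl(\tilde{H}_3/[\tilde{H}_3,\tilde{H}_3]\bigr)=7$ coming from the Wedderburn structure of $H_3\otimes_R K$ together with $H_3\subset\tilde H_3$ (your torsion-kernel packaging is the same argument) to conclude freeness.
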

\begin{proof}
From the basis above it is readily proved (note for instance that
    the element $s_1s_2s_1$ conjugates $s_1$ and $s_2$) that
    $H_3/[H_3,H_3]$ is generated by these $7$ elements. The freeness is
    a consequence of the fact that, if $\tilde{H}_3 = H_3 \otimes_R K$, then $\tilde{H}_3/[\tilde{H}_3,\tilde{H}_3] \simeq Z(\tilde{H}_3)
     \simeq Z(K \Gamma_3)$ has dimension $7$. Since these $7$ elements also generate $\tilde{H}_3/[\tilde{H}_3,\tilde{H}_3]$
     they are a basis of the $7$-dimensional $K$-vector space $\tilde{H}_3/[\tilde{H}_3,\tilde{H}_3]$. If a $R$-linear combination of these elements
     belonged to $[H_3,H_3]$, this would yield a contradiction since $H_3  \subset \tilde{H}_3$.
\end{proof}

\begin{prop} {\ }
\begin{enumerate} 
\item The family $\mathcal{B}_0$ below provides a basis of $H_3$ as a $R$-module.
$$
\mathcal{B}_0 = \begin{array}{l}
 1, s_1, s_1^2,s_2, s_2^2, s_1s_2, s_1s_2^2, s_1^2s_2, 
  s_1^2s_2^2, s_1s_2s_1, s_1s_2s_1^2, s_1^2s_2s_1, 
  s_1^2s_2s_1^2, s_1s_2^2s_1, s_1^2s_2^2s_1, s_2s_1, \\
  s_2^2s_1, s_2s_1^2, s_2^2s_1^2, s_1s_2^2s_1^2, 
  s_1^2s_2^2s_1^2, s_2s_1^2s_2, s_1s_2s_1^2s_2, 
  s_1^2s_2s_1^2s_2 
  \end{array}
$$
\item The linear form $t_0 : H_3 \to R$ defined by $t_0(1) = 1$ and $t_0(g) = 0$ for all $g \in \mathcal{B}_0 \setminus \{ 1 \}$
is a (nondegenerate) symmetrizing form for $H_3$.
\end{enumerate}
\end{prop}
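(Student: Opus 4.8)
The plan is to obtain (1) from Proposition~\ref{prop:H3libre} by a change of basis, and to prove (2) in two stages: first that $t_0$ vanishes on $[H_3,H_3]$, and then that the associated bilinear form is nondegenerate \emph{over $R$}. For (1), recall from Proposition~\ref{prop:H3libre}(2) that $H_3$ is free of rank $24$ over $R$; since $\mathcal{B}_0$ has $24$ elements and a surjective endomorphism of a free module of finite rank over a commutative ring is an isomorphism, it suffices to show that $\mathcal{B}_0$ spans $H_3$, i.e. that $\mathcal{B}_1 \subseteq \mathrm{span}_R(\mathcal{B}_0)$. The cubic relation gives $s_i^{-1} = (abc)^{-1}\bigl(s_i^{2} - (a+b+c)\,s_i + (ab+bc+ca)\bigr)$, and since $abc$ is a unit of $R$ this yields $\mathrm{span}_R\{1,s_i,s_i^{-1}\} = \mathrm{span}_R\{1,s_i,s_i^{2}\}$. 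As $\mathcal{B}_0$ is obtained from $\mathcal{B}_1$ by the termwise substitution $s_i^{-1}\leftrightarrow s_i^{2}$, it remains to plug this expression into each word of $\mathcal{B}_1$ and reduce the resulting positive words using the braid relation $s_1s_2s_1=s_2s_1s_2$ (and the cubic relation whenever an $s_i^{3}$ appears); one checks in this way that every element of $\mathcal{B}_1$, hence all of $H_3$, lies in $\mathrm{span}_R(\mathcal{B}_0)$, so $\mathcal{B}_0$ is an $R$-basis.

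For the trace property, note first that $H_3$ is generated as an $R$-algebra by $s_1,s_2$ (because $s_i^{-1}\in\mathrm{span}_R\{1,s_i,s_i^{2}\}$), and that, by the identity $[uv,w]=u[v,w]+[u,w]v$, the submodule $[H_3,H_3]$ is spanned by the commutators $[s_i,b]$ with $i\in\{1,2\}$ and $b$ running through the basis $\mathcal{B}_0$. Hence $t_0$ is a trace if and only if $t_0(s_ib)=t_0(bs_i)$ for these $48$ pairs, and each such identity is checked by a direct computation: expand $s_ib$ and $bs_i$ in the basis $\mathcal{B}_0$ with the help of the cubic and braid relations and compare the coefficient of $1$. (Equivalently, one computes the class of each $b\in\mathcal{B}_0$ in the rank-$7$ module $H_3/[H_3,H_3]$ of Proposition~\ref{prop:H3AB} and checks that $t_0$ is well defined on the quotient.)

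For nondegeneracy, set $K=\Frac(R)$ and $\widetilde{H}_3 = H_3\otimes_R K$. This algebra is split semisimple, and the representations $S_a,S_b,S_c,U_{ab},U_{ac},U_{bc},V$ induce an isomorphism $\widetilde{H}_3 \simeq K^3\oplus M_2(K)^3\oplus M_3(K)$ (the dimensions $3\cdot1+3\cdot4+9$ sum to $24$, and $\Phi_{H_3}\otimes K$ is injective). Every trace form on the right-hand side has the shape $\sum_\alpha\lambda_\alpha\chi_{S_\alpha}+\sum_{\alpha<\beta}\mu_{\alpha\beta}\chi_{U_{\alpha\beta}}+\nu\,\chi_V$ with scalars in $K$, and is nondegenerate precisely when all of them are nonzero. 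Evaluating these (integral) characters on a few suitable elements and solving the linear system against the values $t_0(b)$, $b\in\mathcal{B}_0$, one finds the coefficients to be explicit nonzero rational functions of $a,b,c$ (reciprocals of the Schur elements); hence $t_0\otimes K$ is nondegenerate and, in particular, the Gram determinant $\det\bigl(t_0(b_ib_j)\bigr)_{b_i,b_j\in\mathcal{B}_0}$ is a nonzero element of $R$. It then remains to check that this determinant is a \emph{unit} of $R$: just as in the two-strand situation, where the Gram matrix in the basis $(1,s_1,s_1^{2})$ has determinant $-(abc)^2$, the factors $a-b$, $a-c$, $b-c$ occurring in the denominators of the $\lambda_\alpha,\mu_{\alpha\beta},\nu$ are cancelled by the Jacobian of the change of basis from matrix units to the positive-word basis $\mathcal{B}_0$, so that only a monomial in $a,b,c$ (up to a rational constant) survives. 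Alternatively, since $\Gamma_3\cong G_4$ and $H_3$ is the generic cyclotomic Hecke algebra of $G_4$, this last point is the known fact that the latter is a symmetric $R$-algebra, $t_0$ being its canonical symmetrizing trace relative to the basis $\mathcal{B}_0$; cf.\ \cite{BROUEMALLE}.

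The conceptual input is supplied by Propositions~\ref{prop:H3libre} and~\ref{prop:H3AB}, and the verifications needed in part (1) and for the trace property are routine, if somewhat tedious. The main obstacle is the very last point: nondegeneracy must hold over $R$, not merely over $K$, and this is not automatic, since $H_3$ is not separable over $R$ (it degenerates whenever two of $a,b,c$ coincide). One genuinely has to see that $\det\bigl(t_0(b_ib_j)\bigr)$ is a unit — equivalently, that its only prime divisors in $R$ are $a,b,c$ — which comes down to confirming that the $(a-b)$-type factors of the Schur elements are exactly killed by the change of basis to $\mathcal{B}_0$, either via the direct computation of the $24\times24$ determinant or via the identification of $t_0$ with the symmetrizing form on the Hecke algebra of $G_4$.
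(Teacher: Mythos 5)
Your part (1) and your reduction of the trace property to the finitely many identities $t_0(s_i b)=t_0(b s_i)$ are fine and essentially coincide with the paper's route: the paper also deduces (1) from Proposition \ref{prop:H3libre} by expressing $s_i^{-1}$ as an $R$-combination of $1,s_i,s_i^2$, and it verifies the trace condition by an explicit computation inside the faithful embedding $\Phi_{H_3}$, namely by checking that the Gram matrix $\bigl(t_0(b_ib_j)\bigr)$ is symmetric. (A small quibble: the identity $[uv,w]=u[v,w]+[u,w]v$ you invoke does not directly show that $[H_3,H_3]$ is spanned by the $[s_i,b]$; the correct induction uses $[uv,y]=[u,vy]+[v,yu]$, or simply the observation that $t_0(g_1g_2y)=t_0(g_2yg_1)=t_0(yg_1g_2)$ once the generator case is known. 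The conclusion you draw is nevertheless correct.)

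The genuine gap is the nondegeneracy over $R$, which you yourself identify as the main obstacle but do not close. The Schur-element argument only shows that $t_0\otimes K$ is nondegenerate, i.e.\ that the Gram determinant is a \emph{nonzero} element of $R$; your claim that ``the factors $a-b$, $a-c$, $b-c$ are cancelled by the Jacobian of the change of basis so that only a monomial in $a,b,c$ survives'' is precisely the statement to be proved, and no argument is given for it — the two-strand computation (determinant $-(abc)^2$ in the basis $1,s_1,s_1^2$) is an analogy, not a proof. Your fallback, citing the symmetrizing form for the cyclotomic Hecke algebra of $G_4\cong\Gamma_3$ from \cite{BROUEMALLE}, is exactly what the paper declines to rely on: it notes that the computations in \cite{BROUEMALLE} \S 4B may have been carried out inside $H_3\otimes K$ rather than inside $H_3$, which is the very reason this proposition is reproved. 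The paper closes the gap by an explicit integral computation through the injective morphism $\Phi_{H_3} : H_3 \into R^3\oplus M_2(R)^3\oplus M_3(R)$: the $24\times 24$ Gram matrix of $t_0$ in the basis $\mathcal{B}_0$ (resp.\ $\mathcal{B}_1$) is symmetric with determinant $-(abc)^{54}$ (resp.\ $-(abc)^2$), a unit of $R$. To complete your proof you must actually carry out this determinant computation (or some equivalent verification over $R$); as written, the unit-determinant claim is unsupported.
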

\begin{proof}
It was checked in \cite{BROUEMALLE} \S 4B that the linear span of $\mathcal{B}_0$ (actually of the image of $\mathcal{B}_0$
under the anti-automorphism $s_i \mapsto s_i$, see \cite{CUBIC5})
was stable
under left multiplication by $s_1, s_2$, hence generates $H_3$, and thus provides
a basis of $H_3$. It appears however that the computations in \cite{BROUEMALLE} might have (incorrectly) been made inside $H_3\otimes K$
instead of $H_3$, so we need to provide another argument. Since we proved that $\mathcal{B}_1$ is a basis of $H_3$,
it is sufficient to show that every element of $\mathcal{B}_1$ can be expressed as a linear combination of
the elements of $\mathcal{B}_0$. This readily follows from the expression of $s_i^{-1}$
as a linear combination of $1,s_i$ and $s_i^2$. This proves (1). Using the explicit injective morphism $\Phi : H_3 \to
R^3 \times M_2(R)^3 \times M_3(R)$ above, calculations inside $H_3$ are easy, and 
we can explicitely check that $(x,y) \mapsto t_0(xy)$ is indeed a symmetrizing trace, more precisely that
the matrix $t_0(b_i b_j)$ for $b_i \in \mathcal{B}_0$ (resp. $\mathcal{B}_1$) is a symmetric matrix with determinant $-(abc)^{54}$ (resp. $-(abc)^2$), which belongs to $R^{\times}$.
\end{proof}

Because of the proposition, it is possible to apply the theory of Geck's `Schur elements' (see e.g. \cite{GECKPFEIFFER}) to $H_3$, that is
to determine elements $p_{\chi} \in R$ attached to each of the irreducible representations $\chi$ of $H_3 \otimes K$ such that
$t_0 = \sum_{\chi} \frac{1}{p_{\chi}} tr_{\chi}$ where $tr_{\chi}$ denotes the matrix trace attached to the irreductible
representation $\chi$ of $H_3 \otimes K$. Once convenient matrix models as well as an explicit description
of $t_0$ are known, it is a simple matter to determine them. These elements were already
determined in \cite{MALLE}.

The one attached to $S_a$
is
$$
p_{S_a} = {\frac { \left( a-c \right)  \left( {a}^{2}-ac+{c}^{2} \right)  \left( a-b \right)  \left( {a}^{2}-ab+{b}^{2} \right) 
 \left( bc+{a}^{2} \right) }{{b}^{4}{c}^{4}}}
 $$
 
the one to $U_{b,c}$ is
$$
p_{U_{b,c}} = -{\frac { \left( {b}^{2}+{c}^{2}-bc \right)  \left( a-c \right)  \left( a-b \right)  \left( bc+{a}^{2} \right) }{{a}^{4}bc
}}
$$
and the one to $V$ is
$$
p_V = {\frac { \left( bc+{a}^{2} \right)  \left( ab+{c}^{2} \right)  \left( ac+{b}^{2} \right) }{{a}^{2}{c}^{2}{b}^{2}}}
$$
By this theory of Schur elements (see \cite{GECKPFEIFFER} theorem 7.2.6) we have that, for
each morphism $\varphi : R \to k$ for $k$ a field, $H_3 \otimes_{\varphi} k$ is semisimple
if and only if $\varphi(p_{\chi}) \neq 0$ for all irreducible representation $\chi$ of $H_3$.
Another related computation that can be found in \cite{THESE} is that the discriminant of the
trace form of the regular representation of $H_3$, that is to say of the action of $H_3$ on itself by left multiplication,
is
$$
\begin{array}{c}
2^{12}3^9a^6b^6c^6(c-b)^{10}(a-c)^{10}(b-a)^{10} \\
(c^2-cb+b^2)^6(a^2-ac+c^2)^6
(b^2-ba+a^2)^6(a^2+bc)^{14}(b^2+ac)^{14}(c^2+ab)^{14}. \\
\end{array}
$$

From this computation of Schur elements we get the following lemma.

\begin{lemma} \label{lem:injH3} Let $R_1$ be a domain, and $\varphi : R \to R_1$ a
morphism of rings such that $\varphi(p_{\chi}) \neq 0$ for all the irreducible representation $\chi$ of $H_3$ . Then the induced map
$$
\Phi_{H_3} \otimes R_1 : H_3 \otimes_{\varphi} R_1 \to R_1^3 \oplus M_2(R_1)^3 \oplus M_3(R_1)
$$
is injective.
\end{lemma}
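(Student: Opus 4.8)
The plan is to leverage the symmetric-algebra structure of $H_3$ together with the theory of Schur elements to reduce the injectivity claim to a statement about the semisimplicity of a certain localization. First, I would recall that $H_3$ is a free $R$-module (Proposition with basis $\mathcal{B}_0$ or $\mathcal{B}_1$) and that the map $\Phi_{H_3}$ is defined over $R$, so the tensored map $\Phi_{H_3} \otimes R_1$ is $R_1$-linear between free $R_1$-modules of the same rank $24$. Thus injectivity is equivalent to the statement that the determinant of $\Phi_{H_3}\otimes R_1$, computed in suitable bases, is a nonzerodivisor in the domain $R_1$; equivalently, it suffices to show this determinant does not vanish after passing to the fraction field $K_1 = \Frac(R_1)$.

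Next I would work over $K_1$. The composite $H_3 \otimes_\varphi K_1 \to K_1^3 \oplus M_2(K_1)^3 \oplus M_3(K_1)$ is exactly the sum of the (matrix models of the) representations $S_a, S_b, S_c, U_{a,b}, U_{a,c}, U_{b,c}, V$, each of which is defined over $R$ and hence specializes along $\varphi$. The key input is that the hypothesis $\varphi(p_\chi)\neq 0$ for all $\chi$ implies, by the Schur-element criterion (\cite{GECKPFEIFFER}, Theorem 7.2.6), that $H_3 \otimes_\varphi K_1$ is a \emph{split semisimple} $K_1$-algebra. Since the seven representations listed remain pairwise non-isomorphic and absolutely irreducible after specialization (their character values are distinct generically, and more carefully one checks the non-vanishing of $p_\chi$ forces them to stay irreducible and inequivalent), the map $H_3 \otimes_\varphi K_1 \to \prod_\chi M_{\dim\chi}(K_1)$ is precisely the Wedderburn isomorphism, hence bijective. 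This gives injectivity over $K_1$, and therefore over $R_1$.

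The one point requiring care — and the place I expect the main obstacle — is verifying that the seven representations do not degenerate or collide under the specialization $\varphi$: a priori some $U_{\alpha,\beta}$ could become reducible, or two of them could become isomorphic, even while the algebra stays semisimple (it would then simply have a different isotypic decomposition). The clean way around this is dimension counting: since $H_3 \otimes_\varphi K_1$ is semisimple of dimension $24$ and $\sum_\chi (\dim\chi)^2 = 3\cdot 1 + 3\cdot 4 + 9 = 24$, and since $\Phi_{H_3}\otimes K_1$ is an algebra homomorphism into a product of matrix algebras of total dimension $24$, it is enough to know this homomorphism is \emph{surjective}; but surjectivity of each block follows because the relevant Schur element $p_\chi$ being nonzero guarantees the corresponding central primitive idempotent of $H_3$ survives in $H_3\otimes_\varphi K_1$ and the block it cuts out is a full matrix algebra of the expected size. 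Thus $\Phi_{H_3}\otimes K_1$ is a surjection between $K_1$-spaces of equal dimension, hence an isomorphism, and in particular injective; restricting to the $R_1$-lattice $H_3\otimes_\varphi R_1$ inside $H_3\otimes_\varphi K_1$ finishes the proof.
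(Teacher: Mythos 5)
Your proposal is correct and follows the same overall strategy as the paper: use that $H_3\otimes_\varphi R_1$ is free of rank $24$ over the domain $R_1$ to reduce injectivity to a statement over a field, and then deduce from the non-vanishing of the Schur elements that $\Phi_{H_3}$ becomes the Wedderburn decomposition there. The genuine difference is how the simple modules of the specialized algebra are identified with the seven generic representations: the paper passes to an algebraic closure of $\Frac(R_1)$ and quotes Tits' deformation theorem together with the decomposition-map theorem (\cite{GECKPFEIFFER}, Thm.\ 7.4.6), whereas you stay over $K_1=\Frac(R_1)$ and argue via surviving central primitive idempotents plus a dimension count. That substitution is legitimate, but it is exactly where your write-up is thinnest, so be aware of what it silently uses: $p_\chi e_\chi$ has coefficients in $R$ (the representations being defined over $R$), so $\varphi(p_\chi)\neq 0$ lets each $e_\chi$, the relations $e_\chi e_{\chi'}=\delta_{\chi\chi'}e_\chi$, $\sum_\chi e_\chi=1$, and $\rho_{\chi'}(e_\chi)=\delta_{\chi\chi'}\Id$ all specialize; and the specialized orthogonality relations (equivalently, the non-vanishing of $\varphi(p_\chi)$) force $\rho_\chi\otimes K_1$ to be surjective onto $M_{d_\chi}(K_1)$. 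Note that surjectivity onto each factor alone would not give surjectivity onto the product; it is the specialized idempotents separating the factors, combined with $\sum_\chi d_\chi^2=24=\dim_{K_1}H_3\otimes_\varphi K_1$, that yields joint surjectivity and hence the isomorphism, after which freeness of $H_3\otimes_\varphi R_1$ gives injectivity over $R_1$ exactly as in the paper. Spelled out this way your route is self-contained and marginally more elementary (no algebraic closure, no appeal to Tits' theorem), at the cost of redoing by hand what the paper's citation provides in one line.
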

\begin{proof} Let $k$ denote an algebraic closure of the fraction field of $R_1$. By the remarks above we get the
semisimplicity of $H_3 \otimes_{\varphi} k$ and, by Tits deformation theorem, that
$H_3 \otimes_{\varphi} k$ is isomorphic to $k^3 \oplus M_2(k)^3 \oplus M_3(k)$.
Moreover (see \cite{GECKPFEIFFER} theorem 7.4.6), the `decomposition map' between
$H_3 \otimes K$ and $H_3 \otimes_{\varphi} k$ induces an isomorphism
between simple modules, which implies that the morphism that we consider 
$\Phi_{H_3} \otimes_{\varphi} : H_3 \otimes_{\varphi} k \to k^3 \oplus M_2(k)^3 \oplus M_3(k)$
is a morphism from $H_3 \otimes_{\varphi} k$ to the sum of the matrix algebras
associated to its simple modules. Because $H_3 \otimes_{\varphi} k$
is semisimple, this morphism is indeed an isomorphism.
Since $H_3$ is free over $R_1$ the conclusion follows.
\end{proof}

Let $M$ be a $R$-module. Since the natural map $H_2 \to H_3$
is injective, we can identify $H_2$ with a $R$-subalgebra of $H_3$.

For $M$ a $R$-module, we let $MT_n(M)$ be the $R$-module
of $R$-linear maps $t : H_n \to M$ such that
$t(xy) = t(yx)$ for all $x ,y \in H_n$, and such that $t(xs_{n-1}) = t(xs_{n-1}^{-1})$,
for all $x$ in the image of the natural morphism $H_{n-1} \to H_n$.

\begin{prop} \label{propMT3} Let $M$ be a $R$-module. Then $MT_3(M)$ is isomorphic to $\Hom_R(R^4,M)$
under $t \mapsto t(1) e_1^* + t(s_1) e_2^* + t(s_1 s_2)e_3^* + t(s_1 s_2^{-1} s_1 s_2^{-1}) e_4^*$, 
where $(e_1,\dots,e_4)$ is the canonical basis of $R^4$ and the $e_i^* \in \Hom(R^4,M)$
are the obvious dual maps.
\end{prop}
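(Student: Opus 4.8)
The plan is to realise $MT_3(M)$ as $\Hom_R(H_3/J,M)$ for an explicit submodule $J\subseteq H_3$, and then to prove that $H_3/J$ is $R$-free of rank $4$ on the images of $1,s_1,s_1s_2,s_1s_2^{-1}s_1s_2^{-1}$; the proposition then follows by dualising. First I would reformulate the defining conditions. An $R$-linear map $t\colon H_3\to M$ lies in $MT_3(M)$ precisely when it vanishes on the $R$-submodule $J$ generated by all $uv-vu$ ($u,v\in H_3$) together with all $x(s_2-s_2^{-1})$, with $x$ in the image of $H_2\hookrightarrow H_3$. As $H_2$ is $R$-free on $\{1,s_1,s_1^{-1}\}$, the second family is $R$-spanned by $s_2-s_2^{-1}$, $s_1s_2-s_1s_2^{-1}$ and $s_1^{-1}s_2-s_1^{-1}s_2^{-1}$, so
$$J=[H_3,H_3]+R(s_2-s_2^{-1})+R(s_1s_2-s_1s_2^{-1})+R(s_1^{-1}s_2-s_1^{-1}s_2^{-1}),$$
and $MT_3(M)=\Hom_R(H_3/J,M)$, functorially in $M$.

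Next I would reduce the generators of $J$ modulo commutators. By Proposition~\ref{prop:H3AB}, $H_3/[H_3,H_3]$ is $R$-free with basis $\mathcal{B}=(b_1,\dots,b_7)$, where $b_1=1$, $b_2=s_1$, $b_3=s_1^{-1}$, $b_4=s_1s_2$, $b_5=s_1s_2^{-1}$, $b_6=s_1^{-1}s_2^{-1}$, $b_7=s_1^{-1}s_2s_1^{-1}s_2$. Writing $\Delta=s_1s_2s_1=s_2s_1s_2$, conjugation by $\Delta$ exchanges $s_1$ and $s_2$, so combining it with the cyclicity of traces one obtains, in $H_3/[H_3,H_3]$:
$$[s_2]=[s_1],\quad [s_2^{-1}]=[s_1^{-1}],\quad [s_1^{-1}s_2]=[s_2^{-1}s_1]=[s_1s_2^{-1}],$$
and — the key point — $[s_1s_2^{-1}s_1s_2^{-1}]=[s_2s_1^{-1}s_2s_1^{-1}]=[s_1^{-1}s_2s_1^{-1}s_2]=b_7$. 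Hence the image $\bar J$ of $J$ in $H_3/[H_3,H_3]$ is $R(b_2-b_3)+R(b_4-b_5)+R(b_5-b_6)$.

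Finally, $(b_1,b_2,b_4,b_7,\,b_2-b_3,\,b_4-b_5,\,b_5-b_6)$ is again an $R$-basis of $H_3/[H_3,H_3]$, since its transition matrix from $\mathcal{B}$ has determinant $-1$. Thus $\bar J$ is a free direct summand of rank $3$ with complement $Rb_1\oplus Rb_2\oplus Rb_4\oplus Rb_7$, so $H_3/J=(H_3/[H_3,H_3])/\bar J$ is $R$-free of rank $4$ on the images of $1,s_1,s_1s_2,b_7$; and because $b_7\equiv s_1s_2^{-1}s_1s_2^{-1}$ modulo $[H_3,H_3]\subseteq J$, the images of $1,s_1,s_1s_2,s_1s_2^{-1}s_1s_2^{-1}$ form a basis of $H_3/J$ as well. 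Dualising, $MT_3(M)=\Hom_R(H_3/J,M)\cong\Hom_R(R^4,M)$, and unwinding the identifications gives exactly $t\mapsto t(1)e_1^*+t(s_1)e_2^*+t(s_1s_2)e_3^*+t(s_1s_2^{-1}s_1s_2^{-1})e_4^*$.

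I expect the only genuinely delicate part to be spotting the braid-conjugacy reductions modulo $[H_3,H_3]$ — in particular that the word $s_1s_2^{-1}s_1s_2^{-1}$ occurring in the statement is, up to a cyclic rotation, conjugate to the basis vector $b_7=s_1^{-1}s_2s_1^{-1}s_2$ via $\Delta=s_1s_2s_1$; this is precisely what forces the \emph{mixed-sign} word into the answer instead of $s_1^{-1}s_2s_1^{-1}s_2$. A minor pitfall is to work with $\{1,s_1,s_1^{-1}\}$, and not $\{1,s_1,s_1^2\}$, as the basis of $H_2$, so that the three Markov relations collapse to the clean equalities $b_2=b_3$, $b_4=b_5$, $b_5=b_6$. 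Everything after that is linear algebra over $R$ with a unimodular change of basis and uses nothing beyond Proposition~\ref{prop:H3AB}; if one wished to bypass the braid manipulations, the identities $[s_1^{-1}s_2]=[s_1s_2^{-1}]$ and $[s_1s_2^{-1}s_1s_2^{-1}]=[s_1^{-1}s_2s_1^{-1}s_2]$ could instead be verified on the representations $S_\alpha$, $U_{\alpha,\beta}$, $V$ through the faithful embedding $\Phi_{H_3}$.
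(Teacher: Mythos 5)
Your proof is correct and follows essentially the same route as the paper's: both rest on Proposition \ref{prop:H3AB} together with the $\Delta=s_1s_2s_1$-conjugation and cyclicity identities that collapse the Markov conditions to $[s_1]=[s_1^{-1}]$, $[s_1s_2]=[s_1s_2^{-1}]=[s_1^{-1}s_2^{-1}]$ and identify $[s_1s_2^{-1}s_1s_2^{-1}]$ with the basis element $[s_1^{-1}s_2s_1^{-1}s_2]$. The only difference is packaging: you exhibit $H_3/J$ as a free rank-$4$ quotient via a unimodular change of basis and dualize, whereas the paper proves injectivity and surjectivity of the evaluation map separately, so the substance is identical.
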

\begin{proof} Let $t \in MT_3(M)$. Since $s_2$ is conjugated to $s_1$ and $t$ vanishes on $s_2 - s_2^{-1}$,
it also vanishes on $s_1- s_1^{-1}$, hence $t(s_1^{-1}) = t(s_1)$. Since $t$ also vanishes on
$s_1 s_2 - s_1 s_2^{-1}$, and because
 $$ \begin{array}{lcl}
 s_1^{-1} s_2^{-1} - s_1s_2 &=& (s_1^{-1} s_2^{-1} -s_1^{-1} s_2) + (s_1^{-1} s_2  - s_1s_2 )\\
 &=&(s_1^{-1} s_2^{-1} -s_1^{-1} s_2) + (s_1s_2s_1)(s_2^{-1} s_1  - s_2s_1 )(s_1s_2s_1)^{-1} \\
 \end{array}
 $$
we get that $t(s_1 s_2) = t(s_1 s_2^{-1}) = 
t( s_1^{-1} s_2^{-1})$. By proposition \ref{prop:H3AB} it follows that the map $MT_3(M) \to \Hom_R(R^4,M)$
described in the statement is injective (note that $t(s_1s_2^{-1}s_1s_2^{-1}) = t(s_2s_1^{-1} s_2 s_1^{-1}) = t(s_1^{-1} s_2 s_1^{-1} s_2)$
using $s_1s_2s_1$-conjugation and the invariance of traces under cyclic rotation). Conversely, because by proposition \ref{prop:H3AB} $H_3/[H_3,H_3]$
is a free module, it is possible to associate to each element in $ \Hom_R(R^4,M)$ a trace $t$ on $H_3$ satisfying
$t(s_1^{-1}) = t(s_1)$, $t(s_1 s_2) = t(s_1 s_2^{-1}) = 
t( s_1^{-1} s_2^{-1})$. Now, since $H_2$ is spanned by $1,s_1,s_1^{-1}$, the $R$-module spanned by the $x s_2 - x s_2^{-1}$ for $x \in H_2$ is spanned
by $s_2 - s_2^{-1}$, $s_1 s_2- s_1s_2^{-1}$, $s_1^{-1}s_2 - s_1^{-1}s_2^{-1}$, hence $t$ clearly vanishes on it.
\end{proof}

\section{Two BMW algebras as quotients of the braid groups}

\label{secttwobmwagebras}

We still denote $R = \Q[a,b,c,(abc)^{-1}]$ and let $S = R[(b+c)^{-1}]$, $S_{\pm} = S/(a \mp y)$.
There are two variants of the Birman-Wenzl-Murakami algebras, one which can be defined over $S_+$, the other one over $S_-$.
Usually, they are defined as a algebras over
$\Q[\alpha,\alpha^{-1}, q,q^{-1}, (q \pm q^{-1})^{-1}]$,
by generators $\sigma_1,\dots,\sigma_{n-1}$, braid relations between the $\sigma_i$'s, and three series of relations
involving the additional elements
$$
e_i = \frac{ \sigma_i^{-1} \pm \sigma_i}{q \pm q^{-1}} \mp 1,
$$
namely
\begin{enumerate}
\item $\sigma_i e_i = \alpha^{-1} e_i $
\item $e_i \sigma_{i+1} e_i = \alpha e_i$
\item $e_i \sigma_{i+1}^{-1} e_i = \alpha e_i$
\end{enumerate}
That these relations are enough to present the algebra originally
introduced in \cite{BW} was shown in \cite{WENZL}.
A classical remark is that, using conjugating properties inside the braid group,
these three \emph{series of relations} are equivalent to the three \emph{relations}
\begin{enumerate}
\item $\sigma_1 e_1 = \alpha^{-1} e_1$ 
\item $e_1 \sigma_{2} e_1 = \alpha e_1$
\item $e_1 \sigma_{2}^{-1} e_1 = \alpha^{-1} e_1$
\end{enumerate}
A slightly more convenient presentation for our purposes is to replace the
generators $\sigma_i$ by $s_i = \alpha^{-1} \sigma_i$. The formulas above become
$$
e_i = \frac{ \alpha^{-1} s_i^{-1} \pm \alpha s_i}{q \pm q^{-1}} \mp 1 = \frac{ \alpha^{-2} s_i^{-1}  \pm s_i}{\alpha^{-1}q \pm \alpha^{-1}q^{-1}} \mp 1,
$$
and
\begin{enumerate}
\item $s_1 e_1 = \alpha^{-2} e_1$ 
\item $e_1 s_{2} e_1 =  e_1$
\item $e_1 s_{2}^{-1} e_1 =  e_1$
\end{enumerate}
A classical consequence of the first relation is that $s_1$ and, therefore, all the $s_i$'s, satisfy a cubic relation,
and more precisely
$$
(s_i - \alpha^{-2})(s_i - \alpha^{-1} q) (s_i \mp \alpha^{-1} q^{-1}) = 0
$$
It follows that this algebra is actually defined over 
$$\Q[\alpha^{2},\alpha^{-2}, \alpha^{-1}q, \alpha^{-1}q^{-1},   (\alpha^{-1}q \pm \alpha^{-1}q^{-1})^{-1}]$$
which is isomorphic to $S_{\pm} = S/(a \mp bc)$ under $a \mapsto \alpha^{-2}$, $b \mapsto \alpha^{-1} q$,
$c \mapsto \pm \alpha^{-1} q^{-1}$. Using this isomorphism, we get that
$$
e_i =  \frac{  a s_i^{-1} \pm s_i}{b + c} \mp 1,
$$
and the defining relations become, in addition of the braid relations,
\begin{enumerate}
\item $(s_1 -a)(s_1-b)(s_1-c)=0$
\item $e_1 s_{2} e_1 =  e_1$
\item $e_1 s_{2}^{-1} e_1 =  e_1$
\end{enumerate}
and thus $BMW_n^{\pm}$ appears as the quotient of $H_n \otimes_{S}S_{\pm}$
by the ideal generated by two elements $\mathcal{S}_{\pm}$ and $\mathcal{S}'_{\pm}$, 
namely $e_1 s_2 e_1 - e_1$ and $e_1 s_2^{-1}e_1 - e_1$. Notice that each of these
elements, expressed in the $s_i$'s, is a linear combination of 12 terms originating from the
braid group. We call these the two 12-terms defining relations of the BMW algebras.
In this setting, the classical definition of the BMW algebras can be formulated as follows.

\begin{defi} $BMW_n^{\pm} = (H_n \otimes_R S_{\pm})/(\mathcal{S}_{\pm},\mathcal{S}'_{\pm})$.
\end{defi}

 We let $x = b+c$, $y = bc$. We recall the following easy consequences of the cubic relation
 and of the definition of $e_i$  :
 \begin{itemize}
 \item $ e_i^2 = \delta e_i$ with $\delta = \frac{1 \pm a \mp x}{x}$
 \item $e_i s_i = a e_i = e_i s_i$
 \end{itemize}
 
  We first prove that, if $x-a$
 is made invertible,
then these two 12-terms relations are equivalent, in other words that the defining ideal is generated
by either one of these two relations. 
\begin{prop} 
$$BMW_n^{\pm} \otimes_{S_{\pm}} S_{\pm}[(x-a)^{-1}] = \left( H_n \otimes_R S_{\pm}[(x-a)^{-1}] \right)/(\mathcal{S}_{\pm})
$$
{}
$$BMW_n^{\pm} \otimes_{S_{\pm}} S_{\pm}[(x-a)^{-1}] = \left( H_n \otimes_R S_{\pm}[(x-a)^{-1}] \right)/(\mathcal{S}'_{\pm})
$$
\end{prop}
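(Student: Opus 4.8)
The plan is to show that modulo $\mathcal{S}_\pm = e_1 s_2 e_1 - e_1$ (after inverting $x-a$) one can already derive $\mathcal{S}'_\pm = e_1 s_2^{-1} e_1 - e_1$, and symmetrically that $\mathcal{S}_\pm$ is derivable from $\mathcal{S}'_\pm$. Since both are ideals of $H_n \otimes_R S_\pm[(x-a)^{-1}]$, establishing these two one-sided implications gives the two claimed presentations simultaneously. Everything takes place on three strands — the generators involved are $s_1,s_2,e_1,e_2$ — so by the conjugation argument already used for the original BMW presentation it suffices to work with the relations indexed by $1,2$; the higher-index relations follow by conjugating in the braid group.

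The key computational input is the cubic relation, which lets us write $s_2^{-1}$ as an $S_\pm$-linear combination of $1, s_2, s_2^2$, together with the two identities recalled just before the statement: $e_i^2 = \delta e_i$ with $\delta = (1 \pm a \mp x)/x$, and $e_i s_i = a e_i = s_i e_i$. First I would expand $e_1 s_2^{-1} e_1$ using the cubic relation for $s_2$, reducing it to a combination of $e_1 e_1 = \delta e_1$, $e_1 s_2 e_1$, and $e_1 s_2^2 e_1$. The term $e_1 s_2 e_1$ is exactly governed by $\mathcal{S}_\pm$. For $e_1 s_2^2 e_1$, I would use that $s_2^2$ is again a linear combination of $1, s_2, s_2^{-1}$ via the cubic relation, or more efficiently insert $s_2 \cdot s_2$ and push one factor through: writing $e_1 s_2^2 e_1 = e_1 s_2 (s_2 e_1)$ does not immediately simplify, so instead I would substitute $s_2^2 = (a+b+c)s_2 - (ab+ac+bc) + abc\, s_2^{-1}$ and feed the resulting $e_1 s_2^{-1} e_1$ back into the equation. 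This produces a linear relation, with coefficients in $S_\pm$, of the form $\mu\, e_1 s_2^{-1} e_1 = \nu\, e_1 s_2 e_1 + \rho\, e_1$ for explicit $\mu,\nu,\rho$. Modulo $\mathcal{S}_\pm$ the right-hand side becomes $(\nu+\rho)e_1$, and one checks that $\mu = \nu + \rho$ up to a unit — more precisely that $\mu$ is, after using $a=\pm y$ and inverting $x-a$, invertible, so that $e_1 s_2^{-1} e_1 \equiv e_1$ follows. The symmetric direction is handled the same way with the roles of $s_2$ and $s_2^{-1}$ exchanged (equivalently, applying the anti-automorphism $s_i \mapsto s_i$).

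The main obstacle I expect is purely the bookkeeping of coefficients: one must verify that the scalar $\mu$ multiplying $e_1 s_2^{-1} e_1$ in the derived relation, once $a=\pm y$ is imposed, is exactly a unit times $(x-a)$ — this is precisely why inverting $x-a$ is both necessary and sufficient, and it is the place where the two sign cases $S_+$ versus $S_-$ must each be checked. A mild secondary point is to make sure the derivation stays inside $H_n$ and not merely $H_n \otimes K$, but since all identities used (cubic relation, $e_i^2=\delta e_i$, $e_i s_i = a e_i$) hold integrally over $S_\pm$, and $x-a$ is the only denominator introduced, this causes no trouble. No genericity of $a,b,c$ is needed beyond what is built into $S_\pm[(x-a)^{-1}]$.
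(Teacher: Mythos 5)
Your overall reduction (work on three strands, derive $\mathcal{S}'_{\pm}$ from $\mathcal{S}_{\pm}$ and conversely, transport to all indices by braid conjugation) is the right frame, but the central computation you propose collapses to a tautology. Writing $s_2^{-1} = \frac{1}{ay}\bigl(s_2^2 - (a+x)s_2 + (y+ax)\bigr)$ gives $e_1 s_2^{-1} e_1 = \frac{1}{ay}\bigl(e_1 s_2^2 e_1 - (a+x)\,e_1 s_2 e_1 + (y+ax)\,\delta e_1\bigr)$; if you then eliminate $e_1 s_2^2 e_1$ by the \emph{same} cubic relation, $s_2^2 = (a+x)s_2 - (y+ax) + ay\,s_2^{-1}$, the two substitutions are inverse to each other and everything cancels: you get $e_1 s_2^{-1} e_1 = e_1 s_2^{-1} e_1$, not a relation $\mu\, e_1 s_2^{-1} e_1 = \nu\, e_1 s_2 e_1 + \rho\, e_1$ with $\mu$ a unit times $(x-a)$. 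This is not an accident of bookkeeping: all the identities you allow yourself (the cubic relation for $s_2$, $e_1^2 = \delta e_1$, $e_1 s_1 = s_1 e_1 = a e_1$ applied with index $1$ only) live in the subalgebra generated by $e_1$ and $s_2$, and subject to just those relations the elements $e_1$, $e_1 s_2 e_1$, $e_1 s_2^{-1} e_1$ are linearly independent, so no such linear relation can be derived. The implication $e_1 s_2 e_1 = e_1 \Rightarrow e_1 s_2^{-1} e_1 = e_1$ genuinely requires three-strand input, i.e.\ the braid relation and the interaction with $s_1$ (or $e_2$), which your derivation never invokes.

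The missing idea, which is how the paper proceeds, is to multiply the assumed relation on the right by $s_2^{-1} e_1$ and rewrite the resulting word using the braid group: $X = (e_1 s_2 e_1) s_2^{-1} e_1 = e_1 s_2^{-1} e_1$ on one hand, while on the other hand $X = e_1 (s_2 e_1 s_2^{-1}) e_1 = e_1 s_1^{-1} e_2 s_1 e_1 = a^{-1}\,e_1 e_2\, (a e_1) = e_1 e_2 e_1$, using $s_2 s_1^{u} s_2^{-1} = s_1^{-1} s_2^{u} s_1$ together with $s_1 e_1 = a e_1$, $e_1 s_1^{-1} = a^{-1} e_1$. Expanding $e_2 = \frac{a s_2^{-1} \pm s_2}{x} \mp 1$ then yields $\bigl(1 - \frac{a}{x}\bigr) e_1 s_2^{-1} e_1 = \bigl(1 - \frac{a}{x}\bigr) e_1$, and this is exactly where the factor $x - a$ whose invertibility you correctly anticipated actually comes from. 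Without some step of this kind (introducing $e_2$, or equivalently conjugating $e_1$ through $s_2$ and using the braid relation), your plan cannot produce the needed coefficient, so as written the proof has a genuine gap.
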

\begin{proof}
We need to prove that the two relations $e_1 s_2^{-1} e_1 = e_1$ and
$e_1 s_2 e_1 = e_1$ are implied one by the other, inside $H_3 \otimes_R S_{\pm}[(x-a)^{-1}]$.
We assume that $e_1 s_2 e_1 = e_1$, and show $e_1 s_2^{-1} e_1 = e_1$, the proof of the converse
implication being similar.

We have $X = (e_1 s_2 e_1) s_2^{-1} e_1 = e_1 s_2^{-1} e_1$,
and also $X = e_1 (s_2 e_1 s_2^{-1}) e_1= e_1 s_1^{-1} e_2 s_1 e_1$, because the braid relations imply
$s_2 s_1^{u} s_2^{-1} = s_1^{-1} s_2^{u} s_1$ for all $u \in \Z$ and by expressing $e_i$ as a linear combination of $1,s_i$ and $s_i^{-1}$.
Now the cubic relation implies $s_1 e_1 = a e_1$ and $e_1 s_1^{-1} = a^{-1} e_1$, hence
$X = (e_1 s_1^{-1}) e_2 (s_1 e_1) = e_1 e_2 e_1$. By definition of $e_2$, this is
$$X = e_1 \left( \frac{  a s_2^{-1} \pm s_2}{x} \mp 1 \right) e_1=
  \frac{ a e_1s_2^{-1}e_1 \pm e_1 s_2e_1}{x} \mp e_1^2  
  =
  \frac{ a e_1s_2^{-1}e_1 \pm e_1 }{x} \mp \delta e_1 .
  $$
Altogether, this yields
$$
\left(1- \frac{a}{x} \right)e_1s_2^{-1}e_1 =  \mp \left(\delta -\frac{1}{x}  \right)  e_1 =
\mp \left(  \frac{1 \pm a \mp x - 1}{x}   \right)  e_1 = 
 \left(  \frac{ - a+ x }{x}   \right)  e_1 = 
\left( 1- \frac{  a }{x}   \right)  e_1 
$$
whence the conclusion.
\end{proof}

Recall that, inside $BMW_n^{\pm}$, we have $a = \pm y$.
\begin{prop} \label{propWBMWetendbien} We have $s_2^{-1} e_1 s_2^{-1} = a^{-2} s_1 e_2 s_1 = y^{-2} s_1 e_2 s_1$, and
$$
\frac{1}{x} s_2^{-1} s_1 s_2^{-1} - \frac{1}{xy^2} s_1 s_2 s_1 - s_2^{-1} + s_1^2 = \frac{1}{xy} s_1 s_2^{-1} s_1 - \frac{y}{x} s_2^{-1} s_1^{-1} s_2^{-1}.
$$
\end{prop}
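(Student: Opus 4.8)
The strategy is to establish both identities by purely elementary manipulation inside $BMW_3^\pm$, using the cubic relation, the definition of $e_i$, and the two defining relations $e_1 s_2 e_1 = e_1$, $e_1 s_2^{-1} e_1 = e_1$, together with the relations $a = \pm y$ and the consequences $e_i^2 = \delta e_i$, $e_i s_i = a e_i = s_i e_i$ already recorded. For the first identity $s_2^{-1} e_1 s_2^{-1} = a^{-2} s_1 e_2 s_1$, I would start from the braid-group identity $s_2 s_1^u s_2^{-1} = s_1^{-1} s_2^u s_1$ (valid for all $u \in \Z$), already used in the proof of the previous proposition, and apply it with $e_1$ in place of the middle factor (legitimate since $e_1$ is a linear combination of $1, s_1, s_1^{-1}$): this gives $s_2 e_1 s_2^{-1} = s_1^{-1} e_2 s_1$. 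Conjugating, or equivalently multiplying on both sides appropriately and using $s_1 e_1 = a e_1$, $e_1 s_1 = a e_1$ (hence $e_1 s_1^{-1} = a^{-1} e_1$, $s_1^{-1} e_1 = a^{-1} e_1$), I would rearrange to isolate $s_2^{-1} e_1 s_2^{-1}$ and produce the factor $a^{-2}$; the substitution $a = \pm y$, so $a^{-2} = y^{-2}$, gives the stated form.

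For the second (scalar) identity, the plan is to expand $e_1$ as $\frac{a s_2^{-1} \pm s_2}{x} \mp 1$ — wait, more precisely I would expand the element $s_2^{-1} e_1 s_2^{-1}$ using $e_1 = \frac{a s_1^{-1} \pm s_1}{x} \mp 1$, obtaining $s_2^{-1} e_1 s_2^{-1} = \frac{a}{x} s_2^{-1} s_1^{-1} s_2^{-1} \pm \frac{1}{x} s_2^{-1} s_1 s_2^{-1} \mp s_2^{-2}$, and similarly expand $s_1 e_2 s_1$ using $e_2 = \frac{a s_2^{-1} \pm s_2}{x} \mp 1$, obtaining $s_1 e_2 s_1 = \frac{a}{x} s_1 s_2^{-1} s_1 \pm \frac{1}{x} s_1 s_2 s_1 \mp s_1^2$. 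Plugging these two expansions into the first identity $s_2^{-1} e_1 s_2^{-1} = y^{-2} s_1 e_2 s_1$ and clearing denominators (multiplying by $x$), then using $a = \pm y$ to rewrite the coefficients, I expect to land exactly on the displayed relation after rearranging terms to the two sides — the $s_2^{-2}$ and $s_1^2$ terms, the $s_2^{-1}s_1s_2^{-1}$ and $s_1s_2s_1$ terms with coefficients $\frac1x$ and $\frac1{xy^2}$, and the mixed $s_1s_2^{-1}s_1$ and $s_2^{-1}s_1^{-1}s_2^{-1}$ terms with coefficients $\frac1{xy}$ and $\frac yx$. The purpose of this second identity is presumably to record an explicit six-term braid-word relation that will serve to define $\WBMW_n$, so the precise placement of terms on the two sides matters for later use but is a matter of bookkeeping.

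The main obstacle is really just keeping the signs straight: everything is tied up with the $\pm$/$\mp$ conventions coming from the two variants $BMW_n^+$ and $BMW_n^-$, and with the substitution $a = \pm y$ which interacts with those signs. One has to check that the single displayed identity (with no $\pm$ visible in it) is correct for both sign choices simultaneously — this is plausible because replacing $s_2$ by $-s_2$ or conjugating appropriately may absorb the sign, but it needs verification. I would handle this by doing the computation once with an abstract sign $\eps \in \{+1,-1\}$ throughout (so $e_i = \frac{a s_i^{-1} + \eps s_i}{x} - \eps$, $a = \eps y$), carrying $\eps$ symbolically, and checking that $\eps$ drops out of the final relation — using $\eps^2 = 1$ and $a = \eps y$ so that, e.g., $a^{-1} = \eps y^{-1}$ and $a^{-2} = y^{-2}$. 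A secondary point to be careful about is that the first identity is an identity in $BMW_3^\pm$ and its derivation uses only the braid relations plus $s_1 e_1 = a e_1$ (the cubic relation), not the two 12-term relations; but the rearrangement to get the clean $a^{-2}$ factor does require inverting $s_2$, which is fine since we work in a quotient of the group algebra of $B_3$. Once the first identity is in hand, the second is forced and is pure algebra.
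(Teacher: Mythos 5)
Your reduction of the second displayed identity to the first one is exactly the paper's last step (expand $e_1,e_2$ via $e_i=\frac{as_i^{-1}\pm s_i}{x}\mp 1$, substitute $a=\pm y$, and check the sign $\eps$ cancels), and that part is sound bookkeeping (note only that the expansion actually produces the terms $-s_2^{-2}+\frac{1}{y^2}s_1^2$, i.e.\ the normalization of $\mathcal{R}_1$ given in \S 4, rather than the literal $-s_2^{-1}+s_1^2$ of the display). The genuine gap is in your proof of the first identity $s_2^{-1}e_1s_2^{-1}=a^{-2}s_1e_2s_1$. You assert that it follows from the braid-conjugation identity $s_2e_1s_2^{-1}=s_1^{-1}e_2s_1$ together with $s_1e_1=e_1s_1=ae_1$, ``not the two 12-term relations''. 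This cannot work: given the cubic relation, the identity you want is (up to an invertible scalar and the braid equality $s_1^{-1}s_2^{-1}s_1^{-1}=s_2^{-1}s_1^{-1}s_2^{-1}$) precisely the vanishing of the 6-term element $\mathcal{R}_1$, and the paper computes explicitly that $\mathcal{R}_1$ has nonzero image in the representations $S_a$, $U_{a,b}$, $U_{a,c}$ of the cubic Hecke algebra $H_3$, images which remain nonzero after specializing $a=\pm bc$. Equivalently, if the relation were a consequence of the braid plus cubic relations alone, $\WBMW_3$ would coincide with $H_3$ (dimension $24$) instead of having dimension $15$ generically (proposition \ref{propisomBMWtilde}), and the whole construction of $\WBMW_n$ as a proper quotient would collapse. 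Concretely, your rearrangement cannot produce the factor $a^{-2}$: the eigenvalue relations let you move powers of $s_1$ past $e_1$ (or of $s_2$ past $e_2$), but to turn conjugation by $s_2$ into left multiplication by $s_2^{-1}$ you must push an $s_2^{\pm 2}$ past $e_1$, and no relation of $H_3$ does that with a scalar; indeed your target is equivalent to $e_1s_2^{-1}s_1^{-2}=a^{-2}e_1s_2$, which genuinely encodes tangle information.

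A correct argument must invoke the BMW defining relations, and the paper does so twice: writing $s_1=\pm xe_1+x\mp as_1^{-1}$ and $s_2^{-1}=\mp a^{-1}s_2+a^{-1}xe_2\pm a^{-1}x$, one computes $e_1e_2s_1=ae_1s_1^{-1}e_2s_1=ae_1s_2e_1s_2^{-1}=ae_1s_2^{-1}$ (using $e_1s_2e_1=e_1$) and $e_2e_1s_2^{-1}=a^{-1}e_2s_2e_1s_2^{-1}=a^{-1}e_2s_1^{-1}e_2s_1=a^{-1}e_2s_1$ (using $e_2s_1^{-1}e_2=e_2$, a conjugate of the second 12-term relation); comparing the resulting expansions of $s_1e_2s_1$ and $s_2^{-1}e_1s_2^{-1}$ then yields the factor $a^{-2}$. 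Without some such input, your argument establishes only the conjugation identity $s_2e_1s_2^{-1}=s_1^{-1}e_2s_1$, which already holds in $H_3$ and is strictly weaker than the statement.
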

\begin{proof}
We have $e_1 =  \frac{  a s_1^{-1} \pm s_1}{x} \mp 1$, that is
$as_1^{-1} \pm s_1 = x e_1 \pm x$ hence
$ s_1 =\pm x e_1+ x \mp a s_1^{-1}$. It follows that
$s_1 e_2 s_1 = \pm x e_1e_2 s_1+ xe_2 s_1 \mp a s_1^{-1}e_2 s_1$.
By the braid relations we have $s_1^{-1}e_2 s_1 = s_2e_1 s_2^{-1}$,
and we have $e_1e_2 s_1 = a e_1s_1^{-1} e_2 s_1
= a e_1s_2 e_1 s_2^{-1} = a e_1 s_2^{-1}$. Thus
$$s_1 e_2 s_1
= \pm x a e_1 s_2^{-1}+ x e_2 s_1 \mp a s_2e_1 s_2^{-1}.$$
Similarly,
$s_2^{-1} = \mp a^{-1} s_2 + a^{-1} x e_2 \pm x a^{-1}$,
hence
$s_2^{-1} e_1 s_2^{-1} = 
\mp a^{-1} s_2e_1 s_2^{-1} + a^{-1} x e_2e_1 s_2^{-1} \pm x a^{-1}e_1 s_2^{-1}$.
We have $e_2 e_1 s_2^{-1} = a^{-1} (e_2 s_2) e_1 s_2^{-1}= a^{-1} e_2 (s_2 e_1 s_2^{-1}) = 
a^{-1} (e_2 s_1^{-1} e_2) s_1 = 
a^{-1} e_2  s_1$ and
$$
\begin{array}{lcl}
s_2^{-1} e_1 s_2^{-1} &=& 
\mp a^{-1} s_2e_1 s_2^{-1} + a^{-2} x  e_2  s_1\pm x a^{-1}e_1 s_2^{-1} \\
&=& a^{-2} (  \pm x a e_1 s_2^{-1}+ x e_2 s_1 \mp a s_2e_1 s_2^{-1}) \\
&=& a^{-2} s_1 e_2 s_1.
\end{array}
$$
Using again $e_i =  \frac{  a s_i^{-1} \pm s_i}{x} \mp 1$ and $a = \pm y$ on both sides
one gets the conclusion by straightforward computation.
\end{proof}

Finally, following the method of Birman and Wenzl in \cite{BW}, we define Markov traces $t_n^{\pm} : BMW_n^{\pm} \to S_{\pm}$
by their images on words in the $\sigma_i's$, by closing the braid corresponding to it and
applying the Kauffman invariant of links, in its original or Dubrovnik variation (see \cite{KAUFREG}).
We recall that this is done by applying the skein relations
of Figure \ref{skeinkauf}, starting from the
additional conventional choice that the trivial knot diagram is mapped to 1, and then multiply by
$\alpha^r$ where $r$ is the writhe of the diagram, namely the number of crossings of the original braid
(in other terms, the image of the abelianization morphism $\ell : B_n \to \Z$ which maps $\sigma_i \mapsto 1$).
In particular, we have $t_n^{\pm}(s_1\dots s_{n-1}) = 1$, 
and 
$$
t_2^{\pm}(1) = \delta_K^{\pm} = \frac{y \mp x +1}{x}.
$$

The fact that the value of such a trace on braids lies inside the $S_{\pm}$
is a consequence of the following probably classical lemma.

\begin{lemma}
For all $\beta \in B_n$, $t_n^{\pm}(\beta) \in \Q[a,a^{-1},x,x^{-1}] \subset S_{\pm}$
\end{lemma}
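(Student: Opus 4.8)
The plan is to prove, by induction on $n$, the a priori stronger statement that $t_n^{\pm}$ takes values in $\Lambda:=\Q[a,a^{-1},x,x^{-1}]$ on the whole $\Lambda$-subalgebra $A_n$ of $BMW_n^{\pm}$ generated by $s_1,\dots,s_{n-1}$. The starting observation is that, once the relation $a=\pm y$ of $S_{\pm}$ is imposed, \emph{every} defining relation of $BMW_n^{\pm}$ has coefficients in $\Lambda$: this is clear for the braid relations and for $s_ie_i=ae_i$, $e_i^2=\delta_K^{\pm}e_i$, $e_1s_2^{\pm1}e_1=e_1$ (as $\delta_K^{\pm}\in\Lambda$), and for the cubic relation it follows from writing it as $(s_i-a)(s_i^2-xs_i+y)=0$ with $y=\pm a\in\Lambda$; consequently $s_i^{-1}=\pm a^{-2}\bigl(s_i^2-(a+x)s_i+(ax\pm a)\bigr)$ and $e_i=\frac{as_i^{-1}\pm s_i}{x}\mp1$ are $\Lambda$-linear combinations of powers of $s_i$. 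In particular $A_n$ contains $s_i^{\pm1}$, all the $e_i$, and hence the image of every braid, so the lemma follows from $t_n^{\pm}(A_n)\subseteq\Lambda$.

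The inductive step rests on two ingredients. First, the classical structural fact — which, as recalled in the introduction, is exactly what the $3$-strand relations guarantee — that $BMW_n^{\pm}$ is generated as a $\bigl(BMW_{n-1}^{\pm},BMW_{n-1}^{\pm}\bigr)$-bimodule by $\{1,s_{n-1},e_{n-1}\}$; since the rewriting of an arbitrary $n$-strand braid word into this form only invokes the defining relations above, it is carried out over $\Lambda$, giving $A_n=A_{n-1}+A_{n-1}s_{n-1}A_{n-1}+A_{n-1}e_{n-1}A_{n-1}$ as a $\Lambda$-module. Second, the behaviour of $t_n^{\pm}$ in the last strand: it is a trace, it restricts on $BMW_{n-1}^{\pm}$ to $\delta_K^{\pm}\,t_{n-1}^{\pm}$ (closing a braid on $n-1$ strands inside $n$ strands adds one split unknot, whence the factor $\delta_K^{\pm}$), and $t_n^{\pm}(w\,s_{n-1}^{\pm1})=t_{n-1}^{\pm}(w)$ for $w\in BMW_{n-1}^{\pm}$ (a Markov stabilisation leaves the closure, hence the Kauffman invariant, unchanged); these identities hold for braids and extend $S_{\pm}$-linearly to all of $BMW_{n-1}^{\pm}$. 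Expanding $e_{n-1}$ then gives $t_n^{\pm}(w\,e_{n-1})=\bigl(\frac{a\pm1}{x}\mp\delta_K^{\pm}\bigr)t_{n-1}^{\pm}(w)$, and $\frac{a\pm1}{x}\mp\delta_K^{\pm}\in\Lambda$ (one checks it equals $1$). Thus, for each $g\in\{1,s_{n-1},e_{n-1}\}$ there is $\mu_g\in\Lambda$ with $t_n^{\pm}(w\,g)=\mu_g\,t_{n-1}^{\pm}(w)$ for all $w\in BMW_{n-1}^{\pm}$.

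Granting this, the induction closes at once: for $n=1$ one has $A_1=\Lambda\cdot1$ and $t_1^{\pm}(\lambda)=\lambda$; for $n\geq2$ and $\xi\in A_n$, write $\xi$ as a $\Lambda$-combination of terms $u\,g\,v$ with $u,v\in A_{n-1}$ and $g\in\{1,s_{n-1},e_{n-1}\}$, and compute $t_n^{\pm}(u\,g\,v)=t_n^{\pm}(vu\,g)=\mu_g\,t_{n-1}^{\pm}(vu)$ with $vu\in A_{n-1}$, so that $t_{n-1}^{\pm}(vu)\in\Lambda$ by the induction hypothesis and hence $t_n^{\pm}(\xi)\in\Lambda$. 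The point that genuinely needs care — and which I would spell out — is the first ingredient of the second paragraph: that the combing of an $n$-strand braid word down to the $A_{n-1}$-bimodule span of $\{1,s_{n-1},e_{n-1}\}$ \emph{never} introduces a denominator other than a power of $x$ (from $e_i$ and from $\delta_K^{\pm}$) or of $a$ (from inverting $s_i$, since $ay=\pm a^2$), both invertible in $\Lambda$. Once this is granted, the parameters $\alpha,q$ of the original presentation never reappear — which is precisely what the lemma asserts — so no separate parity bookkeeping on the Kauffman/Dubrovnik polynomial itself is required.
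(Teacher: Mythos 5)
Your argument is correct in substance, but it takes a genuinely different route from the paper. The paper stays on the diagrammatic side: it proves the stronger statement that the writhe-corrected Kauffman invariant of \emph{any} oriented link diagram lies in $\Q[a^{\pm1},x^{\pm1}]$, by a double induction on the number of crossings and on the number of crossing changes needed to reach a trivial diagram, applying the skein relation and using that the parity of the writhe is determined by the number of crossings; the lemma is then the case of closed braids. You instead argue algebraically in the tower: once $y=\pm a$ is imposed, every defining relation of $BMW_n^{\pm}$ has coefficients in $\Lambda=\Q[a^{\pm1},x^{\pm1}]$, so the braid images generate a $\Lambda$-form $A_n$; you then combine the decomposition $A_n=A_{n-1}+A_{n-1}s_{n-1}A_{n-1}+A_{n-1}e_{n-1}A_{n-1}$ with the trace and Markov properties of $t_n^{\pm}$ (the multipliers $\delta_K^{\pm}$, $1$, $1$ all lying in $\Lambda$, and indeed $\tfrac{a\pm1}{x}\mp\delta_K^{\pm}=1$) to run an induction on $n$. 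Your route yields the slightly stronger statement $t_n^{\pm}(A_n)\subseteq\Lambda$ and avoids diagram manipulation, but it relies on two external inputs the skein proof does not use: the Birman--Wenzl facts that $t_n^{\pm}$ is a well-defined trace satisfying the Markov property (which the paper also grants), and the spanning statement over $\Lambda$, which you flag as the delicate point but do not actually prove. That step is true and is filled exactly as in proposition \ref{propstructgenWBMW}(ii) (with $s_{n-1}^{-1}$ in place of $e_{n-1}$, equivalent over $\Lambda$): the combing uses only the braid relations, the cubic relation, and the six-term relation of proposition \ref{propWBMWetendbien}, valid in $BMW_n^{\pm}$, whose coefficients lie in $\Q[a^{\pm1},x^{\pm1},y^{\pm1}]$ and hence in $\Lambda$ after $y=\pm a$; once that paragraph is written out, your proof is complete.
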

\begin{proof}
Let $L$ be the closure of the braid $\beta$, and $\overrightarrow{D}$ an oriented link diagram
representing it. We prove more generally that, for an oriented link diagram $\overrightarrow{D}$,
the value of the Kauffman invariant $\overrightarrow{K}(\overrightarrow{D})$ lies in $\Q[a,a^{-1},x,x^{-1}]$. We do this
 by a double induction, first on the number of crossings,
and then, the number of crossings being fixed, on the minimal number of crossings needed to
be changed in order to get a diagram representing a trivial link. If $\overrightarrow{D}$ represents
a trivial link with $r$ components, we have $\overrightarrow{K}(\overrightarrow{D}) = t_n^{\pm}(1) = \delta_K^{\pm r} \in S_0$.
Otherwise,
by choosing a suitable crossing we can apply the first relation of Figure \ref{skeinkauf} to the corresponding
unoriented diagram $D$. Letting $D'$ the other diagram with the same number of
crossings, and $D_0$, $D_{\infty}$ the two other ones, and $K(D)$,$K(D')$, etc. the Kauffman polynomial
for unoriented links associated to them, we get
$K(D) = \mp K(D') \mp \eps (q \pm q^{-1}) (K(D_0) + K(D_{\infty}))$ for some $\eps \in \{ -1,1 \}$.
There exists 
oriented diagrams $\overrightarrow{D'},\overrightarrow{D_0},\overrightarrow{D_{\infty}}$
whose underlying unoriented diagrams are $D', D_0,D_{\infty}$. Then 
$\overrightarrow{K}(\overrightarrow{D})$
is equal to
$$
 \mp \overrightarrow{K}(\overrightarrow{D}') \alpha^{w(\overrightarrow{D})-w(\overrightarrow{D'})} \mp \eps (q\pm q^{-1}) \alpha  \left(\alpha^{w(\overrightarrow{D})-w(\overrightarrow{D_0})-1}
 \overrightarrow{K}(\overrightarrow{D}_0)
\pm \alpha^{w(\overrightarrow{D}_0)-w(\overrightarrow{D_{\infty}})-1}
\overrightarrow{K}(\overrightarrow{D}_{\infty})
\right)
$$
{}
Since the parity of the writhe only depends on the number of crossings the conclusion follows by induction.
\end{proof}

More precisely, this lemma shows that
the value of $t_n^{\pm}$ on such a braid belongs to the subalgebra of $S_{\pm}$
generated by $\alpha^{-2} = a$ and $\alpha^{-1}(q\pm q^{-1})  = b+c = x$
as well as their inverses. If $z = q \pm q^{-1}$, this subalgebra may also
be seen as the fixed subalgebra of $\Q[\alpha,\alpha^{-1},z,z^{-1}]$ fixed
by the involutive automorphism $\alpha \mapsto -\alpha$, $z \mapsto -z$.

Using the skein relations one gets in particular the following formula (which is the
value of the Kauffman polynomial on the figure-eight knot $4_1$).
$$
t_3^{\pm}(s_1 s_2^{-1}s_1 s_2^{-1}) = x^3(a^2 \pm a) + x^2 (a^2 + 2a \pm 1) - x(1 \pm a) - (\pm 1\pm a + a^{-1})
$$

\begin{figure} 
\begin{center}
\resizebox{10cm}{!}{\includegraphics{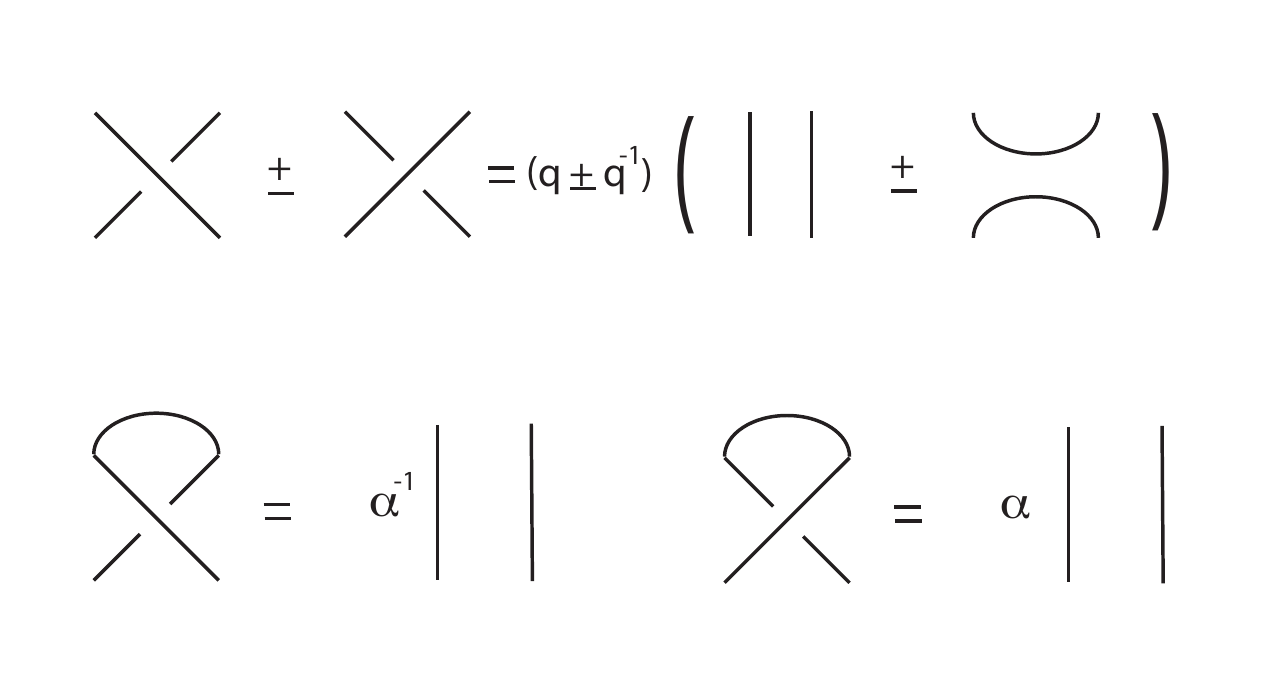}}
\end{center}
\caption{Skein relations for the Kauffman polynomial of unoriented links}
\label{skeinkauf}
\end{figure}

In terms of quantum groups, these two variants of the BMW algebras have their origin in the disctinction between
the symplectic and orthogonal groups acting on their standard module. Indeed, let $V$ denote the finite dimensional
complex vector space acted upon by the isometry group $G$ of some non-degenerate bilinear form. We assume $G$ is split
and fix a Cartan subalgebra of its Lie algebra $\mathfrak{g}$, and use Bourbaki conventions and notations for the weights (see \cite{LIE456}). Then $V$
is a fundamental module of highest weight $\varpi_1$. Then, Figure \ref{brattelitensor} represents the graph corresponding to the relation
\begin{figure}
$$\xymatrix{
 & V(\varpi_1) \ar[dr] \ar[d] \ar[dl] & \\
 V(2 \varpi_1) \ar[dd] \ar[ddr] \ar[dr] & \un \ar[d] & V(\varpi_2) \ar[dl] \ar[ddl] \ar[dd] \\
 & V(\varpi_1) & \\
 V(3 \varpi_1) & V(\varpi_1 + \varpi_2) & V(\varpi_3) 
 }
$$
\caption{Decomposition of $V \otimes V$ and $V \otimes V \otimes V$}
 \label{brattelitensor}
\end{figure}
$x \to y$ meaning `$y$ appears as a constituent in $x \otimes V$' (which turns out to be a symmetric relation because $V$
is selfdual), and therefore is also the Bratteli diagram of the tower of centralizers algebra, which are well-known to be the
algebras of Brauer diagrams. The difference between the orthogonal and symplectic case is that $S^2 V = \un + V(2 \varpi_1)$
in the former case, while $\Lambda^2 V = \un + V(\varpi_2)$ in the latter -- where $V(\la)$ denotes the highest weight module corresponding
to $\la$, and $\un = V(0)$ is the trivial representation.Therefore, the quantum representation of the braid group obtained by monodromy
of the KZ 1-form
$$
\frac{h}{\ii \pi} \sum_{i<j} \Omega_{ij} \mathrm{dlog}(z_i-z_j)
$$
is such that the spectrum $\{ \tilde{a},\tilde{b},\tilde{c} \}$ of the Artin generators is given in table \ref{tablespectreosp} with $q = e^{h/2(m-2)}$,
\begin{table}$$
\begin{array}{|c|c|c|c|}
\hline
 & \un & 2 \varpi_1 & \varpi_2 \\
 & \tilde{a} & \tilde{b}\  \mathrm{ or }\  \tilde{c} & \tilde{b}\ \mathrm{ or } \ \tilde{c} \\
 \hline
 \hline
 SO(V) & q^{1-m} & q & -q^{-1} \\
 \hline
 SP(V) & -q^{1-m} & q^{-1} & -q \\
 \hline
 \end{array}
 $$
 \caption{Eigenvalues of the braid action}
  \label{tablespectreosp}
\end{table}
$m = \dim V$ in the orthogonal case, $m = - \dim V$ in the symplectic case (recall that
$\Omega_{ij}$ is the action on the tensor factors in position $(i,j)$ of $V^{\otimes n}$ of $\sum_k f_k \otimes f_k \in \mathfrak{g} \otimes \mathfrak{g}$ where
$f_1,f_2,\dots$ is an orthonormal basis of $\g$ w.r.t. the Killing form). Renormalizing
the eigenvalues by the formula $x = \tilde{x} q^{1-m}$, we get that $a = -bc$ in the orthogonal case,
while $a = bc$ in the symplectic case. Therefore the algebras $BMW_n^+$ and $BMW_n^-$ corresponds to the
symplectic and orthogonal groups, respectively. Moreover, $a$ is specialized to $\mp q^{2(1-m)}$.

\begin{center}
\end{center}
\begin{center}
\end{center}
\begin{center}
\end{center}

\section{A universal cover for the two BMW algebras, and a shorter defining relation}

We still denote $R = \Q[a,b,c,(abc)^{-1}]$, $S = R[(b+c)^{-1}]$. In all what follows, $\Q$ could be replaced by $\Z[\frac{1}{2}]$
without damage ; however the invertibility of $2$ is crucial at several steps.
Indeed, the BMW-algebras in characteristic 2 present very specific features (see \cite{CABANESMARIN} for results in this direction).
For $n \geq 3$, we let 
$I_n$ denote the ideal of $H_n$ generated by the elements
$$
\mathcal{R}_i = -b c s_i s_{i+1}^{-1} s_i + (bc)^2 s_{i+1}^{-1} s_i s_{i+1}^{-1} - s_i s_{i+1} s_i - (b+c) b^2c^2 s_{i+1}^{-2}
+ (b+c) s_i^2 + (bc)^3 s_{i+1}^{-1} s_i^{-1} s_{i+1}^{-1} 
$$
for $1 \leq i \leq n-2$.
Note that $\mathcal{R}_i = -b c s_i s_{i+1}^{-1} s_i + (bc)^2 s_{i+1}^{-1} s_i s_{i+1}^{-1} - s_i s_{i+1} s_i - (b+c) b^2c^2 s_{i+1}^{-2}
+ (b+c) s_i^2 + (bc)^3 s_{i}^{-1} s_{i+1}^{-1} s_{i}^{-1} 
$, and that each $\mathcal{R}_i$ is conjugated to $\mathcal{R}_1$ inside $H_n$. As a consequence $I_n$ is generated
as an ideal by $\mathcal{R}_1$.

\begin{defi} We let $\widetilde{BMW}_n = H_n/I_n$.
\end{defi}
We also note that $\mathcal{R}_i$ actually has coefficients in $R_0 = \Q[b,c,(bc)^{-1}] \subset R$. Let $\eps$
be
the involutive automorphism of the $\Q$-algebra $R_0$
defined by $b \mapsto c$, $c \mapsto b$. We have $R_0^{\eps} = \Q[x, y^{\pm 1}]$ with $y = bc$, $x = b+c$,
and $\mathcal{R}_i$ has coefficients in $R_0^{\eps}$. The relation $\mathcal{R}_i$ can thus be written
$$
0\equiv -y s_i s_{i+1}^{-1} s_i + y^2 s_{i+1}^{-1} s_i s_{i+1}^{-1} - s_i s_{i+1} s_i - xy^2 s_{i+1}^{-2}
+ x s_i^2 + y^3 s_{i}^{-1} s_{i+1}^{-1} s_{i}^{-1} 
$$
or (see Figure \ref{fig:6T})
{}
$$
 s_{i+1}^{-1} s_i s_{i+1}^{-1} \equiv \frac{1}{y} s_i s_{i+1}^{-1} s_i  +\frac{1}{y^2} s_i s_{i+1} s_i + x s_{i+1}^{-2}
- \frac{x}{y^2} s_i^2 - y s_{i}^{-1} s_{i+1}^{-1} s_{i}^{-1}. 
$$

The cubic relation $(s_i - a)(s_i^2 - x s_i + y) = s_i^3 - (a+x) s_i^2 + (y+ax) s_i - ay = 0$
implies $s_i^2 = (a+x) s_i - (y+ax) + ays_i^{-1}$, hence 
that
$s_{i+1} s_i^{-1} s_{i+1} = s_i^{-1} (s_i s_{i+1} s_i^{-1}) s_{i+1}
= s_i^{-1} s_{i+1}^{-1} s_{i} s_{i+1} s_{i+1}
= s_i^{-1} s_{i+1}^{-1} s_{i} s_{i+1} ^2  
= (a+x)  s_i^{-1} (s_{i+1}^{-1} s_{i} s_{i+1}) - (y+ax)  s_i^{-1} s_{i+1}^{-1} s_{i} + ay s_i^{-1} s_{i+1}^{-1} s_{i} s_{i+1}^{-1}
= (a+x)   s_{i+1} s_{i}^{-1} - (y+ax)  s_i^{-1} s_{i+1}^{-1} s_{i} + ay s_i^{-1} s_{i+1}^{-1} s_{i} s_{i+1}^{-1}
$.
Thus, $\mathcal{R}_i$ implies the following relation $\mathcal{R}'_i$ :
$$
\begin{array}{lcl}
s_{i+1} s_i^{-1} s_{i+1} &\equiv&
 (a+x)   s_{i+1} s_{i}^{-1} - (y+ax)  s_i^{-1} s_{i+1}^{-1} s_{i} 
 +a  s_{i+1}^{-1} s_i  +\frac{1}{y} a  s_{i+1} s_i 
 \\ & & + xay s_i^{-1} s_{i+1}^{-2}
- \frac{x}{y}a  s_i - ay^2 s_i^{-2} s_{i+1}^{-1} s_{i}^{-1}.
\end{array}$$

\begin{figure}
\fbox{
\hspace{-1cm}
\begin{tikzpicture} 
\braid[braid colour=blue,strands=3,braid start={(0,0)}]
{ \dsigma _2^{-1}  \dsigma_1 \dsigma_2^{-1}} 
\node[font=\Huge] at (3.5,-1) {\( \equiv \)};
\node[font=\Huge] at (4.5,-1) {\( \frac{1}{y} \)};
\braid[braid colour = blue,strands=3,braid start={(4.5,0)}]
{\dsigma_1 \dsigma_2^{-1} \dsigma_1}
\node[font=\Huge] at (8.5,-1) {\(+\frac{1}{y^2}\)};
\braid[braid colour = blue,strands=3,braid start={(8.5,0)}]
{\dsigma_1 \dsigma_2 \dsigma_1 }
\node[font=\Huge] at (12.5,-1) {\(+ x\)};
\braid[braid colour = blue,strands=3,braid start={(12.5,0)}]
{\dsigma_2^{-1} \dsigma_2^{-1} }
\node[font=\Huge] at (14,-1) {};
\node[font=\Huge] at (4,-5) {\( -\frac{x}{y^2} \)};
\braid[braid colour=blue,strands=3,braid start={(4,-4)}]
{  \dsigma_1 \dsigma_1} 
\node[font=\Huge] at (8,-5) {\(-y  \)};
\braid[braid colour = blue,strands=3,braid start={(8,-4)}]
{\dsigma_1^{-1} \dsigma_2^{-1} \dsigma_1^{-1}}
\end{tikzpicture}}
\caption{6-terms defining relation for $\widetilde{BMW}_3$}
\label{fig:6T}
\end{figure}

We let $\mathcal{H}_n(b,c)$ denote the usual Hecke algebra $R B_n/(s_1-b)(s_1-c)$. The natural projection
$R B_n \onto \mathcal{H}_n(b,c)$ obviously factorizes through $H_n$.

\begin{prop} {\ } \label{propstructgenWBMW}
\begin{enumerate}
\item The $R$-algebra morphism $H_n \onto \mathcal{H}_n(b,c)$ factorizes through $\widetilde{BMW}_n$.
\item If we abuse notations by letting $ \widetilde{BMW}_{n}$ also denote the
image of $\widetilde{BMW}_{n}$ inside $\widetilde{BMW}_{n+1}$ under the natural morphism
$s_i \mapsto s_i$, we have
$$ \widetilde{BMW}_{n+1}  = \widetilde{BMW}_{n} + \widetilde{BMW}_{n}s_n  \widetilde{BMW}_{n} +  \widetilde{BMW}_{n}s_n^{-1}
 \widetilde{BMW}_{n}.
 $$
 \item For all $n$,  $\widetilde{BMW}_{n}$ is a finitely generated $R$-module.
\end{enumerate}
\end{prop}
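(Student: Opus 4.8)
The plan is to prove the three assertions in sequence, the third being a formal consequence of the second.

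For (i), since $\mathcal{R}_1$ generates $I_n$ and only involves $s_1,s_2$, it suffices to check that the image of $\mathcal{R}_1$ in $\mathcal{H}_3(b,c)$ vanishes. In $\mathcal{H}_3(b,c)$ one has $s_i^2=xs_i-y$, hence $s_i^{-1}=y^{-1}(x-s_i)$; I would substitute these expressions into each of the six monomials of $\mathcal{R}_1$, reduce the outcome to the standard $R$-basis $(1,s_1,s_2,s_1s_2,s_2s_1,s_1s_2s_1)$ of $\mathcal{H}_3(b,c)$ using $s_1s_2s_1=s_2s_1s_2$, and observe that all coefficients cancel. This is a direct (if slightly tedious) computation and constitutes the whole of (i).

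For (ii) I would argue by induction on $n$, the cases $n\le 2$ being immediate from $\WBMW_1=R$ and $\WBMW_2=H_2=R\oplus Rs_1\oplus Rs_1^{-1}$ (there being no relation $\mathcal{R}_i$ on two strands). Write $A=\WBMW_n$, $B=\WBMW_{n-1}\subseteq A$ and $M=A+As_nA+As_n^{-1}A\subseteq\WBMW_{n+1}$; the goal is $M=\WBMW_{n+1}$. Since $M$ is a two-sided $A$-submodule containing $1$ and $\WBMW_{n+1}$ is generated as an algebra by $A$ and $s_n^{\pm1}$, it is enough to prove $s_nM\subseteq M$ and $s_n^{-1}M\subseteq M$. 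Using the inductive identity $A=B+Bs_{n-1}B+Bs_{n-1}^{-1}B$ and the fact that $s_n$ commutes with $B$, one rewrites $s_nA$, $s_nAs_nA$ and $s_nAs_n^{-1}A$ so that, modulo $M$, everything is a combination of terms $B(s_ns_{n-1}^{\eps}s_n^{\eps'})A$ for the four sign choices $\eps,\eps'\in\{1,-1\}$, together with terms in which $s_n^{2}$ occurs; the latter are absorbed into $M$ because the cubic relation expresses $s_n^{\pm2}$ as an $R$-combination of $1,s_n,s_n^{-1}$. Thus the task reduces to showing that each of the four words $s_ns_{n-1}^{\eps}s_n^{\eps'}$ lies in $M$. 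Three of them are easy: $s_ns_{n-1}s_n=s_{n-1}s_ns_{n-1}\in As_nA$ by the braid relation, and $s_ns_{n-1}^{\eps}s_n^{-1}=s_{n-1}^{-1}s_n^{\eps}s_{n-1}\in As_n^{\eps}A$ by the conjugation identity $s_2s_1^us_2^{-1}=s_1^{-1}s_2^us_1$ used earlier.

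The remaining word $s_ns_{n-1}^{-1}s_n$ --- the $\WBMW$-analogue of the BMW element $s_ne_{n-1}s_n$ --- is where the six-term relation is needed, and I expect this to be the main obstacle: it is controlled precisely by the relation $\mathcal{R}'_{n-1}$ derived above, for once its $s_n^{-2}$ is rewritten via the cubic relation, each of the seven monomials on the right-hand side of $\mathcal{R}'_{n-1}$ manifestly lies in $A+As_nA+As_n^{-1}A=M$. Symmetrically, the inclusion $s_n^{-1}M\subseteq M$ reduces to showing $s_n^{-1}s_{n-1}s_n^{-1}\in M$, which is exactly the rewritten form of $\mathcal{R}_{n-1}$, all of whose right-hand terms again lie in $M$. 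This closes the induction and proves (ii); the point to stress is that $H_{n+1}$ is itself very far from being finitely generated over $H_n$ for large $n$, so the extra relation $\mathcal{R}_i$ is genuinely doing the work. Finally, (iii) is immediate: by induction, if $\WBMW_n=\sum_j Rb_j$ with $(b_j)$ finite, then by (ii) the finite family $\{b_j\}\cup\{b_js_nb_k\}\cup\{b_js_n^{-1}b_k\}$ generates $\WBMW_{n+1}$ as an $R$-module.
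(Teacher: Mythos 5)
Your proposal is correct and follows essentially the same route as the paper: part (i) is a direct computation in the Hecke algebra (the paper organizes the cancellation by pairing terms via $ys_i^{-1}=x-s_i$ rather than expanding in the standard basis, but this is the same verification), and part (ii) is exactly the paper's induction on the decomposition $\WBMW_{n+1}=\WBMW_n+\WBMW_ns_n\WBMW_n+\WBMW_ns_n^{-1}\WBMW_n$, reducing via commutation of $\WBMW_{n-1}$ with $s_n$ and the braid/conjugation identities to the single critical word $s_ns_{n-1}^{-1}s_n$, which is handled by the relation $\mathcal{R}'_{n-1}$ (with $s_n^{\pm2}$ absorbed through the cubic relation), with (iii) an immediate consequence. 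The only cosmetic difference is that you also treat $s_n^{-1}M\subseteq M$ separately via $\mathcal{R}_{n-1}$, which the paper skips since closure under $s_n$ already gives closure under $s_n^{-1}$ by the cubic relation.
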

\begin{proof}
The defining relation of $\mathcal{H}_n(b,c)$ is
$s_i^2 = x s_i - y$, and 
$\mathcal{R}_i = -y s_i s_{i+1}^{-1} s_i + y^2 s_{i+1}^{-1} s_i s_{i+1}^{-1} - s_i s_{i+1} s_i - x y^2 s_{i+1}^{-2}
+ x s_i^2 + y^3 s_{i+1}^{-1} s_i^{-1} s_{i+1}^{-1}$ .  Inside $\mathcal{H}_n(b,c)$,
$ys_{i+1}^{-1} =  x -  s_{i+1}$ hence
$-ys_is_{i+1}^{-1}s_i = - xs_i^2 +  s_is_{i+1}s_i$ and
$\mathcal{R}_i =      y^2 s_{i+1}^{-1} s_i s_{i+1}^{-1}  - x y^2 s_{i+1}^{-2}
 + y^3 s_{i+1}^{-1} s_i^{-1} s_{i+1}^{-1}$. From $ys_{i}^{-1} =  x -  s_{i}$
 we get $y^3 s_{i+1}^{-1}s_{i}^{-1}s_{i+1}^{-1} =  xy^2s_{i+1}^{-2} - y^2 s_{i+1}^{-1}s_{i}s_{i+1}^{-1}$
 hence $\mathcal{R}_i = 0$ in $\mathcal{H}_n(b,c)$, which proves (i).
 We prove (ii) by induction, with $\WBMW_1 = R$, the case $n = 1$ being trivially
 true, as $\WBMW_2$ is spanned over $R$ by $1,s_1,s_1^{-1}$. The method
 is now similar to the one used in \cite{CABANESMARIN}, proposition 4.2.
 Let $U = \WBMW_n + \WBMW_n s_n \WBMW_n + \WBMW_n s_n^{-1} \WBMW_n
\subset \WBMW_{n+1}$. It is a $\WBMW_n$-submodule of $\WBMW_{n+1}$ containing $1$, so we
only need to prove $s_n U \subset U$. We have $s_n \WBMW_n \subset U$, so we need to
prove $s_n \WBMW_n s_n^{\pm 1} \WBMW_n \subset U$, and actually
only  $s_n \WBMW_n s_n^{\pm 1} \subset U$ is needed. By induction we know
$ \WBMW_n = \WBMW_{n-1}  + \WBMW_{n-1} s_{n-1} \WBMW_{n-1} + 
\WBMW_{n-1} s_{n-1}^{-1} \WBMW_{n-1} $
hence $s_n\WBMW_ns_n^{\pm 1}$ is equal to 
$$
\begin{array}{cl}
 & s_n\WBMW_{n-1}s_n^{\pm 1}  +s_n \WBMW_{n-1} s_{n-1} \WBMW_{n-1} s_n^{\pm 1}+ 
s_n\WBMW_{n-1} s_{n-1}^{-1} \WBMW_{n-1}s_n^{\pm 1} \\
=&\WBMW_{n-1}s_ns_n^{\pm 1}  + \WBMW_{n-1} s_ns_{n-1}s_n^{\pm 1} \WBMW_{n-1} + 
\WBMW_{n-1}s_n s_{n-1}^{-1}s_n^{\pm 1} \WBMW_{n-1}.
\end{array}
$$ It is thus sufficient to
prove that $s_n s_{n-1}^{-1}s_n \in U$, since $s_ns_{n-1}s_n^{\pm 1} = s_{n-1}^{\pm 1} s_n s_{n-1} \in U$
and $s_n s_{n-1}^{-1} s_n^{-1} = s_{n-1}^{-1} s_n s_{n-1} \in U$. This follows from relation $\mathcal{R}_n'$,
and proves (ii) by induction. (iii) is a trivial consequence of (ii).
\end{proof}

Inside $\WBMW_n \otimes_R S$ and $S$ we introduce the following elements
$$
e_i = \frac{a}{y} \left( \frac{y s_i^{-1} + s_i}{x} - 1 \right), \ \ \ \  \ \ \ \tilde{\delta} = \frac{a^2 - ax + y}{y}.
$$

\begin{prop} \label{proppresWBMWidemp}
We have 
$e_i s_i = a e_i$
and $s_{i+1}^{-1} e_i s_{i+1}^{-1} = y^{-2} s_i e_{i+1} s_i$. Moreover,
these two relations together with $e_i = \frac{a}{y} \left( \frac{y s_i^{-1} + s_i}{x} - 1 \right)$
provide a presentation of $\WBMW_n \otimes_R S$ with generators $s_1,\dots,s_{n-1}, e_1,\dots,e_{n-1}$,
and we have
$e_i^2 = \tilde{\delta}x^{-1} e_i$. 
\end{prop}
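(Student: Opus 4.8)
The plan is to derive the two displayed identities and the formula $e_i^2 = \tilde\delta x^{-1} e_i$ by straightforward substitution of $e_i = \frac{a}{xy}(y s_i^{-1}+s_i-x) = \frac{a}{y}\bigl(\frac{y s_i^{-1}+s_i}{x}-1\bigr)$, and then to turn these computations into the presentation statement by a Tietze transformation eliminating the generators $e_i$. For the first identity, one computes $e_i s_i = \frac{a}{xy}(y + s_i^2 - x s_i)$, so that $e_i s_i = a e_i$ is equivalent (after multiplying by the unit $xya^{-1}$ of $S$) to $s_i^2 - x s_i + y = a(y s_i^{-1}+s_i-x)$, and multiplying this on the right by $s_i$ gives exactly $(s_i-a)(s_i^2-xs_i+y) = (s_i-a)(s_i-b)(s_i-c)=0$, which holds in $H_n$. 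Since every step here is reversible over $S$, this computation also records that, \emph{in the presence of the definition of} $e_i$, the relation $e_i s_i = a e_i$ is equivalent to the cubic relation. From $e_i s_i = a e_i$ one gets $e_i s_i^{-1} = a^{-1} e_i$, hence
$$
e_i^2 = \frac{a}{xy}\bigl(y\, e_i s_i^{-1} + e_i s_i - x\, e_i\bigr) = \frac{a}{xy}\Bigl(\tfrac{y}{a} + a - x\Bigr)e_i = \frac{a^2-ax+y}{xy}\, e_i = \tilde\delta\, x^{-1} e_i .
$$

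For the second identity, substitute the definitions of $e_i$ and $e_{i+1}$ into $s_{i+1}^{-1} e_i s_{i+1}^{-1} - y^{-2} s_i e_{i+1} s_i$ and expand; this expansion is purely formal in $s_i^{\pm 1}, s_{i+1}^{\pm 1}$, so no braid relation intervenes. After clearing denominators by the unit $xy^3 a^{-1}$, the resulting element of $H_n$ is
$$
y^3 s_{i+1}^{-1} s_i^{-1} s_{i+1}^{-1} + y^2 s_{i+1}^{-1} s_i s_{i+1}^{-1} - xy^2 s_{i+1}^{-2} - y\, s_i s_{i+1}^{-1} s_i - s_i s_{i+1} s_i + x s_i^2 ,
$$
which is precisely $\mathcal{R}_i$. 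Thus $s_{i+1}^{-1} e_i s_{i+1}^{-1} - y^{-2} s_i e_{i+1} s_i = \frac{a}{xy^3}\mathcal{R}_i$, so this vanishes in $\WBMW_n\otimes_R S$, and conversely, since $a/(xy^3)\in S^\times$, the identity is equivalent to $\mathcal{R}_i = 0$.

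It then remains to identify the presentation. Let $B$ be the $S$-algebra generated by invertible elements $s_1,\dots,s_{n-1}$ and elements $e_1,\dots,e_{n-1}$ subject to the braid relations among the $s_i$, the relations $e_i = \frac{a}{y}\bigl(\frac{y s_i^{-1}+s_i}{x}-1\bigr)$ for $1\le i\le n-1$, the relations $e_i s_i = a e_i$ for $1\le i\le n-1$, and the relations $s_{i+1}^{-1} e_i s_{i+1}^{-1} = y^{-2} s_i e_{i+1} s_i$ for $1\le i\le n-2$. Using the relations $e_i = \frac{a}{y}(\cdots)$ to eliminate the generators $e_i$ --- a legitimate Tietze transformation since each $e_i$ is thereby written as a word in $s_i^{\pm1}$ over $S$ --- the algebra $B$ is presented on $s_1,\dots,s_{n-1}$ alone, the relation $e_i s_i = a e_i$ turning into the cubic relation $(s_i-a)(s_i-b)(s_i-c)=0$ by the first paragraph and the relation $s_{i+1}^{-1} e_i s_{i+1}^{-1} = y^{-2} s_i e_{i+1} s_i$ turning into $\mathcal{R}_i = 0$ by the second. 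Hence $B \simeq S B_n/(\text{braid relations},\ (s_i-a)(s_i-b)(s_i-c),\ \mathcal{R}_1,\dots,\mathcal{R}_{n-2}) = (H_n/I_n)\otimes_R S = \WBMW_n\otimes_R S$, as claimed.

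There is no conceptual obstacle here; the only slightly delicate point is the term-by-term verification that substituting the definition of $e_i$ into $s_{i+1}^{-1} e_i s_{i+1}^{-1} - y^{-2} s_i e_{i+1} s_i$ reproduces $\mathcal{R}_i$ up to the unit $a/(xy^3)$, together with keeping track that every denominator introduced ($x$ and powers of $y$) becomes invertible precisely after the base change $R\to S$, which is exactly why the presentation is asserted over $S$ and not over $R$.
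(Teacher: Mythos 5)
Your proposal is correct and follows essentially the same route as the paper: verify $e_i^2=\tilde\delta x^{-1}e_i$ and the exchange relation by direct substitution (the latter being, up to the unit $a/(xy^3)$, exactly $\mathcal{R}_i$), and obtain the presentation by observing that, in the presence of the defining formula for $e_i$, the relation $e_is_i=ae_i$ is equivalent to the cubic relation and the exchange relation to $\mathcal{R}_i=0$, then eliminate the $e_i$. Your version is if anything slightly cleaner, since you note that $e_is_i=ae_i$ alone (with the definition of $e_i$) already yields the cubic, whereas the paper also invokes $e_i^2=\delta e_i$ at that step.
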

\begin{proof}
The fact that $e_i^2 = \tilde{\delta}x^{-1} e_i$ is a straightforward computation using the cubic relation and the
definition of $e_i$. Using the definition of $e_i$, $s_{i+1}^{-1} e_i s_{i+1}^{-1} = y^{-2} s_i e_{i+1} s_i$
is a rewriting of the defining relation $\mathcal{R}_i$, which proves the first claim. The fact that
these relations provide a presentation of $\WBMW_n$ follows from the fact that $e_i s_i = a e_i$,
$e_i^2 = \delta e_i$ (with $\delta = \tilde{\delta}/x$)
together
with the relation between $e_i$ and $s_i$ implies the cubic relation
on $s_i$, and then again $s_{i+1}^{-1} e_i s_{i+1}^{-1} = y^{-2} s_i e_{i+1} s_i$.
\end{proof}

It is thus possible to depict elements of $\WBMW_n \otimes_R S$ as diagrams in a similar flavour as
the elements of the usual $BMW$ algebras (see Figure \ref{figWBMW}).

\begin{figure}
\begin{center}
\resizebox{10cm}{!}{\includegraphics{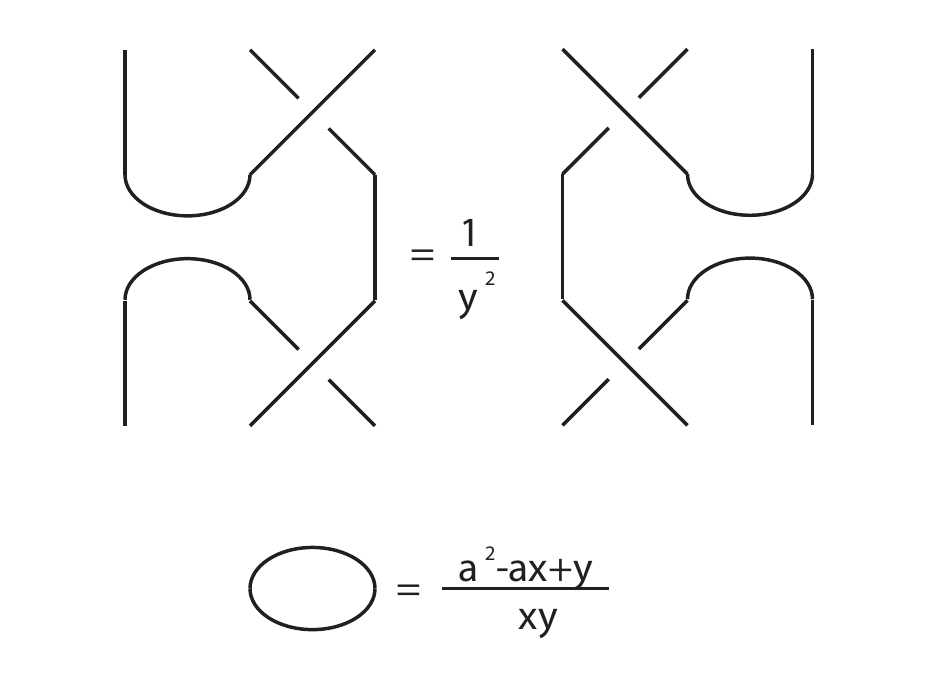}}
\end{center}
\caption{Diagrammatic relations for $\WBMW_n\otimes_R S$}
\label{figWBMW}
\end{figure}

\bigskip

For the sequel we need to compute the image of $\mathcal{R}_1$ inside the various representations of $H_3$. We get that
$S_b,S_c,U_{b,c},V$ all map $\mathcal{R}_1$ to $0$, while
$$
S_a : \mathcal{R}_1 \mapsto -{\frac { \left( -c+a \right)  \left( a-b \right)  \left( {a}^{2}-bc \right)  \left( {a}^{2}+bc \right) }{{a}^{3}}}
$$
{}
$$
U_{a,c} : \mathcal{R}_1 \mapsto 
 \left( \begin {array}{cc}  \left( a-b \right)  \left( ca+{b}^{2} \right) &-{\frac {c \left( a-b \right)  \left( ca+{b}^{2} \right) }{a}}
\\ \noalign{\medskip}-{\frac {c \left( a-b \right)  \left( ca+{b}^{2} \right) }{a}}&{\frac {{c}^{2} \left( a-b \right)  \left( ca+{b}^{2}
 \right) }{{a}^{2}}}\end {array} \right) 
$$

 $$
U_{a,b} : \mathcal{R}_1 \mapsto 
 \left( \begin {array}{cc}  \left( -c+a \right)  \left( ab+{c}^{2} \right) &-{\frac {b \left( -c+a \right)  \left( ab+{c}^{2} \right) }{a}}
\\ \noalign{\medskip}-{\frac {b \left( -c+a \right)  \left( ab+{c}^{2} \right) }{a}}&{\frac {{b}^{2} \left( -c+a \right)  \left( ab+{c}^{2}
 \right) }{{a}^{2}}}\end {array} \right) 
$$

Recall that the \emph{numerical invariant} of a (absolutely) semisimple $k$-algebra $A$
is the tuple $(n_1 \leq \dots \leq n_r)$ such that $A \otimes \overline{k} \simeq M_{n_1}(\overline{k}) \times
\dots \times M_{n_r}(\overline{k})$, where $\overline{k}$ denotes
any algebraically closed field containing $k$.

\begin{prop} \label{propisomBMWtilde}
 For all $n$, $\widetilde{BMW}_n \otimes K$ is a semisimple algebra
 whose 
 numerical invariant is the same as $BMW_n^+$. In particular
 it has dimension 
 $1.3.5.\dots.(2n+1)$.
\end{prop}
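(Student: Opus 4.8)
The plan is to prove both assertions simultaneously by induction on $n$, with $n\le 3$ as base cases and a Jones-type basic construction passing from level $n-1$ to level $n+1$; throughout write $A_k=\widetilde{BMW}_k\otimes_R K$.

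\emph{Base cases.} For $n\le 2$ one has $A_1=K$ and $A_2=H_2\otimes K\cong K^3$ (three distinct eigenvalues $a,b,c$), matching $BMW_1^+$ and $BMW_2^+$. For $n=3$, $\widetilde{BMW}_3=H_3/(\mathcal R_1)$, and over $K$ the algebra $H_3\otimes K$ is semisimple, identified by $\Phi_{H_3}$ with $K^3\oplus M_2(K)^3\oplus M_3(K)$, the blocks indexed by $S_a,S_b,S_c,U_{a,b},U_{a,c},U_{b,c},V$. From the images of $\mathcal R_1$ computed just above, $\mathcal R_1$ maps to $0$ in $S_b,S_c,U_{b,c},V$ and to a nonzero element of each of $S_a,U_{a,b},U_{a,c}$; since these three blocks are simple, the two-sided ideal they generate is their full sum, so $A_3\cong K^2\oplus M_2(K)\oplus M_3(K)$ is semisimple with numerical invariant $(1,1,2,3)$. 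This is exactly the numerical invariant of the generic Brauer algebra on $3$ strands, hence of $BMW_3^+$, of dimension $15$.

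\emph{Bootstrapping the tower relations.} Since each $\mathcal R_i$ lies in $I_n$, the subalgebra of $A_n$ generated by $s_i,s_{i+1}$ is a quotient of $A_3$, hence of $K^2\oplus M_2(K)\oplus M_3(K)$. Because $e_i^2=\tilde\delta x^{-1}e_i$ with $\tilde\delta\neq0$ over $K$, each $e_i$ is diagonalizable with rank $\tilde\delta^{-1}x\,\mathrm{tr}(e_i)$ in every irreducible constituent; a short computation with the matrix models of \S\ref{sectH3} gives $\mathrm{tr}(e_i)=0$ in $S_b,S_c,U_{b,c}$ and $\mathrm{tr}(e_i)=\tilde\delta/x$ in $V$, so $e_i$ vanishes in the one- and two-dimensional blocks and has rank exactly $1$ in the three-dimensional one. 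As a rank-$1$ idempotent $p$ in a matrix algebra satisfies $pxp\in Kp$ for all $x$, summing over the blocks shows that in $A_n$ there are, for every $i$, relations $e_i s_{i\pm1}^{\pm1}e_i=\lambda_i^{\pm}e_i$ and $e_i e_{i\pm1}e_i=\mu_i e_i$ for suitable scalars $\lambda_i^{\pm},\mu_i\in K$ (whose precise values, differing from the usual BMW ones unless $a=\pm y$, will not matter), in addition to $e_i^2=\tilde\delta x^{-1}e_i$, $e_i s_i=ae_i$ and the commutation of $e_i$ with $s_j,e_j$ for $|i-j|\geq2$ from Proposition~\ref{proppresWBMWidemp}.

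\emph{Inductive step.} Assume the statement for all indices $\leq n$. By Proposition~\ref{propstructgenWBMW}(ii), $A_{n+1}=A_n+A_n s_n A_n+A_n e_n A_n$; its quotient by the two-sided ideal $J=A_{n+1}e_n A_{n+1}$ kills $e_n$, hence all the $e_i$ (they are mutually conjugate), hence imposes $s_i^2=xs_i-y$, so $A_{n+1}/J\cong\mathcal H_{n+1}(b,c)\otimes K$ is a generic Iwahori--Hecke algebra, semisimple with numerical invariant indexed by partitions of $n+1$. For $J$ itself one runs the classical BMW straightening: the bootstrapped relations, Proposition~\ref{propstructgenWBMW}(ii), and the fact that $e_n$ commutes with the subalgebra generated by $s_1,\dots,s_{n-2}$, give $e_n A_{n+1}e_n=A_{n-1}e_n$; a symmetrizing-form argument, available at each lower level by the inductive semisimplicity, shows that $a\mapsto ae_n$ embeds $A_{n-1}$ into $A_{n+1}$. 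Thus the corner algebra $(\tilde\delta^{-1}x\,e_n)A_{n+1}(\tilde\delta^{-1}x\,e_n)$ is isomorphic to the semisimple algebra $A_{n-1}$, standard Morita theory identifies $J$ with a unital semisimple two-sided ideal, whose unit is then a central idempotent, and $A_{n+1}\cong J\times\big(\mathcal H_{n+1}(b,c)\otimes K\big)$. Both factors being semisimple, so is $A_{n+1}$, with numerical invariant the disjoint union of that of $A_{n-1}$ (partitions of $m$ with $m\leq n-1$, $m\equiv n+1\bmod2$) and of the Hecke quotient (partitions of $n+1$) --- precisely the numerical invariant of $BMW_{n+1}^+$, since the same basic construction computes the generic BMW algebra; the dimension count follows at once. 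The delicate point, where I expect the real work, is exactly this last step: verifying that $J=A_{n+1}e_n A_{n+1}$ is a \emph{unital} semisimple direct factor. This rests on the straightening identity $e_n A_{n+1}e_n=A_{n-1}e_n$ --- whose proof reproduces the classical BMW/Brauer braid-relation manipulation, now fed by $e_n s_{n-1}^{\pm1}e_n=\lambda^{\pm}e_n$ and $e_n e_{n-1}e_n=\mu e_n$ --- and on the injectivity of $A_{n-1}\into A_{n+1}$, $a\mapsto ae_n$, obtained from the nondegenerate symmetrizing form supplied by semisimplicity at the earlier levels; everything else is routine basic-construction bookkeeping, identical to the treatment of the usual $BMW_n^+$.
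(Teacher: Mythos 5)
Your strategy (induction on $n$ via a Jones--Wenzl basic construction on the tower $A_k=\widetilde{BMW}_k\otimes K$) is genuinely different from the paper's, which does no induction at all: there one rescales $a\mapsto\lambda bc$, $b\mapsto\lambda b$, $c\mapsto\lambda c$, notes that the defining ideals of both $\widetilde{BMW}_n$ and $BMW_n^+$ are generated by their $3$-strand parts, matches the two $3$-strand quotients by the dimension count $24-9=15$, and then imports semisimplicity and the numerical invariant wholesale from the known generic $BMW_n^+$, transporting back through the field isomorphism $\tilde K\simeq K$. Your base case and your bootstrapping of relations $e_i u e_i\in Ke_i$ for $u$ in the parabolic generated by $s_{i\pm1}$ (via the rank-one position of $e_i$ in the $M_3$-block of $A_3$) are correct, and the straightening $e_nA_{n+1}e_n\subseteq A_{n-1}e_n$ is plausibly within reach of these relations.

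The genuine gap is the step you yourself flag, and what you invoke does not close it. From ``$e_nA_{n+1}e_n=A_{n-1}e_n\simeq A_{n-1}$ is semisimple'' and ``$A_{n+1}/J\simeq\mathcal H_{n+1}(b,c)\otimes K$ is semisimple'' one cannot conclude by standard Morita theory that $J=A_{n+1}e_nA_{n+1}$ is a unital semisimple ideal, nor that $A_{n+1}$ is semisimple: take for $A$ the upper triangular $2\times2$ matrices over a field and $e=E_{11}$; then $eAe$ and $A/AeA$ are both one-dimensional and semisimple, yet $AeA$ (spanned by $E_{11},E_{12}$) contains the square-zero ideal spanned by $E_{12}$, has no unit, and $A$ is not semisimple. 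Excluding this phenomenon is exactly the nondegeneracy input in Wenzl's proof for $BMW_n$ (nonvanishing of the weights of the Markov trace, equivalently nondegeneracy of the conditional expectation on $A_n$), and you cannot simply quote it here, because your tower has a third independent parameter $a$ and structure constants $\lambda^{\pm},\mu$ different from the BMW ones, so the nonvanishing would have to be re-proved (or replaced by an a priori spanning bound $\dim A_{n+1}\le 1\cdot3\cdots(2n+1)$ matched against a faithful semisimple family of representations of that total size --- neither of which you construct). A secondary gap of the same kind: even granting that $J$ is a semisimple direct factor Morita-equivalent to $A_{n-1}$, Morita equivalence matches the simple modules but not their dimensions, so ``numerical invariant of $A_{n+1}$ equals that of $A_{n-1}$ together with that of the Hecke quotient'' is not the statement you need; identifying the matrix sizes with those of $BMW_{n+1}^+$ would require comparing the bimodules $A_ne_nA_n$ (the branching data), which you assert via ``the same basic construction computes the generic BMW algebra'' but do not prove.
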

\begin{proof}
Let $\psi : R \to S_+[\la^{\pm 1}]$ be the algebra morphism defined by $a \mapsto \la bc$, $b \mapsto \la b$,
$c \mapsto \la c$.
We define a surjective morphism $S_+[\la^{\pm 1}] B_n \to BMW_n^+ \otimes_{S_+} S_+[\la^{\pm 1}]$ which maps
$\sigma_i \mapsto \la s_i$, where $\sigma_i$ is the $i$-th Artin generator of the braid group $B_n$.
 It is straightforward to check that this morphism factorizes according to the following diagram,
 where the map $S_+[\la^{\pm 1}] B_n \to H_n \otimes_{\psi} S_+[\la^{\pm 1}]$ is $\sigma_i \mapsto s_i$,
 and $H_n \otimes_{\psi} S_+[\la^{\pm 1}] \to 
 \WBMW_n \otimes_{\psi} S_+[\la^{\pm 1}] $ is naturally induced by the canonical map
$H_n \onto \WBMW_n$.
$$
\xymatrix{
S_+[\la^{\pm 1}] B_n \ar[r] \ar[d] &  BMW_n^+ \otimes_{S_+} S_+[\la^{\pm 1}] \\
H_n \otimes_{\psi} S_+[\la^{\pm 1}] \ar[ur] \ar[dr] & \\
& \WBMW_n \otimes_{\psi} S_+[\la^{\pm 1}] \ar[uu]
}
$$
Let $\tilde{K}$ the field of fractions of $S_+[\la^{\pm 1}]$. From this factorisation we
get  surjective morphisms $ H_n \otimes_{\psi} \tilde{K} \to \WBMW_n \otimes_{\psi} \tilde{K} \to BMW_n^+ \otimes_{S_+} \tilde{K}$.
The kernel $F_n$ of $H_n \otimes_{\psi} \tilde{K} \to BMW_n^+ \otimes_{S_+} \tilde{K}$ is
the ideal generated by the image of $F_3$ under the natural morphism $H_3 \otimes_{\psi} \tilde{K} \to H_n \otimes_{\psi} \tilde{K}$,
and similarly the kernel
$G_n$ of $H_n \otimes_{\psi} \tilde{K} \to \WBMW_n \otimes_{\psi} \tilde{K}$ is
generated by the image of $G_3$ under the natural morphism $H_3 \otimes_{\psi} \tilde{K} \to H_n \otimes_{\psi} \tilde{K}$.
In order to prove that the morphism $\WBMW_n \otimes_{\psi} \tilde{K} \to BMW_n^+ \otimes_{S_+} \tilde{K}$
is an isomorphism,
it is thus sufficient to check that $BMW_3^+ \otimes_{S_+} \tilde{K}$ and $\WBMW_3 \otimes_{\psi} \tilde{K}$ have the same
dimension. Since $BMW_3^+$ is free of rank 15 over $S_+$ we need to show that $\WBMW_3 \otimes_{\psi} \tilde{K}$ has
dimension 15. For example because of the computation of the Schur elements, we know by
section \ref{sectH3} that $H_3 \otimes_{\psi} \tilde{K}$ is semisimple, and isomorphic to a direct sum
of matrix algebras. Because of this, the ideal generated by an element is uniquely determined by the
collection of simple modules on which it vanishes. Because of the calculations above we know
that this element is nonzero exactly on $U_{a,c},U_{b,c},S_a$. It follows that the ideal has dimension
$1+2 \times 2^2 = 9$ hence the dimension of $\WBMW_3 \otimes_{\psi} \tilde{K}$ is $24-9 = 15$, as required.

We thus proved that $\WBMW_n \otimes_{\psi} \tilde{K}$ is semisimple, because $BMW_n^+ \otimes_{S_+} \tilde{K}$
is so, and that these two $\tilde{K}$-algebras have the same numerical invariant.
Now notice that $\tilde{K} = \Q(\la,b,c)$ is isomorphic to
$K=\Q(a,b,c)$ under
the isomorphisms
$$
\begin{array}{lcl}
 \tau^{-1} : & K \to \tilde{K} & a \mapsto \la bc, b \mapsto \la b, c \mapsto \la c \\
\tau : &  \tilde{K} \to K & \la \mapsto \frac{bc}{a}, b \mapsto \frac{a}{c}, c \mapsto \frac{a}{b}
\end{array}
$$
and we have $\WBMW_n \otimes_R K \simeq (\WBMW_n \otimes_{\psi} \tilde{K}) \otimes_{\tau } K$ as $K$-algebras.
This isomorphism $\tau$ between $ \tilde{K}$ and $K$ can be extended to an isomorphism between their algebraic closures $k$, $\tilde{k}$. If
$\WBMW_n \otimes_{\psi} \tilde{k} \simeq M_{n_1}(\tilde{k}) \oplus \dots \oplus  M_{n_r}(\tilde{k})$, then
$\WBMW_n \otimes_R k \simeq  M_{n_1}(k) \oplus \dots \oplus M_{n_r}(k)$ and the conclusion follows.
\end{proof}

\bigskip

We let $\overline{R} = R/(a^2-(bc)^2) = R/(a^2-y^2)$, $R_{\pm} = R/(a \mp y)$,
and $\overline{S} = S/(a^2-(bc)^2) = S/(a^2-y^2)$, $S_{\pm} = S/(a \mp y)$. By the Chinese Reminder Theorem
we have $\overline{R} \simeq R_+ \oplus R_-$, $\overline{S} \simeq S_+ \oplus S_-$. Let $\overline{BMW}_n = \widetilde{BMW}_n \otimes_R \overline{R}$.
From the preceding section we know that we have natural $\overline{R}$-algebra morphisms $\overline{BMW}_n \to
BMW_n^{\pm}$, hence a $\overline{R}$-algebra morphism $\overline{BMW}_n \to BMW_n^+ \oplus BMW_n^-$.

\begin{prop} \label{propisomBMWtilde2} The natural
morphism $\overline{BMW}_n \to BMW_n^+ \oplus BMW_n^-$ becomes an isomorphism
after tensorisation by $\overline{S}' = \overline{S}[ (bc-1)^{-1}]$.
\end{prop}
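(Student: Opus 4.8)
The plan is to reduce the statement to a dimension count over the fraction field, using the Chinese Remainder decomposition $\overline{S}' \simeq \overline{S}'_+ \oplus \overline{S}'_-$ to treat each factor separately, and then to invoke the structure results already obtained. First I would remark that by construction of the preceding section there is a natural $\overline{R}$-algebra morphism $\overline{BMW}_n \to BMW_n^+ \oplus BMW_n^-$, and that tensoring with $\overline{S}'$ and applying the Chinese Remainder Theorem (the element $bc-1$ being inverted, and $a^2-y^2 = (a-y)(a+y)$) splits everything: $\overline{BMW}_n \otimes_{\overline{R}} \overline{S}' \simeq (\widetilde{BMW}_n \otimes_R S'_+) \oplus (\widetilde{BMW}_n \otimes_R S'_-)$, where $S'_{\pm} = S_{\pm}[(bc-1)^{-1}]$, and the target splits as $(BMW_n^+ \otimes_{S_+} S'_+) \oplus (BMW_n^- \otimes_{S_-} S'_-)$. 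The morphism respects this splitting, so it suffices to prove that $\widetilde{BMW}_n \otimes_R S'_{\pm} \to BMW_n^{\pm} \otimes_{S_{\pm}} S'_{\pm}$ is an isomorphism, for each choice of sign.

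Next I would establish surjectivity, which is essentially formal: both algebras are quotients of $H_n \otimes S'_{\pm}$, and the defining ideal $I_n$ of $\widetilde{BMW}_n$ is contained in the defining ideal of $BMW_n^{\pm}$ — indeed Proposition \ref{propWBMWetendbien} shows that the generator $\mathcal{R}_1$ (equivalently the $6$-term relation of Figure \ref{fig:6T}, after using $a = \pm y$) holds inside $BMW_n^{\pm}$. Hence the surjection factors as $H_n \otimes S'_{\pm} \onto \widetilde{BMW}_n \otimes S'_{\pm} \onto BMW_n^{\pm} \otimes S'_{\pm}$, and the right-hand map is the one under consideration. It remains to prove injectivity, and the standard way is to compare dimensions over the fraction field: since $BMW_n^{\pm}$ is a free $S_{\pm}$-module of known rank $1\cdot 3\cdot 5\cdots(2n+1)$, and by Proposition \ref{propstructgenWBMW}(iii) $\widetilde{BMW}_n$ is a finitely generated $R$-module, it is enough to show that $\widetilde{BMW}_n \otimes_R L_{\pm}$ has dimension $\leq 1\cdot 3\cdots(2n+1)$ over $L_{\pm}$, where $L_{\pm}$ denotes the fraction field of $S'_{\pm}$ (equivalently of $S_{\pm}$). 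A surjection between two vector spaces of the same dimension is an isomorphism, and flatness/freeness then propagates the isomorphism back down to $S'_{\pm}$.

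For the dimension bound the strategy mirrors the proof of Proposition \ref{propisomBMWtilde}: it suffices, by the inductive spanning relation of Proposition \ref{propstructgenWBMW}(ii), to handle $n=3$, i.e.\ to show $\dim_{L_{\pm}} \widetilde{BMW}_3 \otimes L_{\pm} \leq 15$. Since $a \mp y = 0$ is the only relation imposed (together with inverting $bc-1$), and the Schur-element computation of \S\ref{sectH3} shows that $H_3 \otimes L_{\pm}$ is still semisimple (one must check $p_{S_a}, p_{U_{b,c}}, p_V$, etc.\ remain nonzero on $S_{\pm}$, using $a = \pm bc$ and $bc \neq 1$ — this is exactly where $(bc-1)^{-1}$ is needed, to avoid the vanishing of a Schur element such as $p_{S_a}$, whose numerator contains $bc + a^2 = bc(1 + bc)$ after specialization, so one also checks $bc \neq -1$ is automatic or handled), the ideal generated by $\mathcal{R}_1$ is determined by the list of simple modules on which it is nonzero. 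Using the explicit images $S_a(\mathcal{R}_1), U_{a,c}(\mathcal{R}_1), U_{a,b}(\mathcal{R}_1)$ computed just before Proposition \ref{propisomBMWtilde} — specialized at $a = \pm y$ — one reads off that $\mathcal{R}_1$ is nonzero exactly on $S_a, U_{a,c}, U_{a,b}$ (the factors $a^2 - y^2$, $a^2 + bc$, $ca + b^2$, $ab + c^2$ must be checked not to vanish identically on $S'_{\pm}$, again using $bc = \pm a$ and $bc - 1 \neq 0$). This gives an ideal of dimension $1 + 2\cdot 4 = 9$, hence $\dim \widetilde{BMW}_3 \otimes L_{\pm} = 24 - 9 = 15$ as needed, and incidentally proves semisimplicity with the same numerical invariant as $BMW_3^{\pm}$.

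The main obstacle is the bookkeeping in the last step: one must verify carefully that, after the \emph{double} specialization $a = \pm y$ \emph{and} $bc \neq 1$, none of the relevant Schur elements degenerate and the images of $\mathcal{R}_1$ in $S_a, U_{a,b}, U_{a,c}$ remain genuinely nonzero (not killed by a spurious factor becoming zero), while those in $S_b, S_c, U_{b,c}, V$ stay zero. In other words, the delicate point is to confirm that inverting $bc-1$ (rather than some other element) is exactly what is required to keep $H_3 \otimes \overline{S}'_{\pm}$ semisimple and the quotient of the expected dimension $15$; the isomorphism then follows purely formally from equality of dimensions together with the freeness of $BMW_n^{\pm}$ over $S_{\pm}$.
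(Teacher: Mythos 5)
Your reduction to the two signs via the Chinese Remainder Theorem and your surjectivity argument (via Proposition \ref{propWBMWetendbien}) match the paper, but the injectivity step has a genuine gap: a dimension count over the fraction field cannot prove the statement as formulated. Note that $bc-1$ is not a zero divisor, so $\Frac(S'_{\pm}) = \Frac(S_{\pm})$; your generic computation therefore never "sees" the element $bc-1$ at all, and at best it shows that $\WBMW_n \otimes_R S'_{\pm} \to BMW_n^{\pm}\otimes S'_{\pm}$ becomes an isomorphism after tensoring with the fraction field — essentially the content of Proposition \ref{propisomBMWtilde}, not of \ref{propisomBMWtilde2}. The final sentence "flatness/freeness then propagates the isomorphism back down to $S'_{\pm}$" is unjustified: the kernel of the surjection lives in $\WBMW_n\otimes_R S'_{\pm}$, which is not known to be free or torsion-free (indeed the paper shows $\WBMW_3$ is \emph{not} free over $S$, precisely because of what happens along $y=1$), so the kernel could a priori be a nonzero torsion module killed by passage to the fraction field. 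Freeness of the \emph{target} $BMW_n^{\pm}$ does not rule this out. To descend you would need either the ideal membership with controlled denominators, or a fibrewise dimension bound at \emph{every} prime of $S'_{\pm}$ (then Nakayama kills the kernel) — and the latter fails to be accessible by your semisimplicity argument, since the Schur elements do vanish at plenty of closed points with $bc\neq 1$.

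The paper's proof is exactly the "controlled denominators" route: it reduces, as you do, to showing that the defining ideal of $BMW_n^{\pm}$ is contained in the ideal generated by $\mathcal{R}_1$ inside $H_n\otimes S'_{\pm}$, and then proves the explicit identities of Lemma \ref{lemRimpliesS}, e.g.
$$
x^2(y-1)\,\mathcal{S}_{+} \;=\; \Bigl( \tfrac{x+1}{y} - \tfrac{1}{y}\, s_1 - s_2^{-1} \Bigr)\, \mathcal{R}_1 s_2
$$
(and the analogous identities for $\mathcal{S}'_{+},\mathcal{S}_{-},\mathcal{S}'_{-}$), verified inside the faithful matrix model of $H_3\otimes R_{\pm}$ from Lemma \ref{lem:injH3}. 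Since $x$ is already invertible and $y-1=bc-1$ becomes invertible in $S'_{\pm}$, this gives the required containment, and it is precisely these identities that identify $bc-1$ as the correct element to invert. Also note that your sketch only discusses $\mathcal{S}_{\pm}$; over the ring one must handle $\mathcal{S}'_{\pm}$ as well (the paper does both), whereas in your generic-dimension framework this distinction is invisible for the same reason as above. If you want to keep a module-theoretic argument, you would have to replace the generic count by a proof that the quotient ideal $(\mathcal{S}_{\pm},\mathcal{S}'_{\pm})/(\mathcal{R}_1)$ vanishes after localizing at every prime not containing $bc-1$, which in practice amounts to the same explicit computation.
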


In other words, if $(bc-1)$ is assumed to be invertible, then the 6 terms relation
that we introduce here imply the defining 12 terms relations of the BMW algebras, and
thus we can use it to get a simpler definition of the BMW algebras as quotients of the group
algebra of the braid group.

Recall that the standard (12 terms) generators of the defining ideal of $BMW_n^+$ are
$$
\begin{array}{lcl}
\mathcal{S}_+ &:=& x^{-2}
s_1 s_2 s_1 -yx^{-1}s_1^{-1}s_2-yx^{-1}s_2s_1^{-1}+s_2-x^{-1}s_1s_2-x^{-1}s_2s_1
+yx^{-2}s_1s_2s_1^{-1}
\\ & & +yx^{-2}s_1^{-1} s_2 s_1+y^2x^{-2}s_1^{-1} s_2 s_1^{-1}-x^{-1}s_1-yx^{-1}s_1^{-1}+1
\end{array}
$$
and
$$
\begin{array}{lcl}
\mathcal{S}'_+ &:=& x^{-2}
s_1 s_2^{-1} s_1 -yx^{-1}s_1^{-1}s_2^{-1}-yx^{-1}s_2^{-1}s_1^{-1}+s_2^{-1}-x^{-1}s_1s_2^{-1}-x^{-1}s_2^{-1}s_1
+yx^{-2}s_1s_2^{-1}s_1^{-1}
\\ & & +yx^{-2}s_1^{-1} s_2^{-1} s_1+y^2x^{-2}s_1^{-1} s_2^{-1} s_1^{-1}-x^{-1}s_1-yx^{-1}s_1^{-1}+1
\end{array}
$$
and that the corresponding generators of the defining ideal of $BMW_n^-$ are
$$
\begin{array}{lcl}
\mathcal{S}_- &:=& x^{-2}
s_1 s_2 s_1 -yx^{-1}s_1^{-1}s_2-yx^{-1}s_2s_1^{-1}+s_2-x^{-1}s_1s_2-x^{-1}s_2s_1
+yx^{-2}s_1s_2s_1^{-1}
\\ & & +yx^{-2}s_1^{-1} s_2 s_1+y^2x^{-2}s_1^{-1} s_2 s_1^{-1}+x^{-1}s_1+yx^{-1}s_1^{-1}-1
\end{array}
$$
and
$$
\begin{array}{lcl}
\mathcal{S}'_- &:=& x^{-2}
s_1 s_2^{-1} s_1 -yx^{-1}s_1^{-1}s_2^{-1}-yx^{-1}s_2^{-1}s_1^{-1}+s_2^{-1}-x^{-1}s_1s_2^{-1}-x^{-1}s_2^{-1}s_1
+yx^{-2}s_1s_2^{-1}s_1^{-1}
\\ & & +yx^{-2}s_1^{-1} s_2^{-1} s_1+y^2x^{-2}s_1^{-1} s_2^{-1} s_1^{-1}+x^{-1}s_1+yx^{-1}s_1^{-1}-1
\end{array}
$$

\begin{proof}
We first notice that, since $\overline{R} \simeq R_+ \oplus R_-$, we
have $\overline{BMW}_n \otimes_{\overline{R}} \overline{S}'= \WBMW_n \otimes_R S'_+ \oplus \WBMW_n \otimes_R S'_-$,
where $S'_{\pm} = \overline{S}'/(a \mp bc)$,
so we only need to prove that the natural maps $\WBMW_n \otimes_R S'_{\pm} \to BMW_n^{\pm} \otimes_{R_{\pm}} S'_{\pm}$ are isomorphisms
of $S'_{\pm}$-algebras. For $n=2$ this is clearly true, so we can assume $n \geq 3$.

By definition, $BMW_n^+\otimes_{S_+} S'_+$ is the quotient of $H_n\otimes_S S'_+$ by the ideal generated by $\mathcal{S}$. The claim thus
amounts to saying that $\mathcal{S}$ is contained inside the twosided ideal of $H_n\otimes_S S'_+$ generated by $\mathcal{R}_1$. This
claim is substantiated by the next lemma, which concludes the proof of the proposition.

\begin{lemma} \label{lemRimpliesS} Inside $H_3 \otimes_R R_+$, we have 
$$
x^2(y-1) \mathcal{S}_+ =  \left( \frac{x+1}{y}  - \frac{1}{y} s_1 - s_2^{-1} \right) \mathcal{R}_1 s_2
$$
and
$$
x^2(y-1) \mathcal{S}'_+ =  \left( \frac{x+y}{y^2}  + \frac{y-1}{y} s_1^{-1} - \frac{1}{y^2} s_1 - s_2^{-1} \right) \mathcal{R}_1 s_2.
$$
Inside $H_3 \otimes_R R_-$, we have  
$$
x^2(y-1)\mathcal{S}_- = \left( \frac{1-x}{y}   -\frac{1}{y}   s_1- s_2^{-1} \right) \mathcal{R}_1 s_2
$$
and
$$
x^2(y-1)\mathcal{S}'_- = \left(  \frac{y-x}{y^2} + \frac{y-1}{y} s_1^{-1} - \frac{1}{y^2} s_1 - s_2^{-1} \right) \mathcal{R}_1 s_2.
$$

\end{lemma}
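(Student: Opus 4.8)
The plan is to turn each of the four claimed identities into a finite matrix computation, using the faithful model $\Phi_{H_3}$ of Section~\ref{sectH3}. First I would record that $R_+ = R/(a-bc)$ is, via $a \mapsto bc$, the Laurent polynomial ring $\Q[b^{\pm 1},c^{\pm 1}]$, and likewise $R_-$ via $a\mapsto -bc$; both are domains. Next I would check that every Schur element $p_\chi$ of $H_3$ stays nonzero after the substitution $a=\pm bc$: each $p_\chi$ is, up to a monomial in $a,b,c$, a product of factors taken from $a-b$, $a-c$, $b-c$, $a^2-ab+b^2$, $a^2-ac+c^2$, $b^2-bc+c^2$, $a^2+bc$, $b^2+ac$, $c^2+ab$, and one sees at once that e.g. $a-b \mapsto b(\pm c-1)$, $a^2+bc\mapsto bc(bc+1)$, $a^2-ab+b^2\mapsto b^2(c^2\mp c+1)$, while $c^2\mp c+1$ and $b^2-bc+c^2$ have no rational root, so none of these factors becomes zero in $\Q[b^{\pm 1},c^{\pm 1}]$. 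By Lemma~\ref{lem:injH3} the map $\Phi_{H_3}\otimes R_\pm$ is then injective, so it is enough to verify each identity after composing with each of the seven irreducible representations $S_a,S_b,S_c,U_{a,b},U_{a,c},U_{b,c},V$, all specialised at $a=\pm bc$.

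I would then handle the seven representations in two groups. On $S_b,S_c,U_{b,c},V$ we already know $\mathcal{R}_1\mapsto 0$, so the right-hand sides vanish, and it remains to check by a direct computation with the explicit matrices for $s_1,s_2$ that each of $\mathcal{S}_+,\mathcal{S}'_+,\mathcal{S}_-,\mathcal{S}'_-$ is killed by these four as well; this is just the (classical, and already visible in the proof of Proposition~\ref{propisomBMWtilde}) statement that the irreducible constituents of $BMW_3^{\pm}$ are exactly $S_b,S_c,U_{b,c},V$. On the remaining three, $S_a,U_{a,b},U_{a,c}$, the images of $\mathcal{R}_1$ have been written out above; here I would compute $\chi(s_1)$, $\chi(s_2)$ and $\chi(s_2^{-1})=\chi(s_2)^{-1}$ from the matrix models, form the product of $\chi$ applied to the relevant left factor, to $\chi(\mathcal{R}_1)$, and to $\chi(s_2)$ — a scalar for $S_a$ and a $2\times 2$ matrix for $U_{a,b},U_{a,c}$ — then substitute $a=\pm bc$ and compare with $x^2(y-1)$ times $\chi$ of the corresponding $\mathcal{S}$. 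It is exactly at this last comparison that the relation $a=\pm bc$ is used, and it is unavoidable: none of the four identities can hold already over $R$, because on $S_a$ (and likewise $U_{a,b},U_{a,c}$) the image of $\mathcal{R}_1$ acquires the factor $y-1=bc-1$ only after the substitution, which is precisely why the left-hand sides carry the extra coefficient $x^2(y-1)$.

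The main obstacle is not conceptual but the sheer volume of bookkeeping — four identities, seven representation-checks each, with the $2\times 2$ products for $\mathcal{S}'_\pm$ (which involve $s_2^{-1}$) demanding care with inverses. I would carry these out with a computer algebra system, as is done elsewhere in the paper for the Schur elements and the discriminant. A purely elementary alternative would be to rewrite both sides of each identity in the explicit $R$-basis $\mathcal{B}_0$ of $H_3$, using the cubic and braid relations, and to check that all $24$ coordinates of the difference lie in the ideal $(a\mp bc)$; this works equally well but is longer to set up, so I would prefer the representation-theoretic reduction above.
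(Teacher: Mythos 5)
Your proposal is correct and is essentially the paper's own argument: the paper likewise verifies the four identities through the embedding $\Phi_{H_3}\otimes R_{\pm}$ into $R_{\pm}^3\times M_2(R_{\pm})^3\times M_3(R_{\pm})$, whose injectivity comes from Lemma \ref{lem:injH3}, and then checks equality by direct computation. Your explicit verification that the Schur elements stay nonzero at $a=\pm bc$, and the representation-by-representation organization, merely spell out details the paper leaves implicit.
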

\begin{proof} In order to check equalities in $H_3 \otimes_R R_{\pm}$, we use the map $H_3 \otimes_R R_{\pm} \into R_{\pm}^3 \times M_2(R_{\pm})^3 \times M_3(R_{\pm})$ described in section \ref{sectH3},
which is injective by lemma \ref{lem:injH3},
and check by direct computation that both sides are equal.
\end{proof}
\end{proof}

We add a comment on how we got the equalities of lemma \ref{lemRimpliesS}, inside the algebra
$R_{\pm}^3 \times M_2(R_{\pm})^3 \times M_3(R_{\pm})$. First of all, the images of both $\mathcal{S}_{\pm}$ and $\mathcal{R}$ are $0$
on two of the 1-dimensional factors, on the factor $M_3(R_{\pm})$, and on two of the factors $M_2(R_{\pm})$. In order to get an expression
of $\mathcal{S}_{\pm}$ inside the ideal generated by $\mathcal{R}_1$, we only need to consider the map $H_3 \otimes_R R_{\pm} \to \End(S_{ \pm bc}) \oplus \End(U_{\pm bc,c}) \oplus \End(U_{\pm bc,b})$.
Moreover, because $\mathcal{R}_1$ and $\mathcal{S}$ have coefficients in $R^{\eps}$, we can restrict ourselves
to consider the map $H_3 \otimes_R R_{\pm} \to  \End(S_{ \pm bc}) \oplus \End(U_{\pm bc,c})$ and look for a linear combination with coefficients in $R^{\eps}_{\pm}$ of terms of
the form $g \mathcal{R}_1 g'$ with $g,g'$ elements of the braid group. Now, a direct computation shows that the image of $\mathcal{S}$ as well
as of $\mathcal{R}_1 s_2$ inside $\End(S_{\pm bc,b})$ are matrices of the form $\begin{pmatrix} \ast & 0 \\ \ast & 0 \end{pmatrix}$. Taking into account
their images in $S_{\pm bc}$, which belong to $\Q(x,y)=\Q(b,c)^{\eps}$, and because $\Q(b,c)^2 \simeq \Q(x,y)^4$ as a $\Q(x,y)$-vector space,
we get that $\mathcal{S}_{\pm}$ and the elements of $H_3 \mathcal{R}_1 s_2$ are uniquely determined by their image into a 5-dimensional
vector space over $\Q(x,y)$.

By multiplying $\mathcal{R}_1s_2$
on the left by suitable elements of $H_3$ one readily gets a basis
of this vector space, namely
$\mathcal{R}_1s_2$, $s_1^{-1}\mathcal{R}_1s_2$, $s_1\mathcal{R}_1s_2$, $s_2^{-1}\mathcal{R}_1s_2$, $s_2\mathcal{R}_1s_2$. Expressing the image of $\mathcal{S}_{\pm}$
in this basis we get the linear combinations of the lemma.

\medskip

Since $\bar{S}/(y-1) = \bar{S}/(a^2 -y) = S^{\dagger}/(a^2-y^2)$ with $S^{\dagger} = S/(a^2-y)$, the special case $a^2 = y = 1$ is a consequence of
the study of the algebra $BMW_n^{\dagger} = \WBMW_n \otimes_R S^{\dagger}$.
We let $K^{\dagger}$ denote the fraction field of $S^{\dagger}$.

\begin{prop} The dimension of $BMW_3^{\dagger} \otimes_{S^{\dagger}} K^{\dagger}$ is 16.
\end{prop}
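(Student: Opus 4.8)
The plan is to compute $\dim_{K^\dagger}(\WBMW_3 \otimes_R K^\dagger)$ directly, noting that $BMW_3^{\dagger} \otimes_{S^\dagger} K^\dagger = \WBMW_3 \otimes_R K^\dagger$, and to follow the method used in the proof of Proposition~\ref{propisomBMWtilde}: for $n=3$ the ideal $I_3$ is generated by the single element $\mathcal{R}_1$, so $\WBMW_3 \otimes K^\dagger$ is the quotient of $H_3 \otimes K^\dagger$ by the two-sided ideal generated by $\mathcal{R}_1$, and as soon as $H_3 \otimes K^\dagger$ is semisimple this quotient has dimension $24$ minus the sum of the $n_i^2$ over the matrix factors $M_{n_i}$ on which $\mathcal{R}_1$ acts nontrivially (i.e. on which its component is nonzero, so that the generated ideal fills the whole factor).

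First I would check that $H_3 \otimes_R K^\dagger$ remains semisimple. Since $S^\dagger = S/(a^2-y)$ is a domain (identifying $c$ with $a^2/b$ gives $K^\dagger \cong \Q(a,b)$), by Lemma~\ref{lem:injH3} it suffices to verify that none of the Schur elements $p_{S_a}, p_{U_{b,c}}, p_V$ — nor their images under the $\mathfrak{S}_3$-symmetry permuting $\{a,b,c\}$ — vanishes modulo $a^2-y$. In each of the explicit factorizations recalled above, the only factor that could conceivably vanish under $bc = a^2$ is $bc+a^2$, which becomes $2a^2 \neq 0$ since $2$ and $a$ are invertible; every other factor is a nonzero element of $\Q(a,b)$ after the substitution. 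Hence $\Phi_{H_3} \otimes K^\dagger : H_3 \otimes K^\dagger \xrightarrow{\sim} (K^\dagger)^3 \oplus M_2(K^\dagger)^3 \oplus M_3(K^\dagger)$ is an isomorphism, and $I_3 \otimes K^\dagger$ is, in each factor, either $0$ or the whole factor according to whether the corresponding component of $\mathcal{R}_1$ is zero.

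It then remains to specialize at $bc = a^2$ the images of $\mathcal{R}_1$ computed above. On $S_b, S_c, U_{b,c}, V$ these images are identically $0$. On $S_a$ the image carries the factor $a^2-bc$, so it now vanishes too: this is precisely the one extra simple module on which $\mathcal{R}_1$ dies compared with the generic case, which is what makes the dimension jump from $15$ to $16$. Finally, the images of $\mathcal{R}_1$ on $U_{a,c}$ and $U_{a,b}$ have scalar prefactors $(a-b)(ca+b^2)$ and $(a-c)(ab+c^2)$, which under $c = a^2/b$ are nonzero elements of $\Q(a,b)$, so $\mathcal{R}_1$ still acts nontrivially on both of these $2$-dimensional factors. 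Therefore $\dim_{K^\dagger}(I_3 \otimes K^\dagger) = 2^2 + 2^2 = 8$ and $\dim_{K^\dagger}(\WBMW_3 \otimes K^\dagger) = 24 - 8 = 16$. The computations are elementary; the only point requiring care is the bookkeeping of the previous paragraph — making sure the specialization $a^2 = y$ preserves semisimplicity of $H_3$ (so the matrix-factor dimension count is legitimate) and does not accidentally annihilate the $U_{a,b}$ or $U_{a,c}$ components — and this is settled by inspecting the explicit factorizations of the Schur elements and of the images of $\mathcal{R}_1$.
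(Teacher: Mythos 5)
Your proposal is correct and follows essentially the same route as the paper: use the previously computed images of $\mathcal{R}_1$ in the irreducible representations of $H_3$, note that under $a^2=y$ the image in $S_a$ acquires the vanishing factor $a^2-bc$ while the images in $U_{a,b}$ and $U_{a,c}$ stay nonzero, and invoke semisimplicity of $H_3\otimes K^{\dagger}$ (guaranteed by the non-vanishing of the Schur elements) to count dimensions. The only cosmetic difference is that you count the ideal ($24-2^2-2^2=16$) whereas the paper counts the surviving simple factors ($1+1+1+2^2+3^2=16$), and you spell out the Schur-element check that the paper leaves implicit.
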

\begin{proof} 
By the computations above
we now that 
the representation $S_a : H_3 \to R$
factorizes through $BMW_3^{\dagger}$
when tensored with $S^{\dagger}$, and that it is not the case for the representations $U_{a,c}$ and $U_{a,b}$.
Since $H_3 \otimes_R K^{\dagger}$ is semisimple, the conclusion follows, as $1^2+1^2+1^2+2^2+3^3 = 16$.
\end{proof}
\begin{cor} $\WBMW_3$ is not free over $S$.
\end{cor}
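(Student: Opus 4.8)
The plan is to exhibit a ``dimension jump'' under specialization, which a finitely generated free module cannot have. Set $N = \WBMW_3 \otimes_R S$; by Proposition \ref{propstructgenWBMW}(iii) it is a finitely generated $S$-module, so if it were free it would be free of some finite rank $d$, and then $\dim_F(N\otimes_S F) = d$ for \emph{every} field $F$ receiving a ring homomorphism from $S$. I will apply this to two such fields and reach a contradiction.

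First, since $S = R[(b+c)^{-1}]$ is a localization of $R$, its fraction field is $K = \Frac(R)$, and $N \otimes_S K = \WBMW_3 \otimes_R K$. By Proposition \ref{propisomBMWtilde} this $K$-algebra is semisimple with the same numerical invariant, hence the same dimension, as $BMW_3^+$, namely $15$ (the value $1\cdot 3\cdot 5$); so $d = 15$. Second, $a^2 - y = a^2 - bc$ is irreducible in the UFD $\Q[a,b,c]$ and is not a unit of $S$, hence generates a prime ideal of $S$, so that $S^{\dagger} = S/(a^2-y)$ is a domain; let $K^{\dagger}$ be its fraction field. Then $N \otimes_S K^{\dagger} = (\WBMW_3 \otimes_R S^{\dagger})\otimes_{S^{\dagger}} K^{\dagger} = BMW_3^{\dagger}\otimes_{S^{\dagger}} K^{\dagger}$, which has dimension $16$ by the proposition immediately above; so $d = 16$. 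Since $15 \neq 16$, $N$ cannot be free over $S$, which is the assertion.

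There is no real obstacle here beyond two points of care: one must check that the two specializations are taken at genuine \emph{field} quotients or localizations of $S$ (so that ``the rank of a finite free module is preserved under scalar extension'' actually applies — this is why the primality of $(a^2-y)$ in $S$ is noted), and one must match the two dimension counts to exactly Proposition \ref{propisomBMWtilde} and the preceding proposition, both of which have already been carried out. Note finally that non-freeness over $R$ follows formally, since localization at $(b+c)$ is exact and would send a free $R$-module to a free $S$-module.
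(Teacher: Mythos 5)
Your proposal is correct and is essentially the paper's (implicit) argument: the corollary follows from the rank-15 generic dimension given by Proposition \ref{propisomBMWtilde} versus the dimension 16 of $BMW_3^{\dagger}\otimes_{S^{\dagger}}K^{\dagger}$ from the proposition immediately preceding it, since a free $S$-module has the same rank after base change to any $S$-field. Your added checks (that $S^{\dagger}$ is a domain so $K^{\dagger}$ exists, and the identification $N\otimes_S K^{\dagger}=BMW_3^{\dagger}\otimes_{S^{\dagger}}K^{\dagger}$) are exactly the points needed and are fine.
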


By the same method, using the freeness of $H_4,H_5$ proved in \cite{CUBIC5}, and modulo
the still conjectural
existence of convenient symmetrizing forms assumed in \cite{MALLE2}, we can
check from the Schur elements computed in \cite{MALLE2} that
$H_4 \otimes K^{\dagger}$ and $H_5 \otimes K^{\dagger}$ are semisimple, and
compute with the same method the dimension of $BMW_n^{\dagger} \otimes K^{\dagger}$ : the irreducible
representations corresponding to the ideal $I_n$ are the ones whose restriction to $BMW_3^{\dagger}$
contains a constituent isomorphic to $U_{b,c}$ or $U_{a,c}$. 
By this method we get that, if these symmetrizing forms exist, then
$$\dim_{K^{\dagger}} BMW_n^{\dagger} \otimes K^{\dagger} = 1+ \dim_K \WBMW_n \otimes K = 1 + \dim BMW_n^+
$$
for $n=3,4,5$. Note however that this formula is not valid for $n=2$. We leave the general case open.

Finally, we notice the following fact. Let $\eta \in \Aut(R)$ be defined by $a \mapsto -a$, $b \mapsto -b$, $c \mapsto -c$.
The fixed subring of $R$ is denoted $R^{\eta}$. It is straightforward to check that there is an involutive automorphism of $R^{\eta}$-algebra $\hat{E}$
of $H_n$ mapping $s_i \mapsto -s_i$ and acting as $\eta$ on elements of $R$. One checks easily that
$\hat{E}(\mathcal{R}_i) = - \mathcal{R}_i$,
$\hat{E}(\mathcal{S}_+) = - \mathcal{S}_-$,
$\hat{E}(\mathcal{S}_-) = - \mathcal{S}_+$. As a consequence we get the following.
\begin{prop} \label{propdefE}
There is an automorphism $E$ of the  $R^{\eta}$-algebra $\WBMW_n$ 
defined by
$r \mapsto \eta(r)$ for $r \in R$ and $s_i \mapsto -s_i$. It induces an automorphism
$\overline{E}$
of $\overline{BMW_n} \otimes_{\overline{R}} \overline{S}$ exchanging $BMW_n^+$ and $BMW_n^-$.
\end{prop}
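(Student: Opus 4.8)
The plan is to push the $\eta$-semilinear automorphism $\hat E$ through the two quotient operations $H_n \onto \WBMW_n$ and the specialization to $\overline S$, keeping track of signs. First I would observe that $I_n = H_n \mathcal{R}_1 H_n$ and that, $\hat E$ being a ring automorphism of $H_n$ with $\hat E(\mathcal{R}_1) = -\mathcal{R}_1$, one has $\hat E(I_n) = H_n\,\hat E(\mathcal{R}_1)\,H_n = H_n\mathcal{R}_1 H_n = I_n$. Hence $\hat E$ descends to a ring automorphism $E$ of $\WBMW_n = H_n/I_n$; since $E(s_i) = -s_i$ and $E$ restricts to $\eta$ on $R$, this $E$ is exactly the asserted (involutive) automorphism of the $R^{\eta}$-algebra $\WBMW_n$, which settles the first claim.

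For the second claim I would first extend $\eta$ to $S = R[(b+c)^{-1}]$ by $(b+c)^{-1} \mapsto -(b+c)^{-1}$ (legitimate, since $\eta(b+c) = -(b+c)$), note that $\eta$ fixes $y = bc$ and negates $a$, hence fixes $a^2-y^2$, and therefore induces an involution $\bar\eta$ of $\overline S = S/(a^2-y^2)$. Using $\overline{BMW}_n \otimes_{\overline R} \overline S = \WBMW_n \otimes_R \overline S$, I would then set $\overline E = E \otimes \bar\eta$; this is a well-defined ring automorphism of $\overline{BMW}_n \otimes_{\overline R}\overline S$, the only point to check being that $E$ and $\bar\eta$ are compatible with the $\eta$-twisted matching of the two $R$-module structures on the tensor factors, which is immediate from $E|_R = \eta$.

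Next I would invoke the Chinese Remainder isomorphism $\overline S \simeq S_+ \oplus S_-$ (already used in the text), which gives $\overline{BMW}_n \otimes_{\overline R}\overline S = (\WBMW_n \otimes_R S_+) \oplus (\WBMW_n \otimes_R S_-)$. Since $\bar\eta$ sends the ideal $(a-y)$ onto $(a+y)$, it interchanges the two central idempotents, so $\overline E$ interchanges the two summands. It remains to match the further quotients: by Propositions \ref{proppresWBMWidemp} and \ref{propWBMWetendbien} the defining relation $\mathcal{R}_1$, rewritten via the $e_i$, is $s_2^{-1}e_1 s_2^{-1} = y^{-2}s_1 e_2 s_1$, which holds in $BMW_n^{\pm}$; hence $BMW_n^{\pm}$ is the quotient of $\WBMW_n \otimes_R S_{\pm}$ by the image of the ideal generated by $\mathcal{S}_{\pm}$ and $\mathcal{S}'_{\pm}$. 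The same term-by-term computation that yields $\hat E(\mathcal{S}_{\pm}) = -\mathcal{S}_{\mp}$ — together with the remark that $\mathcal{S}'_{\pm}$ is obtained from $\mathcal{S}_{\pm}$ by interchanging $s_2$ and $s_2^{-1}$, on which $\hat E$ acts symmetrically — also gives $\hat E(\mathcal{S}'_{\pm}) = -\mathcal{S}'_{\mp}$. Therefore $\overline E$ carries the defining ideal of $BMW_n^+$ onto that of $BMW_n^-$, so it descends to an isomorphism $BMW_n^+ \simeq BMW_n^-$ intertwining the canonical projections (the case $n=2$ being trivial); this is the asserted exchange.

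I do not anticipate any real obstacle here: the only computational content consists of the sign identities $\hat E(\mathcal{R}_1) = -\mathcal{R}_1$, $\hat E(\mathcal{S}_{\pm}) = -\mathcal{S}_{\mp}$ and $\hat E(\mathcal{S}'_{\pm}) = -\mathcal{S}'_{\mp}$, all routine once one records that $\hat E$ acts by $s_i^{\pm 1} \mapsto -s_i^{\pm 1}$, $x \mapsto -x$, $y \mapsto y$; everything else is bookkeeping with semilinear automorphisms and the Chinese Remainder Theorem. If there is a slightly delicate step it is merely the well-definedness of $E \otimes \bar\eta$ on the relative tensor product, which reduces to the observation that $E$ and $\bar\eta$ both extend $\eta$.
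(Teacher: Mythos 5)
Your argument is correct and is essentially the paper's: the paper also obtains $E$ and $\overline{E}$ by descending the $\eta$-semilinear automorphism $\hat{E}$ of $H_n$ through the quotients, using precisely the sign identities $\hat{E}(\mathcal{R}_i)=-\mathcal{R}_i$ and $\hat{E}(\mathcal{S}_{\pm})=-\mathcal{S}_{\mp}$ recorded just before the proposition. Your extra bookkeeping (extending $\eta$ to $\overline{S}$, the Chinese Remainder swap of $S_+$ and $S_-$, and the analogous identity for $\mathcal{S}'_{\pm}$) only makes explicit what the paper leaves as "one checks easily."
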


The automorphism of proposition \ref{propdefE} relates the traces $t_n^+$ and $t_n^-$, as we will see in corollary \ref{cortnplusettnmoins} below.

\section{Traces on $\widetilde{BMW}_n$}

By definition, a Markov trace on the tower of algebras $(\WBMW_n)_{n \geq 1}$ with
values in a fixed $R$-module $M$ is a sequence $(t_n)_{n \geq 1}$ of $R$-linear maps $t_n = \WBMW_n \to M$
which are traces on each $\WBMW_n$ (that is, $t_n(xy)=t_n(yx)$ for $x,y \in \WBMW_n$) and which
satisfy the Markov conditions $t_{n+1}(\iota_n(x)s_n^{\pm 1}) = t_n(x)$ for each $x \in \WBMW_n$, where
$\iota_n$ denotes the natural map $\iota_n : \WBMW_n \to \WBMW_{n+1}$. 
We let $(t_n^H)$ denote the Markov trace induced by the Ocneanu trace on
the Hecke algebra $\mathcal{H}_n(b,c)$ through the factorization of proposition \ref{propstructgenWBMW} (i),
normalized by $t_1^H(1) = 1$.

\subsection{General inductive properties}

The following proposition is a corollary of proposition \ref{propstructgenWBMW} (ii).

\begin{prop} \label{unicitetn1} For all $n \geq 2$, $t_{n+1}$ is uniquely determined by $t_n$ and $t_{n+1}(1)$.
\end{prop}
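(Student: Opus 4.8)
The plan is to exploit the structural decomposition of Proposition~\ref{propstructgenWBMW}(ii), which asserts that
$\WBMW_{n+1} = \WBMW_n + \WBMW_n s_n \WBMW_n + \WBMW_n s_n^{-1} \WBMW_n$, together with the defining properties of a Markov trace (the trace property $t_{n+1}(xy)=t_{n+1}(yx)$ and the Markov condition $t_{n+1}(\iota_n(x)s_n^{\pm1}) = t_n(x)$). First I would observe that the decomposition lets us write an arbitrary element $z \in \WBMW_{n+1}$ as a sum of elements of the three shapes $u$, $u s_n v$, and $u s_n^{-1} v$ with $u,v \in \WBMW_n$ (abusing notation to identify $\WBMW_n$ with its image under $\iota_n$). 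So by $R$-linearity it suffices to show that $t_{n+1}$ is forced on each of these three types of elements once $t_n$ and the single scalar $t_{n+1}(1)$ are fixed.

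The element of the first type, $u \in \WBMW_n$, is the easiest: writing $u = u\cdot 1$ and using that $t_{n+1}(1)$ is given, I would want $t_{n+1}(u)$ to be determined — this follows once we know $t_{n+1}$ restricted to $\WBMW_n$ is itself a Markov trace on the smaller tower, hence by induction (the base being handled by Proposition~\ref{unicitetn1} read downward, or directly since $\WBMW_2$ is spanned by $1,s_1,s_1^{-1}$) equals some known multiple structure determined by $t_n$; more simply, $t_{n+1}\circ\iota_n$ is a trace on $\WBMW_n$ satisfying the Markov condition at level $n-1$, so it is determined by $t_n$ and $t_{n+1}(1)$ by the inductive hypothesis, and in fact one expects $t_{n+1}\circ \iota_n = \tilde\delta x^{-1}\, t_n$ or a comparable relation coming from closing off a strand — but for the purposes of this uniqueness statement all that matters is that it is \emph{determined}. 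For the second and third types, the key computation is: using the trace property to cyclically rotate, $t_{n+1}(u s_n v) = t_{n+1}(v u s_n)$, and now $vu \in \WBMW_n$, so the Markov condition gives $t_{n+1}(vu\cdot s_n) = t_n(vu)$, which is a value of the given map $t_n$. Similarly $t_{n+1}(u s_n^{-1} v) = t_{n+1}(vu s_n^{-1}) = t_n(vu)$ by the Markov condition for $s_n^{-1}$. Hence every element of the three generating types has its $t_{n+1}$-value pinned down by $t_n$ (and, for type one, additionally by $t_{n+1}(1)$), and by linearity $t_{n+1}$ is unique.

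The main point to be careful about — and the only genuine obstacle — is that an element of $\WBMW_{n+1}$ is written as a \emph{sum} of terms of the three shapes in a highly non-unique way, so one must check that the resulting value is consistent; but this is automatic here because we are proving uniqueness, not existence: \emph{if} a Markov trace $t_{n+1}$ extending the data exists, then the displayed identities hold for it on each term regardless of the chosen decomposition, and therefore its value on any $z$ is forced to be the corresponding sum. No well-definedness check is needed. I would also note that the cyclic rotation step $t_{n+1}(u s_n^{\pm1} v) = t_{n+1}(vu\, s_n^{\pm1})$ uses only that $t_{n+1}$ is a trace, and the subsequent step uses only the Markov condition with $x = vu \in \WBMW_n$, so no further properties of $\WBMW_{n+1}$ beyond Proposition~\ref{propstructgenWBMW}(ii) enter. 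This completes the plan.
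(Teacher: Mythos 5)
Your handling of the elements $u\,s_n^{\pm 1}v$ (cyclic rotation plus the Markov condition) is exactly the paper's top-level step, and your remark that no well-definedness check is needed for a uniqueness statement is correct. The genuine gap is in the first type of element: you dispose of the restriction $t_{n+1}\circ\iota_n$ by asserting that it ``is itself a Markov trace on the smaller tower'' / ``satisfies the Markov condition at level $n-1$'' and then invoking an inductive hypothesis. But the definition of a Markov trace only constrains multiplication by $s_n^{\pm 1}$ at the top of each level; it says nothing a priori about $t_{n+1}(y\,s_{n-1}^{\pm 1})$ for $y\in\WBMW_{n-1}$, which is precisely what your claim needs. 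This is the nontrivial content of the proposition, and it is not automatic: it is true, but only because $s_{n-1}$ (more generally $s_r$) can be conjugated to $s_n$ inside $\WBMW_{n+1}$ while keeping the other factors inside $\WBMW_n$. Concretely, conjugation by $(s_1s_2\cdots s_n)^{n-r}$ sends $s_r^{\pm 1}\mapsto s_n^{\pm 1}$ and $\WBMW_r$ into $\WBMW_n$, so $t_{n+1}(x\,s_r^{\pm 1}y)=t_{n+1}(y'x'\,s_n^{\pm 1})=t_n(y'x')$, a value of the given $t_n$. Without this step your appeal to induction is circular, and even your base case has the same hole: knowing that $\WBMW_2$ is spanned by $1,s_1,s_1^{-1}$ does not determine $t_3(s_1^{\pm 1})$; one needs $t_3(s_1)=t_3(s_2)=t_2(1)$, i.e.\ again the conjugacy of $s_1$ with $s_2$.

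For comparison, the paper avoids your shifted-tower induction altogether: it proves by descending induction on $r\le n+1$ that $t_{n+1}$ is determined by $t_n$ together with its restriction to $\WBMW_r$, using the decomposition $\WBMW_{r+1}=\WBMW_r+\WBMW_r s_r\WBMW_r+\WBMW_r s_r^{-1}\WBMW_r$ at each level $r$ and the conjugation above to reduce every mixed term to a value of $t_n$; this also has the advantage of only needing ``determined by $t_n$'' rather than an exact Markov identity for the restricted trace. Your outline can be repaired into a correct proof by inserting the conjugation argument (either in the paper's form, or to justify that $t_{n+1}(y\,s_{n-1}^{\pm 1})=t_n(y)$ for $y\in\WBMW_{n-1}$, which follows by conjugating once by $s_1\cdots s_n$ in $\WBMW_{n+1}$ and once by $s_1\cdots s_{n-1}$ in $\WBMW_n$), but as written that key step is missing.
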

\begin{proof}
In the sequel, we abuse notations by letting $\WBMW_r$ denote the image of $\WBMW_r$ inside $\WBMW_{n+1}$. 
We prove that, for all $r \leq n+1$, $t_{n+1}$ is uniquely determined by $t_n$ and by its value
on $\WBMW_r$. The case $r = 1$ is the statement of the proposition, since $\WBMW_1 = R$.
We prove this by descending induction on $r$, the case $r = n+1$ being trivial.
Because of proposition \ref{propstructgenWBMW} (ii) we know that
$\WBMW_{r+1} = \WBMW_r + \WBMW_r s_r \WBMW_r +  \WBMW_r s_r^{-1} \WBMW_r$.
If $x,y \in \WBMW_r$, we have $t_{n+1} (x s_r^{\pm 1} y) = t_{n+1} (yx s_r^{\pm 1} )$. Conjugating by 
$(s_1 s_2 \dots s_{n})^{n-r}$ maps $s_r^{\pm 1}$ to $s_n$ and $x,y$ to elements $x',y'$ of $\WBMW_n$. This yields
$ t_{n+1} (yx s_r^{\pm 1} ) =  t_{n+1} (y'x' s_n^{\pm 1} )= t_n(y'x')$. This proves that $t_{n+1}$ is determined by
$t_n$ and by its restriction to $\WBMW_r$. The conclusion follows by induction.
\end{proof}

\begin{prop} \label{prop:unicitetracegen} Let $t'_n$ be the composition of $t_n$ with $M \to M \otimes_R R[(a^2-bc)^{-1},(b+c)^{-1}]$. Then $(t'_n)$ is uniquely determined
by $ t'_1(1)$ and $t'_2(1)$.
\end{prop}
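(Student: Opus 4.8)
The plan is to reduce, via Proposition~\ref{unicitetn1}, to a statement about the single sequence of scalars $(t'_n(1))_{n\ge 1}$, and then to produce an $n$-independent two-term linear recursion for these scalars by feeding one of the defining relations of $\WBMW_{n+1}$ to the trace. First the reduction: since $\WBMW_1=R$, the trace $t_1$ is determined by $t_1(1)$; since $\WBMW_2$ is spanned over $R$ by $1,s_1,s_1^{-1}$ and the Markov condition at level $2$ forces $t_2(s_1)=t_2(s_1^{-1})=t_1(1)$, the trace $t_2$ is determined by $t_1(1)$ and $t_2(1)$; and by Proposition~\ref{unicitetn1}, $t_{m+1}$ is determined by $t_m$ and $t_{m+1}(1)$ for $m\ge 2$. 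Hence it suffices to show that $(t'_n(1))_{n\ge 1}$ is determined by $t'_1(1)$ and $t'_2(1)$, which we do by exhibiting, for every $n\ge 2$, a relation giving $t'_{n+1}(1)$ in terms of $t'_n(1)$ and $t'_{n-1}(1)$ with coefficients in $S':=R[(a^2-bc)^{-1},(b+c)^{-1}]$ that do not depend on $n$. (Recall $a^2-bc=a^2-y$ and $b+c=x$ are invertible in $S'$, so $t'_n$ extends $S'$-linearly to $\WBMW_n\otimes_R S'$ and may be evaluated on the elements $e_i$ of Proposition~\ref{proppresWBMWidemp}.)

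\emph{Auxiliary scalars.} Expanding $e_m=\tfrac{a}{y}\bigl(\tfrac{ys_m^{-1}+s_m}{x}-1\bigr)$ and using the Markov relations $t'_{m+1}(s_m^{\pm1})=t'_m(1)$ yields
$$t'_{m+1}(e_m)=\frac{a}{y}\Bigl(\frac{y+1}{x}\,t'_m(1)-t'_{m+1}(1)\Bigr).$$
Because all Artin generators are conjugate in $B_{m+1}$ and the trace is cyclic, $t'_{m+1}(s_{m-1}^{\pm1})=t'_{m+1}(s_m^{\pm1})=t'_m(1)$, whence $t'_{m+1}(\iota_m(e_{m-1}))=t'_{m+1}(e_m)$; and, combining cyclicity with the Markov relations at levels $m+1$ and $m$, one gets $t'_{m+1}(e_m s_{m-1}^{\pm1})=t'_m(e_{m-1})=\tfrac{a}{y}\bigl(\tfrac{y+1}{x}t'_{m-1}(1)-t'_m(1)\bigr)$. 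By the first displayed identity, $t'_{m+1}(1)$ and $t'_{m+1}(e_m)$ determine each other once $t'_m(1)$ is known; so it is enough to prove a recursion $t'_{n+1}(e_n)=C\,t'_n(e_{n-1})$ for $n\ge 2$, for one fixed $C\in S'$, the base datum $t'_2(e_1)$ being already determined by $t'_1(1)$ and $t'_2(1)$.

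\emph{The recursion.} Apply $t_{n+1}$ to the identity $s_n^{-1}e_{n-1}s_n^{-1}=y^{-2}\,s_{n-1}e_n s_{n-1}$ of Proposition~\ref{proppresWBMWidemp} (a rewriting of $\mathcal{R}_{n-1}$, valid in $\WBMW_{n+1}\otimes_R S$ for $n\ge 2$). On the left, use cyclicity to get $t_{n+1}(e_{n-1}s_n^{-2})$, expand $s_n^{-2}$ as an $S$-combination of $1,s_n,s_n^{-1}$ via the cubic relation, and apply the Markov property at level $n+1$ together with $t_{n+1}(\iota_n(e_{n-1}))=t_{n+1}(e_n)$. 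On the right, do the same with $s_{n-1}^{2}$ and use the previous paragraph for $t_{n+1}(e_n s_{n-1}^{\pm1})$ and $t_{n+1}(e_n)$. Everything collapses to an $S$-linear relation between the two scalars $t_{n+1}(e_n)$ and $t_n(e_{n-1})$, and a direct computation identifies it as
$$x(a^2-bc)\,t_{n+1}(e_n)=\bigl((a^2-bc)(1+y)+ax(1-y)\bigr)\,t_n(e_{n-1}).$$
Since $x$ and $a^2-bc$ are units of $S'$, this gives $t'_{n+1}(e_n)=C\,t'_n(e_{n-1})$ with $C=\dfrac{(a^2-bc)(1+y)+ax(1-y)}{x(a^2-bc)}\in S'$; translating back through the displayed identities yields $t'_{n+1}(1)=\bigl(\tfrac{y+1}{x}+C\bigr)t'_n(1)-\tfrac{C(y+1)}{x}\,t'_{n-1}(1)$ for $n\ge 2$, which completes the proof. (As a sanity check, when $a=\pm bc$ and $bc\neq 1$ one finds $C=\tfrac{y\mp x+1}{x}=\delta_K^{\pm}$, the expected Markov parameter of the Kauffman trace.)

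\emph{Main obstacle.} The only real work is the bookkeeping in the last step: one must check that, after reducing all powers of $s_n$ and $s_{n-1}$ and applying the cyclicity and Markov identities, the factor multiplying the ``new'' unknown $t_{n+1}(e_n)$ is exactly $x(a^2-bc)$ and nothing larger — any additional non-unit factor would break the argument, and it is precisely this computation that pins down which elements must be inverted to pass to $S'$. This can be done by the explicit manipulation sketched above or, since everything takes place in the image of $H_3$ inside $\WBMW_{n+1}$, verified once and for all through the faithful matrix model $\Phi_{H_3}$ of Section~\ref{sectH3}, which remains injective after base change to $S'$ by Lemma~\ref{lem:injH3} (all Schur elements $p_\chi$ being nonzero in the domain $S'$).
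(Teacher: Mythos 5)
Your proof is correct and follows essentially the same route as the paper: both reduce via Proposition \ref{unicitetn1} to showing that $t'_{n+1}(1)$ is determined by $t'_n$, and both get this by applying the trace to the defining relation $\mathcal{R}_{n-1}$ (you use its rewriting $s_n^{-1}e_{n-1}s_n^{-1}=y^{-2}s_{n-1}e_ns_{n-1}$ from Proposition \ref{proppresWBMWidemp}), eliminating all terms by cyclicity and the Markov property except those quadratic in a single generator, which is exactly where the factor $x(a^2-bc)$ --- hence the required localization --- appears. Your write-up is simply more explicit than the paper's, recording the resulting two-term recursion and the constant $C$ (with the pleasant check $C=\delta_K^{\pm}$ when $a=\pm bc$), which the paper leaves implicit.
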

\begin{proof}
In view of the previous proposition, it is sufficient to prove that,
for all $n \geq 2$, $t'_{n+1}(1)$ is determined by $t'_n$.
We have that $\mathcal{R}_{n-1}$ belongs to
$$
H_n s_n H_n + H_n s_n^{-1} H_n - (b+c) b^2c^2 s_{n}^{-2}+ (b+c) s_{n-1}^2 + (bc)^2 s_n^{-1} s_{n-1} s_n^{-1}.
$$
Since $s_n^{-1} s_{n-1} s_n^{-1}$ is conjugated to $s_{n-1}^{-1} s_n s_{n-1}^{-1}$, the Markov property implies that 
$(b+c)t_{n+1}(s_{n-1}^2 - (bc)^2 s_n^{-2})$ is determined by $t_n$. It is straightforward to check that,
because of the cubic relation, $s_{n-1}^2 - (bc)^2 s_n^{-2}$ is equal to $\frac{(b+c)}{a}(a^2-bc).1$ plus a linear
combination of $s_{n-1}$, $s_{n-1}^{-1}$, $s_n$, $s_n^{-1}$, on which the value of $t_{n+1}$ is clearly determined
by $t_n$. It follows that $(b+c)^2(a^2-bc)t_{n+1}(1)$ is uniquely determined by $t_n$, and the conclusion follows.

\end{proof}

We may compare this statement with the stronger assertion one has on $BMW_n^+$.
\begin{prop} \label{prop:unicitetracegenplus} If $(T_n : BMW_n^+ \to S_+)$ is a Markov trace, then it is uniquely
determined by $T_1(1)$ and $T_2(1)$.

\end{prop}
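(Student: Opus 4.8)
The plan is to deduce the statement from Proposition~\ref{prop:unicitetracegen} by pulling back to the tower $(\WBMW_n)$; the extra strength comes from the fact that the base ring $S_+$ is a domain, so that the localization appearing in that proposition is harmless.

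First I would record that $BMW_n^+$ is a quotient of $\WBMW_n \otimes_R S_+$. Indeed, after introducing the $e_i$, Proposition~\ref{proppresWBMWidemp} identifies the defining relation $\mathcal{R}_i$ of $\WBMW_n$ with $s_{i+1}^{-1}e_i s_{i+1}^{-1} = y^{-2}s_i e_{i+1}s_i$, and Proposition~\ref{propWBMWetendbien} shows that this relation holds inside $BMW_n^{\pm}$; hence $I_n$ maps to $0$ and the canonical surjection $H_n \otimes_R S_+ \onto BMW_n^+$ factors as $\pi_n : \WBMW_n \otimes_R S_+ \onto BMW_n^+$. The $\pi_n$ are compatible with the tower maps $s_i \mapsto s_i$. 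Consequently a Markov trace $(T_n : BMW_n^+ \to S_+)$ pulls back to $\widetilde{T}_n := T_n \circ \pi_n$, and the restriction of $\widetilde{T}_n$ to the image of $\WBMW_n$ is a Markov trace on the tower $(\WBMW_n)$ with values in the $R$-module $S_+$, with $\widetilde{T}_1(1) = T_1(1)$ and $\widetilde{T}_2(1) = T_2(1)$. Since $\WBMW_n \otimes_R S_+$ is spanned over $S_+$ by the image of $\WBMW_n$, the $S_+$-linear map $\widetilde{T}_n$ is determined by this restriction; and $T_n$ is in turn determined by $\widetilde{T}_n$ because $\pi_n$ is onto.

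Next I would apply Proposition~\ref{prop:unicitetracegen} to $(\widetilde{T}_n)$: writing $\iota$ for the natural map $S_+ \to S_+ \otimes_R R[(a^2-bc)^{-1},(b+c)^{-1}]$, that proposition gives that $(\iota \circ \widetilde{T}_n)$ is uniquely determined by $\widetilde{T}_1(1)$ and $\widetilde{T}_2(1)$, i.e.\ by $T_1(1)$ and $T_2(1)$. Now inside $S_+ = S/(a-bc)$ both $b+c$ and $a$ are units, so $a^2-bc = a(a-1)$ and the target of $\iota$ is simply $S_+[(a-1)^{-1}]$. The \emph{key point} is that $S_+$ is an integral domain --- eliminating $a = bc$ identifies it with $\Q[b^{\pm1},c^{\pm1},(b+c)^{-1}]$ --- in which $a-1 = bc-1$ is nonzero; hence $\iota$ is injective. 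Therefore $(\widetilde{T}_n)$ itself, and by the previous paragraph also $(T_n)$, is uniquely determined by $T_1(1)$ and $T_2(1)$.

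The argument is essentially a transfer of Proposition~\ref{prop:unicitetracegen}, so the only genuine input is the observation that over $S_+$ the element $a^2-bc$ is a nonzero non-zero-divisor; this is precisely what fails for a general target module $M$ and what forces the weaker formulation of Proposition~\ref{prop:unicitetracegen}. Accordingly there is no serious obstacle; the one thing to check with a little care is that the proof of Proposition~\ref{prop:unicitetracegen} does apply to the tower $(\WBMW_n)$ with values in $S_+$ --- which it does, since that proof only uses the decomposition of Proposition~\ref{propstructgenWBMW}(ii) together with the vanishing of $\mathcal{R}_{n-1}$, both available here.
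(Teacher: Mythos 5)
Your proof is correct, but it takes a genuinely different route from the paper's. The paper argues directly inside $BMW_{n+1}^+$: since the $12$-term relation $e_{n-1}s_ne_{n-1}-e_{n-1}=0$ holds there, the identity element lies in $BMW_n^+s_nBMW_n^+ + BMW_n^+s_n^{-1}BMW_n^+ + \langle s_{n-1}^{\pm 1}\rangle$ with \emph{no} extraneous factor, so $T_{n+1}(1)$ is determined by $T_n$ outright, and the induction of proposition \ref{unicitetn1} concludes --- no localization, no use of the ring structure of the target. You instead pull back along $\WBMW_n\otimes_R S_+\onto BMW_n^+$ (a legitimate step: $\mathcal{R}_i=0$ in $BMW_n^\pm$ by proposition \ref{propWBMWetendbien}, and this factorization is exactly what the paper uses in proposition \ref{propisomBMWtilde}), invoke proposition \ref{prop:unicitetracegen}, and then cancel the localization because $S_+\simeq\Q[b^{\pm1},c^{\pm1},(b+c)^{-1}]$ is a domain in which $a^2-bc=y(y-1)\neq 0$. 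What your transfer buys is economy --- you reuse the $6$-term-relation computation already done for $\WBMW_n$ instead of redoing a computation with the $12$-term relation. What the paper's direct argument buys is independence of the module of values: it proves the same uniqueness for Markov traces valued in an arbitrary $S_+$-module, which is precisely what gets recycled later at the degenerate specializations (e.g.\ proposition \ref{propbasiqueFn}(iv) and corollary \ref{corclasstracesddagger}, where $a^2=y=1$, so $y-1=0$ and your injectivity-of-localization step would fail). So your proof establishes the stated proposition, but not the stronger, specialization-robust form of the argument that the paper relies on afterwards.
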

\begin{proof}
As before it is sufficient to prove that,
for all $n \geq 2$, $T_{n+1}(1)$ is determined by $T_n$.
We have that the 12 terms relation can be written as
$$
\begin{array}{lcl}
e_{n-1} s_n e_{n-1} - e_{n-1}&=&x^{-2}
s_{n-1} s_{n} s_{n-1} -yx^{-1}s_{n-1}^{-1}s_{n}-yx^{-1}s_{n}s_{n-1}^{-1}+s_{n}-x^{-1}s_{n-1}s_{n}\\ & & -x^{-1}s_{n}s_{n-1}
+yx^{-2}s_{n-1}s_{n}s_{n-1}^{-1}+yx^{-2}s_{n-1}^{-1} s_{n} s_{n-1}
\\ & & +y^2x^{-2}s_{n-1}^{-1} s_{n} s_{n-1}^{-1}-x^{-1}s_{n-1}-yx^{-1}s_{n-1}^{-1}+1
\end{array}
$$ 
hence belongs to $  BMW_n^+ s_n BMW_n^+ + BMW_n^+ s_n^{-1} BMW_n^+ - x s_{n-1}-yx^{-1}s_{n-1}^{-1}+1$,
hence, since $T_{n+1}(s_{n-1}^{\pm 1}) = T_{n+1}(s_{n}^{\pm 1}) = T_n(1)$, we get that $T_{n+1}(1)$ is indeed determined by $T_n$ and the conclusion follows.
\end{proof}

\begin{cor} \label{cortnplusettnmoins} Over $BMW_n^+$, we have $t_n^+ = (-1)^{n-1} \eta \circ t_n^- \circ \overline{E}$.
\end{cor}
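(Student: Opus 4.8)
The plan is to exhibit $t'_n := (-1)^{n-1}\,\eta\circ t_n^-\circ\overline{E}$ as a Markov trace on the tower $(BMW_n^+)_{n\geq 1}$ with values in $S_+$, and then to identify it with $t_n^+$ by invoking the uniqueness statement of Proposition~\ref{prop:unicitetracegenplus}. Here $\overline{E}: BMW_n^+\to BMW_n^-$ is the ring isomorphism furnished by Proposition~\ref{propdefE}, which is semilinear over $\eta: S_+\to S_-$ (it sends $s_i\mapsto -s_i$ and restricts to $\eta$ on the base); $t_n^-: BMW_n^-\to S_-$ is the Dubrovnik trace; and $\eta: S_-\to S_+$ denotes the isomorphism induced by $a\mapsto -a,\ b\mapsto -b,\ c\mapsto -c$, which fixes $y=bc$ and sends $x=b+c$ to $-x$. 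Since composing the $\eta$-semilinear map $t_n^-\circ\overline{E}$ with $\eta$ restores $S_+$-linearity, each $t'_n$ is an $S_+$-linear map $BMW_n^+\to S_+$, and it is a trace because $t_n^-$ is a trace and $\overline{E},\eta$ are ring isomorphisms (the scalar $(-1)^{n-1}$ is harmless).

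First I would check the Markov property. Fix $x\in BMW_n^+$ and let $\iota_n$ denote the standard inclusions into level $n+1$; since $\overline{E}$ is given uniformly by $s_i\mapsto -s_i$ it commutes with the $\iota_n$, so $\overline{E}(\iota_n(x)s_n^{\pm1})=\iota_n(\overline{E}(x))(-s_n)^{\pm1}=-\,\iota_n(\overline{E}(x))s_n^{\pm1}$. Applying the Markov property of $(t_n^-)$ gives $t_{n+1}^-\big(\overline{E}(\iota_n(x)s_n^{\pm1})\big)=-\,t_n^-(\overline{E}(x))$, and therefore
$$
t'_{n+1}(\iota_n(x)s_n^{\pm1})=(-1)^{n}\,\eta\big(-t_n^-(\overline{E}(x))\big)=(-1)^{n-1}\,\eta\big(t_n^-(\overline{E}(x))\big)=t'_n(x),
$$
so the extra sign produced by $\overline{E}$ on an odd power of $s_n$ is exactly absorbed by the jump from $(-1)^{n-1}$ to $(-1)^{n}$.

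It then remains to match $t_n^+$ with $t'_n$ on $1\in BMW_1^+$ and on $1\in BMW_2^+$. Because $\overline{E}(1)=1$ and $t_1^{\pm}(1)=1$, one has $t'_1(1)=\eta(1)=1=t_1^+(1)$. For $n=2$, using $t_2^-(1)=\delta_K^-=\tfrac{y+x+1}{x}$ together with $\eta(x)=-x$ and $\eta(y)=y$, one gets $t'_2(1)=-\,\eta\!\left(\tfrac{y+x+1}{x}\right)=-\,\tfrac{y-x+1}{-x}=\tfrac{y-x+1}{x}=\delta_K^+=t_2^+(1)$. Hence $t'_n$ and $t_n^+$ are two Markov traces $BMW_n^+\to S_+$ with the same values on $1\in BMW_1^+$ and $1\in BMW_2^+$, so Proposition~\ref{prop:unicitetracegenplus} forces $t_n^+=t'_n=(-1)^{n-1}\eta\circ t_n^-\circ\overline{E}$ for all $n$.

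The only point requiring care is the sign bookkeeping: one must keep track of the factor $-1$ coming from $\overline{E}(s_n^{\pm1})=-s_n^{\pm1}$, remember that $\eta$ negates $x$ but fixes $y$, and verify that the resulting alternating sign $(-1)^{n-1}$ is compatible simultaneously with the Markov recursion and with the normalizations $t_1^{\pm}(1)=1$ and $t_2^{\pm}(1)=\delta_K^{\pm}$. Everything else is formal.
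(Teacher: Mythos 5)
Your proposal is correct and follows essentially the same route as the paper: show that $(-1)^{n-1}\eta\circ t_n^-\circ\overline{E}$ is a Markov trace on $BMW_n^+$ (with the sign from $\overline{E}(s_n^{\pm 1})=-s_n^{\pm 1}$ absorbed by the alternating factor), then invoke Proposition~\ref{prop:unicitetracegenplus} after matching the low-level normalizations via $\eta(\delta_K^-)=-\delta_K^+$. The only cosmetic difference is that you match $T_1(1)$ and $T_2(1)$ while the paper matches $T_2(1)$ and $T_2(s_1)$, which is equivalent since $T_2(s_1)=T_1(1)$ by the Markov property.
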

\begin{proof}
We first prove that the RHS defines a Markov trace on $BMW_n^+$, with values in $S_+$.
We denote $T_n$ this RHS.
We clearly have $T_n(\la \beta) = \la T_n( \beta)$, $T_n(\alpha+ \beta) = T_n(\alpha) + T_n(\beta)$ and $T_n(\alpha \beta) = 
T_n(\beta \alpha)$ for all braids $\alpha, \beta \in BMW_n^+$ and scalar $\la \in S_+$, and
also $T_{n+1}(\beta s_n^{\pm 1}) 
= (-1)^n t_{n+1}^-(\overline{E} (\beta s_n^{\pm 1}))
= (-1)^n t_{n+1}^-(\overline{E} (\beta) \overline{E}(s_n^{\pm 1})))
= (-1)^{n-1} t_{n+1}^-(\overline{E} (\beta) s_n^{\pm 1}))
= (-1)^{n-1} t_n^-(\overline{E} (\beta) )) = T_n(\beta)$.
By the proposition above we first need to prove that $T_2(1) = t_2^+(1)$ and $T_2(s_1) = t_2^+(s_1)$.
Recall that $t_2^{\pm}(1) =\delta_K^{\pm}$, $t_2^{\pm}(s_1) = 1$ with
$$
\delta_K^{\pm} = \frac{y \mp x + 1}{x}.
$$
Since $\eta(\delta_K^-) = - \delta_K^+$ we
get $T_2(1) =  \delta_K^+ = t_2^+(1)$ and $T_2(s_1) =1= t_2^+(s_1)$, and the
conclusion follows.

\end{proof}

This corollary proves that the `natural' diagram below
is commutative only up to a sign depending on $n$.

$$
\xymatrix{
 & \overline{BMW}_n \otimes_{\overline{R}} \overline{S}  \ar[dl] \ar[dr] & \\
 BMW_n^+ \ar[d]_{t_n^+} \ar[rr]^{\overline{E}} & & BMW_n^- \ar[d]^{t_n^-} \\
 S_+ \ar[dr] & & S_- \ar[ll]^{\eta} \ar[dl] \\
 & S & }
$$

\subsection{Restrictions to 3 strands, and the Ocneanu trace}

We recall that the notation $MT_3(M)$ was defined in section \ref{sectH3}. We note that
$$
t_3^H(1) = \left( \frac{y+1}{x} \right)^2 \ \  
t_3^H(s_1) =\frac{y+1}{x}  \ \  
t_3^H(s_1s_2) = 1 \ \  
$${}
and define additional elements $t_3^S$, $t_3^K$
of $MT_3(M)$ by
$$
t_3^S(1) = a^3,\ \ \ \ t_3^S(s_1) = t_3^S(s_1s_2) = 0, \ \ \ \ 
t_3^S((s_1s_2^{-1})^2)=  x^2(2a+x),
$$
{}
$$
t_3^K(1) = \delta_K^2,\ \ \ \ t_3^K(s_1) = \delta_K,\ \ \ \ t_3^K(s_1s_2) = 1,
\ \ \ \ \delta_K =  \frac{y^2 - ax+y}{xy}, 
$${}
$$
 t_3^K((s_1 s_2^{-1})^2) = x^3 y(y+1) + x^2(y^2 + 2a + \frac{a}{y}) - x(1+y) - (y + a + \frac{a}{y}).
$$
\begin{prop} \label{prop:MT3bmw} Let $t \in MT_3(M)$ factorizing through $\WBMW_3$.
\begin{enumerate}
\item  The trace $t$ is uniquely
determined by $t(1)$, $t(s_1)=t(s_2)$ and $t(s_1s_2)$. 
\item If $x(a^2-y^2)$ is invertible in $M$ then $t$ is  uniquely determined by $t(1)$ and $t(s_1s_2)$.
\item If $x^2(a^2-y)$ is invertible in $M$ then $t$ is  uniquely determined by $t(s_1)$ and $t(s_1s_2)$.
\item If $x(a^2-y^2)$ and $x^2(a^2-y)$ are invertible in $M$ then $t$ is uniquely determined by $t(s_1s_2)$.
\end{enumerate}
\end{prop}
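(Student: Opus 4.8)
The plan is to feed Proposition~\ref{propMT3} with the single extra piece of information carried by the hypothesis. By Proposition~\ref{propMT3}, an element $t \in MT_3(M)$ is determined by the four numbers $t(1)$, $t(s_1)=t(s_2)$, $t(s_1 s_2)$, $t\big((s_1 s_2^{-1})^2\big)$; and $t$ factors through $\WBMW_3 = H_3/I_3$ exactly when $t(I_3)=0$, i.e.\ --- $t$ being a trace and $I_3$ being generated by $\mathcal{R}_1$ --- when $t(h\,\mathcal{R}_1)=0$ for every $h\in H_3$. Everything reduces to converting finitely many such identities into linear relations among those four numbers.

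For (i), apply $t$ to $\mathcal{R}_1 s_1$. Rewriting $\mathcal{R}_1$ as $s_2^{-1}s_1 s_2^{-1} \equiv \frac1y s_1 s_2^{-1}s_1 + \frac1{y^2}s_1 s_2 s_1 + x\,s_2^{-2} - \frac{x}{y^2}s_1^2 - y\,s_2^{-1}s_1^{-1}s_2^{-1} \pmod{I_3}$, multiplying on the right by $s_1$ and permuting cyclically, one expresses $y^2\,t\big((s_1 s_2^{-1})^2\big)$ as an explicit $R$-linear combination of $t(s_1 s_2^{-1}s_1^2)$, $t(s_1 s_2 s_1^2)$, $t(s_2^{-2}s_1)$, $t(s_1^3)$ and $t(s_2^{-1}s_1^{-1}s_2^{-1}s_1)$, plus $t(\mathcal{R}_1 s_1)=t(s_1\mathcal{R}_1)=0$. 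Each of these five traces collapses, using only cyclicity, the Markov relation on $H_2$, $s_1 s_2 s_1$-conjugation and the cubic relation --- all of which hold for any $t\in MT_3$, and none of which reintroduces $t\big((s_1 s_2^{-1})^2\big)$ --- to an $R$-linear combination of $t(1)$, $t(s_1)$, $t(s_1 s_2)$; here one uses $t(s_1^{-1})=t(s_1)$, $t(s_1^{\pm1}s_2^{\pm1})=t(s_1 s_2)$ and $t(s_1^2)=(a+x+ay)t(s_1)-(y+ax)t(1)$. Since $y=bc\in R^\times$, this solves for $t\big((s_1 s_2^{-1})^2\big)$, proving (i).

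After (i) the admissible $t$'s embed in $\Hom_R(R^3,M)$ via $t\mapsto(t(1),t(s_1),t(s_1 s_2))$, so (ii) and (iii) ask for two more relations among these three numbers, again extracted from the identities $t(h\,\mathcal{R}_1)=0$. One locates them inside the faithful representation $\Phi_{H_3}\colon H_3\hookrightarrow R^3\oplus M_2(R)^3\oplus M_3(R)$ of \S\ref{sectH3}: writing $t=\sum_\chi\mu_\chi\tr_\chi$ over the fraction field, the images computed in the text --- $S_a(\mathcal{R}_1)=-\frac{(a^2-ax+y)(a^2-y)(a^2+y)}{a^3}$ and the rank-one matrices $U_{a,c}(\mathcal{R}_1)$, $U_{a,b}(\mathcal{R}_1)$ with entries divisible by $(a-b)(ac+b^2)$, resp.\ $(a-c)(ab+c^2)$ --- show that $t$ factors through $\WBMW_3$ precisely when $\mu_{S_a}=\mu_{U_{a,b}}=\mu_{U_{a,c}}=0$; expressing $t(1),t(s_1),t(s_1 s_2)$ in the surviving coordinates $(\mu_{S_b},\mu_{S_c},\mu_{U_{b,c}},\mu_V)$ and eliminating yields, after clearing denominators, relations $u\,x(a^2-y^2)\,t(s_1)=(\text{$R$-combination of }t(1),t(s_1 s_2))$ and $u'\,x^2(a^2-y)\,t(1)=(\text{$R$-combination of }t(s_1),t(s_1 s_2))$ with $u,u'\in R^\times$ --- valid already in $H_3$, as in Lemma~\ref{lemRimpliesS} --- the second of which is in substance the $3$-strand case of the computation in the proof of Proposition~\ref{prop:unicitetracegen}. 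Then (ii) and (iii) are immediate, and (iv) follows by solving the resulting $2\times2$ system for $(t(1),t(s_1))$ in terms of $t(s_1 s_2)$, whose determinant is a unit once $x(a^2-y^2)$ and $x^2(a^2-y)$ are invertible.

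The bookkeeping in (i) is routine. The genuine obstacle is in (ii)--(iii): one has to check that these two relations carry the factors $x(a^2-y^2)$ and $x^2(a^2-y)$ \emph{exactly} --- that no extra factor of $x$, of $(a^2-y)$ or of $(a-b)(a-c)$ sneaks into the coefficient of $t(s_1)$, resp.\ of $t(1)$ --- since otherwise the invertibility hypotheses would not suffice. This is what forces the explicit elimination (most cleanly inside $\Phi_{H_3}$) in place of a mere dimension count.
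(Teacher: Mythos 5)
Your part (i) is sound and is essentially the paper's own argument: the paper likewise evaluates $t$ on $\mathcal{R}_1 s_1$ and records that the coefficient of $t(s_1s_2^{-1}s_1s_2^{-1})$ in the resulting linear relation is exactly $y^2$, a unit of $R$. Your verification that the five auxiliary traces collapse onto $t(1),t(s_1),t(s_1s_2)$ checks out (in particular $s_2^{-1}s_1^{-1}s_2^{-1}s_1=s_1^{-1}s_2^{-1}$ in the braid group, so the unknown $t\bigl((s_1s_2^{-1})^2\bigr)$ does not reappear).

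For (ii)--(iv) there is a genuine gap. What is needed are relations valid for traces with values in an \emph{arbitrary} $R$-module $M$; concretely, one must exhibit explicit elements of $I_3$ whose classes modulo $[H_3,H_3]$ and the Markov submodule are $(a^2-y^2)\bigl(x\,s_1-(y+1)\,s_1s_2\bigr)$ and $x^2(a^2-y)\cdot 1$ minus a combination of $s_1$ and $s_1s_2$. Your proposed route --- writing $t=\sum_\chi\mu_\chi\tr_\chi$, eliminating the surviving $\mu$'s and ``clearing denominators'' --- cannot deliver this: the decomposition only exists after inverting the Schur elements (it is meaningless for a general $M$, e.g.\ a torsion module), and an elimination over the fraction field determines each relation only up to a factor in $K^\times$, so it cannot certify that the multiple by exactly $(a^2-y^2)$, rather than by $(a^2-y^2)(y-1)$ or $(a^2-y^2)(a^2+y)$, lies in the image of $I_3$. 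You rightly single out the exactness of these factors as ``the genuine obstacle'', but the remedy you offer is precisely the step that cannot settle it; and the appeal to Lemma~\ref{lemRimpliesS} does not apply, since that lemma verifies an identity between elements of $H_3\otimes R_{\pm}$ via injectivity of $\Phi_{H_3}$, whereas here one needs a congruence modulo $[H_3,H_3]$ plus the Markov span, which $\Phi_{H_3}$ does not detect. The missing ingredient is the paper's direct computation with two specific ideal elements: $t(s_1^{-1}\mathcal{R}_1)=0$ reads $(a^2-y^2)\bigl(x\,t(s_1)-(y+1)\,t(s_1s_2)\bigr)=0$, and $t(\mathcal{R}_1)=0$ reads $x^2(a^2-y)\,t(1)=$ an explicit combination of $t(s_1)$ and $t(s_1s_2)$ (your remark that the latter is the $3$-strand case of Proposition~\ref{prop:unicitetracegen} correctly locates its source, but the former relation, with its exact factor, is the one that drives (ii) and (iv), and it is asserted rather than established in your argument). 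Once those two identities are actually computed, your deduction of (ii)--(iv) goes through.
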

\begin{proof} Let $t \in MT_3(M)$ factorizing though $\WBMW_3$. With the notations of proposition \ref{propMT3}
we write $t = \alpha e_1^* + \beta e_2^* + \gamma e_3^* + \delta e_4^*$. By direct calculation,
we check that the equation $t(\mathcal{R}_1s_1) = 0$ means
$$0=-{x}^{2} \left( {a}^{2}+ax+y \right) \alpha +
 {\frac {x \left( 2\, \left( y+1 \right) {a}^{3}+2\,x \left( y+1 \right) {a}^{2}+a \left( {x}^{2}+y \left( y+1
 \right)  \right) +yx \right) }{a}}\beta $${}$$+{\frac {- \left( y+1 \right) ^{2}{a}^{3}-x \left( y+1 \right) ^{2}{a}^{2}+a \left( -{x}^{2}-2\,{x}^{2}y+y \left( {y}^{
2}+y+1 \right)  \right) -yx \left( y+1 \right) }{a}} \gamma + {y}^{2} \delta
$$ 
and, since $y$ is invertible, this proves that $t$ is determined by $\alpha, \beta$ and $\gamma$,
meaning that $t$ is uniquely determined by $t(1)$, $t(s_1)$ and $t(s_1s_2)$, which proves (i).
Similarly, we get that $t(s_1^{-1}\mathcal{R}_1)=0$ reads
$$
(a^2-y^2) \left( x t(s_1) - (y+1) t(s_1s_2) \right) = 0
$$
and $t(\mathcal{R}_1)=0$ means
$$
x^2(a^2-y) t(1) = \frac{(a^2-y)(y+1) - x(y-1)a}{a} \left( x t(s_1) - (y+1) t(s_1s_2) \right) + \frac{x}{a}(y+1)(a^2-y) t(s_1)
$$
which easily implies (ii)-(iv).
\end{proof}

\begin{cor} Let $(t_n)$ be a Markov trace factoring through $(\WBMW_n)$ with values in $M$.
If  $x(a^2-y^2)$ and $x^2(a^2-y)$ are invertible in $M$ then $(t_n)$ is a multiple of $(t_n^H)$ composed
with some morphism $R \to M$.
\end{cor}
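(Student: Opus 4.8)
The plan is to show that, under the two invertibility hypotheses, any Markov trace $(t_n)$ factoring through $(\WBMW_n)$ with values in $M$ is completely determined by the single scalar $t_1(1)\in M$, and then to realise each possible value of $t_1(1)$ by composing the Ocneanu Markov trace $(t_n^H)$ with an $R$-module morphism $R\to M$. I first record that the hypotheses supply exactly the invertibility needed to run Proposition~\ref{prop:unicitetracegen} over $M$ itself: since $a^2-bc=a^2-y$ and $b+c=x$, invertibility of $x^2(a^2-y)$ in $M$ forces $x$ and $a^2-bc$ to act bijectively on $M$, so the localisation map $M\to M\otimes_R R[(a^2-bc)^{-1},(b+c)^{-1}]$ is the identity and $t'_n=t_n$. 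Hence Proposition~\ref{prop:unicitetracegen} (equivalently: rerunning its proof, which shows $x^2(a^2-y)\,t_{n+1}(1)$ is determined by $t_n$, together with Proposition~\ref{unicitetn1}) gives that $(t_n)$ is determined by $t_1(1)$ and $t_2(1)$.

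Next I eliminate the parameter $t_2(1)$ using the six-term relation on three strands. The Markov conditions give $t_3(s_1s_2)=t_2(s_1)=t_1(1)$ and $t_3(s_2)=t_2(1)$, while the trace property gives $t_3(s_1)=t_3(s_2)$ because $s_2=(s_1s_2)s_1(s_1s_2)^{-1}$ in $B_3$; thus $t_2(1)=t_3(s_1)$. Since $(t_n)$ factors through $(\WBMW_n)$, the functional $t_3$ annihilates the ideal $I_3$, so in particular $t_3(s_1^{-1}\mathcal{R}_1)=0$, which by the computation carried out in the proof of Proposition~\ref{prop:MT3bmw} reads $(a^2-y^2)\bigl(x\,t_3(s_1)-(y+1)\,t_3(s_1s_2)\bigr)=0$. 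As $x(a^2-y^2)$ is invertible in $M$, this forces $x\,t_3(s_1)=(y+1)\,t_3(s_1s_2)$, that is $t_2(1)=\frac{y+1}{x}\,t_1(1)$. Combined with the previous paragraph, $(t_n)$ is determined by $t_1(1)$ alone.

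Finally I exhibit, for an arbitrary $m\in M$, a Markov trace with value $m$ at $t_1(1)$: let $\varphi:R\to M$ be the $R$-module morphism $1\mapsto m$ and consider $\varphi\circ(t_n^H)$. Composing with an $R$-linear map preserves $R$-linearity, the trace identity and the Markov conditions, and $(t_n^H)$ factors through $(\WBMW_n)$ by definition, so $\varphi\circ(t_n^H)$ is a Markov trace on $(\WBMW_n)$ with values in $M$; moreover $(\varphi\circ t_1^H)(1)=\varphi(1)=m$ since $t_1^H(1)=1$. Taking $m=t_1(1)$ and invoking the uniqueness established above yields $t_n=\varphi\circ t_n^H$ for all $n$, which is the assertion. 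The only point requiring care is the chaining of the Markov and cyclicity relations with the two invertibility assumptions (one feeding Proposition~\ref{prop:unicitetracegen}, the other feeding the relation $t_3(s_1^{-1}\mathcal{R}_1)=0$) to pin $t_2(1)$ to $t_1(1)$; once that is in place the argument is purely formal.
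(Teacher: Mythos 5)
Your argument is correct and follows essentially the paper's route: the paper deduces the corollary directly from Proposition \ref{prop:MT3bmw} (iv) together with Proposition \ref{prop:unicitetracegen}, and your proof simply unpacks this by invoking the displayed relation $(a^2-y^2)\bigl(x\,t_3(s_1)-(y+1)\,t_3(s_1s_2)\bigr)=0$ from the proof of that proposition to pin $t_2(1)$ to $t_1(1)$, and by making explicit the (routine) realization step $t_n=\varphi\circ t_n^H$. The only cosmetic imprecision is calling the localisation map $M\to M\otimes_R R[(a^2-bc)^{-1},(b+c)^{-1}]$ the identity rather than an isomorphism; this does not affect the argument.
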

\begin{proof}
Immediate consequence of (iv) together with proposition \ref{prop:unicitetracegen}.
\end{proof}

Recall that $S^{\dagger} = S/(a^2-y) = R[x^{-1}]/(a^2-y)$ and $BMW_n^{\dagger} = \WBMW_n \otimes_R S^{\dagger}$.

\begin{prop} \label{propmarkovypas1} Let $t'_n$ be the composite of $t_n$ with $M \to M \otimes_R S^{\dagger}[ (y-1)^{-1}]$.
 Then $(t'_n)$ is a scalar multiple of the composite of $t_n^H$ with some morphism $R \to M$ and with $M \to M \otimes_R S^{\dagger}[ (y-1)^{-1}]$.
 \end{prop}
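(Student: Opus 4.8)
The plan is to base-change to $M' := M \otimes_R S^{\dagger}[(y-1)^{-1}]$, where $a^2 = y$ and where $x=b+c$, $y-1$ (hence also $a^2-y^2 = y(1-y)$) are invertible, to write $(t'_n)$ for the push-forward there of the given Markov trace, and to set $T_n := t'_n(1) \in M'$. I would first pin down all the scalars $T_n$, and then identify $(t'_n)$ with $T_1$ times the push-forward of the Ocneanu trace $(t_n^H)$ by an abstract uniqueness argument. The one relation that does all the work is $\mathcal{R}_{n-1}=0$ inside $\WBMW_{n+1}$.

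First I would record the elementary consequences of the Markov conditions. Taking the conjugating element trivial in the Markov relation gives $t_n(s_{n-1}^{\pm1}) = t_{n-1}(1) = T_{n-1}$ for $n\geq 2$, and since all $s_i$ (resp.\ all $s_i^{-1}$) are conjugate in $B_n$ and $t_n$ is a trace, $t_n(s_i^{\pm1}) = T_{n-1}$ for every $i$; likewise $t_{n+1}(s_{n-1}^{\pm1}) = t_{n+1}(s_n^{\pm1}) = T_n$ by conjugacy, $t_{n+1}(s_{n-1}^{\pm1}s_n^{\pm1}) = t_n(s_{n-1}^{\pm1}) = T_{n-1}$ and $t_{n+1}(s_{n-1}^{-2}s_n^{\pm1}) = t_n(s_{n-1}^{-2})$ by the Markov property, and $s_n^{-1}s_{n-1}s_n^{-1}$ is conjugate via the half-twist $s_{n-1}s_ns_{n-1}$ to $s_{n-1}^{-1}s_ns_{n-1}^{-1}$, hence cyclically to $s_{n-1}^{-2}s_n$. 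Using the cubic relation in the form $s_i^2 = (a+x)s_i - (y+ax) + ay\,s_i^{-1}$ and $s_i^{-2} = \frac{1}{ay}(s_i - (a+x) + (y+ax)s_i^{-1})$, all these $t'_n$- and $t'_{n+1}$-values become explicit $\Q[a,x,y^{\pm1}]$-linear combinations of $T_{n-1}, T_n, T_{n+1}$.

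The crux is then to expand $0 = t'_{n+1}(\mathcal{R}_{n-1})$ for $n\geq 2$. Applying the braid identities $s_{n-1}s_ns_{n-1}=s_ns_{n-1}s_n$ and $s_n^{-1}s_{n-1}^{-1}s_n^{-1}=s_{n-1}^{-1}s_n^{-1}s_{n-1}^{-1}$ together with the conjugacy above, then cyclic invariance and the Markov reductions, each of the six monomials of $\mathcal{R}_{n-1}$ contributes a combination of $T_{n-1},T_n,T_{n+1}$, so the relation becomes $\frac{x^2(y-a^2)}{a}\,T_{n+1} + P(T_{n-1},T_n) = 0$ for an explicit linear form $P$. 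The coefficient of $T_{n+1}$ is exactly the one already isolated in the proof of Proposition~\ref{prop:unicitetracegen} (a unit times $x^2(a^2-y)$), hence vanishes over $M'$; a direct computation of the remaining terms at $a^2=y$ gives $P(T_{n-1},T_n) = x(y-1)\bigl((y+1)T_{n-1} - xT_n\bigr)$. Since $x(y-1)$ is invertible on $M'$ we conclude $T_n = \frac{y+1}{x}T_{n-1}$ for all $n\geq 2$, hence $T_n = \bigl(\frac{y+1}{x}\bigr)^{n-1}T_1$. I expect this last expansion to be the only real obstacle: it is the same kind of computation as in Proposition~\ref{prop:unicitetracegen}, the new point being simply to keep track of the $T_{n-1}$- and $T_n$-coefficients that were discarded there, and it closes up cleanly precisely because $\mathcal{R}_{n-1}$ involves only $s_{n-1}$ and $s_n$.

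To conclude, I would set $\tau_n = T_1\cdot t_n^H$, viewed as an $M'$-valued Markov trace factoring through $\WBMW_n$ through $S \to S^{\dagger}[(y-1)^{-1}]$. By the recursion just proved, $\tau_n(1) = T_1\bigl(\frac{y+1}{x}\bigr)^{n-1} = T_n = t'_n(1)$ for every $n$; moreover $t'_1 = \tau_1$ on $\WBMW_1 = R$, and on $\WBMW_2 = R\oplus Rs_1\oplus Rs_1^{-1}$ one has $t'_2(s_1^{\pm1}) = t'_1(1) = T_1 = \tau_2(s_1^{\pm1})$ and $t'_2(1) = T_2 = \tau_2(1)$, so $t'_2 = \tau_2$. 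By Proposition~\ref{unicitetn1}, for $n\geq 2$ a Markov trace is determined on $\WBMW_{n+1}$ by its restriction to $\WBMW_n$ and its value at $1$; hence $t'_n = \tau_n$ together with $t'_{n+1}(1) = \tau_{n+1}(1)$ forces $t'_{n+1} = \tau_{n+1}$, and induction gives $t'_n = \tau_n$ for all $n$. (Alternatively, for $n=3$ one may invoke Proposition~\ref{prop:MT3bmw}(ii), applicable since $x(a^2-y^2) = xy(1-y)$ is invertible on $M'$, to identify $t'_3$ with $\tau_3$ from the two values $t'_3(1) = T_3$ and $t'_3(s_1s_2) = t'_2(s_1) = T_1$.) This is exactly the assertion of the proposition, with scalar $t'_1(1)$.
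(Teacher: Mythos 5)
Your argument is correct; I checked the one computation it hinges on, namely the expansion of $t'_{n+1}(\mathcal{R}_{n-1})$: the coefficient of $T_{n+1}$ is $\frac{x^2}{a}(y-a^2)$, i.e.\ a unit times $x^2(a^2-y)$ as in proposition \ref{prop:unicitetracegen}, and after specializing $a^2=y$ the remaining terms do collapse to $x(y-1)\bigl((y+1)T_{n-1}-xT_n\bigr)$, so the recursion $T_n=\frac{y+1}{x}T_{n-1}$ and the rest of your induction via proposition \ref{unicitetn1} go through (the only unstated ingredient is $t_n^H(1)=\bigl(\frac{y+1}{x}\bigr)^{n-1}$, which follows from the Hecke quadratic relation and the Markov conditions, or by applying your own recursion to $t_n^H$). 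The skeleton is the same as the paper's -- pin down the values $t_n(1)$ by a relation coming from $\mathcal{R}$, then invoke proposition \ref{unicitetn1} -- but the mechanism for the recursion differs. The paper subtracts $t'_3(s_1s_2)\,t_n^H$ and works with the difference trace $t_n^{\circ}$: it reuses the already computed $MT_3$ identity $t(s_1^{-1}\mathcal{R}_1)=0$, valid because $a^2-y^2=y(1-y)$ is invertible, applied to the $3$-strand restrictions of $t_{n+1}^{\circ}$, and converts it by conjugacy and Markov shifts into $t_n^{\circ}(1)=\frac{y+1}{x}\,t_{n-1}^{\circ}(1)=0$; this avoids any new expansion but requires checking that those restrictions lie in $MT_3$. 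You instead redo the expansion of $\mathcal{R}_{n-1}$ directly at level $n+1$, refining proposition \ref{prop:unicitetracegen} by keeping the $T_{n-1}$- and $T_n$-terms it discards; this costs one explicit (but verifiable) computation and buys a self-contained derivation together with the closed formula $T_n=\bigl(\frac{y+1}{x}\bigr)^{n-1}T_1$, with the scalar identified as $t'_1(1)$, which agrees with the paper's scalar $t'_3(s_1s_2)$ by the Markov property.
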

\begin{proof}
Inside $S^{\dagger}[(y-1)^{-1}]$ we have $a^2 -y^2 = y(1-y)$, hence by proposition \ref{prop:MT3bmw} (ii)
we have that $t'_3$ is uniquely determined by $t'_3(1)$ and $t'_3(s_1s_2)$,
and more precisely the equation $t'_3(s_1^{-1}\mathcal{R}_1)=0$ means
$t'(s_1) = \frac{y+1}{x} t'(s_1 s_2)$. Let $t_n^{\circ} = t'_n - t'_3(s_1s_2) t_n^H$.
Then $t_3^{\circ}(s_1s_2) = 0$ hence $t_3^{\circ}(s_1) = 0$ and therefore $t_3^{\circ} =  a^{-3}t_3^{\circ}(1) t_3^S$.
Let $\tau = t_4^{\circ} \circ \iota_3$, where $\iota_3 : \WBMW_3 \to \WBMW_4$ is the natural map. Again by
 \ref{prop:MT3bmw} (ii) we know that $\tau$ is uniquely determined by $\tau(1)$ and $\tau(s_1s_2)$. Moreover
 $\tau(s_1s_2) = t_4^{\circ}(s_1s_2) = t_4^{\circ}(s_2s_3) = t_3^{\circ}(s_2) = t_3^S(s_2) = 0$.
 It follows that $\tau(s_1) = 0$, since $\tau(s_1) = \frac{y+1}{x} \tau(s_1 s_2)$.
 But $\tau(s_1) =  t_4^{\circ}(s_1) =t_4^{\circ}(s_3) = t_3^{\circ}(1)$, hence $t_3(1) = 0$ and
 $t_3^{\circ} = 0$. We prove by induction on $n$ that $t_n^{\circ}(1) = 0$. Assume we
 know that $t_r^{\circ}(1) = 0$ for all $r \leq n$, and consider the composite $\tau$ of $t_{n+1}^{ \circ}$
 with the natural map $\WBMW_3 \to \WBMW_{n+1}$. We have $\tau(s_1s_2) = t_{n+1}^{\circ}(s_1 s_2)
 = t_{n+1}^{\circ}(s_{n-1} s_{n})= t_{n-1}^{\circ}(1) = 0$. It follows as before that $\tau(s_1) = 0$,
 hence $t_n^0(1) = t_{n+1}^0(s_n) = t_{n+1}^0(s_1) = \tau(s_1) = 0$. It follows that $t_n^{\circ}(1) = 0$,
 and the claim follows by induction because of proposition \ref{unicitetn1}.
\end{proof}

\subsection{The Kauffman trace}

Recall that we defined in \S \ref{secttwobmwagebras} two Markov traces $t_n^{\pm} : BMW_n^{\pm} \to S_{\pm}$. Using the Chinese
Remainder Theorem isomorphism $\overline{S} \simeq S_+ \oplus S_-$ they
can be patched together into a Markov trace $\overline{BMW}_n \otimes_{\overline{R}}  \overline{S} \to
\overline{S}$, which extends to a Markov trace $t_n^K : \WBMW_n \to \overline{S}$.

$$
\xymatrix{
  &\overline{BMW}_n\otimes_{\overline{R}}S_+ \ar[r]&  BMW_n^+ \ar[r]^{t_n^+} & S_+ \ar[dr] & \\
 \overline{BMW}_n\otimes_{\overline{R}} \overline{S} \ar[ur] \ar[dr] \ar@{.>}[rrrr]^{t_n^K}& & & & \overline{S} \\
  & \overline{BMW}_n\otimes_{\overline{R}}S_- \ar[r]& BMW_n^- \ar[r]_{t_n^-} & S_- \ar[ur] & \\
}
$$

Note that the value of $t_3^K((s_1 s_2^{-1})^2)$ define before indeed matches
this new definition, because of the
computation of the Kauffman invariant of the figure-eight knot we did in section \ref{secttwobmwagebras}.

\begin{prop} Let $t'_n$ be the composite of $t_n$ with $M \to M \otimes_R \overline{S}[ (a^2-y)^{-1}]$.
 Then $(t'_n)$ is a linear combination of the composites of $t_n^H$ and $t_n^K$ with some morphism $\overline{S} \to M \otimes_R \overline{S}[ (a^2-y)^{-1}]$.
\end{prop}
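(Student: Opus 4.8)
The plan is to show that, after the localization, every Markov trace on the tower is an $\overline{S}[(a^2-y)^{-1}]$-combination of the two traces $t_n^H$ and $t_n^K$ we already have in hand. Write $N = M \otimes_R \overline{S}[(a^2-y)^{-1}]$. First I would observe that on $N$ both $b+c = x$ and $a^2 - bc = a^2-y$ act invertibly: the former because $\overline{S}$ is an algebra over $S = R[(b+c)^{-1}]$, the latter by the very definition of the localization. Hence the canonical map $N \to N \otimes_R R[(a^2-bc)^{-1},(b+c)^{-1}]$ is an isomorphism, and proposition \ref{prop:unicitetracegen} applies verbatim to tell us that \emph{any} Markov trace on $(\WBMW_n)_{n\ge 1}$ with values in $N$ is uniquely determined by its two values at $1 \in \WBMW_1$ and $1 \in \WBMW_2$.

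Next I would record the data of the two classical traces, regarded as Markov traces with values in $\overline{S}$ (for $t_n^H$ by composing with $R \onto \overline{S}$, for $t_n^K$ by the patching construction recalled above): one has $t_1^H(1) = t_1^K(1) = 1$, $t_2^H(1) = (y+1)/x$, and $t_2^K(1)$ equals the common value of the $\delta_K^\pm$ under $\overline{S} \simeq S_+ \oplus S_-$, namely $\delta_K = (y^2 - ax + y)/(xy)$. The single elementary computation to carry out is
$$
\delta_K - \frac{y+1}{x} = \frac{y^2 - ax + y - y(y+1)}{xy} = -\frac{a}{y},
$$
which is a unit in $R$; consequently the $2\times 2$ matrix $\left(\begin{smallmatrix} 1 & 1 \\ (y+1)/x & \delta_K \end{smallmatrix}\right)$ is invertible over $\overline{S}[(a^2-y)^{-1}]$.

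Finally, given a Markov trace $(t_n)$ with values in $M$, I would write $(t'_n)$ for its image under $M \to N$ and set $\alpha = t'_1(1)$, $\beta = t'_2(1) \in N$. Inverting the matrix above yields (unique) $\lambda,\mu \in N$ with $\lambda + \mu = \alpha$ and $\frac{y+1}{x}\lambda + \delta_K\mu = \beta$. Since $N$ is a module over $\overline{S}[(a^2-y)^{-1}]$, the assignment $\xi \mapsto t_n^H(\xi)\lambda + t_n^K(\xi)\mu$ (the scalars $t_n^H(\xi),t_n^K(\xi)\in\overline{S}$ acting on $N$) is again a Markov trace on $(\WBMW_n)$ with values in $N$ — $R$-linearity, the trace identity and the two Markov relations all hold termwise — and by construction it takes the values $\alpha$ and $\beta$ at $1 \in \WBMW_1$ and $1 \in \WBMW_2$. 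By the first paragraph it must coincide with $(t'_n)$; equivalently $(t'_n)$ is the sum of the composites of $t_n^H$ and $t_n^K$ with the $\overline{S}$-module morphisms $\overline{S} \to N$ sending $1$ to $\lambda$, resp. to $\mu$, which is the assertion.

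The proof has no genuinely hard step: the whole weight rests on proposition \ref{prop:unicitetracegen}, which confines the localized traces to a two-parameter family, together with the fact that $t_n^H$ and $t_n^K$ already realize two independent parameters — "independent" meaning precisely that $\delta_K - (y+1)/x = -a/y$ is a unit. The only input not literally contained in the text, requiring a line of checking, is that $t_2^K(1)$ is the element $\delta_K$ displayed before proposition \ref{prop:MT3bmw}; this follows from the Markov property and the recalled values of $t_3^K$, or directly from $t_2^\pm(1) = \delta_K^\pm$ together with the splitting $\overline{S} \simeq S_+ \oplus S_-$.
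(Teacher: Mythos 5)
Your argument is correct and is essentially the paper's own: both rest on proposition \ref{prop:unicitetracegen} (after noting that $x$ and $a^2-y$ act invertibly on the localized module, so the trace is pinned down by its two normalization values) together with the computation that $\delta_H - \delta_K = a/y$ is a unit, which makes the pair $(t_n^H, t_n^K)$ realize all possible pairs of normalizations. Your extra verifications (that $t_2^K(1)=\delta_K$ via $\overline{S}\simeq S_+\oplus S_-$, and that the linear combination is again a Markov trace) are exactly the steps the paper leaves implicit.
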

\begin{proof}
By proposition \ref{prop:unicitetracegen} we know that $t'_n$ is uniquely determined by $t'_1(1) = t'_3(s_1s_2)$ and $t'_2(1) = t'_3(s_1)$. It is thus
sufficient to show that $t_3^H$ and $t_3^K$ induce a basis of $( \overline{S}[ (a^2-y)^{-1}])^2$ under $t'_3 \mapsto (t'_3(s_1s_2),t'_3(s_1))$.
The corresponding $2 \times 2$ matrix has for determinant
$$
\left| \begin{array}{cc} \delta_H & 1 \\ \delta_K & 1 \end{array} \right| = \delta_H - \delta_K = \frac{a}{y} \in ( \overline{S}[ (a^2-y)^{-1}])^{\times}
$$
with $\delta_H = \frac{y+1}{x}$, and this concludes the proof.
\end{proof}

\begin{remark} It is well-known that both the Kauffman polynomial and
the HOMFLYPT polynomial both specialize to the Jones polynomial
(see e.g. \cite{LICKO} p. 180). It could have been expected that this coincidence would have appeared in the previous proposition.
It clearly does not, since any specialized value of the invariants defined by our
traces $t_n^H$ and $t_n^K$
cannot coincide on the 2-components unlink (we always have $\delta_H \neq \delta_K$).
What happens is that, according to \cite{LICKO} proposition 16.6, the trace $t_n^K$ provides the Jones
polynomial when specialized to $\alpha = -t^{\frac{-3}{4}}$, $q = t^{\frac{1}{4}}$, that
is $a = q^{-6}$, $b = -q^{-2} = -t^{\frac{-1}{2}}$, $c = -q^{-4} = -t^{-1}$ while, according to \cite{LICKO} proposition 16.5, $t_n^H$
provides the Jones polynomial when $\{b,c \}$ is specialized to $\{ - t^{\frac{1}{2}}, t^{\frac{3}{2}} \}$, and the
only value of $t$ for which these two parametrizations coincide is $t^{\frac{1}{2}} = -1$, in which
case we get $b+c = 0$, which is forbidden here.
\end{remark}

\subsection{An additional trace when $y=a^2= 1$}

Let $S^{\dagger\dagger}  = S/(a^2-1,y-1) = S^{\dagger}/(a^2-y^2) = \overline{S}/(a^2-y)$,
and $BMW_n^{\dagger\dagger} = \WBMW_n \otimes_R S^{\dagger \dagger}$.

We first deal with the very special case $x = -2a$.
\begin{prop} \label{propTracesXm2a}
Let $(x_n)_{n \geq 1}$ denote a sequence with values in some $S^{\dagger\dagger}$-module
satisfying $(x+2a) M = 0$. Then there exists a Markov trace $(t_n^X)$ on $(\widetilde{BMW}_n)$
with values in $M$ such that, for every braid $g$, $t_n^X(g) = x_{\# \hat{g}}$, where
$\hat{g}$ denotes the closure of the braid $g$ and $\# L$ denotes the number of
components of the link $L$. Moreover, every Markov trace on $(\widetilde{BMW}_n)$ with
values in $M$ is of that form.
\end{prop}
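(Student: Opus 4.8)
The plan is to recognize, over the degenerate specialization at hand, the group algebra of the symmetric group as a quotient of $\WBMW_n$, and then to read off all Markov traces from the elementary description of traces on the tower of symmetric groups. Write $R_0=S^{\dagger\dagger}/(x+2a)$; the hypothesis on $M$ says exactly that $M$ is an $R_0$-module, and in $R_0$ one has $a^2=1$, $y=bc=1$ and $x=b+c=-2a$.

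\emph{A surjection onto the symmetric group algebra.} First I would prove that the $R_0$-algebra morphism $R_0B_n\to R_0\mathfrak{S}_n$ sending $s_i$ to the Coxeter transposition $\tau_i$ factors through $\WBMW_n\otimes_R R_0$, that is, through $R_0B_n$ modulo the cubic relations and the relations $\mathcal{R}_i$. Since $\tau_i^{\pm1}=\tau_i$, the cubic relation $(s_i-a)(s_i-b)(s_i-c)$ maps to $(\tau_i-a)(\tau_i^2-(b+c)\tau_i+bc)=(\tau_i-a)(1-x\tau_i+y)=2(\tau_i-a)(1+a\tau_i)$, which vanishes precisely because $x=-2a$ (together with $y=1$ and $a^2=1$); and, using the braid relation in $\mathfrak{S}_n$ and $s_j^{\pm2}\mapsto1$, the relation $\mathcal{R}_i$ maps to $(-y+y^2-1+y^3)\tau_i\tau_{i+1}\tau_i+(x-xy^2)$, which is $0$ as soon as $y=1$. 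This gives a surjection $\pi_n:\WBMW_n\otimes_R R_0\onto R_0\mathfrak{S}_n$, $s_i\mapsto\tau_i$, compatible with the inclusions in the sense that $\pi_{n+1}\circ\iota_n=\iota_n\circ\pi_n$ (writing $\iota_n$ also for $R_0\mathfrak{S}_n\hookrightarrow R_0\mathfrak{S}_{n+1}$).

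\emph{Cycle-counting traces and existence.} Given a sequence $(x_n)$ in $M$, let $c(\sigma)$ be the number of cycles of $\sigma\in\mathfrak{S}_n$ (fixed points included) and let $\bar t_n:R_0\mathfrak{S}_n\to M$ be the $R_0$-linear map with $\bar t_n(\sigma)=x_{c(\sigma)}$. As $c$ is a class function each $\bar t_n$ is a trace; and for $\sigma\in\mathfrak{S}_n$ multiplication by $\tau_n=(n,n+1)$ merges the fixed point $n+1$ into the cycle of $n$, so $c(\sigma\tau_n^{\pm1})=c(\sigma)$ and hence $\bar t_{n+1}(\iota_n(\sigma)\tau_n^{\pm1})=\bar t_n(\sigma)$. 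Transporting through $\pi_n$, I claim $t_n^X:=\bar t_n\circ\pi_n$ is a Markov trace on $(\WBMW_n)$: it is a trace because it is the composite of an algebra morphism with a trace, and the identities just noted, together with $\pi_{n+1}(s_n)=\tau_n$ and the compatibility of the $\pi_n$, yield the Markov condition. Finally $\pi_n(g)$ is the underlying permutation of a braid $g$, whose cycles correspond bijectively to the components of the closure $\hat g$, so $t_n^X(g)=x_{\#\hat g}$; in particular $t_n^X(1)=x_n$.

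\emph{Uniqueness, and the main obstacle.} For the converse I would invoke Proposition \ref{unicitetn1}: a Markov trace $(t_n)$ is determined by the sequence $(t_n(1))_{n\ge1}$ of its values on the units (Proposition \ref{unicitetn1} gives the passage from $t_n$ to $t_{n+1}$ for $n\ge2$; the cases $n=1,2$ are immediate, since $\WBMW_1=R$ and $\WBMW_2$ is spanned by $1,s_1,s_1^{-1}$ with $t_2(s_1^{\pm1})=t_1(1)$ forced by the Markov relation). So, given any Markov trace $(t_n)$ with values in $M$, set $x_n:=t_n(1)$; by the previous step $(t_n^X)$ is a Markov trace sharing all those values, whence $t_n=t_n^X$ and $t_n(g)=x_{\#\hat g}$. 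The one computationally substantial point — and the only place the very special hypotheses $y=a^2=1$ and $x=-2a$ are genuinely used — is the factorization of the second step; within it, it is the \emph{cubic} relation that pins down $x=-2a$, since $\mathcal{R}_i$ already becomes trivial whenever $y=1$.
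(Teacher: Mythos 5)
Your proposal is correct and follows essentially the same route as the paper: you show that at the specialization $y=a^2=1$, $x=-2a$ the quotient map to the symmetric-group algebra kills both the cubic relation and $\mathcal{R}_i$, construct the trace from the number of components (equivalently, cycles of the underlying permutation), and deduce that these exhaust all Markov traces from Proposition \ref{unicitetn1} via the values $t_n(1)$. The only difference is presentational — you verify the trace and Markov conditions directly by cycle counting in $\mathfrak{S}_n$, whereas the paper invokes invariance of the component count under the Markov moves on braid closures — so no further comment is needed.
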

\begin{proof}
We first note that the relations $(s_i-a)(s_i^2 + 2a s_i +1) = (s_i -a)(s_i+a)^2 = (s_i^2-1)(s_i+a) = 0$
hold true in $M$. Therefore, the action of $BMW_n^{\dagger\dagger}$ on $M$ factors
through the group algebra $\Q[a]/(a^2-1) \mathfrak{S}_n$ of the symmetric group. Moreover, the
natural map $H_n \otimes S^{\dagger\dagger}/(x+2a) \to \Q[a]/(a^2-1) \mathfrak{S}_n$, where $H_n$
is the cubic Hecke algebra defined by $(s_i^2-1)(s_i+a) = 0$, is clearly surjective, and $\mathcal{R}_1$
is easily checked to map to $0$. Therefore, the map $\widetilde{BMW}_n \otimes S^{\dagger\dagger}/(x+2a) \to \Q[a]/(a^2-1) \mathfrak{S}_n$
is surjective.

We show that there exists a Markov trace $(t_n^X)$ on $((\Q[a]/(a^2-1)) \mathfrak{S}_n)$
fulfilling the conditions of the statement. Since the formula
$t_n^X(g) = x_{\# \hat{g}}$ clearly defines an invariant of links, and therefore a Markov
trace on the tower of algebras of the braid groups, it is sufficient to prove that
$t_n^X$ vanishes on the defining ideal of $\Q[a]/(a^2-1) \mathfrak{S}_n$ for any given $n$.
This ideal is the linear span of the $A s_i^2 B - AB$ for $A,B$ two arbitrary braids on $n$
strands. Since the closures of $A s_i^2 B$ and $AB$ have the same number of components,
we indeed get that $(t_n^X)$ is a well-defined Markov trace on $(\widetilde{BMW}_n)$.
Since $t_n^X(1) = x_n$, the fact that all Markov traces are obtained this way is a consequence of proposition \ref{unicitetn1}.

\end{proof}

We can now state a general statement.
\begin{prop} {\ }  \label{propclassmarkovddagger3}
\begin{enumerate}
\item There exists a Markov trace $(t_n^{\dagger\dagger})$ on $\WBMW_n$ with values in $S^{\dagger\dagger}$, given by 
$t_n^{\dagger\dagger}(\beta) = a^n \psi_n(\beta)$, where $\psi_n : \WBMW_n \to S^{\dagger\dagger}$ is an algebra morphism
defined by $s_i \mapsto a$.
\item Let $t'_3$ be the composite of $t_3$ with $M \to M \otimes_R S^{\dagger\dagger}[ (a-x)^{-1},(2a-x)^{-1}]$.
 Then $(t'_3)$ is a linear combination of the composites of $t_3^H$, $t_3^K$, $t_3^{\dagger\dagger}$ 
 with
 coefficients inside $M \otimes_R S^{\dagger\dagger}[ (a-x)^{-1},(2a-x)^{-1}]$.
\end{enumerate}
\end{prop}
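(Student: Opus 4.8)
The plan is to treat the two parts separately: (1) by exhibiting the algebra morphism $\psi_n$ explicitly, and (2) by reducing to a single $3\times 3$ determinant over the localised ring. For (1), recall that in $S^{\dagger\dagger}=S/(a^2-1,y-1)$ one has $a^2=1$ and $y=bc=1$. The assignment $s_i\mapsto a$ kills the cubic relation $(s_i-a)(s_i^2-xs_i+y)$ through its first factor, and a short substitution into $\mathcal R_1=-ys_1s_2^{-1}s_1+y^2s_2^{-1}s_1s_2^{-1}-s_1s_2s_1-xy^2s_2^{-2}+xs_1^2+y^3s_1^{-1}s_2^{-1}s_1^{-1}$ — using that $a^{-1}=a$ and $y=1$, so each of the six terms becomes $\pm a$ or $\pm x$ — gives $\mathcal R_1\mapsto 0$. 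Hence $s_i\mapsto a$ extends to an $S^{\dagger\dagger}$-algebra morphism $BMW_n^{\dagger\dagger}\to S^{\dagger\dagger}$, whose composite with $\WBMW_n\to BMW_n^{\dagger\dagger}$ I call $\psi_n$. Being $a^n$ times an algebra morphism to a commutative ring, $t_n^{\dagger\dagger}:=a^n\psi_n$ is automatically a trace; for the Markov property I would compute, for $x\in\WBMW_n$,
$$
t_{n+1}^{\dagger\dagger}(\iota_n(x)s_n^{\pm1})=a^{n+1}\psi_{n+1}(\iota_n(x))\,\psi_{n+1}(s_n)^{\pm1}=a^{n+1}a^{\pm1}\psi_n(x),
$$
and invoke $a^2=1$ (so $a^{\pm1}=a$ and $a^{n+2}=a^n$) together with $\psi_{n+1}\circ\iota_n=\psi_n$ to reduce this to $a^n\psi_n(x)=t_n^{\dagger\dagger}(x)$. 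This settles (1).

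For (2), write $A=S^{\dagger\dagger}[(a-x)^{-1},(2a-x)^{-1}]$, $M'=M\otimes_R A$, and $\Phi(t)=(t(1),t(s_1),t(s_1s_2))$. Since $a^2-y^2$ maps to $0$ in $S^{\dagger\dagger}$, the quotient $\overline S\to S^{\dagger\dagger}$ exists, so $t_3^K$, $t_3^H$ and $t_3^{\dagger\dagger}$ all give $A$-valued traces factoring through $\WBMW_3$ (by the patching construction of $t_n^K$, by Proposition \ref{propstructgenWBMW}(i) and the definition of $(t_n^H)$, and by part (1), respectively), and $t'_3$ likewise lies in $MT_3(M')$ and factors through $\WBMW_3$. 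By Propositions \ref{propMT3} and \ref{prop:MT3bmw}(i), $\Phi$ is injective on the set of such traces. So it suffices to show $\Phi(t_3^H),\Phi(t_3^K),\Phi(t_3^{\dagger\dagger})$ form an $A$-basis of $A^3$: then $\Phi(t'_3)$ is a unique $M'$-combination $\lambda\Phi(t_3^H)+\mu\Phi(t_3^K)+\nu\Phi(t_3^{\dagger\dagger})$, the corresponding combination $\lambda t_3^H+\mu t_3^K+\nu t_3^{\dagger\dagger}$ is again a trace in $MT_3(M')$ factoring through $\WBMW_3$ (an $M'$-multiple of one of the $t_3^\bullet$ clearly is), and injectivity of $\Phi$ forces it to equal $t'_3$.

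This reduces the whole statement to checking that
$$
\det\begin{pmatrix}
\bigl(\tfrac{y+1}{x}\bigr)^2 & \tfrac{y+1}{x} & 1\\
\delta_K^2 & \delta_K & 1\\
a^3 & a^4 & a^5
\end{pmatrix}
$$
is a unit of $A$, where $\delta_K=\tfrac{y^2-ax+y}{xy}$ and, in $S^{\dagger\dagger}$, the bottom row is $(a,1,a)$. The top two rows have the Vandermonde shape $(u^2,u,1)$, and in $S^{\dagger\dagger}$ (where $y=1$) one has $\tfrac{y+1}{x}-\delta_K=\tfrac{2}{x}-\tfrac{2-ax}{x}=a$; expanding along the last row, factoring out this difference, and reducing with $a^2=y=1$, I expect the determinant to come out to $\tfrac{2(x-a)(x-2a)}{x^2}$. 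This is manifestly invertible in $A$, since $2$, $x$, $x-a=-(a-x)$ and $x-2a=-(2a-x)$ all are — which is exactly why those two elements are inverted in the statement, and reflects that $x=a$ and $x=2a$ are the genuinely exceptional values. I expect this determinant evaluation to be the only substantive computation; the module-level bookkeeping around the injectivity reduction and base change is routine but worth spelling out, since $M$, and hence $M'$, is an arbitrary module rather than a field.
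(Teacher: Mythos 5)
Your proof is correct and follows essentially the same route as the paper: part (1) by checking directly that $s_i\mapsto a$ annihilates $\mathcal{R}_1$ and that $a^2=1$ gives the Markov property, and part (2) by reducing via Proposition \ref{prop:MT3bmw}(i) to the invertibility of the same $3\times 3$ determinant, whose value $\frac{2}{x^2}(a-x)(2a-x)$ you compute correctly.
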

\begin{proof}
We first check that the $R$-algebra morphism $\psi_n : H_n \to S^{\dagger\dagger}$ defined by $s_i \mapsto a$
indeed factorizes through $\WBMW_n$, namely that $\psi_n(\mathcal{R}_1) = 0$, by direct calculation. Then
$t_n^{\dagger\dagger}(x) = a^n \psi_n(x)$ clearly defines a trace for every $n$, and we need to check
the Markov property, namely that $t_{n+1}^{\dagger\dagger}(x s_{n}^{\pm 1}) = t_n^{\dagger\dagger}(x)$
for all $x \in \WBMW_n$. This holds because 
$t_{n+1}^{\dagger\dagger}(x s_{n}^{\pm 1}) = a^{n+1} \psi_{n+1}(x s_{n}^{\pm 1})
= a^{n+1} \psi_{n+1}(x) a^{\pm 1}
= a^{n+1} \psi_{n}(x) a^{\pm 1} 
= a^{n+1} \psi_{n}(x) a
= a^{n+2} \psi_{n}(x)
= a^n \psi_n(x) = t_n^{\dagger\dagger}(x)$. This proves (i).
Note that $t_3^{\dagger\dagger}(1) = a^3$, $t_3^{\dagger\dagger}(s_1) = a^2=1$, $t_3^{\dagger\dagger}(s_1s_2) = a$,  
We know that $t_3'$ is uniquely determined by its value on $1,s_1,s_1s_2$. It has to be a linear
combination of $t_3^H$, $t_3^K$ and $t_3^{\dagger\dagger}$ if and only if 
$$
\Delta = \left| \begin{array}{ccc}  a^3 & a^2 & a \\ \delta_H^2 & \delta_H & 1 \\ \delta_K^2 & \delta_K & 1 \\ \end{array} \right| 
=a  \left| \begin{array}{ccc}  a^2 & a & 1 \\ \delta_H^2 & \delta_H & 1 \\ \delta_K^2 & \delta_K & 1 \\ \end{array} \right| 
$$
is invertible. Since $\delta_H = (y+1)/x = 2/x$, $\delta_K = \frac{2}{x} - a$, we get
$$
\Delta = \frac{2}{x^2}(2a-x)(a-x)
$$
whence the conclusion of (ii).

\end{proof}

We will show below (see corollary \ref{corclasstracesddagger}) that part (ii) actually holds true for every $n$, provided
that $x+2a$ is also assumed to be invertible.

We note that, when specialized to a field, $x=a, y=1,a^2=1$ imply $x \in \{-1,1 \}$, hence $\{ b,c \} = \{-j, -j^2 \}$ with $j = \exp(\frac{2 \ii \pi}{3})$
if $a=1$, and $\{ b,c \} = \{ j, j^2 \}$ if $a=-1$ ; likewise,
$x=2a, y=1,a^2=1$ imply $x \in \{-2,2 \}$, hence $b=c$, and $b,c \in \{-1,1 \}$,
hence either $a=b=c=1$ or $a=b=c=-1$.
In these four cases, we have $a = \pm bc$, and a possible additional trace on $\WBMW_n$ cannot factorize through
$BMW_n^{\pm}$, as is immediately checked on the 12-terms relation (note that,
by substracting a linear combination of the two ordinary traces,
we can assume in the first two cases that this trace satisfies $t_3(1) = t_3(s_1) = 0$, $t_3(s_1s_2) = 1$, while in the
latter cases we can assume $t_3(1) = 1$, $t_3(s_1) = t_3(s_1s_2)=0$).

\section{A central extension of $BMW$}

\subsection{Definition}
We define an algebra $F_n$ over $A = \Q[a,x,x^{-1}]/(a^2 = 1)$ by generators
$s_1,\dots,s_{n-1}$, $e_1,\dots,e_{n-1},C$ and relations
\begin{enumerate}
\item $s_i s_{i+1} s_i  = s_{i+1} s_i s_{i+1}$, $s_i s_j = s_j s_j$
\item $(s_i -a)(s_i^2 -x s_i+1)=0$
\item $e_i = a \left( \frac{ s_i^{-1} + s_i}{x} - 1 \right)$
\item $s_i e_i = a e_i$
\item $e_i s_{i+1} e_i =  e_i+C$
\item $e_i s_{i-1} e_i =  e_i+C$
\item $e_i s_{i+1}^{-1} e_i =  e_i+C$
\item $e_i s_{i-1}^{-1} e_i =  e_i+C$
\item $s_i C = C s_i = a C$.
\end{enumerate}
Letting  $\tilde{\delta} = 2 - ax $, we have $e_i^2= \tilde{\delta}x^{-1} e_i$. 
Immediate consequences of these relations are
$s_i^{-1} C = C s_i^{-1} = a C$, $e_i C = C e_i = x^{-1} \tilde{\delta}C$,
$C^2 = (x^{-2} \tilde{\delta}^2 a - x^{-1} \tilde{\delta})C = x^{-1} \tilde{\delta}(a x^{-1} \tilde{\delta} - 1) C
= 2x^{-2} \tilde{\delta}( a-x) C$. Note
that, in the specializations $x = a$ and $x = 2a$, we have $C^2 = 0$.

The following is easily checked
\begin{prop} \label{propbasiqueFn}
\begin{enumerate}
\item The $S^{\dagger\dagger}$-algebra $F_n \otimes_A S^{\dagger\dagger}$ is a quotient of $H_n \otimes_R S^{\dagger\dagger}$ through $s_i \mapsto s_i$ and $A \to S^{\dagger\dagger}$ being given by $x \mapsto b+c$. This quotient factorizes through $BMW_n^{\dagger\dagger}$.
\item There is a surjective morphism of $S^{\dagger\dagger}_{\pm}$-algebras $F_n \otimes_A S^{\dagger\dagger}_{\pm} \onto BMW_n^{\pm} \otimes_{S_{\pm}} S^{\dagger\dagger}_{\pm}$ satisfying
$s_i \mapsto s_i$, $e_i \mapsto e_i$, $C \mapsto 0$. Its kernel is the linear span of $C$.
\item The automorphism and antiautomorphism of $A$-algebras of the group algebra $A B_n$
defined by $s_i \mapsto s_i^{-1}$ induce an automorphism and a antiautomorphism of $F_n$.
\item Every Markov trace $(t_n)$ factorizing through $F_n$ is uniquely determined by $t_3(1)$, $t_3(s_1)$
and $t_3(s_1s_2)$.
\end{enumerate}
\end{prop}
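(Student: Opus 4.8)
The plan is to obtain all four assertions from a direct inspection of the defining relations of $F_n$, the only substantial point being (iv). For (i): the relation $e_i = a\bigl((s_i^{-1}+s_i)/x-1\bigr)$ expresses each $e_i$, and $C = e_1 s_2 e_1 - e_1$ expresses $C$, as noncommutative polynomials in the $s_j^{\pm 1}$, so $F_n\otimes_A S^{\dagger\dagger}$ is generated by the images of $s_1,\dots,s_{n-1}$; since the braid relations and the cubic $(s_i-a)(s_i^2-xs_i+1)$ of $F_n$ are exactly the defining relations of $H_n$ after the substitution $y=bc\mapsto 1$, $x\mapsto b+c$ fixed by $A\to S^{\dagger\dagger}$, the assignment $s_i\mapsto s_i$ gives a surjection $H_n\otimes_R S^{\dagger\dagger}\onto F_n\otimes_A S^{\dagger\dagger}$. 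It factors through $BMW_n^{\dagger\dagger}=\WBMW_n\otimes_R S^{\dagger\dagger}$ precisely when $\mathcal{R}_1$ maps to $0$; by Proposition \ref{proppresWBMWidemp} (with $y=1$, $a^{-1}=a$) this is the identity $s_2^{-1}e_1 s_2^{-1}=s_1 e_2 s_1$ inside $F_3$, which I obtain by running the computation of Proposition \ref{propWBMWetendbien} verbatim, using $e_1 s_2 e_1 = e_1 + C$ and $e_2 s_1^{-1} e_2 = e_2 + C$ in place of the corresponding BMW relations; the extra terms this produces are an additional $+C$ in $e_1 e_2 s_1$ and in $e_2 e_1 s_2^{-1}$, and tracking them through (using $s_iC=Cs_i=aC$) one finds that the $C$-contributions to the two sides of the claimed identity are both equal to $axC$, hence cancel.

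For (ii), one checks that the nine defining relations of $F_n$ all hold in $BMW_n^{\pm}\otimes_{S_\pm}S^{\dagger\dagger}_{\pm}$ once $C$ is sent to $0$: the braid relations, the cubic, $s_ie_i=ae_i$, and the definition of $e_i$ (which agrees with the BMW element because $a=\pm1$ and $y=1$ on $S^{\dagger\dagger}_\pm$) are built in; $e_i s_{i\pm1}^{\pm1}e_i=e_i+C$ becomes the BMW hook relation $e_i s_{i\pm1}^{\pm1}e_i=e_i$; and $s_iC=Cs_i=aC$ is vacuous. The resulting surjection kills the two-sided ideal $AC$, which is at most one-dimensional over $A$ since every generator of $F_n$ sends $C$ to a scalar multiple of itself; conversely, setting $C=0$ turns the presentation of $F_n$ into a (redundant) form of the classical BMW presentation, so $F_n/AC$ is exactly $BMW_n^{\pm}\otimes_{S_\pm}S^{\dagger\dagger}_\pm$ and the kernel is precisely $AC$. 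For (iii): the definition of $e_i$ forces $e_i\mapsto e_i$ under $s_i\mapsto s_i^{-1}$, whence $C=e_1 s_2 e_1 - e_1\mapsto e_1 s_2^{-1}e_1-e_1=C$ by $e_1 s_2^{-1}e_1=e_1+C$; granting $e_i$ and $C$ fixed, each defining relation goes to another relation of the list, or to a trivial consequence of $s_ie_i=ae_i$ and $a^2=1$ — in particular the cubic is self-dual under $s_i\leftrightarrow s_i^{-1}$ exactly because $a^2=1$, and the four hook relations are permuted among themselves. Hence $s_i\mapsto s_i^{-1}$ descends to an involutive automorphism of $F_n$, and the identical computation with the order of products reversed gives the antiautomorphism.

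For (iv): a Markov trace $(t_n)$ factoring through $(F_n)$ restricts on three strands to an element of $MT_3(M)$ factoring through $\WBMW_3$ by (i), so $t_3$ is determined by $t_3(1)$, $t_3(s_1)$ and $t_3(s_1 s_2)$ by Proposition \ref{prop:MT3bmw}(i). The proof of Proposition \ref{unicitetn1} applies verbatim to the tower $(F_n)$, since $F_{n+1}=F_n+F_n s_n F_n+F_n s_n^{-1}F_n$ follows from Proposition \ref{propstructgenWBMW}(ii) together with (i); so it remains to show that each scalar $t_{n+1}(1)$, $n\ge 3$, is determined by $t_3$. The generic argument of Proposition \ref{prop:unicitetracegen} is unavailable here because $a^2-y=0$ on $S^{\dagger\dagger}$, so instead I apply $t_{n+1}$ to the relation $e_{n-1}s_n e_{n-1}=e_{n-1}+C$. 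Using cyclicity of the trace, $e_i^2=\tilde{\delta}x^{-1}e_i$, the Markov property for $e_{n-1}\in F_n$, and the conjugacy of all the $s_i$, the left-hand side equals $\tilde{\delta}x^{-1}t_n(e_{n-1})$, a linear expression in $t_{n-1}(1)$ and $t_n(1)$; the right-hand side is linear in $t_n(1)$, $t_{n+1}(1)$ and $t_{n+1}(C)$, and the relation $s_nC=aC$ gives $t_{n+1}(C)=a\,t_n(C)$, hence $t_k(C)=a^{k-3}t_3(C)$ for $k\ge 3$, with $t_3(C)$ itself pinned down by the index-$1$ instance of the same relation. Solving for $t_{n+1}(1)$ yields a recursion of the shape $t_{n+1}(1)=\tfrac{2}{x}t_n(1)+t_n(C)-\tilde{\delta}x^{-1}\bigl(\tfrac{2}{x}t_{n-1}(1)-t_n(1)\bigr)$; together with $t_2(1)=t_3(s_1)$ and $t_1(1)=t_3(s_1 s_2)$ this propagates the three initial values to every $t_n(1)$, and hence to the whole trace. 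The main obstacle is exactly this step: one must find the correct replacement for Proposition \ref{prop:unicitetracegen} and carry out the Markov bookkeeping carefully — for instance $t_{n+1}(s_{n-1})=t_{n+1}(s_n)=t_n(1)$ holds by conjugacy rather than by a direct Markov move, and the term $t_{n+1}(C)$ has to be brought under control through the quasi-centrality of $C$ before the recursion closes. The verifications in (i)--(iii) are mechanical once one has checked that the stray $C$-terms in the $\mathcal{R}_1$ computation genuinely cancel.
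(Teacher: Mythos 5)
Your proposal is correct and takes essentially the same route as the paper: the $C$-tracking in (i) reproduces the paper's verification that $s_1e_2s_1$ and $s_2^{-1}e_1s_2^{-1}$ both map to $xe_1s_2^{-1}+axC+xe_2s_1-s_2e_1s_2^{-1}$, and your (ii)--(iii) are exactly the checks the paper leaves to the reader. For (iv), your recursion obtained by applying the trace to $e_{n-1}s_ne_{n-1}=e_{n-1}+C$ and using $t_{n+1}(C)=a\,t_n(C)$ is the same mechanism the paper invokes via proposition \ref{prop:unicitetracegenplus} (and coincides with the computation it records in proposition \ref{proprelrecutraceFn}), so there is no substantive difference.
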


\begin{proof}
We start with (i). Only the fact that we have a factorization through $BMW_n^{\dagger\dagger}$ requires a justification.
According to proposition \ref{proppresWBMWidemp}, it is sufficient to show that $s_1 e_2s_1$ and $s_2^{-1} e_1 s_2^{-1}$
are mapped to the same element. Using the same computations as in the proof of proposition \ref{propWBMWetendbien}
we easily get that both are sent to $x e_1 s_2^{-1} + ax C + x e_2 s_1 - s_2 e_1 s_2^{-1}$, and this proves the claim. 
(ii) is easy, because the linear span of $C$ is clearly a two-sided ideal of $F_n$. (iii) is easily checked from the defining relations of $F_n$.
We now prove (iv). From the arguments of proposition \ref{prop:unicitetracegenplus}  one
easily gets that such a Markov trace is uniquely determined by $t_3$ together with the collection of the $t_n(C)$,
since $C = e_{n-1} s_n e_{n-1} - e_{n-1}$.  Because $Cs_n = aC$ we get $a t_{n+1}(C) = t_n(C)$, whence the Markov trace is uniquely determined by $t_3$, and
therefore by $t_3(1)$, $t_3(s_1)$ together with $t_3(s_1s_2)$ by proposition \ref{prop:MT3bmw}.

\end{proof}

\begin{cor} If $(t_n)$ is a Markov trace factoring through $F_n$, then the associated link invariant
does not distinguish mirrors and does not detect non-invertible links.
\end{cor}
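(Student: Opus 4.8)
The plan is to upgrade the two symmetries of $F_n$ recorded in Proposition \ref{propbasiqueFn}(iii) to symmetries of the Markov trace $(t_n)$ itself, and then to translate these into statements about the closures of braids. Write $\Psi$ (resp.\ $\Psi'$) for the automorphism (resp.\ antiautomorphism) of $F_n$ of Proposition \ref{propbasiqueFn}(iii), both given by $s_i \mapsto s_i^{-1}$; they commute with the natural inclusions $\iota_n : F_n \to F_{n+1}$ because they do so on the generators $s_i$. Since each $t_n$ is a trace and the Markov condition $t_{n+1}(\iota_n(x)s_n^{\pm 1}) = t_n(x)$ is symmetric under $s_n \leftrightarrow s_n^{-1}$, a direct check shows that $(t_n \circ \Psi)$ and $(t_n \circ \Psi')$ are again Markov traces factoring through $(F_n)$: for instance $(t_{n+1}\circ \Psi')(\iota_n(x)s_n^{\pm 1}) = t_{n+1}\!\left(s_n^{\mp 1}\iota_n(\Psi'(x))\right) = t_{n+1}\!\left(\iota_n(\Psi'(x))s_n^{\mp 1}\right) = (t_n\circ\Psi')(x)$.

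By Proposition \ref{propbasiqueFn}(iv) it therefore suffices to check that $t_3$, $t_3\circ\Psi$ and $t_3\circ\Psi'$ agree on $1$, $s_1$ and $s_1s_2$. Pulled back along $H_3 \onto F_3$ (using Proposition \ref{propbasiqueFn}(i)), the trace $t_3$ lies in $MT_3(M)$, so the computations made in the proof of Proposition \ref{propMT3} give $t_3(s_1^{-1}) = t_3(s_1)$ and $t_3(s_1^{-1}s_2^{-1}) = t_3(s_1s_2)$; combining with the cyclic invariance $t_3(s_2^{-1}s_1^{-1}) = t_3(s_1^{-1}s_2^{-1})$ one gets $t_3(\Psi(1)) = t_3(\Psi'(1)) = t_3(1)$, $t_3(\Psi(s_1)) = t_3(\Psi'(s_1)) = t_3(s_1^{-1}) = t_3(s_1)$, $t_3(\Psi(s_1s_2)) = t_3(s_1^{-1}s_2^{-1}) = t_3(s_1s_2)$ and $t_3(\Psi'(s_1s_2)) = t_3(s_2^{-1}s_1^{-1}) = t_3(s_1s_2)$. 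Hence $t_n\circ\Psi = t_n\circ\Psi' = t_n$ for all $n$, and consequently also $t_n\circ(\Psi\circ\Psi') = (t_n\circ\Psi)\circ\Psi' = t_n\circ\Psi' = t_n$.

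It remains to read these identities on the level of links. For $\beta \in B_n$ with closure $L = \hat\beta$ the invariant is $I(L) = t_n(\bar\beta)$, where $\bar\beta$ is the image of $\beta$ in $F_n$. Since $\Psi$ is an algebra automorphism sending $s_i \mapsto s_i^{-1}$, applying it to $\bar\beta$ replaces every $\sigma_i^{\pm 1}$ by $\sigma_i^{\mp 1}$ without altering the word order, hence realizes the mirror image $L^*$; and $\Psi\circ\Psi'$ is an antiautomorphism fixing each $s_i$, so it sends $\bar\beta$ to the image of the reversed braid word, whose closure is $L$ with all components reoriented, $L^{\mathrm{rev}}$. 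From $t_n\circ\Psi = t_n$ and $t_n\circ(\Psi\circ\Psi') = t_n$ we obtain $I(L^*) = I(L)$ and $I(L^{\mathrm{rev}}) = I(L)$ for every link $L$: the first says the associated invariant does not distinguish mirrors, the second that it cannot certify a link to be non-invertible. The only genuinely delicate point is the bookkeeping — matching each (anti)automorphism of $F_n$ with the correct operation on diagrams (mirror versus orientation reversal) and confirming that composing the tower $(t_n)$ with it stays inside the class of Markov traces governed by Proposition \ref{propbasiqueFn}; once this is pinned down, the argument reduces to the short three-values verification above.
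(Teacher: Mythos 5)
Your proof is correct and follows essentially the same route as the paper: it rests on items (iii) and (iv) of Proposition \ref{propbasiqueFn} together with the identities $t_3(s_1)=t_3(s_1^{-1})$ and $t_3(s_1s_2)=t_3(s_1^{-1}s_2^{-1})$ coming from the $MT_3$ analysis, and then reads the resulting invariance under the (anti)automorphisms $s_i\mapsto s_i^{-1}$ as mirror- and reversal-insensitivity of the link invariant. The only difference is cosmetic: you verify explicitly that the composites with these (anti)automorphisms are again Markov traces (a point the paper leaves implicit) and you package reversal via the composite antiautomorphism fixing the $s_i$, whereas the paper phrases it as ``mirror of the inverse''.
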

\begin{proof} This follows from the items (iii) and (iv) of the proposition : the mirror of the closed braid $\hat{\beta}$
is the closure of the image of $\beta$ under the automorphism $s_i \mapsto s_i^{-1}$,  the mirror of the inverse is the closure of the image of $\beta$
under the antiautomorphism $s_i \mapsto s_i^{-1}$. Since $t_3(s_1) = t_3(s_1^{-1})$,
$t_3(1) = t_3(1^{-1})$, $t_3(s_1s_2) = t_3(s_1^{-1} s_2^{-1})$, such a Markov trace coincides by (iv) with its 
composite with the (anti-)automorphisms defined in (iii), whence the conclusion. 
\end{proof}

\begin{prop} \label{proprelrecutraceFn} If $(t_n)$ is a Markov trace factoring through $F_n$, then
\begin{enumerate}
\item $\forall n \geq 3 \ \ t_{n+1}(C) = a t_n(C)$
\item $\forall n \geq 2\ \ t_{n+1}(C) = a t_{n+1}(1) - a x^{-1}(\tilde{\delta} + 2)t_n(1) +2a \tilde{\delta} x^{-2} t_{n-1}(1)$
\item For all $n \geq 1$,
$$t_{n+3}(1)=   \left( a+\frac{\tilde{\delta} + 2}{x}\right)t_{n+2}(1) -\frac{1}{x} \left(2 \frac{\tilde{\delta}}{x}+a(\tilde{\delta} + 2) \right) t_{n+1}(1) 
 +2a \frac{\tilde{\delta}}{x^2} t_{n}(1)$$
\end{enumerate}
\end{prop}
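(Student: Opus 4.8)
The plan is to deduce all three identities from the defining relations of $F_n$ together with the trace property $t_m(uv)=t_m(vu)$ and the Markov condition $t_{m+1}(\iota_m(x)s_m^{\pm1})=t_m(x)$, being careful that the latter does \emph{not} assert $t_{m+1}|_{F_m}=t_m$. For part (1) I would simply apply the Markov condition to $x=C$: since $\iota_n(C)=C$ in $F_{n+1}$ and relation (ix) gives $Cs_n=aC$ there, one gets $t_n(C)=t_{n+1}(Cs_n)=a\,t_{n+1}(C)$, and multiplying by $a$ (using $a^2=1$) yields $t_{n+1}(C)=a\,t_n(C)$.

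For part (2) I would start from relation (v) at index $n-1$, namely $C=e_{n-1}s_ne_{n-1}-e_{n-1}$ inside $F_{n+1}$, so that $t_{n+1}(C)=t_{n+1}(e_{n-1}s_ne_{n-1})-t_{n+1}(e_{n-1})$. For the first term, cyclicity gives $t_{n+1}(e_{n-1}s_ne_{n-1})=t_{n+1}(e_{n-1}^2s_n)=\tilde{\delta}x^{-1}t_{n+1}(e_{n-1}s_n)$, and then the Markov condition (with $x=e_{n-1}\in F_n$) turns this into $\tilde{\delta}x^{-1}t_n(e_{n-1})$. The two remaining values I would compute from relation (iii), $e_{n-1}=\frac{a}{x}s_{n-1}^{-1}+\frac{a}{x}s_{n-1}-a$: at level $n$ the Markov condition applies directly, $t_n(s_{n-1}^{\pm1})=t_{n-1}(1)$, so $t_n(e_{n-1})=\frac{2a}{x}t_{n-1}(1)-a\,t_n(1)$; at level $n+1$ I would first use that all the $s_i$ are conjugate in $F_{n+1}$, whence $t_{n+1}(s_{n-1}^{\pm1})=t_{n+1}(s_n^{\pm1})=t_n(1)$ by Markov, giving $t_{n+1}(e_{n-1})=\frac{2a}{x}t_n(1)-a\,t_{n+1}(1)$. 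Substituting and collecting terms produces the asserted formula $t_{n+1}(C)=a\,t_{n+1}(1)-ax^{-1}(\tilde{\delta}+2)t_n(1)+2a\tilde{\delta}x^{-2}t_{n-1}(1)$.

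For part (3) I would combine (1) and (2): formula (2) at level $n+1$ expresses $t_{n+2}(C)$ through $t_{n+2}(1),t_{n+1}(1),t_n(1)$, while (1) at level $n+1$ together with (2) at level $n$ rewrites $t_{n+2}(C)=a\,t_{n+1}(C)$ through $t_{n+1}(1),t_n(1),t_{n-1}(1)$; equating the two, multiplying by $a$ and using $a^2=1$ gives a three-term recursion for $t_{n+2}(1)$, valid for $n\geq2$, and relabelling $n\mapsto n+1$ both puts it in the stated shape and extends its validity to all $n\geq1$.

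There is no serious obstacle here; the one thing to be careful about is not to confuse $t_{n+1}|_{F_n}$ with $t_n$ — this is precisely why evaluating $t_{n+1}(e_{n-1})$ needs the conjugacy of the $s_i$ rather than a naive restriction, and why $t_n(e_{n-1})$ and $t_{n+1}(e_{n-1})$ come out with shifted indices on the $t_\bullet(1)$ — together with keeping the validity ranges straight ($n\geq2$ in (2) so that $e_{n-1}$ and $s_{n-1}$ are defined, $n\geq3$ in (1), hence $n\geq1$ in the final recursion after relabelling).
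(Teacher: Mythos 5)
Your proof is correct and follows essentially the same route as the paper: (1) from $Cs_n=aC$ plus the Markov condition, (2) from $C=e_{n-1}s_ne_{n-1}-e_{n-1}$, cyclicity, $e_{n-1}^2=\tilde{\delta}x^{-1}e_{n-1}$ and the Markov condition, and (3) by equating the two expressions for $t_{n+2}(C)$ and relabelling. The only cosmetic difference is that you evaluate $t_{n+1}(e_{n-1})$ by conjugating $s_{n-1}$ to $s_n$, whereas the paper conjugates $e_{n-1}$ to $e_n$ — the same manoeuvre, and your extra care with index ranges and with not confusing $t_{n+1}|_{F_n}$ with $t_n$ is exactly what the paper's argument implicitly requires.
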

\begin{proof} (i) is trivially deduced from $s_n C = a C$ and the Markov property.
From $C = e_{n-1} s_n e_{n-1} - e_{n-1}$ we get
$t_{n+1}(C) = t_{n+1}(e_{n-1}^2 s_n) - t_3(e_{n-1}) = \tilde{\delta}x^{-1}t_{n+1}(e_{n-1} s_n) - t_{n+1}(e_{n-1})
= \tilde{\delta}x^{-1}t_n(e_{n-1}) - t_{n+1}(e_n)$ by the Markov property and because $e_{n-1}$ and $e_n$ are conjugates.
Expanding $e_i = (a/x)(s_i + s_i^{-1}) - a$ and using the Markov property again we
get
$t_{n+1}(C) = \tilde{\delta}x^{-1}(2a/x t_{n-1}(1) - a t_n(1)) - (2a/x t_n(1) - a t_{n+1}(1)) =
a t_{n+1}(1) - a x^{-1}(\tilde{\delta} + 2)t_n(1) +2a \tilde{\delta} x^{-2} t_{n-1}(1)$, namely (ii).
By (i) we know $t_{n+1}(C) = a t_{n+2}(C)$ hence, by (ii), we get
$t_{n+1}(C) = a t_{n+2}(C) =  t_{n+2}(1) -  x^{-1}(\tilde{\delta} + 2)t_{n+1}(1) +2 \tilde{\delta} x^{-2} t_{n}(1)$
 hence, again by (ii),  $t_{n+2}(1) -  x^{-1}(\tilde{\delta} + 2)t_{n+1}(1) +2 \tilde{\delta} x^{-2} t_{n}(1) = 
a t_{n+1}(1) - a x^{-1}(\tilde{\delta} + 2)t_n(1) +2a \tilde{\delta} x^{-2} t_{n-1}(1)$
hence 
$$t_{n+2}(1)=   \left( a+\frac{\tilde{\delta} + 2}{x}\right)t_{n+1}(1) -\frac{1}{x} \left(2 \frac{\tilde{\delta}}{x}+a(\tilde{\delta} + 2) \right) t_{n}(1) 
 +2a \frac{\tilde{\delta}}{x^2} t_{n-1}(1)$$
which proves (iii).
\end{proof}

At this stage, $C$ could well be $0$. We now prove that this is not the case.
\subsection{A genuine extension of $BMW_n^{\pm}$}

Using the abelianization morphism $B_n \onto \Z$ we can define a 3-dimensional $H_n \otimes_R S^{\dagger \dagger}$-module
by
$$
s_i \mapsto \begin{pmatrix}  a & 1 & 0 \\ 0 & b & 1 \\ 0 & 0 & b^{-1} \end{pmatrix}
$$
It is easily checked that $\mathcal{R}_1$ acts by $0$ in this module, hence we get a 3-dimensional
$BMW_n^{\dagger\dagger}$-module. We get that $e_i = a((s_i + s_i^{-1})/x - 1)$
is mapped to
$$
\left[ \begin {array}{ccc} {\frac { \left( ab-1 \right)  \left( -b+a
 \right) }{{b}^{2}+1}}&{\frac {ab-1}{{b}^{2}+1}}&{\frac {b}{{b}^{2}+1}
}\\ \noalign{\medskip}0&0&0\\ \noalign{\medskip}0&0&0\end {array}
 \right] 
$$
while $e_i s_{i+1} e_i - e_i$ and $e_i s_{i+1}^{-1} e_i - e_i$ are both mapped
to
$$
2 \frac{ab-b^2-1}{(b^2+1)^2}
\begin{pmatrix} (ab-1)(a-b) & ab-1 & b \\ 0 & 0 & 0 \\ 0 & 0 & 0
\end{pmatrix}
$$
{}
This proves that this module induces a $F_n\otimes_A S^{\dagger\dagger}$-module,
and therefore a $F_n\otimes_A S^{\dagger\dagger}_{\pm}$-module,
which do not factorize through $BMW_n^{\pm}$.
It follows that $BMW_n^{\dagger\dagger}$ and $F_n$ are genuine extensions of the Birman-Wenzl-Murakami algebra.
We need to prove a similar result for the specializations appearing in the previous section.
However, for one of them the above argument does not work, because it corresponds
to a root of $ab-b^2-1$. 
Nevertheless, we know by proposition  \ref{propstructgenWBMW} (iii) that $BMW_n^{\dagger\dagger}$ is finitely
generated as a $S^{\dagger\dagger}$-module. Because of this, the dimension of every specialization
is at least the dimension of $BMW_n^{\dagger\dagger}$ over the field of fractions of $S^{\dagger\dagger}$
(this classical fact follows for instance from Nakayama's lemma, by replacing $S^{\dagger\dagger}$ by its
localization at the defining ideal of the specialization). This proves the following.

\begin{prop} \label{propFnVraieextension} For every morphism $\la : S^{\dagger \dagger} \to \C$,
$BMW_n^{\dagger \dagger} \otimes_{\la} \C$ and $(F_n\otimes_A S^{\dagger\dagger})\otimes_{\la} \C$ have dimension at least $1 + \dim BMW_n^+$.
\end{prop}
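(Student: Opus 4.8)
The plan is to reduce the statement to a dimension count at the generic point, and then to use the explicit $3$-dimensional module constructed just above to check that the central element $C$ does not already vanish there.

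First I would reduce to the generic fibre. Recall that $BMW_n^{\dagger\dagger} = \WBMW_n\otimes_R S^{\dagger\dagger}$ is a finitely generated $S^{\dagger\dagger}$-module by proposition \ref{propstructgenWBMW} (iii), and that $F_n\otimes_A S^{\dagger\dagger}$ is a quotient of it by proposition \ref{propbasiqueFn} (i), hence also finitely generated over $S^{\dagger\dagger}$. A morphism $\la:S^{\dagger\dagger}\to\C$ sends $a$ to $\pm1$, so it factors through one of the two integral domains $S^{\dagger\dagger}_{\pm}=S/(a\mp1,\,y-1)\cong\Q[b,b^{-1}][(b+b^{-1})^{-1}]$; fix this one, call it $D$, set $F=\Frac D$ and $\mathfrak p=\Ker(\la:D\to\C)$. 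For any finitely generated $D$-module $M$, localizing at $\mathfrak p$ and applying Nakayama's lemma shows that a minimal generating family of $M_{\mathfrak p}$ has exactly $\dim_{\C}(M\otimes_{\la}\C)$ elements and a fortiori generates $M\otimes_D F$, whence $\dim_{\C}(M\otimes_{\la}\C)\ge\dim_F(M\otimes_D F)$; this is just the usual upper semicontinuity of fibre dimension. Applying this to the $D$-modules $BMW_n^{\dagger\dagger}\otimes_{S^{\dagger\dagger}}D$ and $F_n\otimes_A D$, and using once more the surjection $BMW_n^{\dagger\dagger}\onto F_n\otimes_A S^{\dagger\dagger}$, it suffices to prove that $\dim_F(F_n\otimes_A F)\ge 1+\dim BMW_n^+$.

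To get this generic dimension I would base-change the surjection of proposition \ref{propbasiqueFn} (ii) along $D\to F$: it yields a surjection $F_n\otimes_A F\onto BMW_n^{\pm}\otimes_{S_{\pm}}F$ whose kernel is the image of $F\cdot C$, of $F$-dimension $0$ or $1$, while the target has $F$-dimension $\dim BMW_n^+$ because $BMW_n^{\pm}$ is a free $S_{\pm}$-module of that rank. Hence everything reduces to showing $C\ne0$ in $F_n\otimes_A F$, and this is precisely what the $3$-dimensional module above yields: since all $s_i$ act by the same matrix, the relations $e_i s_{i\pm1}^{\pm1}e_i=e_i+C$ force $C$ to act by the scalar $2(ab-b^2-1)(b^2+1)^{-2}$ times the matrix displayed above, and the remaining defining relations of $F_n$ (braid and cubic relations, $s_ie_i=ae_i$, $s_iC=Cs_i=aC$) are then routine to verify, so this is indeed an $F_n\otimes_A S^{\dagger\dagger}$-module; over $F$ that action is nonzero, since $ab-b^2-1=-(b^2\mp b+1)$ is a nonzero element of $D$ while $b^2+1=bx$ is a unit of $D$. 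Therefore $C$ acts nontrivially, so $C\ne0$ in $F_n\otimes_A F$, and in fact $\dim_F(F_n\otimes_A F)=1+\dim BMW_n^+$, which completes the proof.

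I do not expect a genuine obstacle here; the two points that need a little care are purely bookkeeping. First, $S^{\dagger\dagger}$ is not a domain, so one must split off one of its two integral-domain factors before invoking a fraction field. Second, the decisive assertion — that $C$ does not already vanish over the \emph{generic} point of that factor — has to be extracted from the fact that $ab-b^2-1$ is a nonzero element of the coordinate ring, and not merely nonzero at some particular specialization.
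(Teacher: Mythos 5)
Your proposal is correct and follows essentially the same route as the paper: the $3$-dimensional representation shows that $C$ acts by a nonzero multiple of $2(ab-b^2-1)(b^2+1)^{-2}$, hence is nonzero at the generic point, so by proposition \ref{propbasiqueFn} (ii) and the freeness of $BMW_n^{\pm}$ the generic fibre has dimension $1+\dim BMW_n^+$, and then finite generation (proposition \ref{propstructgenWBMW} (iii)) plus Nakayama gives the lower bound at every specialization. Your only addition is the careful splitting of $S^{\dagger\dagger}$ into its two domain factors before invoking a fraction field, which tidies up the paper's slightly loose reference to ``the field of fractions of $S^{\dagger\dagger}$''.
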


\begin{cor} For every morphism $\la : S^{\dagger \dagger} \into \C$,
the image of $C$ inside 
$(F_n\otimes_A S^{\dagger\dagger})\otimes_{\la} \C$ 
is nonzero.
\end{cor}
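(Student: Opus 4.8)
The plan is to derive the statement by contradiction from the dimension estimate of proposition \ref{propFnVraieextension}, together with the description of the kernel in proposition \ref{propbasiqueFn} (ii). First I would record a reduction. Since $a^2 = 1$ and $y = 1$ inside $S^{\dagger\dagger}$, any morphism $\la : S^{\dagger\dagger} \to \C$ satisfies $\la(a) \in \{1,-1\}$, hence factors through one of the two quotients $S^{\dagger\dagger}_{+} = S^{\dagger\dagger}/(a-1)$ or $S^{\dagger\dagger}_{-} = S^{\dagger\dagger}/(a+1)$. Moreover each $S^{\dagger\dagger}_{\pm}$ is itself a quotient of $S_{\pm}$ (because there $a = \pm 1 = \pm y$, so the defining relation $a = \pm bc$ already holds), so that $BMW_n^{\pm} \otimes_{S_{\pm}} S^{\dagger\dagger}_{\pm}$ and the surjection of proposition \ref{propbasiqueFn} (ii) still make sense after this specialization. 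By the symmetry $b \leftrightarrow c$ and $a \mapsto -a$ it is enough to treat the $+$ case.

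Next, suppose for contradiction that the image of $C$ in $(F_n\otimes_A S^{\dagger\dagger})\otimes_{\la}\C = (F_n\otimes_A S^{\dagger\dagger}_{+})\otimes_{S^{\dagger\dagger}_{+}}\C$ were zero. Then this $\C$-algebra coincides with its quotient by the two-sided ideal generated by $C$. By proposition \ref{propbasiqueFn} (ii) this ideal is, before specialization, exactly the $S^{\dagger\dagger}_{+}$-line $S^{\dagger\dagger}_{+}\,C$, and the quotient of $F_n\otimes_A S^{\dagger\dagger}_{+}$ by it is $BMW_n^{+}\otimes_{S_{+}}S^{\dagger\dagger}_{+}$; since tensoring is right exact, base-changing along $S^{\dagger\dagger}_{+}\to\C$ gives
$$
\big((F_n\otimes_A S^{\dagger\dagger}_{+})\otimes_{S^{\dagger\dagger}_{+}}\C\big)\big/(C)\ \simeq\ \big(BMW_n^{+}\otimes_{S_{+}}S^{\dagger\dagger}_{+}\big)\otimes_{S^{\dagger\dagger}_{+}}\C\ =\ BMW_n^{+}\otimes_{S_{+}}\C .
$$
As $BMW_n^{+}$ is a free $S_{+}$-module of rank $\dim BMW_n^{+}$, the right-hand side has $\C$-dimension exactly $\dim BMW_n^{+}$. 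Hence the hypothesis $C = 0$ would force $\dim_{\C}\big((F_n\otimes_A S^{\dagger\dagger})\otimes_{\la}\C\big) = \dim BMW_n^{+}$.

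This contradicts proposition \ref{propFnVraieextension}, which asserts $\dim_{\C}\big((F_n\otimes_A S^{\dagger\dagger})\otimes_{\la}\C\big) \geq 1 + \dim BMW_n^{+}$; therefore the image of $C$ is nonzero, as claimed. I do not expect a real obstacle in carrying this out: the only points that need attention are the bookkeeping of which quotient of $S^{\dagger\dagger}$ the morphism $\la$ factors through, and the routine use of freeness of $BMW_n^{\pm}$ over $S_{\pm}$ together with right-exactness of the tensor product to identify $BMW_n^{+}\otimes_{S_+}\C$ and compute its dimension. Everything else is a direct invocation of the two propositions just established.
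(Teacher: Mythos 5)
Your proposal is correct and is essentially the paper's (implicit) argument for this corollary: if $C$ died under the specialization, the specialized algebra would coincide with its quotient by $(C)$, which proposition \ref{propbasiqueFn} (ii) plus right-exactness identifies with $BMW_n^{\pm}\otimes_{S_{\pm}}\C$, of dimension exactly $\dim BMW_n^{\pm}$ by the Morton--Wassermann freeness the paper also invokes, contradicting the bound $\geq 1+\dim BMW_n^+$ of proposition \ref{propFnVraieextension}. The only cosmetic point is that your reduction to the $+$ case is unnecessary (and the symmetry you cite, $b\leftrightarrow c$, $a\mapsto -a$, is not quite the paper's, which is $a,b,c\mapsto -a,-b,-c$ with $s_i\mapsto -s_i$): the identical argument applies to the $-$ case since $\dim BMW_n^-=\dim BMW_n^+$.
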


\begin{cor} \label{cordimFn}  The $A$-module $F_n$ is free of rank $1+\dim BMW_n^{\pm}$
\end{cor}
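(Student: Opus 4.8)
The plan is to split $A$ into its two central idempotent pieces and apply the structure theorem for finitely generated modules over a principal ideal domain. Write $d:=\dim BMW_n^{\pm}$ (a common value, since $BMW_n^+\cong BMW_n^-$) and $N:=1+d$. As $2$ is invertible and $a^2-1=(a-1)(a+1)$, the Chinese Remainder Theorem gives $A\cong A_+\times A_-$, where $A_\pm=\Q[x,x^{-1}]$ is the quotient of $A$ on which $a=\pm1$; correspondingly $F_n=F_n^+\oplus F_n^-$, with $F_n^\pm:=F_n\otimes_A A_\pm$. Since $A_\pm$ is a principal ideal domain, it is enough to show that $F_n^+$ is free of rank $N$ over $A_+$, the $-$ case being identical. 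Recall from the consequences of the defining relations listed just after the definition of $F_n$ that $s_iC=Cs_i=aC$ and $e_iC=Ce_i=x^{-1}\tilde{\delta}C$; hence the two-sided ideal of $F_n^+$ generated by $C$ is simply the $A_+$-submodule $A_+C$, a cyclic module, say $A_+C\cong A_+/I$.

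I would next compute $\overline{F}:=F_n^+/(A_+C)$. The $A_+$-algebra $S^{\dagger\dagger}_+$ is free of rank $2$ over $A_+$ (with basis $\{1,b\}$, since $bc=1$ forces $b^2-xb+1=0$), hence faithfully flat. Base-changing the exact sequence $A_+C\to F_n^+\to\overline{F}\to0$ along $A_+\to S^{\dagger\dagger}_+$ and applying proposition \ref{propbasiqueFn}(ii) identifies $\overline{F}\otimes_{A_+}S^{\dagger\dagger}_+$ with $BMW_n^+\otimes_{S_+}S^{\dagger\dagger}_+$, which is free of rank $d$ over $S^{\dagger\dagger}_+$ because $BMW_n^+$ is free of rank $d$ over $S_+$; thus $\overline{F}\otimes_{A_+}S^{\dagger\dagger}_+$ is a free $A_+$-module of rank $2d$. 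By faithful flatness $\overline{F}$ embeds into it, so $\overline{F}$ is a finitely generated, torsion-free module over the Noetherian principal ideal domain $A_+$, hence free, and necessarily of rank $d$ (rank is preserved by the flat extension). Being projective, $\overline{F}$ makes the sequence $0\to A_+C\to F_n^+\to\overline{F}\to0$ split, so $F_n^+\cong A_+^d\oplus(A_+/I)$.

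The remaining, and crucial, point is that $I=0$. Choose a morphism $\lambda:S^{\dagger\dagger}\to\C$ with $\lambda(a)=1$ and $\lambda(x)$ transcendental over $\Q$; such a $\lambda$ factors through $S^{\dagger\dagger}_+$ and exists, for one may lift $x\mapsto\lambda(x)$ by sending $b$ to a complex root of $T^2-\lambda(x)T+1$. Then $x\mapsto\lambda(x)$ makes $\C$ a faithful $A_+$-algebra, and there is a canonical identification $(F_n\otimes_A S^{\dagger\dagger})\otimes_\lambda\C\cong F_n^+\otimes_{A_+}\C$; by the corollary preceding the present statement, the image of $C$ in this algebra is nonzero, so $C\neq0$ already in $F_n^+\otimes_{A_+}\Q(x)$. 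But $(A_+/I)\otimes_{A_+}\Q(x)$ vanishes whenever $I\neq0$; hence $I=0$, that is $A_+C\cong A_+$. Therefore $F_n^+\cong A_+^{d+1}=A_+^N$, and symmetrically $F_n^-\cong A_-^N$; since $F_n=F_n^+\oplus F_n^-$ and $A=A_+\oplus A_-$ as $A$-modules, we conclude that $F_n\cong A^N$ is free of rank $N=1+\dim BMW_n^{\pm}$.

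The only real difficulty is concentrated in the last paragraph: proving that $C$ is not a torsion element of $F_n^+$ over $A_+$, equivalently that it persists to the generic fibre. This is exactly what the corollary invoked above provides, and it in turn rests on proposition \ref{propFnVraieextension} (hence on Nakayama's lemma and the finite generation of $\WBMW_n$ as an $S^{\dagger\dagger}$-module). Everything else — the idempotent splitting of $A$, the structure theorem over each $A_\pm$, and the faithfully flat descent of the classical freeness of $BMW_n^+$ over $S_+$ down to $A_+$ — is routine.
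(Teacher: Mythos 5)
Your proof is correct, and it rests on the same two nontrivial inputs as the paper's — the idempotent splitting $A\cong A_+\oplus A_-$, Morton--Wassermann freeness of the BMW algebra, and proposition \ref{propFnVraieextension} (through the corollary on the nonvanishing of $C$) to see that $C$ survives at the generic point — but the mechanics differ. The paper notes that $BMW_n^{\pm}\otimes_{S_{\pm}}S^{\dagger\dagger}_{\pm}$ is presented by relations with coefficients in $A_{\pm}$ and that this $A_{\pm}$-form is free of rank $N=\dim BMW_n^{\pm}$ by \cite{MORTONWASSERMANN}; it then chooses a module section of $F_n^{\pm}\onto A_{\pm}^{N}$, obtains a surjection $A_{\pm}^{1+N}\to F_n^{\pm}$, and proves it injective by comparing dimensions over the fraction field $K_{\pm}$, where proposition \ref{propFnVraieextension} supplies the lower bound $1+N$. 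You avoid invoking an $A_{\pm}$-rational form of Morton--Wassermann: you only use freeness of $BMW_n^{+}$ over $S_+$, descend freeness of the quotient $F_n^{+}/A_+C$ to $A_+$ through the rank-two free (hence faithfully flat) extension $A_+\subset S^{\dagger\dagger}_+$ combined with the structure theorem over the principal ideal domain $A_+$, split the resulting extension, and then exclude torsion of $C$ via a specialization $\la:S^{\dagger\dagger}\to\C$ with $\la(x)$ transcendental. That last step is a welcome piece of care: proposition \ref{propFnVraieextension} is stated for closed ($\C$-valued) fibres, and your transcendental specialization is precisely what transfers its conclusion to the generic fibre, a passage the paper's proof leaves implicit when it asserts that $F_n^{\pm}\otimes_{A_{\pm}}K_{\pm}$ has dimension $1+N$. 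What the paper's route buys in exchange is brevity: once the free $A_{\pm}$-form is granted, no splitting and no structure theorem are needed.
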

\begin{proof}
Let us denote $A_{\pm} = \Q[a,x,x^{-1}]/(a\mp 1) \simeq \Q[x,x^{-1}]$. By the natural $A$-module
decomposition $A \simeq A_+ \oplus A_-$ it is enough to prove that $F_n^{\pm} = F_n \otimes_A A_{\pm}$
is a free $A_{\pm}$-module of rank $1+N$ with $N = \dim BMW_n^{\pm}$.  Now notice that $BMW_n^{\pm} \otimes_{S_{\pm}} S^{\dagger\dagger}_{\pm}$ is actually defined by a presentation with coefficients in $A_{\pm}$, and that
the corresponding $A_{\pm}$-form is free of rank $N$ by \cite{MORTONWASSERMANN}. Therefore, as in proposition \ref{propbasiqueFn} (ii),
we get a surjective $A_{\pm}$-morphisme $F_n^{\pm} \onto A_{\pm}^N$ by mapping $C$ to $0$. Letting $s_{\pm}$
denote a section of this morphism, we get a surjective morphism $u_{\pm} : A_{\pm}^{1+N} \simeq (A_{\pm} C)\oplus A_{\pm}^N \to
F_n^{\pm}$ by $(\la C,m) \mapsto \la C + s_{\pm}(m)$. Letting $K_{\pm}$ denote the quotient field
of $A_{\pm}$, it follows that $u_{\pm} \otimes_{A_{\pm}} K_{\pm}$ is surjective. But proposition \ref{propFnVraieextension}
implies that $F_n^{\pm} \otimes_{A_{\pm}} K_{\pm}$ has dimension $1+N$. Therefore
the source and target of $u_{\pm} \otimes_{A_{\pm}} K_{\pm}$ have the same dimension hence $u_{\pm} \otimes K_{\pm}$
is injective. Since the source of $u_{\pm}$ is a free module this implies that $u_{\pm}$ is injective and this proves the claim.

\end{proof}

\begin{cor} \label{corFnFnp1}  The natural algebra morphism $F_n \to F_{n+1}$ is injective.
\end{cor}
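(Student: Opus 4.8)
The plan is to derive Corollary \ref{corFnFnp1} from the freeness established in Corollary \ref{cordimFn} together with the classical injectivity of the tower maps of the genuine BMW algebras. First I would use the $A$-module splitting $A \simeq A_+ \oplus A_-$ employed in the proof of Corollary \ref{cordimFn}, with $A_{\pm} = \Q[a,x,x^{-1}]/(a\mp 1) \simeq \Q[x,x^{-1}]$, to reduce the claim to the injectivity of the natural map $f_n : F_n^{\pm} \to F_{n+1}^{\pm}$, where $F_m^{\pm} = F_m \otimes_A A_{\pm}$. Writing $\mathcal{B}_m = BMW_m^{\pm} \otimes_{S_{\pm}} S^{\dagger\dagger}_{\pm}$ for the $A_{\pm}$-form appearing in that proof, the proof of Corollary \ref{cordimFn} provides, for each $m$, a short exact sequence of $A_{\pm}$-modules
$$
0 \longrightarrow A_{\pm}\,C \longrightarrow F_m^{\pm} \stackrel{\pi_m}{\longrightarrow} \mathcal{B}_m \longrightarrow 0,
$$
in which $F_m^{\pm}$ is free of rank $1 + \dim BMW_m^{\pm}$, $\mathcal{B}_m$ is free of rank $\dim BMW_m^{\pm}$ by \cite{MORTONWASSERMANN}, and $A_{\pm}C$ is accordingly a free rank-one direct summand of $F_m^{\pm}$ (so in particular $C$ is part of an $A_{\pm}$-basis). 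Since $f_n$ sends $s_i \mapsto s_i$, $e_i \mapsto e_i$ and $C \mapsto C$, the map $f_n$, the identity of $A_{\pm}C$, and the natural BMW tower map $g_n : \mathcal{B}_n \to \mathcal{B}_{n+1}$ (again $s_i \mapsto s_i$, $e_i \mapsto e_i$) assemble into a morphism between the exact sequences for $m=n$ and $m=n+1$.

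The conclusion is then a short diagram chase. If $z \in F_n^{\pm}$ satisfies $f_n(z)=0$, then applying $\pi_{n+1}$ and using commutativity gives $g_n(\pi_n(z)) = \pi_{n+1}(f_n(z)) = 0$, so $\pi_n(z)=0$ once $g_n$ is known to be injective; hence $z = \lambda C$ for some $\lambda \in A_{\pm}$. But then $0 = f_n(z) = f_n(\lambda C) = \lambda C$ inside $F_{n+1}^{\pm}$, and since $C$ is part of an $A_{\pm}$-basis of the free module $F_{n+1}^{\pm}$ (Corollary \ref{cordimFn}) this forces $\lambda = 0$, i.e. $z=0$.

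Thus the whole argument rests on one input: the injectivity of the natural map $g_n : \mathcal{B}_n \to \mathcal{B}_{n+1}$ between the BMW algebras. I would obtain this from the Kauffman tangle model of \cite{MORTONWASSERMANN}: over $A_{\pm}$ the algebra $\mathcal{B}_m$ is free with basis the usual set of tangle diagrams on $m$ strands, and adjoining a trivial through-strand on the right embeds this basis into the corresponding basis of $\mathcal{B}_{m+1}$, so that $g_n$ is even a split injection of free $A_{\pm}$-modules. This is the only step appealing to the external structure theory of the BMW algebras rather than to the internal bookkeeping of the present paper, and it is where I expect the only real care to be needed; everything else is a routine manipulation of the exact sequences already produced in the proof of Corollary \ref{cordimFn}, so I anticipate no further obstacle.
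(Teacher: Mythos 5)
Your proposal is correct and follows essentially the same route as the paper: reduce via $A \simeq A_+ \oplus A_-$ to the maps $F_n^{\pm} \to F_{n+1}^{\pm}$, place them in the commutative diagram of short exact sequences $0 \to A_{\pm}C \to F_m^{\pm} \to BMW_m^{\pm}\otimes S^{\dagger\dagger}_{\pm} \to 0$ coming from Proposition \ref{propbasiqueFn} (ii) and Corollary \ref{cordimFn}, and conclude from the injectivity of the two outer vertical maps. The paper states this as a one-line five-lemma-type argument; you merely make explicit the diagram chase and the Morton--Wassermann tangle-basis justification that the BMW tower map is (split) injective, which is exactly the ``known'' input the paper invokes.
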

\begin{proof}
As before, using the notations of the previous proof, it is equivalent to show that the natural maps $F_n^{\pm} \to F_{n+1}^{\pm}$ are
injective. This is true because the following diagram of horizontal short exact sequences is commutative, and because its two extremal vertical arrows
are known to be injective.
$$
\xymatrix{
0 \ar[r] & A_{\pm} C \ar[r]\ar[d] & F_n^{\pm} \ar[r] \ar[d]& BMW_n^{\pm} \ar[r] \ar[d]& 0 \\
0 \ar[r] & A_{\pm} C \ar[r] & F_{n+1}^{\pm} \ar[r] & BMW_{n+1}^{\pm} \ar[r] & 0 \\
}
$$
\end{proof}
\subsection{The algebra $F_n$ as a specialization of $\WBMW_n$}

The goal of this section is to prove the following theorem.

\begin{theor} \label{theoisomddaggerFn} The morphism of $S^{\dagger\dagger}$-algebras $BMW_n^{\dagger\dagger} \to F_n \otimes_A S^{\dagger\dagger}$
induces an isomorphism after tensorisation by $S^{\dagger\dagger}[(x+2a)^{-1}]$.
\end{theor}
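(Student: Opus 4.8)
The plan is to exploit the surjective $S^{\dagger\dagger}$-algebra morphism $\phi\colon BMW_n^{\dagger\dagger}\onto F_n\otimes_A S^{\dagger\dagger}$ furnished by proposition~\ref{propbasiqueFn}(i), and to build a surjection going the other way after inverting $x+2a$. Write $B=S^{\dagger\dagger}[(x+2a)^{-1}]$. I would try to define an $A$-algebra map $\psi\colon F_n\to BMW_n^{\dagger\dagger}\otimes_{S^{\dagger\dagger}}B$ by $s_i\mapsto s_i$, $e_i\mapsto a\bigl((s_i^{-1}+s_i)/x-1\bigr)$ and $C\mapsto e_1s_2e_1-e_1$. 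If this is well defined, it is automatically surjective since the $s_i$ generate the target. Then $(\phi\otimes_{S^{\dagger\dagger}}B)\circ\psi$ is a surjective $B$-linear endomorphism of $F_n\otimes_A B$, which is a finitely generated $B$-module (free of rank $1+\dim BMW_n^+$ by corollary~\ref{cordimFn}); since a surjective endomorphism of a finitely generated module over a commutative ring is bijective, $\psi$ is injective, hence an isomorphism, and consequently $\phi\otimes_{S^{\dagger\dagger}}B$ is an isomorphism as well. Note that this route needs no dimension count, which matters since the analogous dimension formula over $K^{\dagger}$ is left open in the paper.

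Everything therefore reduces to checking that the defining relations (1)--(9) of $F_n$ hold in $BMW_n^{\dagger\dagger}\otimes_{S^{\dagger\dagger}}B$ under the assignment above. Relations (1),(2),(3),(4) are immediate from the definition of $\WBMW_n$ and of the $e_i$ (using $y=bc=1$ in $S^{\dagger\dagger}$, so that $(s_i-b)(s_i-c)=s_i^2-xs_i+1$), and $s_1C=Cs_1=aC$ in (9) follows at once from $s_1e_1=e_1s_1=ae_1$. The substance lies in (5)--(8): one must show that for every admissible $i$ the four elements $e_is_{i+1}^{\pm1}e_i-e_i$ and $e_is_{i-1}^{\pm1}e_i-e_i$ all equal the single element $C=e_1s_2e_1-e_1$; the remaining cases $s_jC=aC=Cs_j$ of (9) then follow formally. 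I would reduce this to a short list of identities in $\WBMW_3\otimes_R B$, namely, in the copy of $\WBMW_3$ on strands $\{1,2,3\}$,
\[
e_1s_2e_1=e_1s_2^{-1}e_1,\qquad s_2(e_1s_2e_1)=a(e_1s_2e_1)=(e_1s_2e_1)s_2,\qquad e_1s_2e_1-e_2s_1e_2=e_1-e_2 .
\]
The second identity says $E_1:=e_1s_2e_1$ is a two-sided eigenvector for $s_1$ and $s_2$ with eigenvalue $a$, hence --- since $a^2=1$ --- is fixed by conjugation by all of $B_n$; conjugating by powers of the shift braid $\delta=s_1\cdots s_{n-1}$ (which sends $e_i\mapsto e_{i+1}$, $s_{i+1}\mapsto s_{i+2}$) then makes $E_i:=e_is_{i+1}e_i$ independent of $i$, whence $s_jE_1=E_1s_j=aE_1$ for all $j$ and $e_is_{i+1}e_i-e_i=C$. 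The inner automorphism $\iota_\Delta$ of $\WBMW_3$ given by conjugation by $\Delta=s_1s_2s_1$ (it descends because it is inner in $B_3$, hence to every quotient of $\Q B_3$) swaps $s_1\leftrightarrow s_2$ and $e_1\leftrightarrow e_2$; applying it to the first identity gives $e_2s_1e_2=e_2s_1^{-1}e_2$, and combined with the third it yields $e_2s_1^{\pm1}e_2-e_2=C$, which propagates (conjugating again by powers of $\delta$) to $e_is_{i-1}^{\pm1}e_i-e_i=C$; the first identity similarly propagates to the remaining variant $e_is_{i+1}^{-1}e_i-e_i=C$.

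The three displayed identities are to be verified by explicit computation in $\WBMW_3\otimes_R B$, either by exhibiting the relevant differences as sums $\sum_j g_j\mathcal{R}_1 g_j'$ of multiples of the defining relation $\mathcal{R}_1$ (in the style of lemma~\ref{lemRimpliesS} and proposition~\ref{propWBMWetendbien}), or more mechanically by pushing everything into the faithful model $H_3\into R^3\oplus M_2(R)^3\oplus M_3(R)$ of section~\ref{sectH3} --- which remains injective after base change to $B$ by lemma~\ref{lem:injH3}, the critical Schur-element factor $a^2+bc$ being equal to $2$ in $S^{\dagger\dagger}$ --- and checking that the images vanish modulo the ideal generated by the image of $\mathcal{R}_1$. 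I expect this computational verification to be the main obstacle, and it is precisely here that the invertibility of $x+2a$ must intervene: at $x=-2a$ the specialization of $\WBMW_n$ is genuinely larger than $F_n$ (compare proposition~\ref{propTracesXm2a} and section~\ref{subsectm2a}), so at least one of the three identities --- presumably $e_1s_2e_1=e_1s_2^{-1}e_1$ --- must fail there, and $x+2a$ should appear as the denominator arising when one solves for $C$ in the matrix model.
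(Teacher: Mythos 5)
Your overall architecture is the same as the paper's: one proves the theorem by checking that the defining relations (i)--(ix) of $F_n$ hold in $BMW_n^{\dagger\dagger}\otimes S^{\dagger\dagger}[(x+2a)^{-1}]$ under $s_i\mapsto s_i$, $e_i\mapsto e_i$, $C\mapsto e_1s_2e_1-e_1$, thereby producing a two-sided inverse to the surjection of proposition \ref{propbasiqueFn} (i) (the paper phrases this as finding a section of the projection; your appeal to ``surjective endomorphism of a finitely generated module'' is harmless but not needed, since the composite is the identity on generators). The gap lies in your reduction of this verification to three identities on $3$ strands. First, your middle identity is false as written: modulo the span of $C$, i.e.\ in $BMW_3^{\pm}$ where $e_1s_2e_1=e_1$, it would read $s_2e_1=ae_1$, which fails ($s_2e_1$ and $e_1$ are distinct elements of the Morton--Wassermann diagram basis); the correct statement is that $C=e_1s_2e_1-e_1$, not $e_1s_2e_1$, is a two-sided $a$-eigenvector. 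Second, and fatally, even after this correction the propagation step is circular: knowing $s_1C=Cs_1=aC$ and $s_2C=Cs_2=aC$ only gives invariance of $C$ under conjugation by the subgroup $\langle s_1,s_2\rangle\cong B_3$, not by all of $B_n$; invariance under $s_3,\dots,s_{n-1}$ \emph{is} relation (ix) for those generators, i.e.\ part of what you must prove. In particular, conjugating by the shift braid turns $e_1s_2e_1-e_1$ into $e_2s_3e_2-e_2$, and the assertion that these agree modulo the ideal (the paper's $\mathcal{S}_1\equiv\mathcal{S}_2$) is a statement inside $H_4$, not $H_3$. That is precisely the step for which the paper produces the huge computer-assisted expression of $2(x+2)^2x^4(\mathcal{S}_1-\mathcal{S}_2)$ as an explicit element of $I_4$ (Figures \ref{S12a}--\ref{S12c}), and it is exactly where the factor $x+2a$ enters.

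There is also a structural reason why no purely $3$-strand verification can work. The identities (\ref{eqssprime}) and (\ref{eqssym}) (your first and third identities) hold in $H_3\otimes_R S^{\dagger\dagger}$ with no localization at $x+2a$ (only $2$ and $x$ need be invertible), and the theorem is in fact true for $n=3$ even at $x=-2a$ (section \ref{subsectm2a}: $\mathcal{B}_3\simeq\mathcal{F}_3$). Since $F_n\otimes_A\kappa$ is presented by the same relations over any specialization $\kappa$, an argument that deduces all the $F_n$-relators from identities valid over $S^{\dagger\dagger}$ itself would force the map to be an isomorphism at $x=-2a$ for every $n$, contradicting proposition \ref{propdimBMWm2a}: at $n=4$ and $x=-2a$ the specialization of $BMW_4^{\dagger\dagger}$ has dimension $115$, while $F_4$ specializes to dimension $1+105=106$ (corollary \ref{cordimFn}), the discrepancy coming exactly from $C_1^{\pm}\neq C_2^{\pm}$ there. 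So your guess that the $x+2a$ denominator would surface when ``solving for $C$'' in the $3$-strand matrix model is off target; the obstruction lives on $4$ strands, and supplying the $4$-strand input $\mathcal{S}_1\equiv\mathcal{S}_2$ over $S^{\dagger\dagger}[(x+2a)^{-1}]$ is the real content of the paper's proof.
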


We define $\mathcal{S}_ i = e_i s_{i+1} e_i - e_i$, 
$\mathcal{S}'_ i = e_i s_{i+1}^{-1} e_i - e_i \in H_n \otimes_R S^{\dagger\dagger}$, and also
$\widehat{\mathcal{R}_i} = (s_i s_{i+1} s_i)\mathcal{R}_i (s_i s_{i+1} s_i)^{-1}$,
and similarly $\widehat{\mathcal{S}_i} = (s_i s_{i+1} s_i) (\mathcal{S}_i) (s_i s_{i+1} s_i)^{-1}$.
The two formulas below hold inside $H_3 \otimes S^{\dagger\dagger}$ and can be checked
computationally by using the morphism $\Phi_{H_3}$, as we did for lemma \ref{lemRimpliesS} (these
formulas were found by a similar procedure, too).

\begin{equation}
\mathcal{S}_1- \mathcal{S}'_1 
= \frac{1}{x} \mathcal{R}_1 s_2 -\frac{1}{x^2}s_1^{-1} \mathcal{R}_1 s_2 
- \frac{1}{x^2}s_1 \mathcal{R}_1 s_2 = \frac{1}{x^2}(x - s_1^{-1} - s_1) \mathcal{R}_1 s_2 
= \frac{1}{(b+c)^2} (s_1 -b)(s_1-c) s_1^{-1} \mathcal{R}_1 s_2  \label{eqssprime}
\end{equation}
\begin{equation}
2x^2 \left(  \mathcal{S}_{1} - \widehat{\mathcal{S}}_{1}\right)=
 (x- s_1^{-1} - s_1) \mathcal{R}_1 s_2 -  \mathcal{R}_1 + (-x +  s_2^{-1} + s_2) \widehat{\mathcal{R}_1} s_1
+  \widehat{\mathcal{R}_1} \label{eqssym}
\end{equation}

We now want to show that $\mathcal{S}_1 - \mathcal{S}_2$ also belongs to the ideal generated by $\mathcal{R}_1$. For this
we need to work inside $H_4 \otimes S^{\dagger\dagger}$. Since the computations become quite complicated, we specialize
at $a=1$. There is no loss of generality in doing this, as we justify it now. The natural decomposition $S^{\dagger\dagger}
= S^{\dagger\dagger}_+ \oplus S^{\dagger\dagger}_-$ induces a $\Q$-vector space decomposition $H_n \otimes_R S^{\dagger\dagger} = H_n^+ \oplus
H_n^-$ with projectors $p_+,p_-$ given by the multiplication by $(a-1)/2$ and $(a+1)/2$. It is straightforward to
check that the involutive automorphism $E^{\dagger}$ of $H_n \otimes_R S^{\dagger\dagger}$ induced by
$\hat{E}$ and $\eta$ (see the notations of proposition \ref{propdefE} and before) exchanges $H_n^+$ and $H_n^-$, maps
$\mathcal{S}_i \mapsto -\mathcal{S}_i$, $\mathcal{R}_i \mapsto -\mathcal{R}_i$ and intertwines $p_+$ and $p_-$ up
to a sign (that is: $E^{\dagger} \circ p_+ = - p_- \circ E^{\dagger}$,
$E^{\dagger} \circ p_- = - p_+ \circ E^{\dagger}$). Because of this, any expression of $p_+(\mathcal{S}_1 - \mathcal{S}_2) = (\mathcal{S}_1)_+
- (\mathcal{S}_2)_+$ immediately yields an expression of $p_-(\mathcal{S}_1 - \mathcal{S}_2)$
and therefore of $\mathcal{S}_1 - \mathcal{S}_2$.

We now use the fact that $H_4$ is a free $R$-module of rank 648 in order to do explicit computations. More precisely,
the computations are made as follows. First of all, we build a basis of $H_4$ as follows. We
consider the collection $\mathcal{B}_2$ of 27 words in $s_i, s_i^{-1}$ given in proposition 4.8 of \cite{CUBIC5}.
They form a basis of $H_4$ as a $H_3$-module. Together with the list of 24 words $\mathcal{B}_1$ 
given by proposition \ref{prop:H3libre} (ii), which induces a basis of $H_3$, we get a
collection $\mathcal{B}_3 = \{ g_1 g_2 ; (g_1, g_2) \in \mathcal{B}_1 \times \mathcal{B}_2\}$
of $24 \times 27 = 648$ words which induces a basis of $H_4$. From this and the implicit rewriting rules of \cite{CUBIC5} we
build an explicit regular representation $H_4 \into Mat_{648}(R)$ and therefore an injective map
 $\Phi_{H_4} : H_4 \otimes_R S_+^{\dagger \dagger} \to Mat_{648}(S_+^{\dagger \dagger})$, that we use in order to check equalities.

Letting $(\mathcal{S}_1)_{\pm} = \mathcal{S}_{\pm}$,
we let $(\mathcal{S}_2)_{\pm} = (s_1 s_2)( \mathcal{S}_1)_{\pm} (s_1 s_2)^{-1}$.
Inside $H_4 \otimes_R R/(a-1,y-1)$ we find that $2 (x+2)^2 x^4 ((\mathcal{S}_1)_+ -( \mathcal{S}_2)_+)$
can be expressed as a sum of 161 terms obviously belonging to $I_4$, see Figures \ref{S12a}, \ref{S12b}, \ref{S12c}.

These terms were found as follows. For computational reasons (and the limited power of the computers we have at disposal) it is too difficult to
compute the linear span of $I_4$ inside the field of fractions $\Q(x)$ or $\Q(b)$, so we need to circumvent this obstacle by computing inside specialisations in $x$.
For some specific value of $x$ we get a basis $\mathcal{B}^1$ of the linear span of the image of $I_4$ inside $\Q^{648}$.
The one we get is made of terms of the form $g \mathcal{R}_1 g'$ where $g,g'$ are products of elements $s_i,s_i^{-1}$
(chosen inside the basis of $H_4$ mentionned above), or $g \mathcal{R}_2 g'$ or $g \widehat{\mathcal{R}_1} g'$ or $g \widehat{\mathcal{R}_2} g'$.
The same elements form a basis of the specializations of $H_4$ for infinitely many values of $x$. We chose a number of values for which
we got an expression of $\mathcal{S}_1 - \mathcal{S}_2$ as linear combination of the elements of $\mathcal{B}^1$. Assuming that these
coefficients should be rational fractions in $x$ whose denominators have low degree, we get these rational fractions by interpolation. We then check that the
corresponding equality is correct by direct computation inside $H_4 \otimes \Q(x)$. It so happens that the choices we made in this process provide an expression of $(x-1)(x+2)^2(x^2+x-1)x^4 (\mathcal{S}_1 - \mathcal{S}_2)$. Recall that, in order to deal with the odd cases of the previous
section, we need to specialize at $x=1$.  For this alone, we need to start again with this specialization, and we get this time, as a linear combinations of another basis $\mathcal{B}^2$, a polynomial expression of $(x+1)(x+2)^2x^4 (\mathcal{S}_1 - \mathcal{S}_2)$. By Bezout theorem both results combined provide an expression of $2 (x+2)^2 x^4 ((\mathcal{S}_1)_+ -( \mathcal{S}_2)_+)$
as a linear combination of $\mathcal{B}^1 \cup \mathcal{B}^2$, and this is the result that is expressed here (the cardinality of
$\mathcal{B}^1 \cup \mathcal{B}^2$ is 161).

Needless to say, one could have hoped to get a nicer expression. Unfortunately we failed to find one.

The $x^4$ factor in this expression prevents the specialization $x=0$, which is to be expected. The $x+2$ factor prevents the specialization
$x=-2a$, which was not expected, but is explained by the proposition below. This proposition implies that the morphism under consideration
does not induce an isomorphism under this specialization, since the algebras $BMW_4^{\pm}$ have dimension 105.
Note that $(S^{\dagger\dagger}_{\pm})/(x+2a) \simeq \Q[b]/(b+ b^{-1} \pm 2) \simeq \Q[b]/(b \pm 1)^2$.

\begin{prop} \label{propdimBMWm2a}
The $\Q$-algebras $BMW_4^{\dagger\dagger} \otimes_{S^{\dagger\dagger}} \Q[b]/(b \pm 1)$ both have dimension 115.
\end{prop}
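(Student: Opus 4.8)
The plan is to deduce the statement from one explicit finite-dimensional linear-algebra computation over $\Q$. First I would reduce to a single algebra. The two algebras in the statement are the specializations $\WBMW_4\otimes_\varphi\Q$ and $\WBMW_4\otimes_{\varphi'}\Q$ of $\WBMW_4$ along the ring morphisms $\varphi$ with $\varphi(a)=1$, $\varphi(b)=\varphi(c)=-1$, and $\varphi'$ with $\varphi'(a)=-1$, $\varphi'(b)=\varphi'(c)=1$ (both well defined, since $\varphi(abc)=\varphi'(abc)=1$): indeed $BMW_4^{\dagger\dagger}\otimes_{S^{\dagger\dagger}}\Q[b]/(b+1)=\WBMW_4\otimes_R S^{\dagger\dagger}\otimes_{S^{\dagger\dagger}}\Q[b]/(b+1)$, and the composite $R\to S^{\dagger\dagger}\to\Q[b]/(b+1)=\Q$ through the component $S^{\dagger\dagger}_+$ (on which $a=1$, $y=bc=1$, $x=b+c=-2=-2a$, i.e. the $\Q[b]/(b+1)^2$ of the preceding remark, then $b=-1$) is exactly $\varphi$, and symmetrically $\Q[b]/(b-1)$ gives $\varphi'$. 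Since $\varphi'=\varphi\circ\eta$, the automorphism $E$ of Proposition \ref{propdefE} (acting by $\eta$ on $R$ and by $s_i\mapsto -s_i$) induces a $\Q$-algebra isomorphism $\WBMW_4\otimes_\varphi\Q\simeq\WBMW_4\otimes_{\varphi'}\Q$, so it suffices to prove $\dim_\Q\WBMW_4\otimes_\varphi\Q=115$.

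Next I would unwind the quotient. As $I_4$ is generated as a two-sided ideal of $H_4$ by $\mathcal{R}_1$ alone (all $\mathcal{R}_i$ being conjugate) and $-\otimes_R\Q$ is right exact, we get $\WBMW_4\otimes_\varphi\Q=(H_4\otimes_\varphi\Q)/J$, where $J$ is the two-sided ideal generated by the image $\overline{\mathcal{R}_1}$ of $\mathcal{R}_1$. By \cite{CUBIC5}, $H_4$ is free of rank $648$ over $R$, with the explicit basis $\mathcal{B}_3=\{g_1g_2:(g_1,g_2)\in\mathcal{B}_1\times\mathcal{B}_2\}$ used above, and the rewriting rules of \cite{CUBIC5} provide the explicit regular representation $\Phi_{H_4}:H_4\otimes_\varphi\Q\hookrightarrow Mat_{648}(\Q)$; in particular $\dim_\Q H_4\otimes_\varphi\Q=648$. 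Since $\mathcal{B}_3$ spans $H_4\otimes_\varphi\Q$, the ideal $J$ is the $\Q$-span of the $648^2$ vectors $b_i\,\overline{\mathcal{R}_1}\,b_j\in\Q^{648}$, $b_i,b_j\in\mathcal{B}_3$, each of which is computed concretely through $\Phi_{H_4}$. Computing the rank of this family, one gets $\dim_\Q J=533$, whence $\dim_\Q\WBMW_4\otimes_\varphi\Q=648-533=115$, as claimed.

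The main obstacle is purely computational, of the same nature as the identities behind Lemma \ref{lemRimpliesS} and Figures \ref{S12a}, \ref{S12b}, \ref{S12c}, with the simplification that here the ground ring is $\Q$ rather than a field of rational functions. Note that the Schur-element / numerical-invariant shortcut of Propositions \ref{propisomBMWtilde} and \ref{propisomBMWtilde2} is not available: at $b=c$ (that is $x=-2a$) the cubic relation $(s_i-1)(s_i+1)^2=0$ is inseparable and $H_4\otimes_\varphi\Q$ is not semisimple — already the discriminant of the trace form of $H_3$ recalled in Section \ref{sectH3} contains the factor $(c-b)^{10}$, which vanishes here. To keep the rank computation feasible I would not assemble all $648^2$ generators at once, but build $J$ as the increasing sum of the left submodules $(H_4\otimes_\varphi\Q)\,\overline{\mathcal{R}_1}\,b_j$, each spanned by applying the $648$ left-multiplication matrices of $\Phi_{H_4}$ to the single vector $\overline{\mathcal{R}_1}\,b_j$, keeping a running row-echelon form over $\Q$ and terminating once the rank stops growing.
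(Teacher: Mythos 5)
Your proposal is correct and is essentially the paper's own proof: the paper likewise specializes $H_4$ (free of rank $648$, with the explicit basis and regular representation from \cite{CUBIC5}) at $a=\pm1$, $b=c=\mp1$, computes that the defining ideal generated by $\mathcal{R}_1$ has dimension $533$ inside $\Q^{648}$, and concludes $648-533=115$. The only (harmless) extra ingredient in your write-up is the reduction of one sign case to the other via the automorphism $E$ of Proposition \ref{propdefE}, whereas the paper simply performs the computation for both specializations.
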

\begin{proof}
Direct computations prove that their defining ideals inside $H_4 \otimes_R R/(a\mp 1,b\pm 1, c \pm 1) \simeq \Q^{648}$ have dimension $533$,
hence the corresponding quotients have dimension $648-533 = 115$.
\end{proof}
\begin{figure}
$$
\begin{array}{ll}
 & (x+2)(x^7+4x^6-x^5-12x^4-5x^3+9x^2-1)x\mathcal{R}_1+ (-x^7-4x^6+8x^4+2x^3-8x^2-2x+6)\mathcal{R}_1s_3^{-1}\\ +&
  (2x+4)\mathcal{R}_1s_3^{-1}s_2^{-1}s_1^{-1}
  +   (x+2)(x^2-x-1)(x^5+3x^4-3x^3-5x^2+3x-2)x\mathcal{R}_1s_1 \\
+&  (x+2)(x^2-x-1)x^2\mathcal{R}_1s_1s_3^{-1}+ (-x-2)(x^2-x-1)(x^3+3x^2-2x-3)x^2\mathcal{R}_1s_1s_3 \\+&
  (-x^8-5x^7-3x^6+13x^5+17x^4-4x^3-10x^2+6x+6)\mathcal{R}_1s_3+(2x+4)\mathcal{R}_1s_3s_2s_1\\+& (-x-2)(x^3+2x^2+2x-1)x\mathcal{R}_2+ 
  (x^5+4x^4+3x^3-3x^2-2x-2)\mathcal{R}_2s_1^{-1}\\+& (x+1)(x+2)(x^2-x-1)(x^4+4x^3+2x^2-2x-4)x\mathcal{R}_2s_1^{-1}s_3+ 
  (x^5+4x^4+3x^3-3x^2-2x-2)\mathcal{R}_2s_1\\ +& (x+1)(x+2)(x^2-x-1)(x^4+4x^3+2x^2-2x-4)x\mathcal{R}_2s_1s_3\\ + & 
  (-x-1)(x+2)(x^2-x-1)(x^4+3x^3-2x^2-x+2)x\mathcal{R}_2s_2\\ + & (-x-2)(x^5+2x^4-6x^3-7x^2+2x+9)x\widehat{\mathcal{R}}_1+
  (2x^7+9x^6+2x^5-26x^4-29x^3-x^2+12x+4)\widehat{\mathcal{R}}_1s_3^{-1}\\ + &  (-2x-4)\widehat{\mathcal{R}}_1s_3^{-1}s_2^{-1}s_1^{-1}+ 
  (x^8+5x^7+4x^6-11x^5-25x^4-14x^3+9x^2+14x+4)\widehat{\mathcal{R}}_1s_3\\+&( -2x-4)\widehat{\mathcal{R}}_1s_3s_2s_1+
  (-x-1)(x+2)(x^6+3x^5-4x^4-9x^3+2x^2+9x+1)x\widehat{\mathcal{R}}_2\\ + &
  (x^9+7x^8+14x^7-4x^6-47x^5-49x^4+5x^3+34x^2+10x-4)\widehat{\mathcal{R}}_2s_1^{-1}\\+& 
  (x^9+7x^8+14x^7-3x^6-44x^5-52x^4+32x^2+12x-4)\widehat{\mathcal{R}}_2s_1\\ + & 
  (-x^8-5x^7-2x^6+17x^5+25x^4+3x^3-24x^2-20x-2s_3^{-1})\mathcal{R}_1+ (2x+4)(x^2+x+2)s_3^{-1}\mathcal{R}_1s_3^{-1}\\ + &
  (-x^6-3x^5-4x^4-2x^3+7x^2+10x+4)s_3^{-1}\mathcal{R}_1s_1^{-1}\\+& (-x^8-5x^7-x^6+14x^5+9x^4-11x^3-10x^2+4x+4)s_3^{-1}\mathcal{R}_1s_1\\+& 
  (x-1)(x+2)(x^2-x-1)(x^2+4x+2)xs_3^{-1}\mathcal{R}_1s_1s_3\\ +& (-x^8-4x^7+x^6+12x^5+12x^4+5x^3-4x^2-12x-10)s_3^{-1}\mathcal{R}_1s_2\\+& 
  (x+1)(x^5+3x^4-4x^3-9x^2+2x+10)xs_3^{-1}\mathcal{R}_1s_3+ (2x+4)s_3^{-1}\mathcal{R}_1s_3s_2^{-1} \\ + & 
  (-x^8-5x^7-3x^6+13x^5+25x^4+15x^3-5x^2-12x-4)s_3^{-1}\widehat{\mathcal{R}}_1 \\ + &
  (-x-2)(x^5+3x^4-3x^3-7x^2-4x+2)s_3^{-1}\widehat{\mathcal{R}}_1s_3^{-1}+
  (x+2)(x^4+x^3+2x^2-3)xs_3^{-1}\widehat{\mathcal{R}}_1s_2^{-1}\\ + & 
  (x^7+4x^6-2x^5-14x^4-15x^3-2x^2+14x+8)s_3^{-1}\widehat{\mathcal{R}}_1s_1^{-1}\\+& 
  (x^5-4x^4-12x^3-4x^2+10x+6)s_3^{-1}\widehat{\mathcal{R}}_1s_1\\+&( -x^7-4x^6+10x^4+12x^3+3x^2-4x-6)s_3^{-1}\widehat{\mathcal{R}}_1s_3+ 
  (-x-1)(x+2)(x^2+x-1)xs_3^{-1}\widehat{\mathcal{R}}_2\\+& (x^5+4x^4+3x^3-3x^2-2x-2)s_3^{-1}\widehat{\mathcal{R}}_2s_1^{-1}+ 
  (x^5+4x^4+3x^3-3x^2-2x-2)s_3^{-1}\widehat{\mathcal{R}}_2s_1\\+& (x-1)(x^2+4x+2)s_3^{-1}s_2^{-1}\widehat{\mathcal{R}}_1s_3+
  (2x+4)s_3^{-1}s_2^{-1}s_1\mathcal{R}_2+ (-2x-4)s_3^{-1}s_2^{-1}s_1\widehat{\mathcal{R}}_2\\+& 
  (-x-2)(x^5+3x^4-3x^3-7x^2-4x+2)s_3^{-1}s_2\mathcal{R}_1s_3^{-1}+( -x^5-3x^4+4x^3+10x^2+4x-4)s_3^{-1}s_2\mathcal{R}_1s_3\\+&
  (x+1)(x^5+3x^4-4x^3-8x^2+6x+11)x^2s_2^{-1}\mathcal{R}_2+( -x^5-4x^4-5x^3-x^2+2x-2)s_2^{-1}\mathcal{R}_2s_1^{-1}\\+&
  (-x^7-4x^6-x^5+6x^4+6x^3-x^2-2)s_2^{-1}\mathcal{R}_2s_1+ (x+2)(x^4+3x^3-6x+1)xs_2^{-1}\widehat{\mathcal{R}}_1\\+&
  (-x^4-3x^3+3x+2)xs_2^{-1}\widehat{\mathcal{R}}_1s_3^{-1}+ (-x^4-4x^3-3x^2+5x+4)xs_2^{-1}\widehat{\mathcal{R}}_1s_3\\+&
  (x+1)(x^5+2x^4-6x^3-x^2+10x+6)xs_2^{-1}\widehat{\mathcal{R}}_2\\+& 
  (-x^8-5x^7-4x^6+10x^5+18x^4+6x^3-7x^2-4x-2)s_2^{-1}\widehat{\mathcal{R}}_2s_1^{-1}\\+& 
 ( -x^8-5x^7-4x^6+10x^5+18x^4+6x^3-7x^2-4x-2)s_2^{-1}\widehat{\mathcal{R}}_2s_1+( -2x-4)s_2^{-1}s_1\mathcal{R}_2s_1^{-1}\\+&
  (2x+4)s_2^{-1}s_3\widehat{\mathcal{R}}_1s_3^{-1}+ (-x^7-4x^6+8x^4-7x^2-x-4)xs_1^{-1}\mathcal{R}_1\\+& 
  (-x^8-3x^7+5x^6+13x^5-9x^3+2x^2+4x-2)s_1^{-1}\mathcal{R}_1s_3^{-1}+( x^7+4x^6-9x^4-3x^3+4x^2-2)s_1^{-1}\mathcal{R}_1s_3\\+& 
  (-x^7-3x^6-2x^5+2x^4+7x^3+2x^2-4x+2)s_1^{-1}\mathcal{R}_2\\+& (x^7+4x^6-2x^5-10x^4+x^3+2x^2-7x-6)xs_1^{-1}\mathcal{R}_2s_2^{-1}\\+&
  (x^8+4x^7-2x^6-10x^5+x^4+2x^3-7x^2-4x+4)xs_1^{-1}\mathcal{R}_2s_2\\+& (-x^7-4x^6+2x^5+10x^4-x^3-2x^2+7x+6)xs_1^{-1}\mathcal{R}_2s_3\\+&
  (-x+1)(x+1)(x^4+2x^3-5x^2+2x+14)xs_1^{-1}\widehat{\mathcal{R}}_1\\+&
  (-x^7-4x^6+11x^4+15x^3+5x^2-2x-6)s_1^{-1}\widehat{\mathcal{R}}_1s_3^{-1}\\+&
  (-x-1)(x^6+2x^5-5x^4-3x^3-5x^2+2x+6)s_1^{-1}\widehat{\mathcal{R}}_1s_3 \\ + & \dots
  \end{array}
  $$
   \caption{For $a=1$, $2 (x+2)^2 x^4 (\mathcal{S}_1 - \mathcal{S}_2)$ to be continued.}
   \label{S12a}
  \end{figure}
  \begin{figure}
  $$
  \begin{array}{ll}& \dots\\
  +& 
  (-x^7-4x^6+2x^5+10x^4-x^3+15x+14)xs_1^{-1}\widehat{\mathcal{R}}_2\\+& 
  (-x^7-3x^6-2x^5+2x^4+7x^3+2x^2-4x+2)s_1^{-1}\widehat{\mathcal{R}}_2s_3^{-1} \\+& 
  (2x^6+7x^5-6x^4-11x^3+2x^2-10x-4)xs_1^{-1}\widehat{\mathcal{R}}_2s_2^{-1}\\+&
  (-x+1)(x+1)^2(x^2-x-1)(x^2+4x+2)s_1^{-1}\widehat{\mathcal{R}}_2s_1^{-1}\\+&
  (-x^7-4x^6+10x^4+11x^3-2x^2-6x-4)s_1^{-1}\widehat{\mathcal{R}}_2s_1+( -2x-4)s_1^{-1}\widehat{\mathcal{R}}_2s_1s_2^{-1}\\+&
  (2x^8+8x^7-3x^6-24x^5-10x^4+12x^3+6x^2-2x-2)s_1^{-1}\widehat{\mathcal{R}}_2s_2\\+&
  (x^7+4x^6-x^5-13x^4-10x^3+x^2-6)s_1^{-1}s_3^{-1}\mathcal{R}_1s_3^{-1}+ (-x^3-5x^2-2x-2s_1^{-1})s_3^{-1}\mathcal{R}_1s_3\\+&
  (x^6+4x^5-10x^3-13x^2-4x-2)s_1^{-1}s_3^{-1}\widehat{\mathcal{R}}_1s_3^{-1}\\+&
  (x^6+3x^5-3x^4-7x^3-6x^2-2x+4)s_1^{-1}s_3^{-1}\widehat{\mathcal{R}}_1s_3\\+& 
  (-x^7-4x^6+x^5+13x^4+10x^3-x^2+6)s_1^{-1}s_2^{-1}\mathcal{R}_2s_1^{-1}\\+& (-x^7-3x^6+5x^5+13x^4-12x^2+8)s_1^{-1}s_2^{-1}\mathcal{R}_2s_1
+(  2x+4)s_1^{-1}s_2^{-1}s_3^{-1}\mathcal{R}_1+( -2x-4)s_1^{-1}s_2^{-1}s_3^{-1}\widehat{\mathcal{R}}_1
\\+&
  (2x+4)s_1^{-1}s_2^{-1}s_3\mathcal{R}_1+ (-2x-4)s_1^{-1}s_2^{-1}s_3\widehat{\mathcal{R}}_1\\+&
  (-x-2)(x^2-x-1)(x^5+3x^4-4x^3-2x^2+3x-4)xs_1^{-1}s_2\mathcal{R}_2\\+&
  (x+1)(x^5+3x^4-4x^3-10x^2-4x+4)xs_1^{-1}s_3\mathcal{R}_1s_3^{-1}+ (-x-1)(x+2)(x^2+2x-2)s_1^{-1}s_3\mathcal{R}_1s_3\\+&
  (-x+1)(x+1)^2(x^2-x-1)(x^2+4x+2)s_1^{-1}s_3\mathcal{R}_2s_1^{-1}\\+&( -x^7-4x^6+10x^4+11x^3-2x^2-6x-4)s_1^{-1}s_3\mathcal{R}_2s_1\\+& 
  (x+1)(x^6+4x^5-x^4-13x^3-9x^2+2x+6)s_1^{-1}s_3\widehat{\mathcal{R}}_1s_3^{-1}\\+& 
  (x+2)(x^6+3x^5-4x^4-9x^3-x^2+2x+6)s_1^{-1}s_3\widehat{\mathcal{R}}_1s_3\\+&
  (-x^8-4x^7+2x^6+17x^5+5x^4-22x^3-18x^2+2x+10)xs_1\mathcal{R}_1\\+& (-x^7-4x^6-x^5+10x^4+18x^3+7x^2-2x-6)s_1\mathcal{R}_1s_3^{-1}\\+&
  (x^8+3x^7-5x^6-13x^5+5x^4+21x^3+9x^2-8x-6)s_1\mathcal{R}_1s_3\\+&(-x^6-2x^5-x^4-4x^3-8x^2-8x+2)s_1\mathcal{R}_2
\\+&  (x^5+3x^4+4x^3-3x^2-9x-2)xs_1\mathcal{R}_2s_2^{-1}+ (2x+4)s_1\mathcal{R}_2s_1^{-1}s_2\\+& 
  (x^7+5x^6+2x^5-9x^4-11x^3-4x^2-2)xs_1\mathcal{R}_2s_2\\+& (x^7+4x^6-2x^5-16x^4-12x^3+10x^2+17x+4)xs_1\mathcal{R}_2s_3\\+&
  (x^8+5x^7+4x^6-10x^5-23x^4-20x^3+3x^2+19x+12)xs_1\widehat{\mathcal{R}}_1\\+& 
  (-x^7-6x^6-8x^5+11x^4+34x^3+22x^2-5x-10)xs_1\widehat{\mathcal{R}}_1s_3^{-1}\\+&
  (-2x^7-9x^6-3x^5+24x^4+33x^3+7x^2-15x-8)xs_1\widehat{\mathcal{R}}_1s_3\\+&
  (x^7+4x^6-2x^5-16x^4-12x^3+10x^2+17x+4)xs_1\widehat{\mathcal{R}}_2\\+& (-x^6-2x^5-x^4-2x^3+2)s_1\widehat{\mathcal{R}}_2s_3^{-1}\\+&
  (-x^6-3x^5+6x^4+13x^3+3x^2-14x-10)xs_1\widehat{\mathcal{R}}_2s_2^{-1}\\+& 
  (-x^8-5x^7-3x^6+15x^5+24x^4-2x^3-20x^2-8x+8)s_1\widehat{\mathcal{R}}_2s_1^{-1}\\+&
 ( -x^8-5x^7-3x^6+14x^5+21x^4+x^3-15x^2-6x+6)s_1\widehat{\mathcal{R}}_2s_1\\+&
  (x^9+5x^8+3x^7-14x^6-22x^5+3x^4+20x^3+4x^2-8x-2)s_1\widehat{\mathcal{R}}_2s_2\\+&
  (x+2)(x^2-x-1)(x^5+5x^4+3x^3-2x^2-2x-2)s_1s_3^{-1}\mathcal{R}_1\\+& (x^6+4x^5-10x^3-13x^2-4x-2)s_1s_3^{-1}\mathcal{R}_1s_3^{-1}\\+&
  (-x^7-3x^6+5x^5+12x^4-5x^3-16x^2-6x+4)s_1s_3^{-1}\mathcal{R}_1s_3\\+&
  (x^7+5x^6+4x^5-10x^4-21x^3-11x^2+2x+6)s_1s_3^{-1}\widehat{\mathcal{R}}_1s_3^{-1}\\+&
  (2x+2)(x^2+4x+2)(x^4-x^3-3/2x^2+1)s_1s_3^{-1}\widehat{\mathcal{R}}_1s_3 \\+ & \dots
  \end{array}
  $$
   \caption{For $a=1$, $2 (x+2)^2 x^4 (\mathcal{S}_1 - \mathcal{S}_2)$, continued.}
      \label{S12b} 
  \end{figure}
\begin{figure}
  $$
  \begin{array}{ll}+& 
  (-x-1)(x^5+3x^4-3x^3-7x^2-6x+2)s_1s_2^{-1}\mathcal{R}_2s_1^{-1}+ (2x+4)xs_1s_2^{-1}\mathcal{R}_2s_1\\+&
  (x-1)(x+1)^2(x^2-x-1)(x^2+4x+2)s_1s_2^{-1}\widehat{\mathcal{R}}_2s_1^{-1}\\+&
  (x-1)(x+1)^2(x^2-x-1)(x^2+4x+2)s_1s_2^{-1}\widehat{\mathcal{R}}_2s_1\\+& (-x-2)(x^2-x-1)(x^4+5x^3+3x^2-x-4)xs_1s_2\mathcal{R}_2\\+& 
  (x-1)(x+1)^2(x^2-x-1)(x^2+4x+2)s_1s_2\mathcal{R}_2s_1^{-1}\\+& (x-1)(x+1)^2(x^2-x-1)(x^2+4x+2)s_1s_2\mathcal{R}_2s_1\\+&
  (-x+1)(x+1)^2(x^2-x-1)(x^2+4x+2)s_1s_2\widehat{\mathcal{R}}_2s_1^{-1}\\+&
  (-x+1)(x+1)^2(x^2-x-1)(x^2+4x+2)s_1s_2\widehat{\mathcal{R}}_2s_1+ (2x+4)s_1s_2s_3^{-1}\mathcal{R}_1\\+& 
  (-2x-4)s_1s_2s_3^{-1}\widehat{\mathcal{R}}_1+ (2x+4)s_1s_2s_3\mathcal{R}_1+( -2x-4)s_1s_2s_3\widehat{\mathcal{R}}_1\\+&
  (x+2)(x^2-x-1)(x^4+x^3-4x^2-2)s_1s_3\mathcal{R}_1\\+& (x+1)(x^6+4x^5-x^4-13x^3-9x^2+2x+6)s_1s_3\mathcal{R}_1s_3^{-1}\\+&
  (x+2)(x^5+2x^4-6x^3-5x^2+6)s_1s_3\mathcal{R}_1s_3\\+&( -x^8-5x^7-3x^6+15x^5+24x^4-2x^3-20x^2-8x+8)s_1s_3\mathcal{R}_2s_1^{-1}\\+&
  (-x^8-5x^7-3x^6+14x^5+21x^4+x^3-15x^2-6x+6)s_1s_3\mathcal{R}_2s_1\\+&
  (x+1)^2(x^2+4x+2)(x^4-x^3-x^2-x+1)s_1s_3\widehat{\mathcal{R}}_1s_3^{-1}\\+&
  (x+2)(x^6+4x^5-x^4-13x^3-9x^2+4x+8)xs_1s_3\widehat{\mathcal{R}}_1s_3\\+& (-x-2)(x^5+2x^4-6x^3-7x^2+4x+5)xs_2\mathcal{R}_1+
  (x^3-2x-2)(x^4+5x^3+5x^2-x-1)s_2\mathcal{R}_1s_3^{-1}\\+& (x^6+3x^5-4x^4-13x^3-6x^2+4x+2)s_2\mathcal{R}_1s_3\\+&
  (x+1)(x^7+4x^6-x^5-12x^4-5x^3+8x^2+6x+2)xs_2\mathcal{R}_2\\+& (-x^8-5x^7-4x^6+10x^5+18x^4+6x^3-7x^2-4x-2)s_2\mathcal{R}_2s_1^{-1}\\+&
  (-x^8-5x^7-4x^6+10x^5+18x^4+6x^3-7x^2-4x-2)s_2\mathcal{R}_2s_1\\+& (x^6+4x^5-x^4-12x^3-4x^2+13x+11)x^2s_2\widehat{\mathcal{R}}_2\\+&
  (-x^9-6x^8-9x^7+7x^6+34x^5+31x^4-9x^3-26x^2-6x+8)s_2\widehat{\mathcal{R}}_2s_1^{-1}\\+&
  (-x^9-6x^8-9x^7+7x^6+34x^5+31x^4-9x^3-26x^2-6x+8)s_2\widehat{\mathcal{R}}_2s_1+( -2x-4)s_2s_3^{-1}\mathcal{R}_1s_3\\+&
  (2x+4)s_2s_1^{-1}\widehat{\mathcal{R}}_2s_1+ (-x^7-3x^6+4x^5+14x^4+6x^3-10x^2-12x-2)s_3\mathcal{R}_1\\+&
  (-x-1)(x+1)(x^5+2x^4-6x^3-4x^2+4)s_3\mathcal{R}_1s_3^{-1}+ (-x-1)(x^5+2x^4-x^3-x^2-4x-4)s_3\mathcal{R}_1s_1^{-1}\\+&
  (-x^7-x^6+7x^5+2x^4-13x^3-10x^2+4x+4)s_3\mathcal{R}_1s_1\\+& (x-1)(x+2)(x^2-x-1)(x^2+4x+2)xs_3\mathcal{R}_1s_1s_3\\+&
  (-x+1)(x^8+6x^7+9x^6-6x^5-29x^4-28x^3-16x^2-14x-10)s_3\mathcal{R}_1s_2\\+& (x+2)(x^3+3x^2-6)s_3\mathcal{R}_1s_3\\+&
  (-x-1)(x+2)(x^6+3x^5-4x^4-9x^3+2x^2+7x+1)xs_3\mathcal{R}_2\\+& (x^8+6x^7+7x^6-13x^5-27x^4-9x^3+10x^2+2x-4)s_3\mathcal{R}_2s_1^{-1}\\+&
  (x^8+6x^7+8x^6-10x^5-30x^4-14x^3+8x^2+4x-4s_3)\mathcal{R}_2s_1\\+& (-x^7-3x^6+3x^5+11x^4+13x^3+7x^2-2x-4)s_3\widehat{\mathcal{R}}_1\\+& 
  (-x-1)(x^7+4x^6-10x^4-12x^3-3x^2+4x+6)s_3\widehat{\mathcal{R}}_1s_3^{-1}+ (-2x-4)s_3\widehat{\mathcal{R}}_1s_3^{-1}s_2\\+&
  (x^5+3x^4+x^3-x-8)xs_3\widehat{\mathcal{R}}_1s_2^{-1}+( x^6+x^5-7x^4-5x^3-4x^2+10x+8)s_3\widehat{\mathcal{R}}_1s_1^{-1}\\+&
  (-x^6-2x^5+2x^4-3x^3-4x^2+6x+6)s_3\widehat{\mathcal{R}}_1s_1\\+& (-x-1)(x+2)(x^6+3x^5-4x^4-9x^3+4x+4)s_3\widehat{\mathcal{R}}_1s_3\\+&
  (x-1)(x+1)(x^2+4x+2)s_3s_2^{-1}\widehat{\mathcal{R}}_1s_3^{-1}+ (x-1)(x+2)(x^2+4x+2)s_3s_2^{-1}\widehat{\mathcal{R}}_1s_3\\+&
  (-x-1)(x^5+3x^4-4x^3-10x^2-4x+4)s_3s_2\mathcal{R}_1s_3^{-1}+ (x+1)(x+2)(x^2+2x-2)s_3s_2\mathcal{R}_1s_3\\+&
  (2x+4)s_3s_2s_1^{-1}\mathcal{R}_2+ (-2x-4)s_3s_2s_1^{-1}\widehat{\mathcal{R}}_2.
 \end{array}
 $$
 \caption{For $a=1$, $2 (x+2)^2 x^4 (\mathcal{S}_1 - \mathcal{S}_2)$, last part.}
    \label{S12c}
 
 \end{figure}
 
We will elaborate a bit more on the case $x = -2a$ in section \ref{subsectm2a}. For now, let $\pi : BMW_n^{\dagger \dagger}\otimes S^{\dagger\dagger}[(x+2a)^{-1}] \onto F_n \otimes_A S^{\dagger\dagger}[(x+2a)^{-1}]$ be the obvious
 projection. We need to find a section $f$ such that $f \circ \pi = \mathrm{Id}$, and for this it is enough
 to check that the natural projection $f : H_n \otimes S^{\dagger\dagger}[(x+2a)^{-1}] \onto BMW_n^{\dagger \dagger}\otimes S^{\dagger\dagger}[(x+2a)^{-1}] $ factorizes through $F_n \otimes_A S^{\dagger\dagger}[(x+2a)^{-1}] $. Using the decomposition $S^{\dagger\dagger} = S^{\dagger\dagger}_+ \oplus
 S^{\dagger\dagger}_-$ and the Galois automorphism mapping $a \mapsto -a$,
   it is sufficient to check this for $f_+ : H_n \otimes S^{\dagger\dagger}_+[(x+2)^{-1}] \onto BMW_n^{\dagger \dagger}\otimes S_+^{\dagger\dagger}[(x+2)^{-1}] $. It is clear that the relators associated to $(i)-(iv)$ are mapped to $0$.
 Now $C = e_1 s_2 e_1 - e_1$ is mapped to $\mathcal{S}_1$, and the relations \ref{eqssprime}, \ref{eqssym} and the fact that
 $\mathcal{S}_1- \mathcal{S}_2 \in I_n$ (hence $\mathcal{S}_i - \mathcal{S}_{i+1} \in I_n$ for all $i$) then imply, using conjugation by
 elements of the braid groups, that the relators associated to (v) and (viii) are also mapped to $0$. Since $s_1 e_1 = e_1 s_1 = a e_1$
 it is clear that $s_1 \mathcal{S}_1 =  \mathcal{S}_1s_1 = a s_1$ and this implies $s_i C - a C \mapsto 0, Cs_i - a C \mapsto 0$ for all $i$,
 which justifies that the 9th type of relator is also mapped to $0$. This completes the proof of the theorem.

\begin{cor} \label{corclasstracesddagger}
Let $(t_n)$ be a Markov trace on $(BMW_n^{\dagger\dagger})$ with value in some $S^{\dagger\dagger}$-module $M$, and $t'_n$
its composite with $M \to M \otimes S^{\dagger\dagger}[(a-x)^{-1},(2a-x)^{-1},(2a+x)^{-1} ]$. Then $t'_n$ is a linear combination
$t_n^H$, $t_n^K$ , $t_n^{\dagger\dagger}$
with coefficients in $M \otimes_R S^{\dagger\dagger}[(a-x)^{-1},(2a-x)^{-1},(2a+x)^{-1} ]$.
\end{cor}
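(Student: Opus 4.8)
The plan is to upgrade Proposition \ref{propclassmarkovddagger3}(ii), which settles the case $n=3$, to all $n$ by transporting the problem to the algebra $F_n$, where the rigidity statement Proposition \ref{propbasiqueFn}(iv) is available. Write $M' = M \otimes S^{\dagger\dagger}[(a-x)^{-1},(2a-x)^{-1},(2a+x)^{-1}]$ and let $t'_n$ be the localisation of $t_n$, with values in $M'$. Since $2a+x = x+2a$ is invertible in $M'$, Theorem \ref{theoisomddaggerFn} identifies $BMW_n^{\dagger\dagger}$ with $F_n \otimes_A S^{\dagger\dagger}$ after tensoring with $S^{\dagger\dagger}[(x+2a)^{-1}]$; because these isomorphisms are induced by $s_i \mapsto s_i$ they are compatible with the tower inclusions, so $(t'_n)$ is, equivalently, a Markov trace factoring through $(F_n)$ with values in $M'$.

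First I would check that each of $t^H_n$, $t^K_n$, $t^{\dagger\dagger}_n$ gives, after the same localisation, a Markov trace factoring through $F_n$. For $t^H_n$ this follows from the factorisation $\WBMW_n \onto \mathcal{H}_n(b,c)$ of Proposition \ref{propstructgenWBMW}(i) composed with $R \to S^{\dagger\dagger}$; for $t^{\dagger\dagger}_n$ it is Proposition \ref{propclassmarkovddagger3}(i); and for $t^K_n : \WBMW_n \to \overline{S}$ one uses the identity $S^{\dagger\dagger} = \overline{S}/(a^2-y)$, composing $t^K_n$ with $\overline{S} \onto S^{\dagger\dagger}$ and extending scalars along $\WBMW_n \to \WBMW_n \otimes_R S^{\dagger\dagger} = BMW_n^{\dagger\dagger}$. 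By the computation in the proof of Proposition \ref{propclassmarkovddagger3}(ii), the determinant $\Delta = \frac{2}{x^2}(2a-x)(a-x)$ of the matrix of values of $t^H_3, t^K_3, t^{\dagger\dagger}_3$ on $1, s_1, s_1s_2$ is a unit in $S^{\dagger\dagger}[(a-x)^{-1},(2a-x)^{-1}]$, hence a fortiori over $M'$. Therefore one can solve for scalars $\lambda_H, \lambda_K, \lambda_{\dagger\dagger} \in M'$ with $t'_3(g) = \lambda_H t^H_3(g) + \lambda_K t^K_3(g) + \lambda_{\dagger\dagger} t^{\dagger\dagger}_3(g)$ for every $g \in \{1, s_1, s_1s_2\}$.

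Finally, set $\tau_n = t'_n - \lambda_H t^H_n - \lambda_K t^K_n - \lambda_{\dagger\dagger} t^{\dagger\dagger}_n$, a Markov trace factoring through $F_n$ with values in $M'$ satisfying $\tau_3(1) = \tau_3(s_1) = \tau_3(s_1s_2) = 0$. By Proposition \ref{propbasiqueFn}(iv) a Markov trace factoring through $F_n$ is entirely determined by its values on $1, s_1, s_1s_2$ at level $3$, so $\tau_n = 0$ for all $n$, which is precisely the asserted decomposition of $t'_n$. I do not expect a genuine obstacle here beyond bookkeeping: the substantial inputs — the identification $BMW_n^{\dagger\dagger} \simeq F_n$ of Theorem \ref{theoisomddaggerFn} and the rigidity of traces on $F_n$ of Proposition \ref{propbasiqueFn}(iv) — are already in place, and the only points deserving care are verifying that the three model traces descend to $F_n$ (which rests on $S^{\dagger\dagger} = \overline{S}/(a^2-y)$ for $t^K_n$) and handling the three simultaneous localisations consistently.
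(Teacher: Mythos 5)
Your proposal is correct and follows essentially the same route as the paper: localize so that Theorem \ref{theoisomddaggerFn} lets the trace factor through $F_n$, match it at level $3$ against $t_3^H,t_3^K,t_3^{\dagger\dagger}$ using the determinant from Proposition \ref{propclassmarkovddagger3}(ii), and conclude by the rigidity statement of Proposition \ref{propbasiqueFn}(iv). The extra bookkeeping you include (compatibility with the tower maps and checking the three model traces descend to $F_n$) is exactly what the paper leaves implicit.
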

\begin{proof}
We know from proposition \ref{propclassmarkovddagger3} (ii) that $t'_3$ is a linear combination of $t_3^H$, $t_3^K$, $t_3^{\dagger \dagger}$.
The statement is then a consequence of theorem \ref{theoisomddaggerFn} together with proposition \ref{propbasiqueFn} (iv).
\end{proof}
\subsection{Computations inside $F_n$}

Following the arguments of Wenzl in \cite{WENZL} p. 400 we can derive additional useful relations
inside $F_n$.

\begin{lemma} \label{lemformulesFsee}Assume $|j-i| = 1$. Then we have
\begin{enumerate}
\item $s_j s_i e_j = e_i s_j s_i$
\item $s_i e_j e_i = a s_j^{-1} e_i + C$, $e_i e_j s_i = a e_i s_j^{-1} + C$
\item $e_i e_j e_i = e_i + \frac{2a}{x} C$
\item $e_i s_j s_i = a e_i e_j -C$
\end{enumerate}
\end{lemma}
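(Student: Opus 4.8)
The plan is to imitate Wenzl's computation in \cite{WENZL}, p.~400, keeping track of the central correction terms. First I would record the consequences of the defining relations of $F_n$ that get used repeatedly: from (iii), $s_k+s_k^{-1}=ax\,e_k+x$; from (iii)--(iv) together with $a^2=1$, $s_k^{\pm1}e_k=e_ks_k^{\pm1}=ae_k$; $e_k^2=\tilde{\delta}x^{-1}e_k$ and $e_kC=Ce_k=\tilde{\delta}x^{-1}C$; from (ix), $s_k^{\pm1}C=Cs_k^{\pm1}=aC$; the relations (v)--(viii), i.e.\ $e_is_j^{\pm1}e_i=e_i+C$ whenever $|i-j|=1$; and the ``Reidemeister'' identity $s_{i+1}^{-1}e_is_{i+1}^{-1}=s_ie_{i+1}s_i$, which holds in $F_n$: it is exactly the equality $s_2^{-1}e_1s_2^{-1}=s_1e_2s_1$ proved, inside $F_n$, in the course of the proof of Proposition \ref{propbasiqueFn}(i) (equivalently, it holds in $F_n\otimes_AS^{\dagger\dagger}$ by Propositions \ref{propbasiqueFn}(i) and \ref{proppresWBMWidemp} with $y=1$, hence in $F_n$ since $F_n$ embeds into $F_n\otimes_AS^{\dagger\dagger}$ by Corollary \ref{cordimFn}). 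Finally, the index-reversing automorphism $\rho\colon s_k\mapsto s_{n-k}$ of $F_n$ fixes $C$ (by relation (vi)) and interchanges (v)$\leftrightarrow$(vi), (vii)$\leftrightarrow$(viii) and the two forms of the Reidemeister identity, so it is enough to establish each of (1)--(4) in the case $j=i+1$.

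Relation (1) is a pure braid computation with no $C$: expanding $e_j$ by (iii) and using the braid identities $s_js_is_j=s_is_js_i$ and $s_js_is_j^{-1}=s_i^{-1}s_js_i$ gives $s_js_ie_j=\frac{a}{x}(s_i^{-1}s_js_i+s_is_js_i)-as_js_i$, and expanding $e_i$ by (iii) gives the same expression for $e_is_js_i$. For (3) I substitute $s_j=ax\,e_j+x-s_j^{-1}$ into relation (v): using $e_i^2=\tilde{\delta}x^{-1}e_i$ and $e_is_j^{-1}e_i=e_i+C$ (relation (vii)) the left side becomes $ax\,e_ie_je_i+\tilde{\delta}e_i-(e_i+C)$, which must equal $e_i+C$; since $2-\tilde{\delta}=ax$ and $\frac{1}{ax}=\frac{a}{x}$, this solves to $e_ie_je_i=e_i+\frac{2a}{x}C$.

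For (2) I multiply the Reidemeister identity $s_{i+1}^{-1}e_is_{i+1}^{-1}=s_ie_{i+1}s_i$ on the left by $e_i$: the left side becomes $(e_is_{i+1}^{-1}e_i)s_{i+1}^{-1}=(e_i+C)s_{i+1}^{-1}=e_is_{i+1}^{-1}+aC$ (by (vii) and (ix)), while the right side becomes $(e_is_i)e_{i+1}s_i=ae_ie_{i+1}s_i$ (by (iv)); multiplying through by $a$ yields $e_ie_{i+1}s_i=ae_is_{i+1}^{-1}+C$, the second identity of (2). Multiplying the Reidemeister identity on the \emph{right} by $e_i$ instead and simplifying $s_ie_{i+1}s_ie_i=as_ie_{i+1}e_i$ by (iv), one obtains in the same way the first identity $s_ie_{i+1}e_i=as_{i+1}^{-1}e_i+C$. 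Finally, for (4): right-multiplying the second identity of (2) by $s_i^{-1}$ and using (ix) gives $e_ie_j=ae_is_j^{-1}s_i^{-1}+aC$, hence $e_is_j^{-1}s_i^{-1}=ae_ie_j-C$; on the other hand, multiplying the Reidemeister identity on the left by $s_{i+1}$ and on the right by $s_i^{-1}$ gives $e_is_{i+1}^{-1}s_i^{-1}=s_{i+1}s_ie_{i+1}$, which equals $e_is_{i+1}s_i$ by (1). Combining the two, $e_is_js_i=ae_ie_j-C$, which is (4).

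These computations are short and essentially mechanical; the only thing needing attention is the bookkeeping of indices, since $s_ke_\ell$ does \emph{not} simplify when $k\ne\ell$, so each use of (iv) or of (v)--(viii) above has to be arranged so that the relevant indices coincide — which is precisely why the Reidemeister identity is exploited in the two specific ways indicated. I therefore do not expect a genuine obstacle beyond getting the $C$-terms right; the single external input is the validity, already over $A$, of the Reidemeister identity $s_{i+1}^{-1}e_is_{i+1}^{-1}=s_ie_{i+1}s_i$ in $F_n$, which I take from the proof of Proposition \ref{propbasiqueFn}(i).
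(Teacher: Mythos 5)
Your proposal is correct, and for items (1) and (3) it is essentially the paper's argument ((1) from the braid relations after expanding $e$, (3) by expanding $s_j$ or $e_j$ inside $e_i s_j e_i$ resp.\ $e_ie_je_i$ and using $e_i^2=\tilde{\delta}x^{-1}e_i$ together with (v)--(viii)). For (2) and (4) you take a genuinely different route: the paper stays entirely inside the presentation of $F_n$, proving (2) by the insertion trick $s_ie_je_i=s_j^{-1}(s_js_ie_j)e_i=as_j^{-1}(e_is_je_i)$ using (1), (iv) and (ix), and then (4) from (3) together with (2); you instead import the ``Reidemeister'' relation $s_{i+1}^{-1}e_is_{i+1}^{-1}=s_ie_{i+1}s_i$, established in the proof of Proposition \ref{propbasiqueFn} (i) (via the computation of Proposition \ref{propWBMWetendbien} adapted to $F_n$, or through the embedding $F_n\hookrightarrow F_n\otimes_A S^{\dagger\dagger}$ coming from Corollary \ref{cordimFn}), and obtain (2) by multiplying it by $e_i$ on either side, then (4) by combining (2), (1) and that relation again. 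Both are valid and there is no circularity, since Proposition \ref{propbasiqueFn} precedes the lemma and does not use it. What each buys: the paper's proof is self-contained and treats $j=i\pm 1$ uniformly, so it needs neither the prior verification of the Reidemeister relation nor your index-reversing automorphism $\rho$ (which is a correct but additional step one must check preserves the presentation, with $\rho(C)=C$); your version isolates the single geometric relation $s_{i+1}^{-1}e_is_{i+1}^{-1}=s_ie_{i+1}s_i$ as the source of all the $C$-corrections, which is conceptually transparent, but it leans on a computation (Proposition \ref{propbasiqueFn} (i)) whose own verification already runs through identities essentially equivalent to (2), so it is slightly less economical than the paper's direct manipulation.
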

\begin{proof}
By the braid relations, we have $s_j s_i s_j^{\alpha} = s_i^{\alpha} s_j s_i$ whenever $\alpha \in \{ -1,0,1 \}$.
Since $e_k = a ((s_k^{-1} + s_k)/x-1)$ we get (i).
Using the defining relations of $F_n$ and (i) we get
$s_i e_j e_i  = s_j^{-1} (s_j s_i e_j) e_i = s_j^{-1}  e_i s_j (s_i e_i) = a s_j^{-1}  (e_i s_j  e_i) = 
a s_j^{-1}  (C+e_i) = a s_j^{-1}  C+a s_j^{-1} e_i =   C+a s_j^{-1} e_i$ hence the first part of (ii). The
second part is similar. Now $e_i e_j e_i$ is equal to
$$a e_i \left( \frac{s_j + s_j^{-1}}{x}-1 \right) e_i
= a  \left( \frac{1}{x} e_is_je_i + \frac{1}{x} e_is_j^{-1}e_i-e_i^2 \right) 
= a  \left( \frac{1}{x} (e_i + C) + \frac{1}{x} (e_i + C)-\frac{\tilde{\delta}}{x} e_i \right) $$
hence $e_i e_j e_i = 
 a  \left( \frac{ax}{x}e_i + \frac{2}{x}C  \right) $ which proves (iii).
 In order to prove (iv), we use that, because of (iii),
 $e_i s_j s_i = (e_i e_j e_i - \frac{2a}{x} C) s_j s_i =
 e_i (e_j e_i s_j) s_i- \frac{2a}{x} Cs_j s_i =
 e_i (a e_j s_i^{-1} + C) s_i- \frac{2a}{x} C =
  a e_ie_j s_i^{-1}s_i + e_iCs_i - \frac{2a}{x} C =
  a e_ie_j  + \frac{\tilde{\delta}}{x} a C - \frac{2a}{x} C =
  a e_ie_j  + \frac{2-ax-2}{x} a C $ which proves (iv).
 
\end{proof}

\subsection{A central extension of the Temperley-Lieb algebra}

\begin{defi} We define a unital algebra $\widetilde{TL}_n$
over $A = \Q[a,x,x^{-1}]/(a^2 = 1)$ by generators $e_1,\dots,e_{n-1},C$
and relations
\begin{enumerate}
\item $e_i^2 = \tilde{\delta}x^{-1} e_i$
\item $e_i C = C e_i = \tilde{\delta} x^{-1} C$
\item $C^2 = 2 x^{-2} \tilde{\delta}(a-x) C$
\item $e_i e_j = e_j e_i $ if $|j-i| \geq 2$
\item $e_i e_j e_i = e_i + \frac{2a}{x} C$ if $|j-i|  = 1$
\end{enumerate}
\end{defi}

We have a natural morphism $\widetilde{TL}_n \to F_n $ of unital $A$-algebras.
The next proposition shows that $\widetilde{TL}_n$ can be identified
with a subalgebra of $F_n$, and is a genuine extension of the ordinary
Temperley-Lieb algebra $TL_n$ defined as the quotient of $\widetilde{TL}_n$ by the
two-sided ideal generated by $C$.

\begin{prop} The natural morphism $\widetilde{TL}_n \otimes_A S^{\dagger\dagger} \to F_n  \otimes_A S^{\dagger\dagger}$
is injective. The $S^{\dagger\dagger}$-module $\widetilde{TL}_n \otimes_A S^{\dagger\dagger}$ is free of rank $1+Cat_n$,
where $Cat_n$ denotes the $n$-th Catalan number.
\end{prop}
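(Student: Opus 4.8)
The plan is to identify $\widetilde{TL}_n\otimes_A S^{\dagger\dagger}$ with its image $B$ inside $F_n\otimes_A S^{\dagger\dagger}$ by establishing two facts: (a) $\widetilde{TL}_n$ is generated as an $A$-module by $1+Cat_n$ elements, and (b) $B$ is a free $S^{\dagger\dagger}$-module of rank $1+Cat_n$. Once both are in hand, a surjection from a module generated by $m$ elements onto a free module of rank $m$ over a commutative ring is automatically an isomorphism, and we are done.

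For (a), using relations (2) and (3) in the definition of $\widetilde{TL}_n$, every product $uCv$ of elements of $\widetilde{TL}_n$ reduces to a scalar multiple of $C$ (push $C$ through the $e_i$'s via $e_iC=Ce_i=\tilde{\delta}x^{-1}C$, and use $C^2=2x^{-2}\tilde{\delta}(a-x)C$), so the two-sided ideal generated by $C$ is exactly $AC$, and the quotient $\widetilde{TL}_n/(AC)$ is the ordinary Temperley--Lieb algebra $TL_n$ over $A$ with loop parameter $\tilde{\delta}x^{-1}$, which is $A$-free of rank $Cat_n$ on the planar diagram basis. Lifting that basis and adjoining $C$ exhibits $\widetilde{TL}_n$, hence $\widetilde{TL}_n\otimes_A S^{\dagger\dagger}$, as generated by $1+Cat_n$ elements. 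Since the defining relations of $\widetilde{TL}_n$ all hold in $F_n$ — relation (5) is precisely Lemma \ref{lemformulesFsee}(iii), the others being immediate consequences of the relations of $F_n$ — the natural morphism of the statement is well defined; write $B\subseteq F_n\otimes_A S^{\dagger\dagger}$ for its image, so $B$ is generated by $1+Cat_n$ elements.

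For (b), decompose $S^{\dagger\dagger}\simeq S^{\dagger\dagger}_{+}\times S^{\dagger\dagger}_{-}$, each factor being a localisation of $\Q[b,b^{-1}]$ and thus a principal ideal domain; accordingly $B=B^{+}\times B^{-}$ and $F_n\otimes_A S^{\dagger\dagger}=\prod_{\pm}F_n^{\pm}\otimes_{A_{\pm}}S^{\dagger\dagger}_{\pm}$. By Proposition \ref{propbasiqueFn}(ii) the kernel of the surjection $F_n^{\pm}\otimes_{A_{\pm}}S^{\dagger\dagger}_{\pm}\onto BMW_n^{\pm}\otimes_{S_{\pm}}S^{\dagger\dagger}_{\pm}$ is the span of $C$, which is free of rank $1$ since $A_{\pm}C$ is a free direct summand of $F_n^{\pm}$ (proof of Corollary \ref{cordimFn}). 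As $C\in B^{\pm}$, intersecting with $B^{\pm}$ gives a short exact sequence $0\to S^{\dagger\dagger}_{\pm}C\to B^{\pm}\to\overline{B}^{\pm}\to 0$, where $\overline{B}^{\pm}$ is the subalgebra of $BMW_n^{\pm}\otimes_{S_{\pm}}S^{\dagger\dagger}_{\pm}$ generated by $e_1,\dots,e_{n-1}$, i.e. the Temperley--Lieb subalgebra of the BMW algebra. In the Kauffman tangle model of $BMW_n^{\pm}$ (cf. \cite{MORTONWASSERMANN}) the planar tangles span exactly this $e_i$-subalgebra and form part of an $S_{\pm}$-basis, so the Temperley--Lieb subalgebra is a free direct summand of $BMW_n^{\pm}$ of rank $Cat_n$; being a direct summand this persists under the base change $S_{\pm}\to S^{\dagger\dagger}_{\pm}$, so $\overline{B}^{\pm}$ is free of rank $Cat_n$. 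The short exact sequence (free kernel, free cokernel) then forces $B^{\pm}$ to be free of rank $1+Cat_n$, which is (b).

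Finally, composing a surjection $(S^{\dagger\dagger})^{1+Cat_n}\onto\widetilde{TL}_n\otimes_A S^{\dagger\dagger}$ with $\widetilde{TL}_n\otimes_A S^{\dagger\dagger}\onto B$ produces a surjective, hence bijective, endomorphism of $(S^{\dagger\dagger})^{1+Cat_n}$, so all three maps are isomorphisms; in particular $\widetilde{TL}_n\otimes_A S^{\dagger\dagger}\to B\subseteq F_n\otimes_A S^{\dagger\dagger}$ is an isomorphism, giving both the claimed injectivity and freeness of rank $1+Cat_n$. The step I expect to be the real obstacle is the lower bound in (b): one must make sure the specialisation does not shrink the Temperley--Lieb subalgebra of $BMW_n^{\pm}$, and the clean way to guarantee this is the direct-summand property furnished by the tangle basis of \cite{MORTONWASSERMANN}. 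If one prefers a self-contained argument, an alternative is to verify the claim on $3$ and $4$ strands using the explicit matrix models of section \ref{sectH3} and then propagate it by an induction in the style of Proposition \ref{propstructgenWBMW}(ii).
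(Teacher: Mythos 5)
Your proof is correct, and it rests on exactly the same two inputs as the paper's own argument: the linear independence, inside the Kauffman tangle (Morton--Wassermann) basis of $BMW_n^{\pm}$, of the planar words in the $e_i$'s, and the fact from proposition \ref{propbasiqueFn} (ii) and corollary \ref{cordimFn} that $C$ spans a free rank-one kernel of $F_n\otimes_A S^{\dagger\dagger}_{\pm}\onto BMW_n^{\pm}\otimes_{S_{\pm}}S^{\dagger\dagger}_{\pm}$. The only difference is packaging: the paper checks directly that the explicit spanning set $\mathcal{B}\sqcup\{C\}$ (Jones normal-form words together with $C$) maps to a linearly independent family in $F_n\otimes_A S^{\dagger\dagger}$, whereas you reach the same conclusion by counting generators, computing the rank of the image through the short exact sequence $0\to S^{\dagger\dagger}_{\pm}C\to B^{\pm}\to\overline{B}^{\pm}\to 0$, and invoking the standard fact that a surjection from a module generated by $m$ elements onto a free module of rank $m$ over a commutative ring is an isomorphism -- a harmless detour.
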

\begin{proof}
Let $\mathcal{B}$ the set of words in the $e_i$'s that provides the
usual basis of the Temperley-Lieb algebra, namely `increasing products
of increasing strings', see \cite{JONES} p. 27. Their
image inside $\overline{BMW}_n \otimes_{\overline{R}} S^{\dagger\dagger}$
through $\widetilde{TL}_n \otimes_A S^{\dagger\dagger} \to F_n  \otimes_A S^{\dagger\dagger} \to \overline{BMW}_n \otimes_{\overline{R}} S^{\dagger\dagger}$
is a linearly independent subset of Kauffman's tangle algebra, see \cite{MORTONWASSERMANN}.
It follows that $\mathcal{B} \sqcup \{ C \}$ is linearly independent in $\widetilde{TL}_n \otimes_A S^{\dagger\dagger}$. For,
if such a linear combination $\sum_{b \in \mathcal{B} } \la_b b + \la_C C$ was $0$, then its image inside $ \overline{BMW}_n \otimes_{\overline{R}} S^{\dagger\dagger}$
would also be $0$. But this image is equal to the image of $\sum_{b \in \mathcal{B} } \la_b b$, which is zero only if $\la_b = 0$
for all $b \in \mathcal{B}$. But then $\la_C C$ is mapped to $\la_C C \in F_n \otimes_A S^{\dagger\dagger}$, and we know
that this is zero only if $\la_C = 0$. The remaining assertions are then obvious.
\end{proof}

We let $A_1 = A/(x-a)$, and $\widetilde{TL}_n(1) = \widetilde{TL}_n \otimes_A A_1$.
Note that, inside $\widetilde{TL}_n(1)$, we have $e_i^2 = a e_i$.
We let $\bar{e}_i$ denote the image of $e_i \in \widetilde{TL}_n$
under the natural projection $\widetilde{TL}_n \to TL_n$.

\begin{figure} 
\begin{center}
\resizebox{10cm}{!}{\includegraphics{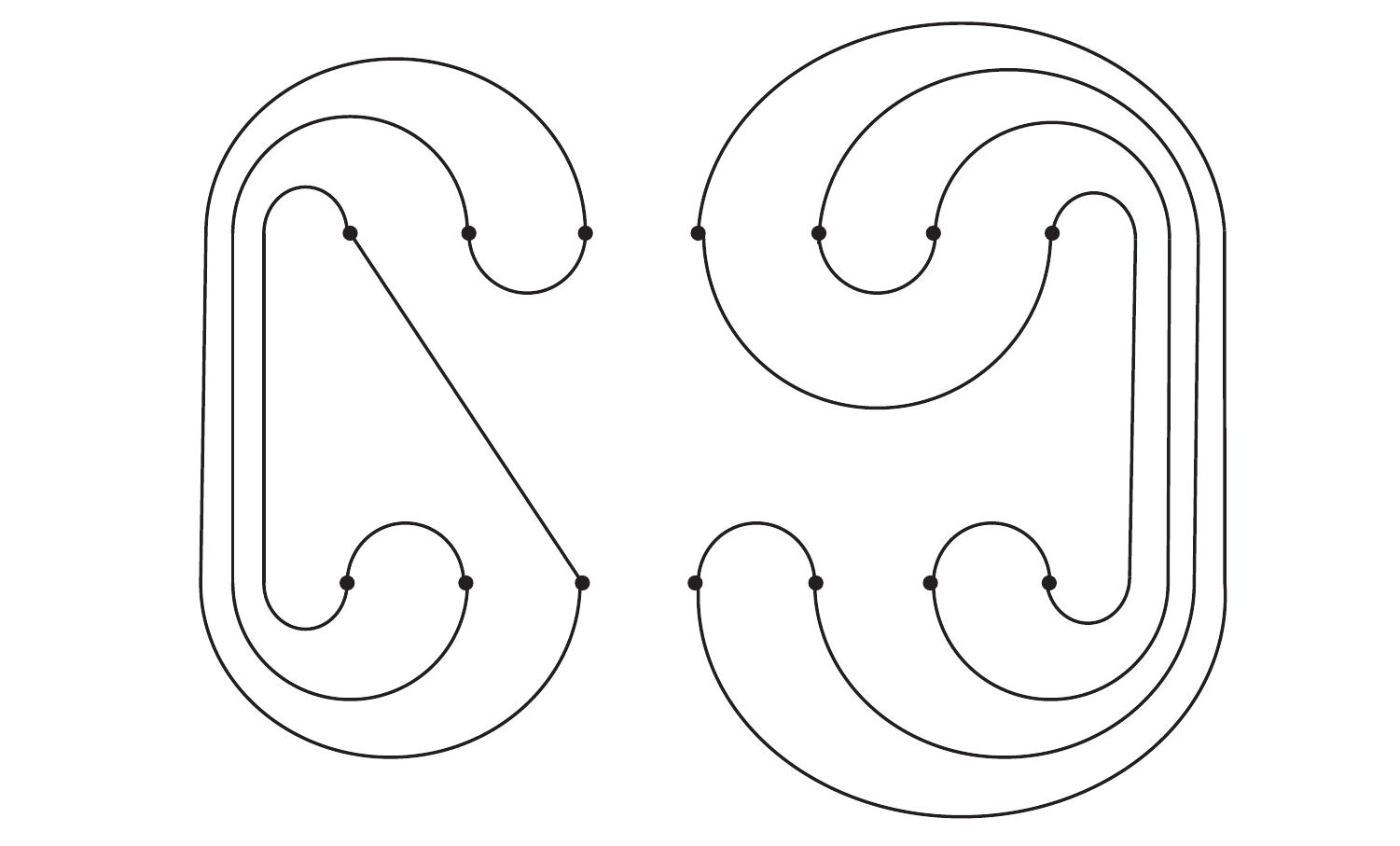}}
\end{center}
\caption{Closure of a Temperley-Lieb diagram with 2 components.}
\label{figclosureTL}
\end{figure}

\begin{prop} There exists a family of traces $t_n : \widetilde{TL}_n(1) \to A_1$
satisfying $t_n(C) = -a^{n+1}$, and
$$
t_n(e_{i_1}\dots e_{i_k}) = a^{k+n} \left( \mathcal{N}(\bar{e}_{i_1}\dots \bar{e}_{i_k}) - k \right)
$$
where $\mathcal{N}(\bar{e}_{i_1}\dots \bar{e}_{i_k})$ denotes the number of connected
components of the diagrammatic closure of $\bar{e}_{i_1}\dots \bar{e}_{i_k} \in TL_n$ (see Figure \ref{figclosureTL})
\end{prop}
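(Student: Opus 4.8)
The plan is to build $t_n$ by hand, first on the free associative $A_1$-algebra $\mathcal F_n$ on the symbols $e_1,\dots,e_{n-1},C$, and only afterwards to verify that it kills the defining relations of $\widetilde{TL}_n(1)=\mathcal F_n/J_n$. On a $C$-free monomial $w=e_{i_1}\dots e_{i_k}$ I set $t_n(w)=a^{k+n}\bigl(\mathcal N(w)-k\bigr)$, where $\mathcal N(w)$ is the number of connected components of the closure of the planar $(n,n)$-diagram obtained by stacking the elementary cup--cap diagrams $e_{i_1},\dots,e_{i_k}$ (closed loops created while stacking are retained, not simplified away). On a monomial with exactly one letter $C$, say $w=u\,C\,v$ with $u,v$ of lengths $p,q$ in the $e_i$'s, I set $t_n(w)=a^{p+q}t_n(C)$ with $t_n(C):=-a^{n+1}$; on a monomial with at least two $C$'s I set $t_n(w)=0$. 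Extending $A_1$-linearly gives a well-defined map $\mathcal F_n\to A_1$ (monomials are an $A_1$-basis), and it is \emph{already a trace} on $\mathcal F_n$: the value of each monomial depends only on its cyclic equivalence class, since the closure of a stacked diagram is, up to isotopy, invariant under cyclic permutation of the word, so $\mathcal N$, the total length $k$, and the length of the $e$-part of a one-$C$ monomial are all cyclic invariants; hence $t_n(\alpha\beta)=t_n(\beta\alpha)$ for all $\alpha,\beta\in\mathcal F_n$.

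It then remains to check that $t_n$ vanishes on the two-sided ideal $J_n$, i.e. that $t_n(u\,r\,v)=0$ for all monomials $u,v$ and every specialized relator $r$; this makes $t_n$ descend to a trace $t_n:\widetilde{TL}_n(1)\to A_1$. In $A_1=A/(x-a)$ one has $\tilde\delta=2-ax=1$ and $a^{-1}=a$, so the relators are $e_i^2-ae_i$, $e_iC-aC$, $Ce_i-aC$, $C^2$, $e_ie_j-e_je_i$ for $|i-j|\ge 2$, and $e_ie_je_i-e_i-2C$ for $|i-j|=1$. Each case uses an elementary geometric fact about stacking: passing from the word $w\,e_i\,w'$ to $w\,e_i^2\,w'$ creates exactly one new closed loop (the cap of the first $e_i$ glued to the cup of the second), so $\mathcal N$ increases by $1$ and the length by $2$, making $t_n(w\,e_i^2\,w')=a\,t_n(w\,e_i\,w')$; commuting two far-apart generators, or sliding an $e_i$ past a $C$ (using the absorption rule $wCw'=a^{|w|+|w'|}C$ coming from $e_iC=Ce_i=aC$), or producing two $C$'s (value $0$ on both sides), are all immediate. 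The only substantial case is $e_ie_je_i-e_i-2C$ with $|i-j|=1$: replacing the sub-word $e_ie_je_i$ by $e_i$ is an isotopy of planar tangles that destroys no loop, so $\mathcal N$ is unchanged while the length drops by $2$; writing $k_0$ for the length of $w\,e_i\,w'$ and $\mathcal M=\mathcal N(w\,e_i\,w')$ one gets $t_n(w\,e_ie_je_i\,w')=a^{(k_0+2)+n}(\mathcal M-k_0-2)=a^{k_0+n}(\mathcal M-k_0)-2a^{k_0+n}$, while $t_n(w\,e_i\,w')+2a^{|w|+|w'|}t_n(C)=a^{k_0+n}(\mathcal M-k_0)-2a^{(k_0-1)+n+1}$, and these agree because $a^2=1$.

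Finally, the normalization $t_n(C)=-a^{n+1}$ and the displayed value $t_n(e_{i_1}\dots e_{i_k})=a^{k+n}\bigl(\mathcal N-k\bigr)$ hold by construction, once one notes that $\mathcal N$ above is exactly the component count of the diagrammatic closure of $\bar e_{i_1}\dots\bar e_{i_k}$ as drawn in Figure \ref{figclosureTL}. (That the resulting trace is genuinely informative — $C\neq 0$ in $\widetilde{TL}_n(1)$ — follows from the freeness statement of the preceding proposition, but is not needed here.) I expect the main obstacle to be precisely the bookkeeping in the last relator: one must simultaneously justify that straightening the zig-zag leaves the closure (hence $\mathcal N$) untouched and that the extra central term $2C$ is forced by the two-unit drop in length together with the $C$-absorption rule; all the other relations collapse to the single observation that stacking an $e_i^2$ produces exactly one circle and $a^2=1$.
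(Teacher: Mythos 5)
Your construction is correct and is essentially the paper's own proof: you define the linear form on a free algebra via the component count of the diagrammatic closure of the stacked word (with closed loops retained, which is exactly the convention the paper's computation uses), obtain the trace property from cyclic invariance of the closure, and kill the relators by noting that inserting $e_i^2$ in place of $e_i$ creates exactly one loop while $e_ie_je_i$ straightens to $e_i$ with the $2C$ term compensating the drop in length; the paper merely starts from the free algebra on the $e_i$'s with $C$ adjoined as a one-dimensional ideal satisfying $C^2=0$, $Ce_i=e_iC=aC$, so your extra checks of the $C$-relators are the only (harmless) additional work. One slip to fix: passing from $w e_i w'$ to $w e_i^2 w'$ increases the word length by $1$, not $2$ (with your stated count the exponent would be off by one), and with the correct count the identity $t_n(w e_i^2 w') = a\,t_n(w e_i w')$ follows exactly as you conclude.
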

\begin{proof}
We fix $n$, and we prove that this formula indeed provides a trace on $\widetilde{TL}_n(1)$. We first note that
the formula $t_n(e_{i_1}\dots e_{i_k}) = a^{k+n} \left( \mathcal{N}(\bar{e}_{i_1}\dots \bar{e}_{i_k}) - k \right)$
provides a linear form on the free $A_1$ algebra on $e_1,\dots,e_{n-1}$, and that
it is indeed a trace on this algebra, because 
$\mathcal{N}(e_{i_1}\dots e_{i_k})$ is invariant under
cyclic permutation of the $e_{i_r}'s$ as is easily seen, for instance by representing the vertices
of the diagram on a circle. It is easily checked that the formula $t_n(C)= -a^{n+1}$
extends this trace to the sum of this algebra with the (non-unital) 1-dimensional algebra
spanned by $C$, defined by $C^2 =0$ and $C e_i = e_i C=aC$ for all $i$.

It then remains to check that the defining relations of $\widetilde{TL}_n(1)$
as a quotient module are mapped to $0$ under $t_n$. Let $x,y$ be words in the $e_i$'s,
and assume $x e_i y$ has length $k$.
Then, by definition of $t_n$, we have 
$$t_n(x e_i^2 y) = a^{k+n+1}(  \mathcal{N}(\bar{x} \bar{e}_i^2 \bar{y})- k -1)
= a^{k+n+1}(  \mathcal{N}(\bar{x} \bar{e}_i \bar{y})+1- k -1) = a t_n (x e_i y)
$$
hence the ideal generated by $e_i^2 -a e_i$ is mapped to $0$.
Similarly, let $i,j$ such that $|j-i| = 1$. Then 
$$
\begin{array}{lclcl}
t_n(x e_i e_j e_i y) &=& a^{k+2+n}(  \mathcal{N}(\bar{x} \bar{e}_i\bar{e}_j \bar{e}_i \bar{y})- k -2)
&=& a^{k+n}(  \mathcal{N}(\bar{x} \bar{e}_i \bar{y})- k -2) \\
&=& t_n(x e_i y) - 2 a^{k+n}
& =&t_n(x e_i y) + 2 t_n(xCy)\end{array}
$$
and this is equal to $t_n (x (e_i + 2 C) y)$
hence the ideal generated by $e_ie_je_i - (e_i+2C)$ is mapped to $0$.
Assume now $|j-i| \geq 2$. Since $\bar{e}_i \bar{e}_j = \bar{e}_j \bar{e}_i$
it is clear that $t_n$ vanishes on the ideal generated by the
$e_i e_j - e_j e_i$. The conclusion follows.

\end{proof}

We let $A_0 = A/(x-2a)$, and $\widetilde{TL}_n(0) = \widetilde{TL}_n \otimes_A A_0$.
Note that, inside $\widetilde{TL}_n(0)$, we have $\tilde{\delta} = 0$
whence $e_i^2 = C e_i = e_i C = 0$, $e_i e_j e_i = e_i + C$ whenever $|j-i| = 1$.

\begin{prop} \label{proptracesTL0}
Let $n \geq 3$ and $u_n,v_n \in A_0$.
There exists a trace on $\widetilde{TL}_n(0)$
defined by $t_n(1) = v_n$, $t_n(C) = - u_n$,
$t_n(e_i) = u_n$ for all $i \in [1,n-1]$ and
$$
t_n(e_{i_1}\dots e_{i_k}) = 0 \mbox{\ if \ } k \geq 2.
$$
\end{prop}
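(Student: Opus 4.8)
The plan is to obtain $t_n$ by descent from a manifestly tracial linear form on a larger, almost free, algebra, so that the only thing to check is compatibility with the defining relations. Recall (as noted just before the statement) that in $A_0=A/(x-2a)$ one has $\tilde{\delta}=2-ax=2-2a^2=0$ and $2a/x=1$, so that $\widetilde{TL}_n(0)$ is presented by generators $e_1,\dots,e_{n-1},C$ and relations $e_i^2=e_iC=Ce_iC=0$ (hence $C^2=0$ and $C$ central), $e_ie_j=e_je_i$ for $|i-j|\geq 2$, and $e_ie_je_i=e_i+C$ for $|i-j|=1$.

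Next I would introduce the $A_0$-algebra $\widehat{P}_n=P_n\oplus A_0\tilde{C}$, where $P_n$ is the free associative unital $A_0$-algebra on $e_1,\dots,e_{n-1}$, the element $\tilde{C}$ is central, $\tilde{C}^2=0$, and $w\tilde{C}=\tilde{C}w=\varepsilon(w)\tilde{C}$ for $w\in P_n$, with $\varepsilon\colon P_n\to A_0$ the augmentation $e_i\mapsto 0$ (this is the trivial square-zero extension of $P_n$ by the one-dimensional bimodule $A_0$ pulled back along $\varepsilon$). By construction $\widehat{P}_n$ already satisfies $\tilde{C}e_i=e_i\tilde{C}=\tilde{C}^2=0$ with $\tilde{C}$ central, so the presentation of $\widetilde{TL}_n(0)$ above shows that $e_i\mapsto e_i$, $\tilde{C}\mapsto C$ identifies $\widetilde{TL}_n(0)$ with the quotient $\widehat{P}_n/J$, where $J$ is the two-sided ideal of $\widehat{P}_n$ generated by the elements $e_i^2$, $e_ie_j-e_je_i$ $(|i-j|\geq 2)$ and $e_ie_je_i-e_i-\tilde{C}$ $(|i-j|=1)$. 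On $\widehat{P}_n$, which has $A_0$-basis $\{1\}\cup\{\text{words in the }e_i\text{ of length}\geq 1\}\cup\{\tilde{C}\}$, I define the $A_0$-linear form $\hat{t}$ by $\hat{t}(1)=v_n$, $\hat{t}(\tilde{C})=-u_n$, $\hat{t}(e_i)=u_n$, and $\hat{t}(w)=0$ for every word $w$ in the $e_i$ of length $\geq 2$.

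Two short verifications then finish the proof. First, $\hat{t}$ is a trace on $\widehat{P}_n$: for words $u,v$ the monomials $uv$ and $vu$ have equal length so $\hat{t}(uv)=\hat{t}(vu)$, while $\hat{t}(u\tilde{C})=\varepsilon(u)\hat{t}(\tilde{C})=\hat{t}(\tilde{C}u)$ and $\hat{t}(\tilde{C}^2)=0$. Second, since $\hat{t}$ is a trace, $\hat{t}$ vanishes on $J$ as soon as $\hat{t}(wr)=0$ for every monomial $w$ of $\widehat{P}_n$ and every generating relator $r$ of $J$; for $w=\tilde{C}$ this holds because $\tilde{C}r=\varepsilon(r)\tilde{C}=0$, for $w$ a word in the $e_i$ and $r\in\{e_i^2,\ e_ie_j-e_je_i\}$ every term of $wr$ has length $\geq 2$, and for $r=e_ie_je_i-e_i-\tilde{C}$ the term $we_ie_je_i$ has length $\geq 3$ while $\hat{t}(we_i)$ and $\hat{t}(w\tilde{C})$ both vanish unless $w=1$, in which case they equal $u_n$ and $-u_n$ and cancel. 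Hence $\hat{t}$ descends to a trace $t_n$ on $\widetilde{TL}_n(0)=\widehat{P}_n/J$, and its values on $1$, on $C$, on the $e_i$, and on any length-$\geq 2$ monomial in the $e_i$ are read off from those of $\hat{t}$ on the corresponding preimages.

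There is no genuine obstacle once the right ambient algebra $\widehat{P}_n$ is chosen; the only subtlety worth underlining is the cancellation in the last verification, which expresses the fact that it is the relation $e_ie_je_i=e_i+C$ — and not $e_ie_je_i=e_i$, which would be incompatible with the prescribed values — that forces, and is made compatible with, the presence of the class $C$. I would present this case analysis explicitly, since it is the whole content of the proof.
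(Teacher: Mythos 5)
Your proof is correct and follows essentially the same route as the paper: the paper's proof (by reference to the preceding proposition on $\widetilde{TL}_n(1)$) likewise defines the linear form on the free algebra on the $e_i$'s extended by the one-dimensional span of $C$, checks it is a trace there, and verifies that it annihilates the defining relators, the only nontrivial cancellation being $t_n(e_ie_je_i)=t_n(e_i)+t_n(C)$. Your packaging via the square-zero extension $\widehat{P}_n$ along the augmentation, and the reduction via the trace property to checking $\hat t(wr)=0$ for monomials $w$ and relators $r$, is just a slightly cleaner organization of the same verification.
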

\begin{proof}
Similar to the previous proof, only easier.
\end{proof}

Note that such a trace is never symmetrizing, because $t_n((e_1-e_2)x) = 0$
for every $x \in \widetilde{TL}_n(0)$.

We finally notice that the extension is not split in one of the two special cases we are interested in, therefore
providing a non-zero Hochschild cohomology 2-class inside $\mathrm{HH}^2(TL_n(1),\Q[a]/(a^2-1))$.

\begin{prop} \label{propTLsplit}
Let $k$ be a field of characteristic $0$ and $\varphi : A \onto k$ be a morphism of $\Q$-algebras.
Assume $n \geq 3$. The natural short exact sequences $0 \to k C \to \widetilde{TL}_n \otimes_{\varphi} k \to TL_n \otimes_{\varphi} k \to 0$
splits if and only if $\varphi(x-a) \neq 0$. In that case, the splitting is given by the
map
$$
\bar{e}_i \mapsto e_i - \left( \frac{\varphi(x)}{2\varphi(a-x)} \right) C.
$$
Moreover, this splitting is unique.
\end{prop}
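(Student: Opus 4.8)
The statement has three assertions: that the sequence splits when $\varphi(x-a)\neq 0$, that it does not split when $\varphi(x-a) = 0$, and that the splitting, when it exists, is given by the stated formula and is unique. The natural starting point is that a splitting of the sequence $0 \to kC \to \widetilde{TL}_n \otimes_\varphi k \to TL_n \otimes_\varphi k \to 0$ as algebras is the same as a section $\sigma$ of the projection which is an algebra morphism. Since $TL_n \otimes_\varphi k$ is generated by the $\bar e_i$, such a $\sigma$ is determined by the images $\sigma(\bar e_i)$, which must lie in the preimage of $\bar e_i$, hence have the form $e_i + \lambda_i C$ for scalars $\lambda_i \in k$; I would first argue by symmetry (the cyclic/conjugation symmetry of the defining relations, or the automorphisms permuting the $e_i$ compatibly) that all $\lambda_i$ must be equal, call it $\lambda$.

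The core of the argument is then to impose the defining relations of $TL_n \otimes_\varphi k$ on the elements $f_i := e_i + \lambda C$ and see which values of $\lambda$ work. The relation $\bar e_i^2 = \tilde\delta x^{-1}\bar e_i$ gives, using $e_iC = Ce_i = \tilde\delta x^{-1}C$ and $C^2 = 2x^{-2}\tilde\delta(a-x)C$ (all relations from the definition of $\widetilde{TL}_n$), a linear equation in $\lambda$ with leading term coming from $C^2$; similarly the braid-type relation $\bar e_i\bar e_j\bar e_i = \bar e_i$ for $|i-j|=1$ must be checked against $e_ie_je_i = e_i + \tfrac{2a}{x}C$ (relation (v) of $\widetilde{TL}_n$). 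Writing out $f_if_jf_i$ and collecting the $C$-coefficient, one gets the condition $\tfrac{2a}{x} + (\text{terms linear in }\lambda, \text{ from } e_iCe_j\text{ etc.}) = \lambda \cdot(\text{something}) $. The upshot is an \emph{inhomogeneous} linear equation $c_1\lambda = c_0$ in $k$ whose coefficient $c_1$ is (up to a unit) $\varphi(a-x)$ and whose solution, when $\varphi(a-x)\neq 0$, is exactly $\lambda = \varphi(x)/(2\varphi(a-x))$. This simultaneously gives existence of the splitting and its uniqueness (a unique $\lambda$ solves the equation). The commuting relations $\bar e_i\bar e_j = \bar e_j\bar e_i$ for $|i-j|\geq 2$ are automatic since $e_i, e_j, C$ all commute in that range, so they impose no further constraint. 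When $\varphi(a-x) = 0$, the equation degenerates: the coefficient $c_1$ vanishes while $c_0 = \varphi(x)/2 \neq 0$ (note $\varphi(x)$ is invertible, as $x$ is invertible in $A$), so no $\lambda$ works and the sequence cannot split — this also recovers the earlier observation that $C^2 = 0$ precisely at $x = a$ and $x = 2a$.

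I expect the main obstacle to be purely computational bookkeeping: expanding $f_if_jf_i = (e_i+\lambda C)(e_j+\lambda C)(e_i + \lambda C)$ requires the auxiliary products $e_iCe_j$, $Ce_iC$, etc., and while each is given directly by relations (2)–(3) of $\widetilde{TL}_n$, one must be careful to collect the coefficient of $C$ correctly and to use $\tilde\delta = 2 - ax$ together with $a^2 = 1$ to simplify. I would double-check the final linear equation by specializing $\lambda = \varphi(x)/(2\varphi(a-x))$ and verifying it satisfies both the quadratic and the braid relation identically, which is the cleaner way to present existence; uniqueness and non-existence then follow from the coefficient analysis above. One should also note that there is nothing to check for $n = 2$ (the relation (v) is vacuous), which is why the hypothesis $n \geq 3$ appears — and indeed for $n=2$ the sequence always splits trivially, consistent with the statement being made only for $n\geq 3$.
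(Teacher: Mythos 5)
Your setup is the right one (a splitting must send $\bar e_i$ to $e_i+\lambda_i C$ and the relations must pin down the $\lambda_i$), but two of the steps you rely on do not work as described. First, the reduction to a single parameter ``by symmetry'' is unjustified: there is no automorphism of $\widetilde{TL}_n$ permuting the $e_i$ transitively, and even the flip $e_i\mapsto e_{n-i}$ would only carry one splitting to another, not force a given splitting to be equivariant. In the correct argument this reduction is also unnecessary: expanding $\check e_i^2=\tilde\delta x^{-1}\check e_i$ with $\check e_i=e_i+\lambda_i C$, using $e_iC=Ce_i=\tilde\delta x^{-1}C$ and $C^2=2x^{-2}\tilde\delta(a-x)C$, gives for each $i$ separately the equation $\frac{\lambda_i\tilde\delta}{x}\bigl(1+\frac{2\lambda_i}{x}(a-x)\bigr)=0$, which determines each $\lambda_i$ individually once the ambiguity below is resolved.

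Second, and more seriously, the constraints do not reduce to one inhomogeneous linear equation $c_1\lambda=c_0$. The equation above is quadratic in $\lambda_i$ and (when $\tilde\delta\neq 0$) always has the root $\lambda_i=0$; excluding this root is exactly where the relation $e_ie_je_i=e_i+\frac{2a}{x}C$ for $|i-j|=1$ (hence the hypothesis $n\geq 3$) must be used, and it is also the entire content of non-splitting at $x=a$: there the quadratic relation forces $\lambda_i=0$, and then $\check e_i\check e_j\check e_i=e_ie_je_i=e_i+\frac{2a}{x}C\neq\check e_i$ since $C\neq 0$ and $\mathrm{char}\,k=0$. Moreover the degenerate case $\tilde\delta=0$, i.e.\ $\varphi(x)=2\varphi(a)$, needs separate treatment: the quadratic constraint is then vacuous (in your framing both coefficients of the ``linear equation'' vanish), yet the splitting \emph{does} exist there, with $\lambda=1=\frac{-\varphi(x)}{2\varphi(a-x)}$, the value being forced by the braid-type relation alone. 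Your closing remark that non-splitting ``recovers'' $C^2=0$ at $x=a$ and $x=2a$ conflates this extension with the extension $F_n$ of the BMW algebra: $C^2=0$ also holds at $x=2a$, where the Temperley--Lieb extension splits. So the proof has to be the short case analysis the paper carries out (rule out $\lambda_i=0$ via the neighbouring relation; treat $\tilde\delta=0$ separately; otherwise solve $1+\frac{2\lambda_i}{x}(a-x)=0$, which requires $\varphi(x-a)\neq 0$ and yields the stated value and uniqueness), not the solution of a single linear equation.
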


\begin{proof}
For the proof, we identify $x,a$ with their value under $\varphi$, and $\widetilde{TL}_n$, $TL_n$
with their specialization. Let $\check{e}_i$ denote the image of $\bar{e}_i$ under such a splitting.
We have $\check{e}_i = e_i + \la_i C$ for some $\la_i \in k$. For a given $i$, the relation $\check{e}_i^2 = \frac{\tilde{\delta}}{x}
\check{e}_i$ is equivalent to  the equation
$$
(*) \ \ \ \ \frac{\la_i \tilde{\delta}}{x} \left( 1 + \frac{2 \la_i}{x} (a-x) \right) = 0.
$$
If $\la_i = 0$ for some $i$, then, choosing $j$ with $|j-i| = 1$ we get that
$\check{e}_i \check{e}_j \check{e}_i = e_i \check{e}_j \check{e}_i$ is equal to $\check{e}_i$
iff $2a + \la_j \tilde{\delta}^2/x = 0$, hence $\tilde{\delta} \neq 0$ and $\la_j = -2ax/\tilde{\delta}^2$.
But then the equation $\check{e}_j^2 = (\tilde{\delta}/x)\check{e}_j$ implies $\la_j x/\tilde{\delta} = 0$,
a contradiction. Therefore $\la_i = 0$ for every $i$.

If $\tilde{\delta} = 0$, that is $x = 2a$, for every $i,j$ with $|j-i| = 1$ we have
$\check{e}_i \check{e}_j \check{e}_i = e_i e_j e_i = e_i + C$, therefore $\la_i = 1 = \frac{-x}{2(a-x)}$,
and the formula $e_i \mapsto e_i +C$ is easily checked to provide a splitting in this case.

We can thus assume $\tilde{\delta} \neq 0$, and $\la_i \neq 0$ for all $i$. Then $(*)$
implies $x \neq a$ and $\la_i = \frac{-x}{2(a-x)}$. The fact that this formula provides a splitting
is again checked by direct computation, and this proves the claim.
\end{proof}

\subsection{Splittings}

In this section we show that our extension of the BMW-algebra is not split
exactly in the two cases we are interested in.

\begin{prop} \label{propsplittingsBMW} Let $k$ be a field of characteristic $0$ and $\varphi: S^{\dagger\dagger}_{\pm} \to k$
a morphism of $\Q$-algebras. Then, for $n \geq 3$, the natural short exact sequence
$0 \to k C \to (F_n \otimes_A S^{\dagger\dagger}_{\pm} ) \otimes_{\varphi} k
\to (BMW_n^{\pm})\otimes S^{\dagger\dagger}_{\pm} ) \otimes_{\varphi} k \to 0$
splits iff $\varphi(Q)$ has a root in $k$, where
$$
Q(\la) = x^4(1+ u \tilde{\delta} + u^2 \tilde{\delta}) \in S^{\dagger\dagger}_{\pm} [\la].
$$
and $u = 2 \la a (a-x)x^{-2}$.
If this is the case, each one of the roots $\la$ provides a splitting $s_i \mapsto s_i + \la C$, and these
are the only two possible splittings. In particular, if $k$ is algebraically closed, then the short
exact sequence splits iff $\varphi((x-a)(x-2a)) \neq 0$, and it admits exactly one
splitting iff $\varphi(x+2a) = 0$.
\end{prop}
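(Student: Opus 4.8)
The plan is to show first that every algebra splitting of $\pi$ has the form $s_i\mapsto s_i+\la C$ for a \emph{single} scalar $\la$, and then to decide which $\la$ are admissible. Throughout I identify $a,x,\tilde{\delta}$ with their images in $k$ under $\varphi$, and I use that $C\neq 0$ in the specialized algebra (the corollary to proposition \ref{propFnVraieextension}) and that $\ker\pi=kC$ (proposition \ref{propbasiqueFn}(ii)). Since the specialization of $BMW_n^{\pm}$ is generated by the images $\bar s_i$ of the $s_i$ (relation (iii) of $F_n$ expresses each $e_i$ through the $s_i$), a splitting $\sigma$ is determined by the elements $\sigma(\bar s_i)\in\pi^{-1}(\bar s_i)=s_i+kC$; write $\sigma(\bar s_i)=s_i+\la_i C$. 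Putting $\gamma:=2x^{-2}\tilde{\delta}(a-x)$ (so $C^2=\gamma C$ and $s_jC=Cs_j=aC$), one has $(s_i+\la_i C)C=a(1+a\gamma\la_i)C$; since $\sigma(\bar s_i)$ must be a unit, this forces $1+a\gamma\la_i\neq 0$. Expanding the braid relation $\sigma(\bar s_i)\sigma(\bar s_{i+1})\sigma(\bar s_i)=\sigma(\bar s_{i+1})\sigma(\bar s_i)\sigma(\bar s_{i+1})$ and collecting the coefficient of $C$ gives
$$(\la_i-\la_{i+1})(1+a\gamma\la_i)(1+a\gamma\la_{i+1})\,C=0,$$
so all the $\la_i$ agree with one scalar $\la$. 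Conversely, $s_i\mapsto s_i+\la C$ defines a splitting precisely when the elements $\tilde s_i:=s_i+\la C$ satisfy the defining relations of $(F_n\otimes_{\varphi}k)/(C)$, namely (i)--(iv) of $F_n$ together with (v)--(viii) with $C$ deleted, the $e_i$ being replaced by $\tilde e_i:=a(x^{-1}(\tilde s_i^{-1}+\tilde s_i)-1)$.

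I would then check these relations. The braid relations (i) and relation (iii) hold automatically. A short computation gives $\tilde s_i^{-1}=s_i^{-1}+\mu C$ with $\mu=-a\la/(a+\la\gamma)$, hence $\tilde e_i=e_i+\nu C$ with $\nu=a(\la+\mu)/x$; the basic identity here is $\la\gamma=au\tilde{\delta}$, which turns $\mu,\nu$ into rational functions of $u=2\la a(a-x)x^{-2}$. Expanding $(\tilde s_i-a)(\tilde s_i^2-x\tilde s_i+1)$ with $(s_i-a)C=0$ and $C^2=\gamma C$, and using $\la\gamma=au\tilde{\delta}$ and $ax=2-\tilde{\delta}$, one finds its $C$-coefficient equals $\la\tilde{\delta}(1+u\tilde{\delta}+u^2\tilde{\delta})$; thus the cubic relation holds iff $\la=0$, or $\tilde{\delta}=0$, or $1+u\tilde{\delta}+u^2\tilde{\delta}=0$. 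Now $\la=0$ gives $\tilde e_i=e_i$, hence $\tilde e_i\tilde s_{i+1}\tilde e_i=e_is_{i+1}e_i=e_i+C\neq e_i$, violating (v); and $\tilde{\delta}=0$ (i.e.\ $x=2a$) gives $\gamma=0$, $e_iCe_i=0$, $\tilde e_i=e_i$, hence again $\tilde e_i\tilde s_{i+1}\tilde e_i=e_i+C\neq e_i$, violating (v). So a splitting forces $1+u\tilde{\delta}+u^2\tilde{\delta}=0$, i.e.\ $\la$ to be a root of $\varphi(Q)$. For the converse, \emph{assuming} $1+u\tilde{\delta}+u^2\tilde{\delta}=0$ (hence $\la\neq 0$ and $\tilde{\delta}\neq 0$), one substitutes $\mu=\la/(u^2\tilde{\delta})$, $\nu=-a\la/(xu)$ together with $\la\gamma=au\tilde{\delta}$, $x=a(2-\tilde{\delta})$ and $a-x=a(\tilde{\delta}-1)$ into the expansions of $\tilde s_i\tilde e_i-a\tilde e_i$ and of $\tilde e_i\tilde s_{i+1}\tilde e_i-\tilde e_i$ and checks that both vanish identically; relations (vi)--(viii) then follow from (v) using the automorphism $s_j\mapsto s_j^{-1}$ of $F_n$ (proposition \ref{propbasiqueFn}(iii)) and braid-group conjugation/relabelling. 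This establishes the first two assertions, distinct roots $\la$ yielding distinct splittings $s_i\mapsto s_i+\la C$.

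The last two assertions are a short discussion of $Q$. As a polynomial in $\la$, $Q=x^4+2ax^2\tilde{\delta}(a-x)\,\la+4\tilde{\delta}(a-x)^2\,\la^2$ (using $a^2=1$), whose constant term $x^4$ is a unit, so $Q$ is never $0$; hence $Q$ is either the constant $x^4$ — exactly when $\varphi(\tilde{\delta}(a-x))=0$ — or genuinely quadratic, with discriminant a unit times $\tilde{\delta}(\tilde{\delta}-4)$. Over an algebraically closed $k$, $\varphi(Q)$ therefore has a root iff it is non-constant, i.e.\ iff $\varphi((x-a)(x-2a))\neq 0$ (note $\tilde{\delta}=2-ax$ vanishes iff $x=2a$), and it has a unique root iff it is quadratic with vanishing discriminant, i.e.\ iff $\varphi(\tilde{\delta}-4)=0$; since $\tilde{\delta}-4=-(ax+2)=-a(x+2a)$ this is $\varphi(x+2a)=0$. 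The real work is the second paragraph, and the single genuinely delicate point is the verification that relations (iv) and (v)--(viii) all collapse onto the one quadratic $1+u\tilde{\delta}+u^2\tilde{\delta}=0$ once $\mu$ and $\nu$ are correctly normalised — a finite but fiddly substitution, of the same nature as the identities \ref{eqssprime}, \ref{eqssym} and lemma \ref{lemRimpliesS}.
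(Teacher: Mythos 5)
Your proposal is correct, but it reaches the result by a genuinely different route than the paper. The paper first invokes proposition \ref{propTLsplit}: any splitting restricts to a splitting of the Temperley--Lieb extension, which forces $x\neq a$ and pins down $\check{e}_i=e_i-\frac{x}{2(a-x)}C$; the condition $Q(\la_i)=0$ then appears as the compatibility of this forced $e$-shift with $\check{e}_i=a((\check{s}_i+\check{s}_i^{-1})/x-1)$, and uniformity of the $\la_i$ comes from the braid relation together with $Q(\la_0)=x^4/\tilde{\delta}\neq 0$ for the value $\la_0$ killing the factor $1+a\gamma\la$. You bypass proposition \ref{propTLsplit} entirely: you get $\la_i=\la_{i+1}$ from invertibility of $\sigma(\bar{s}_i)$ plus $C\neq 0$ (a cleaner argument, since it needs no prior knowledge of $Q(\la_i)=0$), and you extract the quadratic from the cubic relation, which only yields the weaker $\la\tilde{\delta}(1+u\tilde{\delta}+u^2\tilde{\delta})=0$, so you must --- and correctly do --- eliminate $\la=0$ and $\tilde{\delta}=0$ by showing relation (v) fails there. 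Your normalizations check out: under $1+u\tilde{\delta}+u^2\tilde{\delta}=0$ one indeed gets $\mu=\la/(u^2\tilde{\delta})$ and $\nu=-a\la/(xu)=-\frac{x}{2(a-x)}$, so your $\tilde{e}_i$ coincides with the paper's $\check{e}_i$, and with this value $\tilde{e}_iC=0$ and the relations (iv), (v)--(viii) hold identically; thus the ``fiddly but finite'' verification you defer does go through and is no heavier than the paper's own ``direct computation'' steps. One small caveat: the automorphism $s_j\mapsto s_j^{-1}$ fixes $e_j$ and $C$ but sends $\tilde{s}_j=s_j+\la C$ to $s_j^{-1}+\la C$, which is \emph{not} $\tilde{s}_j^{-1}=s_j^{-1}+\mu C$; so to deduce the inverse relations from (v) you should first record $\tilde{e}_iC=0$ (or simply repeat the same two-line computation with $s_j^{-1}$, using $e_is_j^{-1}e_i=e_i+C$). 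In short, the paper's route buys a shorter sufficiency check, while yours buys independence from the Temperley--Lieb splitting result and a more transparent proof that the $\la_i$ are all equal.
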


\begin{proof}
In the proof we work inside the specializations, and identify $x,a,\dots$ with their images under $\varphi$.
Let us assume that there is a splitting, given by $e_i \mapsto \check{e}_i$, $s_i \mapsto \check{s}_i$.
This splitting provides a splitting of the extension of the Temperley-Lieb subalgebra, and therefore,
by proposition \ref{propTLsplit},
one needs to have $x \neq a$, and $\check{e}_i = e_i - \frac{x}{2(a-x)} C$. We have $\check{s}_i = s_i + \la_i C$
for some $\la_i \in k$. Moreover, the equation $(s_i-a)(s_i^2-xs_i+1) = 0$ implies
that $s_i^{-1} = a s_i^2 -(ax+1)s_i + (x+a)$. Since this equation also holds for $\check{s}_i$,
we get $\check{s}_i^{-1}  = a \check{s}_i^2 -(ax+1)\check{s}_i + (x+a)$. Expanding $\check{s}_i = s_i + \la_i C$
we get $\check{s}_i^{-1} = s_i^{-1} + \la_i a (a-x) (1+2\la_i \tilde{\delta}/x^2) C$. From this we then get that the
equation $\check{e}_i = a\left( \frac{\check{s}_i + \check{s}_i^{-1}}{x} -1 \right)$
imposes $Q(\la_i) = 0$. We now consider the braid relation $ \check{s}_i \check{s}_j \check{s}_i =
\check{s}_j \check{s}_i \check{s}_j = 0$ when $|j-i| = 1$. We get
$\check{s}_i \check{s}_j \check{s}_i = s_i s_j s_i + (2 \la_i + \la_j)C + (\la_i^2 + 2 \la_i \la_j) C^2 + \la_i^2 \la_j C^3$
and therefore
$\check{s}_i \check{s}_j \check{s}_i - \check{s}_j \check{s}_i \check{s}_j$ is equal to
$$
(\la_i - \la_j)\left( 2 \frac{\tilde{\delta}}{x^2}(a-x) \la_i+a\right) \left( 2 \frac{\tilde{\delta}}{x^2}(a-x) \la_j+a\right) C.
$$
Therefore, either $\la_i = \la_j$, or $\tilde{\delta} \neq 0$ and one of the two values $\la_i$ or $\la_j$
is equal to $\la_0 = -ax^2/(2 \tilde{\delta}(a-x))$. Since $Q(\la_0) = x^4/\tilde{\delta}$ this is excluded hence
$\la_i = \la_j$. This proves that $\la_i$ is independent of $i$, hence the splitting has the form $\check{s}_i = s_i + \la C$
with $\la$ independent of $i$. It then remains to prove that this formula, with $\la$ a root of $Q$, provides a splitting.
The relation $\check{s}_i \check{s}_j = \check{s}_j \check{s}_i$ when $|j-i| \geq 2$
is clear, and therefore the only two relations that remain to be checked are $\check{e_i} \check{s}_j^{\eps} \check{e}_i = \check{e}_i$
for $|j-i| = 1$ and $\eps \in \{-1,1\}$. For this we first check that $\check{e}_i C = 0$ by direct computation. Then,
$\check{e}_i \check{s}_j \check{e}_i = \check{e}_i(s_j + \la C) \check{e}_i
= \check{e}_i s_j \check{e}_i + \la \check{e}_i C\check{e}_i 
= \check{e}_i s_j \check{e}_i + 0$ and, expanding $\check{e}_i = e_i - \frac{x}{2(a-x)} C$, we  get
$\check{e}_i \check{s}_j \check{e}_i =  \check{e}_i$. The case $\eps = -1$ is similar, and this concludes the proof.

\end{proof}

\begin{remark}
In cohomological terms, the non splitting in the cases $x=a$ and $x=2a$ provides a non-zero cohomology 2-class in the
Hochschild cohomology of these specializations of the BMW algebras with values in the one-dimensional bimodule
given by $s_i \mapsto a$ (which factorizes through $BMW_n^{\pm}$ in these cases). 
If $x\not\in \{a,2a,-2a \}$ and the sequence splits, necessarily in two different ways, then the two splittings
afford to distinct $BMW_n^{\pm}$-bimodule structures on $k$, namely $s_i C = C s_i = bC$
and $s_i C = C s_i = c C$.
\end{remark}

\begin{remark} \label{remarkretractTL} Another natural question is
for which specializations $\varphi : A \to k$ (with $k$ a field of characteristic $0$)
the natural morphism $\widetilde{TL}_n \otimes_{\varphi} k \to F_n \otimes_{\varphi} k$
admits a retraction. A straightforward computation shows that this holds if and only if
$\varphi(x+2a) = 0$ and that, in this case, there is exactly one retraction. It
is given by $e_i \mapsto e_i$, $s_i \mapsto -e_i-a$, $s_i^{-1} \mapsto -e_i - a$, $C \mapsto C$.
\end{remark}
\subsection{A central extension of the $(-1)$-Hecke algebras}

We introduce the two-sided ideal $F_n^+$ of $F_n$ generated by $e_1,\dots,e_{n-1},C$,
and we let $F_n^{++}$ denote the ideal $(F_n^+)^2$.

Inside $F_n/F_n^{++}$ we have $e_i = -C$ for all $i$, and therefore $\tilde{\delta} x^{-1} C = e_i C = -C^2
= -2 \tilde{\delta}(a-x)C$, that is $\left( \frac{\tilde{\delta}}{x} \right)^2 a C = 0$. 
We thus let $\overline{F}_n$ denote the quotient of $F_n(0) = F_n \otimes_A A_0$ by the ideal
$(A_0 F_n^+)^2$. It is spanned over $A_0$ by elements $E_w, w \in \mathfrak{S}_n$,
and $C$. Indeed, one can easily check the following formula, when we also denote $s_1,\dots,s_{i-1}$
the Coxeter generator of the symmetric group :
if $\ell(s_i w) = \ell(w)+1$ then $s_i.E_w = E_{s_i w}$, otherwise
$s_i.E_w = -2a^{\ell(w)} C + 2a E_w - E_{s_i w}$ ;
moreover $C^2 = 0$ and $C.E_w = a^{\ell(w)} C$.
Actually, a similar algebra can be associated to every Coxeter system, as we show now.

\begin{theor} \label{theoHeckeExt} Let $(W,S)$ be a Coxeter system, and $k$ a field of characteristic $\neq 2$. The formulas
$$
\left\lbrace \begin{array}{lcll} s.E_w &=& E_{s w} & \mbox{ if } \ell(s w) = \ell(w)+1 \\
&=&  -2a^{\ell(w)} C + 2a E_w - E_{s w} & \mbox{otherwise} \\
s.C &=& a C
\end{array} \right.
$$
for all $s \in S$, $w \in W$, define a representation of the Artin-Tits group $B$ associated
to $(W,S)$ on the free module over $k[a]/(a^2-1)$ spanned by $C$ and the $E_w, w \in W$. When $W$ is finite, the image of the group algebra of $B$ inside this representation is a free module of rank $1+|W|$.
In all cases, this image projects onto the Iwahori-Hecke algebra of $(W,S)$ defined by the relation
$(s-a)^2 = 0$ for all $s \in S$, with kernel the linear span of $\tilde{C} = -(s-a)^2$ for an arbitrary choice of $s \in S$. When
$W$ admits a single conjugacy class of reflections,
this algebra is the quotient of the group
algebra of $B$ by the relations $(t-a)(s-a)^2 = (s-a)^2 (t-a) = 0$ for all $s,t \in S$.
\end{theor}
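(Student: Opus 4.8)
The plan is in three stages: first, verify that the displayed formulas define a representation $\rho$ of the Artin--Tits group $B$ on the free module $M := \Lambda C\oplus\bigoplus_{w\in W}\Lambda E_w$ over $\Lambda:=k[a]/(a^2-1)$; second, identify the image $\mathcal{A}=\rho(\Lambda B)$ as an extension of the Iwahori--Hecke algebra $\mathcal{H}=\mathcal{H}(W,S)$ (with quadratic relation $(T_s-a)^2=0$) by a one-dimensional square-zero ideal; third, when $W$ has a single conjugacy class of reflections, match $\mathcal{A}$ with the proposed presentation. I expect the main obstacle to be the braid relations in the first stage: the part of the computation living in $\mathcal{H}$ is automatic, but the correction terms lead to a $C$-valued combinatorial identity which, even after reduction to a rank-two parabolic, must keep track of an $\ell(w)$-dependent sign.

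\emph{The representation.} Take for $\mathcal{H}$ the Iwahori--Hecke algebra of $(W,S)$, free over $\Lambda$ on $\{T_w\}_{w\in W}$ with $T_sT_w=T_{sw}$ if $\ell(sw)>\ell(w)$ and $T_sT_w=2aT_w-T_{sw}$ otherwise (this records $(T_s-a)^2=0$, using $a^2=1$), and let $\rho_0\colon B\to\GL(M)$ be the representation factoring through $\mathcal{H}$ afforded by the left regular $\mathcal{H}$-module on $\bigoplus_w\Lambda E_w$ (via $E_w\leftrightarrow T_w$) together with the character $T_s\mapsto a$ on $\Lambda C$. Put $\delta_s\in\End_\Lambda(M)$ equal to $E_w\mapsto -2a^{\ell(w)}C$ when $\ell(sw)<\ell(w)$, $E_w\mapsto 0$ otherwise, and $C\mapsto 0$, so that the prescribed operator is $\rho(s)=\rho_0(s)+\delta_s$. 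Since $\delta_s$ has image in $\Lambda C$ and kills $\Lambda C$, one has $\delta_s\delta_t=0$ and $\rho_0(s)\delta_t=a\delta_t$; in particular $\rho_0(s)^{-1}\delta_s=a\delta_s$ is square-zero, so $\mathrm{Id}+\rho_0(s)^{-1}\delta_s$ is a unit and $\rho(s)\in\GL(M)$. For the braid relation in two generators $s,t$, expanding $\rho(s_1)\cdots\rho(s_m)$ ($m=m_{st}$) and dropping all terms containing two or more $\delta$'s (which vanish) gives
$$\rho(s_1)\cdots\rho(s_m)=\rho_0(s_1\cdots s_m)+\sum_{i=1}^{m}a^{\,i-1}\,\delta_{s_i}\,\rho_0(s_{i+1}\cdots s_m);$$
the $\rho_0$-terms agree for the two reduced words of the dihedral longest element (as $\rho_0$ is a representation), so the braid relation reduces to equality of the two $C$-valued tails. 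Writing $w=w_J\cdot{}^Jw$ with $J=\{s,t\}$, $w_J\in W_J=\langle s,t\rangle$ and ${}^Jw$ the minimal-length representative of $W_Jw$, and using $\ell(u_J\cdot{}^Jw)=\ell(u_J)+\ell({}^Jw)$ for $u_J\in W_J$, one checks that each tail evaluated on $E_w$ equals $a^{\ell({}^Jw)}$ times the corresponding tail computed inside the dihedral Hecke module of $W_J$; hence the braid relation for $B$ follows from that for each rank-two parabolic $W_J\cong I_2(m_{st})$ — a finite check, doable by induction on $m_{st}$.

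\emph{The image as an extension.} A direct computation from the formulas shows that $(\rho(s)-a)^2$ is the operator $E_w\mapsto -2a^{\ell(w)+1}C$, $C\mapsto 0$, and in particular is \emph{independent of $s$}; set $\tilde{C}:=-(\rho(s)-a)^2\in\mathcal{A}$. Then $\tilde{C}(E_e)=2aC\neq 0$ (using $\mathrm{char}\,k\neq 2$), so $\Lambda\tilde{C}$ is free of rank one; and one checks $\rho(s)\tilde{C}=\tilde{C}\rho(s)=a\tilde{C}$ and $\tilde{C}^2=0$, so $\Lambda\tilde{C}$ is a square-zero two-sided ideal. Restricting operators to $M/\Lambda C$ yields a surjection $\pi\colon\mathcal{A}\twoheadrightarrow\mathcal{H}$ (there $\rho_0$ realizes the left regular, hence faithful, $\mathcal{H}$-module) with $\tilde{C}\in\ker\pi$; and because $(\rho(s)-a)^2=-\tilde{C}$ the quotient $\mathcal{A}/\Lambda\tilde{C}$ is a quotient of $\mathcal{H}$, while the composite $\mathcal{H}\twoheadrightarrow\mathcal{A}/\Lambda\tilde{C}\twoheadrightarrow\mathcal{H}$ sends $T_w\mapsto\pi(\rho(w))=T_w$ (here $\rho(w)$, $w\in W$, is well defined by Matsumoto's theorem) and is therefore the identity. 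Hence $\mathcal{A}/\Lambda\tilde{C}\cong\mathcal{H}$ and $\ker\pi=\Lambda\tilde{C}=\Lambda(s-a)^2$, which is the asserted description of the projection onto $\mathcal{H}$; and when $W$ is finite the sequence $0\to\Lambda\tilde{C}\to\mathcal{A}\to\mathcal{H}\to 0$ has $\Lambda$-free terms ($\Lambda\cong k\times k$, so projective is free) of ranks $1$ and $|W|$, so it splits and $\mathcal{A}$ is free of rank $1+|W|$.

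\emph{The presentation.} Suppose $W$ has a single conjugacy class of reflections, equivalently the subgraph of the Coxeter graph obtained by keeping only the edges $\{s,t\}$ with $m_{st}$ odd is connected. Let $\mathcal{B}=\Lambda B/((t-a)(s-a)^2,(s-a)^2(t-a): s,t\in S)$; since $\mathcal{A}$ satisfies these relations (as $(s-a)^2=-\tilde{C}$ and $(t-a)\tilde{C}=0=\tilde{C}(t-a)$) there is a surjection $\mathcal{B}\twoheadrightarrow\mathcal{A}$. Writing $c_s=(s-a)^2\in\mathcal{B}$, the defining relations give $u\,c_s=c_s\,u=a^{\ell(u)}c_s$ for all $u\in B$; and when $m_{st}$ is odd the braid relation supplies a positive word $u$ with $tu=us$ in $B$, whence $t=usu^{-1}$ and $c_t=uc_su^{-1}=a^{2\ell(u)}c_s=c_s$. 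By connectedness all $c_s$ coincide, say $=c$; then $\Lambda c$ is a square-zero two-sided ideal (as $c\cdot c=c(s-a)^2$ and $c(s-a)=0$) with $\mathcal{B}/\Lambda c=\Lambda B/((s-a)^2:s\in S)=\mathcal{H}$, and $c\mapsto-\tilde{C}\neq 0$ shows $\Lambda c$ is free of rank one with $\Lambda c\to\Lambda\tilde{C}$ an isomorphism. Thus $\mathcal{B}\twoheadrightarrow\mathcal{A}$ carries $0\to\Lambda c\to\mathcal{B}\to\mathcal{H}\to 0$ onto $0\to\Lambda\tilde{C}\to\mathcal{A}\to\mathcal{H}\to 0$, inducing the identity on $\mathcal{H}$ and an isomorphism on the ideals, so it is an isomorphism by the five lemma; for finite $W$ one may instead simply note $\dim_\Lambda\mathcal{B}\le 1+|W|=\dim_\Lambda\mathcal{A}$.
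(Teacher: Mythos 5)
Your overall architecture is appealing, and the second and third stages are essentially correct: the computation that $(\rho(s)-a)^2$ is the operator $E_w\mapsto -2a^{\ell(w)+1}C$, $C\mapsto 0$, independent of $s$; the identification $\ker\pi=\Lambda\tilde C$ via the retraction $\mathcal{H}\to\mathcal{A}/\Lambda\tilde C\to\mathcal{H}$; and, for a single class of reflections, the conjugation of the $c_s$ along odd dihedral braid relations followed by the five lemma (this last part is in substance the paper's own argument). One harmless blemish: over $\Lambda\cong k\times k$ it is false that projective implies free, but you never need that claim, since you exhibit $\Lambda\tilde C$ and $\mathcal{H}$ as free directly and only use projectivity of the quotient to split the sequence.

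The genuine gap is in your first stage, exactly at the point you yourself flagged as the main obstacle. Your reduction of the $C$-valued tails to the rank-two parabolic $W_J=\langle s,t\rangle$ (writing $w=u\cdot{}^Jw$ with lengths adding) is correct, but the dihedral identity itself --- that the two tails $\sum_i a^{i-1}\delta_{s_i}\rho_0(s_{i+1}\cdots s_m)$ attached to the two alternating words of length $m=m_{st}$ agree on every $E_u$, $u\in I_2(m)$ --- is precisely the content of the braid relation in rank two, and you dispose of it with ``a finite check, doable by induction on $m_{st}$.'' For one fixed $m$ this is indeed a finite computation, but the theorem concerns arbitrary Coxeter systems, so all $m\geq 2$ must be handled simultaneously, and no uniform argument is given; ``induction on $m_{st}$'' does not even parse as stated, since both the group and the module change with $m$ and you give no recursion relating the tails for consecutive values of $m$. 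So the key identity is asserted, not proved. The paper closes exactly this point by the classical Bourbaki device: introduce the right-multiplication operators $R_s$ given by the mirror formulas ($R_s(E_w)=E_{ws}$ if $\ell(ws)=\ell(w)+1$, $R_s(E_w)=-2a^{\ell(w)}C+2aE_w-E_{ws}$ otherwise, $R_s(C)=aC$), verify by a short case analysis --- using only that $\ell(swt)=\ell(w)$ and $\ell(sw)=\ell(wt)$ together force $sw=wt$ --- that each $R_u$ commutes with each $\rho(s)$, and then propagate the braid identity from $E_1$, where it is trivial, to every $E_w=R_{\underline{w}}E_1$; this verification is local and independent of $m$. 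Adopting that argument (globally, which makes your parabolic reduction unnecessary, or just inside $W_J$) is the easiest way to fill the hole; otherwise you must produce closed formulas for the dihedral tails, uniformly in $m$, and check their symmetry under exchanging $s$ and $t$.
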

\begin{proof}
For every $s \in S$ we introduce the endomorphism $R_s$ defined by
$$
\left\lbrace \begin{array}{lcll} R_s(E_w) &=& E_{ws} & \mbox{ if } \ell( ws) = \ell(w)+1 \\
&=&  -2a^{\ell(w)} C + 2a E_w - E_{ ws} & \mbox{otherwise}
\end{array} \right.
$$
and $R_s(C) =aC$.
We want to prove that the action of $\underbrace{stst\dots}_{m_{st}}$ on each $E_w$ coincides
with the action of $\underbrace{tsts\dots}_{m_{st}}$. 
As in the classical proof for the usual Hecke algebras (see \cite{LIE456}, ex. 23 a) in ch. IV \S 2) we check that $R_s, R_t$ commute with the actions of $s$ and $t$ by a straightforward computation, only using that the two conditions
$\ell(swt) = \ell(w)$ and $\ell(sw) = \ell(wt)$, when met at the same time, imply $sw = wt$. From
this and the obvious fact that 
$$\underbrace{stst\dots}_{m_{st}}.E_1 = E_{\underbrace{stst\dots}_{m_{st}}} = 
E_{\underbrace{tsts\dots}_{m_{st}}} = \underbrace{tsts\dots}_{m_{st}}.E_1 
$$
we deduce that, writing any $w$ as a reduced expression $t_{i_1}\dots t_{i_r}$,
and letting $R_{\underline{w}} = R_{t_1}\dots R_{t_r}$, we get
$$\underbrace{stst\dots}_{m_{st}}.E_w=
\underbrace{stst\dots}_{m_{st}}. R_{\underline{w}}.E_1
=
R_{\underline{w}}\underbrace{stst\dots}_{m_{st}}. E_1 
=
R_{\underline{w}}\underbrace{tsts\dots}_{m_{st}}. E_1 
=
\underbrace{tsts\dots}_{m_{st}}. R_{\underline{w}}E_1 
=
\underbrace{tsts\dots}_{m_{st}}. E_w
$$ 
and this proves the first claim.
Let $\tilde{H}$ denote the image of the group algebra of $B$ in this representation. By the same argument,
we get that the morphism $g \mapsto g.E_1$ induces an injective module morphism between $\tilde{H}$
and the linear span $\mathcal{E}$ of the $E_w$ and $C$. Letting $\overline{\mathcal{E}}$ denote the
quotient of $\mathcal{E}$ by the linear span of $C$, we get an action of $\tilde{H}$ on $\overline{\mathcal{E}}$
which factorises through the regular representation of the usual Hecke algebra $H$ of $(W,S)$.
Letting $\overline{E}_w \in \overline{\mathcal{E}}$ denote the image
of $E_w \in \mathcal{E}$, we get  
therefore a surjective map $x \mapsto x. \overline{E}_1$ from $\tilde{H}$ onto
a free module 
with basis the $\overline{E}_w, w \in W$. If $W$ is finite, we deduce
from this that the rank of $\tilde{H}$ is $1+|W|$. Since $\tilde{H} \to \mathcal{E}$ is injective we know that the kernel of $\tilde{H} \to
\overline{E}$ is the linear span of $C$, and this kernel coincides with the kernel of $\tilde{H} \to H$ by the faithfulness of
the regular representation. If $W$ is finite, we deduce
from this that the rank of $\tilde{H}$ is $1+|W|$. 
We have $(s-a).E_1 = E_s - a E_1$ and $(s-a)^2.E_1 = -2aC+2aE_s -E_1 - a E_s - a E_s + E_1= -2aC$.
Letting $\tilde{C} \in \tilde{H}$ denote the action of $-(s-a)^2$, we get $\tilde{C}.E_1 = 2a C$, $\tilde{C}.C = 0$
hence $\tilde{C}^2 = 0$. Also note that, since $\tilde{C}.E_1 = -2a C$ does not depend on the choice of $s \in S$,
the definition of $\tilde{C}$ does not depend on the choice of $s$ either. Since $(t-a).C = 0$ for all $t \in S$ we get $(t-a)\tilde{C}.E_1 = 0$.
Moreover $\tilde{C}.E_t = \tilde{C} R_t(E_1) = R_t(\tilde{C}.E_1) = 2a R_t(C) = 2C$. Therefore
$\tilde{C}.(t-a).E_1 = \tilde{C}.E_t - a \tilde{C}.E_1 = 2 C - 2 C = 0$. Let now $\hat{H}$ denote the quotient of the
group algebra $k[a]/(a^2-1)B$ of $B$ by the relations $(s-a)^2(t-a) = (t-a)(s-a)^2 = 0$ for all $s,t \in S$. We proved that the
natural surjective morphism $k[a]/(a^2-1)B \to \tilde{H}$ factors through $\hat{H}$. For $s \in S$, let $\hat{C}_s$ denote the image of
$-(s-a)^2$ inside $\hat{H}$. Since $(t-a) \hat{C}_s = \hat{C}_s (t-a)=0$ we get $t \hat{C}_s = a \hat{C}_s = \hat{C}_s t$
hence $t \hat{C}_st^{-1} = \hat{C}_s$ for all $t \in S$
hence $b \hat{C}_sb^{-1} = \hat{C}_s$ for all $b \in B$. 
If $W$ has a single conjugacy class of reflections
this implies that $\hat{C}_s$ does not depend of $s \in S$, because in that case all the elements of $S$
are conjugated one to the other inside $B$, and $b \hat{C}_s b^{-1} = \hat{C}_{bsb^{-1}}$ whenever $bsb^{-1} \in S$.
Therefore we note $\hat{C} = \hat{C}_s$. By the above we know that its linear span is a two-sided ideal of $\hat{C}$, and
it is clear that the composite map $\hat{H} \to \tilde{H} \to H$ factors through $\hat{H}/\langle \hat{C} \rangle \to H$.
But $\hat{H}/\langle \hat{C} \rangle $ is the quotient of $k[a]/(a^2-1)B$ by the relations $(s-a)^2 =0$ for all $s \in S$
hence this map is an isomorphism. By the short five-lemma this implies that $\hat{H} \to \tilde{H}$ is also
an isomorphism.

\end{proof}

\begin{remark} These extensions of the $(-1)$-Hecke algebras are 
non-split ; indeed, a splitting would provide elements $\hat{s} = s + \la C$
for some $\la \in k$ such that $(\hat{s}-a)^2=0$. But an easy computation
shows that $(\hat{s}-a)^2=(s-a)^2 \neq 0$. Therefore, these extensions 
provide natural non-zero Hochschild 2-cohomology classes in the cohomology of these
Hecke algebras with values in the trivial bimodule afforded
by the obvious augmentation map. When $W = \mathfrak{S}_n$ and $k$ has characteristic $0$, it
is proved in \cite{BEM} (theorem 1.1) that the corresponding (Hochschild) cohomology group has dimension $1$. A natural
question is then whether our extension spans this 2nd cohomology group whenever $W$ has a single
class of reflections.
\end{remark}

\begin{remark} When $W$ has several classes of reflections, then
the quotient of $k[a]^2/(a^2-1) B$
by the relations $(s-a)^2(t-a) = (t-a)(s^2-a) = 0$ defines a larger algebra. This algebra projects
onto the usual Hecke algebra and the kernel of the projection is a two-sided nilpotent ideal of rank
the number $r$ of conjugacy classes of reflections. The action of $k[a]/(a^2-1)B$ on this
algebra admits a similar description : a basis is given by elements $E_w, w \in W$ together
with elements $C_s, s \in S$ such that $C_s = C_{s'}$ whenever $s$ is a conjugate of $s'$,
and the action itself is given by the formulas $s.E_w = E_{sw}$ is $\ell(sw) = \ell(w)+1$
and $s.E_w = -2a^{\ell(w)} C_s + 2a E_w - E_{sw}$ otherwise. The proof is similar
and left to the reader.
\end{remark}

When there is a single conjugacy class of reflections, one may wonder if we could reduce
the number of relations by asking for e.g. $(t-a)(s-a)^2 = 0$ for all $s,t \in S$, but \emph{not} for
$(s-a)^2(t-a)=0$ for all $s,t \in S$. The answer is positive, as we show now.

\begin{prop} If $(W,S)$ is an irreducible Coxeter system 
with a single conjugacy class of reflections
and $char. k = 0$, 
then the algebra $\tilde{H}(W,S)$ is the quotient of the group algebra of
$k[a]/(a^2-1)B$ by the relations $(t-a)(s-a)^2 = 0$ for $s,t \in S$. The corresponding
ideal is also generated by the relations $(s-a)^2(t-a) = 0$ for $s,t \in S$. If $(W,S)$ is simply laced, 
these statements remain valid under the weaker assumption
$char. k \neq 2$.
\end{prop}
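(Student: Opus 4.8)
The plan is to show that, under the hypotheses, the two-sided ideal $J$ of $k[a]/(a^2-1)\,B$ generated by all the $(t-a)(s-a)^2$ ($s,t\in S$) coincides with the ideal $J'$ generated by all the $(s-a)^2(t-a)$, and that both equal the defining ideal $\tilde J$ of $\tilde H(W,S)$. Since Theorem~\ref{theoHeckeExt} identifies $\tilde H(W,S)$ with $(k[a]/(a^2-1)\,B)/\tilde J$, where $\tilde J$ is generated by the two families together, we have $\tilde J=J+J'$, so it is enough to prove $J=J'$. For this I would use \emph{symmetrisation}: the braid relations $\underbrace{sts\cdots}_{m_{st}}=\underbrace{tst\cdots}_{m_{st}}$ are unchanged under reversal of words, so there is an antiautomorphism $\tau$ of $k[a]/(a^2-1)\,B$ fixing $a$ and every $s\in S$ and reversing products; it satisfies $\tau(J)=J'$ and $\tau(\tilde J)=\tilde J$. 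Hence it suffices to prove $J'\subseteq J$, i.e. that each generator $(s-a)^2(t-a)$ of $J'$ lies in $J$; applying $\tau$ then gives $J\subseteq J'$, so $J=J'=\tilde J$.

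\emph{Reduction to rank two.} Fix $s,t\in S$ and set $m=m_{st}$. The elements $(s-a)^2(t-a)$, $(t-a)(s-a)^2$, $(s-a)(t-a)^2$, $(s-a)^3$, $(t-a)^3$ all lie in the image of the algebra homomorphism $k[a]/(a^2-1)\,B(I_2(m))\to k[a]/(a^2-1)\,B$ sending the two standard generators to $s$ and $t$ (well defined, since $s$ and $t$ satisfy the braid relation of $I_2(m)$ inside $B$). So it is enough to prove the purely dihedral statement: in $k[a]/(a^2-1)\,B(I_2(m))$ the element $(s-a)^2(t-a)$ lies in the two-sided ideal $\mathfrak a_m$ generated by $(t-a)(s-a)^2$, $(s-a)(t-a)^2$, $(s-a)^3$, $(t-a)^3$.

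\emph{The dihedral case.} If $m=2$ then $s,t$ commute, so $(s-a)^2(t-a)=(t-a)(s-a)^2\in\mathfrak a_2$ with no condition on $k$. For $m\geq3$ one works modulo $\mathfrak a_m$: the relations $(v-a)(s-a)^2=0$ force $v(s-a)^2=a(s-a)^2$ for $v\in\{s,t\}$, so $(s-a)^2$ spans a one-dimensional left ideal, and similarly $(s-a)^2(t-a)$ is killed on the left by every $v-a$ and, using all four relators, on the right by $t-a$ and by each $(v-a)^2$. To finish, one trades the right-leaning word $(s-a)^2(t-a)$ for left-leaning ones via the braid relation: conjugation by the half-twist $\Delta=\underbrace{sts\cdots}_{m}$ exchanges $s$ and $t$ when $m$ is odd (and centralises them up to the centre when $m$ is even), and fed through the cubic relations this should produce, after clearing denominators, an identity $N\cdot(s-a)^2(t-a)\in\mathfrak a_m$ for an explicit non-zero integer $N$ depending only on $m$; for a simply laced diagram only $m\in\{2,3\}$ occur and $N$ is a power of $2$, so $\mathrm{char}\,k\neq2$ suffices, whereas in general $N$ is a product of small primes and $\mathrm{char}\,k=0$ is enough. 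Dividing by $N$ gives $(s-a)^2(t-a)\in\mathfrak a_m$. For pairs $s,t$ with $m_{st}$ even one may instead bypass this: since $(W,S)$ is irreducible with a single class of reflections, $s$ and $t$ are joined by a chain of generators along odd bonds, and conjugation by the corresponding half-twists reduces $(s-a)^2(t-a)$ to the $m=3$ identity, which is already in $J$.

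The first two steps are formal. The hard part will be the dihedral identity for $m\geq3$: one must pinpoint the exact combination of left multiples of the four relators that reproduces $(s-a)^2(t-a)$ up to an invertible scalar --- the obvious manipulations only reproduce tautologies, so the cubic relations have to be inserted in just the right way --- and one must control the integer $N$ that appears; for $m>3$ this is most naturally carried out by induction on $m$ using the half-twist $\Delta$, and it is precisely here that the hypothesis on $\mathrm{char}\,k$ (only $2$ in the simply laced case) is consumed.
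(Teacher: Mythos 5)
Your formal reductions are fine: the reversal antiautomorphism does show that $J'\subseteq J$ forces $J=J'$, and pushing a dihedral membership statement forward along $k[a]/(a^2-1)B(I_2(m))\to k[a]/(a^2-1)B$ is legitimate. But the entire mathematical content of the proposition is the rank-two step, and there you have not proved anything: you assert that conjugation by the half-twist, ``fed through the cubic relations,'' \emph{should} produce an identity $N\cdot(s-a)^2(t-a)\in\mathfrak a_m$ with $N$ a nonzero integer, and you yourself note that the obvious manipulations only give tautologies. Even for $m=3$ no identity is exhibited (the paper does this by an explicit trick: from $sts=tst$ one gets $(t-a)^2=(s-a)^2ts$, then the element $X=(s-a)^2t-2a(t-a)^2+(t-a)^2s$ satisfies $Xs=0$, whence $X=0$ and, after symmetrizing, $(s-a)^2=(t-a)^2$ and $(s-a)^2t=a(s-a)^2$ --- with only powers of $2$ in denominators, which is where $\mathrm{char}\,k\neq2$ enters). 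More seriously, for odd $m\geq 5$ --- which you cannot avoid, since $H_3$, $H_4$ and the odd dihedral groups are exactly the non--simply-laced cases covered by the proposition --- there is no known explicit identity of the kind you postulate, and the paper does not argue this way at all: it proves that the right action of the $(-1)$-Hecke algebra $H$ of $I_2(m)$ on the line $kC_s$ (with $C_s$ the image of $(s-a)^2$) factors through $H$ modulo its radical (which is spanned by $\sum_w(-1)^{\ell(w)}T_w$) and kills the central element $(st)^m-1$, whose image generates the whole matrix part of $H/J(H)$; hence the action is the trivial character and $C_s(t-a)=0$. That representation-theoretic argument is where $\mathrm{char}\,k=0$ is used, and it is not a cosmetic variant of your plan --- it replaces the missing identity.

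A second, smaller defect: your ``reduction to rank two'' is stated for all $m\geq 3$, but the dihedral statement is not expected to be true for even $m\geq 6$ (the authors suspect counterexamples), so the reduction must from the outset be restricted to odd bonds and combined with the chain argument coming from the single conjugacy class of reflections: once the odd dihedral case is known, $C_{s_i}=C_{s_{i+1}}$ along a chain of odd bonds, hence $C_s=C_t$ for all $s,t\in S$, and then $C_s(t-a)=C_t(t-a)=0$. Your ``bypass'' for even $m_{st}$ gestures at this but is wrong as stated: the chain reduces to the identity for the odd label $m_{s_is_{i+1}}$ of each bond (possibly $5$ or more), not to the $m=3$ identity, and the mechanism is the conjugacy $C_{s_i}=C_{s_{i+1}}$, not a rewriting of $(s-a)^2(t-a)$ itself. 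In sum, the skeleton is compatible with the paper's proof, but the load-bearing step (the odd dihedral case, and even the $A_2$ case) is missing, and the route you propose for it is not known to work.
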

\begin{proof}
We first give two proofs of this  statement in the important simply-laced case. These proofs
are straightforward and explicit, in contrast with the proof of the general case that
we provide after that.

In this simply-laced case, the statement can be reduced to the special case where $W$ has type $A_2$.
If $char. k = 0$, by using $k[a]/(a^2-1) \simeq k[a]/(a-1)\oplus k[a]/(a+1) \simeq k \oplus k$
and the fact that $\tilde{H}$ is in this case a quotient of the Hecke algebra a specialization of $H_3$,
we can compute the dimension of the ideal generated by these relations, and concludes in this
way that the ideal coincides with the ideal generated by the relations $(t-a)(s-a)^2 = (s-a)^2(t-a)=0$ for $a \in \{-1,1 \}$.
By this method one may actually get explicit expressions over $\Q$ whose denominators are powers of $2$, thus
getting the conclusion for every field of characteristic $\neq 2$. We provide an alternative, \`a la Coxeter
argument. We compute inside the quotient $A$ of $k B_n$ by the relations $(t-a)(s-a)^2 = 0$ for
all $s,t \in S$.  Again for all $s,t \in S$, since $(s-a)^3 = 0$
we get $s^{-1} = a s^2 - 3 s +3a$ and therefore the identity $s(t-a)^2 = a (t-a)^2$ implies $s^{-1}(t-a)^2 = a(t-a)^2$.
We assume that $(W,S)$ has type $A_2$ and we let $S = \{s,t \}$.
Then, $sts = tst$ implies $(t-a)^2 = (ts)^{-1}(s-a)^2 ts = s^{-1} t^{-1}(s-a)^2 ts = (s-a)^2 ts$. Let then $X = (s-a)^2 t - 2a(t-a)^2  + (t-a)^2 s$.
We get $X.s = (s-a)^2 ts + (t-a)^2 (s^2 -2as) = (t-a)^2 (s^2-2as +1) = (t-a)(t-a)(s-a)^2 = 0$. Since $s$ is invertible
this implies $X = 0$, that is $(s-a)^2 t = 2a(t-a)^2  - (t-a)^2 s$. Symmetrically we get $(t-a)^2s = 2a(s-a)^2  - (s-a)^2 t$
hence $2a (s-a)^2 = (t-a)^2 s + (s-a)^2 t = (t-a)^2 s + 2a(t-a)^2  - (t-a)^2 s = 2a(t-a)^2$. 
Thus $(s-a)^2 = (t-a)^2$ hence $(s-a)^2 t = (t-a)^2 t = t(t-a)^2 = a(t-a)^2 = a (s-a)^2$ and symmetrically $(t-a)^2 s = a(t-a)^2$.
The conclusion follows.

We now provide an argument for the general case in characteristic $0$. First of all, we may as in our
very first argument assume that $H$ and $\tilde{H}$ are defined over $k$ 
and $a \in \{-1,1 \} \subset k$. Secondly, we can assume $a=1$, for
the map $s \mapsto -s$ defines an isomorphism between the two
variations of $\tilde{H}$ that permutes consistently the ideals under
consideration. Thus, $H$ is the quotient of $k B$ by the relations $(s-1)^2 = 0$
for $s \in S$,
while $\tilde{H}$ is the quotient of $k B$ by the relations $(t-1)(s-1)^2 =(s-1)^2(t-1)= 0$
for $s,t \in S$. We let $\hat{H}$ denote the quotient of $k B$ by the relations $(t-1)(s-1)^2 = 0$ for $s,t \in W$. 
For $s \in S$ we let $C_s = (s-1)^2 \in \hat{H}$. 

We want to show that $\hat{H} = \tilde{H}$ and for this
we can assume that $W = \langle s,t \rangle$ is of type $I_2(m)$ with $m$ odd.
Indeed, recall that our assumption that there is only one conjugacy class of reflections 
implies that every two elements $s,t \in S$ are connected by a chain $s=s_1,s_2,\dots,s_r = t$ such that $\langle s_i, s_{i+1} \rangle$ is a finite dihedral group of odd type. Therefore, if we can prove our statement for each $\langle s_i, s_{i+1} \rangle$,
we get $C_{s_i} s_{i+1} = 
C_{s_i}= C_{s_i} s_i $ ;
since $s_i$ and $s_{i+1}$ are conjugates inside
$\langle s_i, s_{i+1} \rangle$ this proves $C_{s_i} = C_{s_{i+1}}$, and by induction $C_s = C_t$. But then $C_s(t-1) = C_t(t-1) = 0$,
which proves the claim. We thus assume from now on that $W = \langle s,t \rangle$ is of type $I_2(m)$ with $m$ odd.

Then $\hat{H} = \tilde{H}$ is equivalent to
saying that $\hat{H}$ acts on $k C_s$ by right multiplication through its
(unique) 1-dimensional representation $s,t \mapsto 1$. Indeed, if
this is the case we have $(s-1)^2(t-1) = 0$ and this implies $(t-1)^2(s-1)=0$
since these two expressions are conjugated by an element of $B$.
This is what we prove now.

First notice that this action factorizes through $H$. Indeed, whenever $f$ is the
image in $\hat{H}$ of an element of $B$, we have $g(s-1)^2 = (s-1)^2$
and $g(t-1)^2 = (t-1)^2$, and therefore
$$
C_s g s^2 = C_s g(s-1)^2 + 2 C_s g s - C_s g = C_s (s-1)^2 + 2 C_s g s - C_s g = 2 C_s g s - C_s g
$$
and similarly $C_s g t^2 = 2 C_s g t - C_s g$
since $C_s (t-1)^2 = (s-1)(s-1)(t-1)^2 = 0$. Since $\hat{H}$ is spanned
by the image of $B$, this proves that the right action on $C_s \hat{H}$ factorizes
through $H$. Also note that $k$ can be assumed to be algebraically closed. We now need
to make a few remarks on $H$.

Recall from e.g. \cite{GECKPFEIFFER} that the generic Hecke algebra admits two 1-dimensional
irreducible representations and $(m-1)/2$ two-dimensional ones, that can be defined
by explicit formulas. It is straightforward to check that the specializations at $q=-1$ of
the 2-dimensional ones
$$
s \mapsto \begin{pmatrix} -1 & 0 \\ 1 & -1 \end{pmatrix}
\ \ \ \ \ \ t \mapsto \begin{pmatrix} -1 & c_j \\ 0 & -1 \end{pmatrix}
$$
with $c_j = -2-(\zeta^j + \zeta^{-j})$ where $\zeta$ is a primitive $m$-th root of $1$
and $1 \leq j \leq (m-1)/2$ are pairwise non-isomorphic irreducible representations of $H$.
Note in passing that $st$ is mapped to a matrix of trace $c_j+2$ and determinant $1$,
and therefore is conjugated to $\mathrm{diag}(- \zeta^j, - \zeta^{-j})$. Since the two $1$-dimensional irreducible representations become one, it follows that the Jacobson
radical $J(H)$ has dimension $1$.

We claim that $J(H)$ is spanned by $X = \sum_{w \in W} (-1)^{\ell(w)} T_w \in H$. For
this, we first note that $X \mapsto 0$ under the $1$-dimensional representation $s,t \mapsto 1$. Then, letting $X_s = T_1 - T_t  + T_{st} - T_{tst} + ...$
we have $X = X_s - X_s.T_s$ and therefore $X.T_s = X_s.T_s - X_s.T_s^2 = X_s.T_s
-2 X_s.T_s + X_s = X$. Similarly, with obvious notations, $X = X_t - X_t.T_t$
and therefore $X.T_t = X$. It follows that $k X$ is a $1$-dimensional two-sided
ideal of $H$ whose image under $H/J(H) \simeq k \oplus M_2(k)^{(m-1)/2}$
cannot be the two-sided ideal $k$. Since all the other proper 2-sided ideals
of $H/J(H)$ have dimension at least $4$, this proves that its image inside $H/J(H)$ is
$0$ hence $k X = J(H)$.

From this we deduce that the right action of $H$ on $C_s \hat{H}$ factorizes
through $H/J(H)$. Indeed, letting $Y_s = T_1 - T_t + T_{ts} - T_{tst}+\dots$
we have $X = Y_s - T_s.Y_s$ hence $C_s.X = C_s.Y_s - C_s.s.Y_s= C_s Y_s - C_s Y_s = 0$, since $C_s.s = (s-1)^2(s-1)+(s-1)^2 = (s-1)^2 = C_s$.
Finally, since $(st)^{m} = (ts)^{m} $ is central, we
have $C_s(st)^{m} = (st)^{m} C_s = C_s$ hence the two-sided ideal of $H$
generated by $(st)^{m} -1$ acts by $0$. But the image of $(st)^{m}$
inside each 2-dimensional irreducible representation of $H$ is
$\mathrm{diag}(-\zeta^j, -\zeta^{-j})^{m} = -1$ hence, because $-2 \neq 0$ in $k$, the ideal
generated by the image of $(st)^{m}-1$ inside $H/J(H) \simeq k \oplus M_2(k)^{(m-1)/2}$ is $M_2(k)^{(m-1)/2}$. This proves that the right action of $H$ on $C_s$ factorizes through $k$, and this proves the claim.

The second statement about the relations $(s-a)^2(t-a) = 0$ is obviously similar.
\end{proof}

Thus $\overline{F}_n = F_n(0)/(A_0 F_n^+)^2$ is the extension of the theorem corresponding to $W = \mathfrak{S}_n$.
Notice that the natural map $\overline{F}_n \to \overline{F}_{n+1}$ is into for all $n \geq 1$.
We now prove that there is indeed a Markov trace on $F_n$ factorizing through $\overline{F}_n$.
Our proof is essentially an adaptation of Jones's proof of existence for the Ocneanu trace (see \cite{JONESANNALS} theorem 5.1).
\begin{theor} \label{theoexistencetracedeltanul} There exists a unique family of traces $t_n : \overline{F}_n \to A_0$
satisfying $t_{n+1}(xs_n^{\pm 1}) = t_{n}(x)$ for all $x \in \overline{F}_{n-1}$ and 
$t_2(C) = 1$.
\end{theor}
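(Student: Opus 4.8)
The plan is to adapt Jones's inductive proof of the existence of the Ocneanu trace (\cite{JONESANNALS}, Theorem~5.1), carrying the central generator $C$ along. Uniqueness is the easy half and follows from results already in place: a Markov trace factoring through $F_n$ is determined by $t_3(1)$, $t_3(s_1)$ and $t_3(s_1s_2)$ by Proposition~\ref{propbasiqueFn}(iv), and in the present specialization ($x=2a$, $a^2=1$, so that $\tilde{\delta}=0$ and $e_i\equiv-C$ in $\overline{F}_n$) the recursions of Proposition~\ref{proprelrecutraceFn}, the Markov relations and the normalization $t_2(C)=1$ pin these three scalars down — for instance $t_n(C)=a^n$, read off at once from $Cs_n=aC$. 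So everything reduces to exhibiting one such family.

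For existence I would work with the explicit model of $\overline{F}_n=F_n(0)/(A_0F_n^+)^2$ furnished by Theorem~\ref{theoHeckeExt} for $W=\mathfrak{S}_n$: it is free over $A_0$ on the symbols $E_w$ ($w\in\mathfrak{S}_n$) together with $C$, with $s_i\cdot E_w$ equal to $E_{s_iw}$ or to $-2a^{\ell(w)}C+2aE_w-E_{s_iw}$ according to length, and $s_iC=aC$, $CE_w=a^{\ell(w)}C$, $C^2=0$. It is convenient to rescale to $\hat{t}_n:=a^{-n}t_n$, so that $\hat{t}_n(C)=1$ and the Markov condition becomes the weighted Jones form $\hat{t}_{n+1}(xs_n^{\pm1})=a\,\hat{t}_n(x)$; we are then in exactly the setting of Jones's argument, with Markov parameter $a$, plus the extra class $C$. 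I would induct on $n$ — the algebras $\overline{F}_1$ and $\overline{F}_2$ being commutative, so that any linear form is a trace and the base values are fixed by the normalization together with the Markov and trace constraints up to level $3$ — using the decomposition $\overline{F}_{n+1}=\overline{F}_n+\overline{F}_ns_n\overline{F}_n$ coming from Proposition~\ref{propstructgenWBMW}(ii) (the power $s_n^{-1}$ being eliminated by the identity $s_n^{-1}=2a-s_n-2C$, which holds in $\overline{F}_n$), and define $\hat{t}_{n+1}$ by $\hat{t}_{n+1}(\alpha s_n\beta)=a\,\hat{t}_n(\alpha\beta)$ for $\alpha,\beta\in\overline{F}_n$, together with the value on the subalgebra $\overline{F}_n$ forced by imposing the second Markov relation $\hat{t}_{n+1}(xs_n^{-1})=a\,\hat{t}_n(x)$ (this value differs from $\hat{t}_n$ only by a correction supported on $C$).

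Two things then need checking, and the first is where I expect the difficulty to lie. Well-definedness: the rule $\alpha s_n\beta\mapsto a\,\hat{t}_n(\alpha\beta)$ must be independent of how an element of $\overline{F}_ns_n\overline{F}_n$ is written, equivalently all relations of $\overline{F}_{n+1}$ relating such expressions must be respected — the commutation of $s_n$ with $\overline{F}_{n-1}$, the braid relation $s_ns_{n-1}s_n=s_{n-1}s_ns_{n-1}$, the quadratic reduction $s_n^2=2as_n-1-2aC$, and, crucially, the identities bringing $s_n\gamma s_n$ (for $\gamma\in\overline{F}_n$) back into $\overline{F}_n+\overline{F}_ns_n\overline{F}_n$, which in the extended algebra carry $C$-corrections governed by Lemma~\ref{lemformulesFsee} and relations \ref{eqssprime}, \ref{eqssym} (equivalently by the defining relator $\mathcal{R}_1$ and by relators (v)--(viii) of $F_n$). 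The point is that on the unextended $(-1)$-Hecke algebra, where $C=0$, these compatibilities fail — the discrepancy being exactly the non-zero Hochschild $2$-class noted after Theorem~\ref{theoHeckeExt} — so making them hold is precisely what the central element $C$ is there for, and this well-definedness check is, in effect, the whole content of the theorem. The second point is the trace identity $\hat{t}_{n+1}(uv)=\hat{t}_{n+1}(vu)$, established by Jones's case analysis with $u,v$ running over $\overline{F}_n\cup\overline{F}_ns_n\overline{F}_n$: for $u,v\in\overline{F}_n$ it reduces to the inductive trace property of $\hat{t}_n$ plus bookkeeping of the $C$-term; the mixed case uses the defining formula and induction; and the case $u=\alpha s_n\beta$, $v=\gamma s_n\delta$ is handled by pushing $s_n(\beta\gamma)s_n$ through the reductions above and invoking the inductive hypothesis, as in \cite{JONESANNALS}, Theorem~5.1 (compare \cite{WENZL}, p.~400). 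Undoing the rescaling then gives the family $(t_n)$, and uniqueness identifies it as the only one.
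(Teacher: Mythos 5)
Your overall strategy is the right one (and is the paper's): work inside the explicit model of $\overline{F}_n$ furnished by Theorem \ref{theoHeckeExt}, force the value of $t_{n+1}$ on $\overline{F}_n$ from $1=\frac{a}{2}(s_n+s_n^{-1})+aC$, extend to the rest of $\overline{F}_{n+1}$ by a Markov-type formula, and then run Jones's case analysis for the trace property. But there is a genuine gap at the central step: you define $\hat t_{n+1}$ on $\overline{F}_n s_n\overline{F}_n$ by the rule $\alpha s_n\beta\mapsto a\,\hat t_n(\alpha\beta)$, and you yourself concede that the independence of this value from the chosen representation "is, in effect, the whole content of the theorem" --- and then you do not prove it. Since the sum $\overline{F}_{n+1}=\overline{F}_n+\overline{F}_n s_n\overline{F}_n$ is not direct and the writing $\alpha s_n\beta$ is wildly non-unique (one also needs compatibility with the forced values on $\overline{F}_n$, into which products $\alpha s_n\beta$ can fall), a proof organized this way has not yet constructed any linear functional at all; listing the relations that must be respected is not a verification of them.

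The way to close the gap, which is how the paper proceeds, is to never define $t_{n+1}$ on representatives of products. Instead use the module \emph{basis} of $\overline{F}_{n+1}$ given by $C$ and the $E_w$, $w\in\mathfrak{S}_{n+1}$, written in the normal form $w=w'y_k$ with $w'\in\mathfrak{S}_n$ and $y_k=s_ns_{n-1}\cdots s_k$ (so $\overline{F}_{n+1}=\overline{F}_n\oplus\bigoplus_{k<n+1}\overline{F}_n y_k$), and \emph{define} $t_{n+1}([w']y_k):=t_n([w']\hat y_k)$ with $\hat y_k=s_{n-1}\cdots s_k$, together with $t_{n+1}(x):=a\,t_n(x)+a\,t_n(xC)$ for $x\in\overline{F}_n$. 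This is automatically well defined, and the work is then shifted to where it can actually be carried out: first prove the two-sided Markov identity $t_{n+1}(xs_n^{\pm1}y)=t_n(xy)$ for $x,y\in\overline{F}_n$ (the $s_n^{-1}$ case via $s_n^{-1}=2a-2C-s_n$ and the known values $t_m(C)=a^m$), and then prove $t_{n+1}(s_ix)=t_{n+1}(xs_i)$ by the case analysis $i<n$ and $i=n$, the latter split according to whether the factors lie in $\overline{F}_{n-1}$, using $s_n^2=-2aC-1+2as_n$ and the braid relation to reduce everything to $t_n$ by induction. In other words, the trace-property verification on the normal-form definition replaces, and in fact constitutes, the well-definedness you left open; without it (or an equivalent conditional-expectation argument) the existence half of the theorem is not established, while your uniqueness argument via Proposition \ref{propbasiqueFn}(iv) and the recursions is fine.
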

\begin{proof}
Because $t_{n+1}(C) = a t_{n+1}(C s_n)$ and $C \in \overline{F}_{n-1}$, the condition $t_2(C) = 1$ implies $t_n(C) = a^n$
for all $n \geq 2$.
We recall that every element of $\mathfrak{S}_{n+1}$ admits a reduced expression of
the form $wy_k$ with $w$ a reduced expression of some element in $\mathfrak{S}_n$
and $y_k = s_n s_{n-1}\dots s_k$ with $1 \leq k \leq n+1$, with the convention $y_{n+1} = 1$.
We assume that $t_n$ is uniquely defined with a trace satisfying the Markov property, and
we show from this that $t_{n+1}$ is also uniquely defined. We let $\hat{y}_k = s_{n-1}\dots s_k$
for $1 \leq k \leq n$.

First of all, we note that
$1 = \frac{a}{2} (s_i + s_i^{-1}) - a e_i= \frac{a}{2} (s_i + s_i^{-1}) + a C$, hence the
Markov property imposes that, for all $x \in \overline{F}_n$, we have
$$t_{n+1}(x) = at_{n}(x) +at_{n}(x C) = at_{n}(x) + a^{1+\ell(w)+n}$$
if $x$ is given by the reduced expression $w$. Now $\overline{F}_{n+1} = \overline{F}_n
\oplus \bigoplus_{k < n+1} \overline{F}_n y_k$.
Therefore, $t_{n+1}$ is uniquely defined by its value on $\overline{F}_n$, that we already
defined, and on the $\overline{F}_n y_k$ for $k < n+1$.
But the Markov property imposes $t_{n+1} ([w] y_k) = t_{n+1} ([w] s_n\hat{y}_k) =  t_{n} ([w] \hat{y}_k)$, so we take
this as a definition.

We need to prove that $t_{n+1}$ is a trace and that is satisfies the Markov condition.
We start by the latter property, and actually we prove first that
$t_{n+1}(x s_n^{\pm 1} y) = t_n(xy)$ for all $x ,y \in \overline{F}_n$. First of all we
note that $t_{n+1}(x s_n \hat{y}_k) = t_n(x \hat{y}_k)$ for all $x \in \overline{F}_n$,
since it holds for $x = C$ as well as all the $x = [w]$ for $w$ a reduced expression
in $\mathfrak{S}_n$. We can then restrict ourselves to proving that
$t_{n+1}(x s_n y) = t_n(xy)$ for all $y$ of the form $[w] \hat{y}_k$ for $w$ a reduced
expression in $\mathfrak{S}_{n-1}$ and $1 \leq k \leq n$. Then
$t_{n+1}(x s_n y) = t_{n+1}(x s_n [w] \hat{y}_k) = t_{n+1}(x [w] s_n \hat{y}_k) = t_{n+1}(x [w] y_k)
= t_n(x[w] \hat{y}_k) = t_n(xy)$. We now prove that $t_{n+1}(x s_n^{-1} y) = t_n(xy)$
under the same assumptions on $x,y$. We can assume $x = [w]$ and $y = [m]$
for $w,m$ reduced expression. We then notice that $s_n^{-1} = 2a + 2 e_n - s_n = 2a - 2C - s_n$
and therefore $t_{n+1}(x s_n^{-1} y) = 2at_{n+1}(xy) - 2 a^{\ell(w)+ \ell(m)+n} - t_{n+1} (x s_n y)
= 2at_{n+1}(xy) - 2 a^{\ell(w)+ \ell(m)+n} - t_{n+1} (x s_n y)
= 2 t_n(xy) + 2a^{n+\ell(w)+\ell(m)} - 2 a^{\ell(w)+ \ell(m)+n} - t_n(xy) = t_n(xy)$.

We now prove that $t_{n+1}$ is a trace. We need to prove $t_{n+1}(s_i x) = t_{n+1}(x s_i)$
for all $i \leq n$. We first assume $i < n$. If $x \in \overline{F}_n$ this is an immediate
consequence of the relation between $t_{n+1}(x)$ and $t_n(x)$. If not, we can assume
$x = [w] s_n \hat{y}_k$. Then $t_{n+1}(s_i x) = t_{n+1}(s_i [w] s_n \hat{y}_k) = t_n(s_i [w]  \hat{y}_k)$
by the Markov property. Since $t_n$ is a trace this is equal to
$t_n(s_i [w]  \hat{y}_k) = t_n( [w]  \hat{y}_ks_i) =  t_{n+1}([w] s_n \hat{y}_k s_i) = t_{n+1}(xs_i)$.

We now let $i = n$. If $x \in \overline{F}_n$ this is a consequence of the Markov property :
$t_{n+1}(s_n x) = t_n(x) = t_{n+1}(x s_n)$. If not, we can assume $x = u s_{n} v$
with $u,v \in \overline{F}_{n}$. Then  $t_{n+1}(s_n x) = t_{n+1}(s_nu s_n v)$. 

\begin{itemize}
\item If $u,v \in \overline{F}_{n-1}$ this is equal to $t_{n+1}(s_nu vs_n) =  t_{n+1}(us_nv s_n) =  t_{n+1} (x s_n)$.
\item
If $u \in \overline{F}_{n-1}$ and $v \not\in \overline{F}_{n-1}$ this is equal to 
$t_{n+1}(us_n^2 v) = -2a t_{n+1}(uCv) - t_{n+1}(uv) + 2a t_{n+1}(us_nv)$
since $s_n^2 = -2a C - 1 + 2a s_n$,
and therefore to $-2a^{\ell(u)+\ell(v)+n+1}  - t_{n+1}(uv) + 2a t_{n+1}(us_nv) =
-2a^{\ell(u)+\ell(v)+n+1}  - t_{n+1}(uv) + 2a t_{n}(uv) = 
 -2a t_{n+1}(uCvs_n)  - t_{n+1}(uv) + 2a t_{n+1}(uvs_n)$.
On the other hand we can write $v = [w] s_{n-1} [w']$ with $w,w'$ reduced expressions
in $\mathfrak{S}_{n-1}$. Then
$t_{n+1} (us_n v s_n) = t_{n+1}( u s_n[w] s_{n-1} [w'] s_n)
= t_{n+1}( u [w]s_n s_{n-1}  s_n[w'])
= t_{n+1}( u [w]s_{n-1} s_{n}  s_{n-1}[w'])
=t_{n}( u [w]s_{n-1}^2[w'])
= -2at_{n}( u [w]C[w']) - t_{n}( u [w][w']) + 2a t_{n}( u [w]s_{n-1}[w'])
= -2at_{n}( u [w]C[w']) - t_{n}( u [w][w']) + 2a t_{n}( u v)
= -2at_{n+1}( u Cvs_n) - t_{n+1}( u v) + 2a t_{n+1}( u vs_n)
$
since $t_n(u [w][w']) = at_{n-1}(u[w][w']) + a^{n+\ell(u) +\ell(w)+\ell(w')} = 
at_{n}(u[w]s_{n-1}[w']) + a^{n+\ell(u) +\ell(w)+\ell(w')} = t_{n+1}(uv)$.
\item the case $u\not\in \overline{F}_{n-1}$ and $v \in \overline{F}_{n-1}$ is similar and left to the reader.
\item We now assume $u = [w] s_{n-1} [w']$ and $v = [m] s_{n-1} [m']$.
Then 
$$
\begin{array}{lcl}
t_{n+1} (s_n u s_{n} v) &=& t_{n+1}(s_n [w] s_{n-1} [w'] s_n [m] s_{n-1} [m'])\\
&=& t_{n+1}( [w] s_ns_{n-1} s_n[w']  [m] s_{n-1} [m'])\\
&=& t_{n+1}( [w] s_{n-1}s_{n} s_{n-1}[w']  [m] s_{n-1} [m'])\\
&=& t_{n}( [w] s_{n-1}^2[w']  [m] s_{n-1} [m'])
\end{array}
$$
 by the Markov property.
Similarly, $$t_{n+1} (u s_n v s_n) = t_n([w] s_{n-1}[w'] [m] s_{n-1}^2 [m']).$$
Expanding $s_{n-1}^2$ we get
$t_{n}( [w] s_{n-1}^2[w']  [m] s_{n-1} [m']) = 
-2a t_{n}( [w] C[w']  [m] s_{n-1} [m']) - t_{n}( [w] [w']  [m] s_{n-1} [m'])
+ 2a t_{n}( [w] s_{n-1}[w']  [m] s_{n-1} [m'])$
while $t_n([w] s_{n-1}[w'] [m] s_{n-1}^2 [m']) = 
-2a t_n([w] s_{n-1}[w'] [m] C [m']) - t_n([w] s_{n-1}[w'] [m]  [m'])
+ 2a t_n([w] s_{n-1}[w'] [m] s_{n-1} [m'])$.
We have $t_{n}( [w] C[w']  [m] s_{n-1} [m']) = t_n([w] s_{n-1}[w'] [m] C [m']) = a^{n+1+\ell(w)+\ell(w')+\ell(m)+\ell(m')}$
$t_{n}( [w] [w']  [m] s_{n-1} [m']) = t_{n-1}( [w] [w']  [m]  [m'])
= t_{n}( [w]s_{n-1} [w']  [m]  [m'])$ by the Markov property,
whence $t_{n+1} (s_n u s_{n} v) = t_{n+1} (u s_n v s_n)$ and the conclusion.
\end{itemize}
\end{proof}

\subsection{The case $x = -2a$}
\label{subsectm2a}

Let $\mathcal{B}_n$ denote the $\Q[a]/(a^2-1)$-algebra $BMW_4^{\dagger\dagger} \otimes S^{\dagger\dagger}/(x+2a)$
and $\mathcal{B}_n^{\pm} = \mathcal{B}_n \otimes \Q[a]/(a \mp 1)$. 
Inside $\mathcal{B}_n$ we have $x=-2a$, $e_i^2 = 2a e_i$, $\tilde{\delta} = 4$ and $(s_i-a)(s_i+a)^2 = (s_i+a)(s_i^2-1) = 0$.
We specialize $H_4$ accordingly, and let $U^{\pm}$, $V^{\pm}$ denote the kernels of its projection onto $\mathcal{B}_4^{\pm}$
and $BMW_4^{\pm}$, respectively. We denote $\mathcal{I}_n^{\pm} = \Ker(\mathcal{B}_n^{\pm} \onto BMW_n^{\pm})$ and
we identify $\mathcal{I}_4^{\pm}$ with the vector space $U^{\pm}/V^{\pm}$.
By proposition \ref{propdimBMWm2a} we know that they have dimension $115-105 = 10$. As we noticed in the
proof of proposition \ref{propTracesXm2a}, we have natural morphisms $\mathcal{B}_n^{\pm} \onto \Q \mathfrak{S}_n$.

We let $C_i^{\pm}$ denote the image of $\mathcal{S}_i$ inside $\mathcal{B}_n^{\pm}$. We have $(C_i^{\pm})^2 = 6a C_i^{\pm} = \pm 6 C_i^{\pm}$.
$\dim \mathcal{B}_n^{\pm} = 115$. The quotient $\mathcal{F}_n^{\pm}$ of $\mathcal{B}_n^{\pm}$ by the ideal generated by $C_1^{\pm} - C_2^{\pm}$ is $F_n \otimes A/(a \mp 1,x+2a)$. We have $\mathcal{B}_n^{\pm} \onto \mathcal{F}_n^{\pm} \onto BMW_n^{\pm}$,
and $\mathcal{B}_3 \simeq \mathcal{F}_3$.
Recall from proposition \ref{propsplittingsBMW} that the morphism $\mathcal{F}_n^{\pm} \to BMW_n^{\pm}$
admits exactly one splitting, given by $s_i \mapsto s_i - (a/3) C$. 

By explicit computations inside $U^{\pm}/V^{\pm}$, we get the following.
\begin{enumerate}
\item The bimodule action of $H_4$ on $\mathcal{I}_4^{\pm}$
factorizes through $(\Q \mathfrak{S}_4)\otimes (\Q \mathfrak{S}_4)^{\mathrm{op}}$
(that is, the left and right actions of the $s_i^2$ are trivial). The corresponding
representations are $\chi_4 \otimes \chi_4 + \chi_{31} \otimes \chi_{31}$ if $a=1$,
$\chi_{1^4} \otimes \chi_{1^4} + \chi_{211} \otimes \chi_{211}$ if $a = -1$, where
$\chi_{\la}$ denotes the irreducible representation of $\mathfrak{S}_n$
associated to the partition $\la$ of $n$, with the convention that $\chi_{[n]}$
is the trivial representation.
\item The subalgebra of $\mathcal{I}_4^{\pm}$ generated by $C_1^{\pm}$ and $C_2^{\pm}$
is 5-dimensional, and defined by the relations $(C_i^{\pm})^2 = 6a C_i^{\pm}$, 
$C_1^{\pm} C_2^{\pm} C_1^{\pm} - C_2^{\pm} C_1^{\pm} C_2^{\pm} = 4(C_1^{\pm} - C_2^{\pm})$, one possible basis being
$C_1^{\pm},C_2^{\pm},C_1^{\pm}C_2^{\pm}, C_2^{\pm}C_1^{\pm}, C_1^{\pm}C_2^{\pm}C_1^{\pm}$.
\item By direct computation, we check that the ideal $\mathcal{I}_4^{\pm}$ is generated
by $C_1^{\pm}$, $C_2^{\pm}$ and the $e_i$'s. Since we have $\mathcal{B}_3^{\pm} = \mathcal{F}_3^{\pm}$,
by lemma \ref{lemformulesFsee} (iii) we know that the $C_i^{\pm}$'s belong to the
subalgebra generated by the $e_i$'s. Therefore, the extension $\mathcal{B}_4^{\pm}$
of $BMW_4^{\pm}$ is basically determined by the induced extension of the
Temperley-Lieb subalgebra of $BMW_n^{\pm}$, at least when $n=4$. We suspect it is the case in general.
\item As an algebra, using known algorithms used
by GAP4, we check that $\mathcal{I}_4^{\pm}$ can be split into a direct
sum of two unital $\Q$-algebras, one of them being 1-dimensional,
the other one being 9-dimensional. We check that the latter
is central and simple, but not a division ring. Therefore, we have
$\mathcal{I}_4^{\pm} \simeq \Q \oplus Mat_3(\Q)$ as a $\Q$-algebra.
\end{enumerate}

One problem we face to extend these properties further is that we need to know whether
$C_3^{\pm}$ and $C_1^{\pm}$ do commute (or to what extent they do not). This
should be doable by computing inside $H_5$, which is still finite-dimensional. However
its dimension ($155520$) is a lot larger than the dimension of $H_4$, and
there is no software capable of dealing with it yet.

These computations are however sufficient to guess a plausible conjecture. For $n \geq 3$, let $TL_n^{\pm}$
denote the quotient of the group algebra $\Q \mathfrak{S}_n$ of the symmetric
group by the ideal $\mathcal{J}_n^{\pm}$ generated by $\mathcal{T} = s_1 s_2 s_1 + a s_1 s_2 + a s_2 s_1 + s_1 + s_2 + a$
with $a = \pm 1$. It is a specialization of the Temperley-Lieb algebra, and has dimension the Catalan number $Cat_n$.
Now recall from the proof of proposition \ref{propTracesXm2a} that we have a surjective morphism $\mathcal{B}_n^{\pm}
\onto \Q \mathfrak{S}_n$. By direct computation we get that $\mathcal{S}_1$ is mapped onto $\mathcal{T}$. Therefore,
we have a commutative diagram of horizontal short exact sequences
$$
\xymatrix{
0 \ar[r] & \mathcal{I}_n^{\pm} \ar[d]_{\pi_n^{\pm}}\ar[r] & \mathcal{B}_n^{\pm} \ar[r] \ar@{->>}[d] & BMW_n^{\pm} \ar[r] \ar@{->>}[d]& 0 \\
0 \ar[r] & \mathcal{J}_n^{\pm} \ar[r] & \Q \mathfrak{S}_n \ar[r] & TL_n^{\pm}\ar[r] & 0 \\
}
$$
The computations above together with the identification of $\mathcal{B}_3^{\pm}$ with a specialization of $F_3$ shows
that, for $n=3$ and $n=4$, the leftmost vertical map $\pi_n^{\pm}$ is an isomorphism. We conjecture
that it is the case in general.

\begin{conj}
$\pi_n^{\pm}$ is an isomorphism for all $n \geq 3$.
\end{conj}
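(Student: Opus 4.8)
The plan is to view the conjecture through the fibre product: since the surjections $\mathcal{B}_n^\pm\twoheadrightarrow BMW_n^\pm$ and $\mathcal{B}_n^\pm\twoheadrightarrow\Q\mathfrak{S}_n$ induce the \emph{same} map onto $TL_n^\pm$, there is a canonical algebra map $\mathcal{B}_n^\pm\to BMW_n^\pm\times_{TL_n^\pm}\Q\mathfrak{S}_n$ whose kernel is exactly $\ker\pi_n^\pm$; thus $\pi_n^\pm$ is an isomorphism if and only if $\mathcal{B}_n^\pm$ is this fibre product. The first, easy, step is the surjectivity of $\pi_n^\pm$: the image of the two-sided ideal $\mathcal{I}_n^\pm$ under $\mathcal{B}_n^\pm\twoheadrightarrow\Q\mathfrak{S}_n$ is a two-sided ideal of $\Q\mathfrak{S}_n$ containing $\pi_n^\pm(C_1^\pm)=\mathcal{T}$, hence contains $\mathcal{J}_n^\pm=(\mathcal{T})$; this surjectivity in turn forces $\mathcal{B}_n^\pm$ to surject onto the fibre product. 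Since $\dim\bigl(BMW_n^\pm\times_{TL_n^\pm}\Q\mathfrak{S}_n\bigr)=\dim BMW_n^\pm+n!-Cat_n$, it then suffices to prove the single inequality $\dim\mathcal{I}_n^\pm\le n!-Cat_n$; equality, and hence the isomorphism, follows automatically.

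To bound $\dim\mathcal{I}_n^\pm$ from above, I would use that $\mathcal{I}_n^\pm$ is the two-sided ideal of $\mathcal{B}_n^\pm$ generated by $C_1^\pm$, together with the inductive decomposition $\mathcal{B}_{n+1}^\pm=\mathcal{B}_n^\pm+\mathcal{B}_n^\pm s_n\mathcal{B}_n^\pm+\mathcal{B}_n^\pm s_n^{-1}\mathcal{B}_n^\pm$ from proposition \ref{propstructgenWBMW}(ii) specialised at $x=-2a$, and the relations \ref{eqssprime}, \ref{eqssym} and those of lemma \ref{lemformulesFsee}, all of which remain valid at $x=-2a$ (only the relation $\mathcal{S}_1-\mathcal{S}_2\in I_n$ underlying theorem \ref{theoisomddaggerFn} breaks down there, which is exactly why the $C_i^\pm$ may differ). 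These reductions should express $\mathcal{I}_n^\pm$ as the span of products of the $e_i$'s and of the $C_i^\pm$'s, giving a spanning set indexed by pairs of non-identity Temperley--Lieb half-diagrams --- equivalently, in the semisimple picture, by the partitions of $n$ with at least three rows (resp. three columns), i.e. those \emph{not} indexing an irreducible constituent of $TL_n$, which are precisely the ones governing $\mathcal{J}_n^\pm=\bigoplus_\lambda M_{d_\lambda}(\Q)$. Linear independence of this spanning set, and the identification with $\mathcal{J}_n^\pm$, would then come representation-theoretically: one shows that the $H_n$-bimodule structure on $\mathcal{I}_n^\pm$ factors through $\Q\mathfrak{S}_n\otimes(\Q\mathfrak{S}_n)^{\mathrm{op}}$ (the $s_i^2$ acting as $1$ on each side) and that, as such, $\mathcal{I}_n^\pm\simeq\bigoplus_\lambda\chi_\lambda\otimes\chi_\lambda$ over exactly those partitions. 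A two-sided ideal carrying such a bimodule structure is forced to be semisimple with Wedderburn decomposition $\bigoplus_\lambda M_{d_\lambda}(\Q)$, and a surjection of semisimple algebras with identical block structure is automatically an isomorphism; hence $\pi_n^\pm$ is an isomorphism.

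The hard part is precisely the obstacle flagged at the end of \S\ref{subsectm2a}: controlling the subalgebra of $\mathcal{I}_n^\pm$ generated by the $C_i^\pm$. One knows that $C_i^\pm$ squares to a scalar multiple of itself, that $C_i^\pm C_j^\pm=C_j^\pm C_i^\pm$ when $|i-j|\ge 3$, and that $C_i^\pm C_j^\pm C_i^\pm-C_j^\pm C_i^\pm C_j^\pm=4(C_i^\pm-C_j^\pm)$ when $|i-j|=1$; but whether $C_1^\pm$ and $C_3^\pm$ commute --- and, more seriously, whether any further relations among the $C_i^\pm$ arise --- is open, and even a brute-force check for $n=5$ would require explicit computations inside $H_5$, of dimension $155520$, which are beyond the reach of current software. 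Conceptually, the missing input is a uniform proof that the $\Q\mathfrak{S}_n$-bimodule $\mathcal{I}_n^\pm$ has the predicted Specht constituents for \emph{every} $n$; this is a still-unestablished fragment of the representation theory of $\WBMW_n$ at the parameters $x=-2a$, $a^2=y=1$, and I expect that a proof would have to rest on a cellular-type, or conditional-expectation, description of $\mathcal{B}_n^\pm$ that makes its restriction-to-three-strands behaviour transparent for all $n$, in the spirit of theorem \ref{theoisomddaggerFn}.
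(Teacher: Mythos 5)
You should first be aware that the statement you are addressing is a \emph{conjecture}: the paper does not prove it. Its only evidence is the explicit verification for $n=3$ (where $\mathcal{B}_3^{\pm}\simeq\mathcal{F}_3^{\pm}$) and $n=4$ (proposition \ref{propdimBMWm2a} together with the computations listed in \S\ref{subsectm2a}), and the authors explicitly record the obstruction to going further, namely the unknown relations among the $C_i^{\pm}$ (already whether $C_1^{\pm}$ and $C_3^{\pm}$ commute) and the infeasibility of the required computation inside $H_5$, of dimension $155520$. So there is no proof in the paper to compare yours with, and your text, as you yourself say, is a reduction plus a programme rather than a proof.

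As a reduction it is sound and consistent with the paper's own remarks. Surjectivity of $\pi_n^{\pm}$ does follow as you argue: $\mathcal{I}_n^{\pm}$ is the two-sided ideal of $\mathcal{B}_n^{\pm}$ generated by $C_1^{\pm}$ (the image of $\mathcal{S}'_1$ coincides with $C_1^{\pm}$ by relation (\ref{eqssprime}), which survives the specialization $x=-2a$), its image in $\Q\mathfrak{S}_n$ is a two-sided ideal containing $\mathcal{T}$, hence equals $\mathcal{J}_n^{\pm}$, and the fibre-product argument then shows the conjecture is equivalent to the single inequality $\dim\mathcal{I}_n^{\pm}\le n!-Cat_n$ --- which is exactly the paper's conjectural dimension formula for $\mathcal{B}_n^{\pm}$. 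The genuine gap is precisely where you place it, and it is not closed: you give no argument that your proposed spanning set of $\mathcal{I}_n^{\pm}$ spans (the decomposition of proposition \ref{propstructgenWBMW}(ii) and the relations you cite do not control products of the $C_i^{\pm}$ with non-adjacent indices), no proof that the left and right actions of the $s_i^2$ on $\mathcal{I}_n^{\pm}$ are trivial for $n\ge 5$, and no identification of the bimodule constituents beyond the $n=4$ computation already in the paper. Note also that you cannot invoke lemma \ref{lemformulesFsee} verbatim: it is proved inside $F_n$, where all the $C_i$'s have been identified to a single element $C$; inside $\mathcal{B}_n^{\pm}$ the analogous identities must be re-derived from (\ref{eqssprime}) and (\ref{eqssym}) and then carry index-dependent elements $C_i^{\pm}$, which is exactly the source of the difficulty. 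In short, your proposal organizes the problem correctly and reproduces the evidence and the equivalent dimension statement, but it proves nothing beyond what the paper already establishes for $n\le 4$, and leaves open the same key step the paper does.
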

If this conjecture holds true, then the algebra structure of $\mathcal{B}_n^{\pm}$ would be completely determine by
an explicit linear splitting of $\mathcal{B}_n^{\pm} \to BMW_n^{\pm}$ together with
the corresponding Hochschild 2-cocycle. Indeed, the bimodule action of $BMW_n^{\pm}$ on $\mathcal{I}_n^{\pm}$
would be easily determined, since the action of every braid on $\mathcal{I}_n^{\pm}$ would
be identified under $\pi_n^{\pm}$ with the action of the corresponding permutation on the ideal $\mathcal{J}_n^{\pm}$.
Moreover, we would have for the dimension of $\mathcal{B}_n^{\pm}$ the conjectural formula
$$
\begin{array}{lcl}
\dim \mathcal{B}_n^{\pm} &=& \dim BMW_n^{\pm} + \dim \Q \mathfrak{S}_n - \dim TL_n^{\pm} \\
&=& 1.3. \dots . (2n-1) + n! - Cat_n 
\end{array}
$$
Finally, we suspect that the special retraction pointed out in remark \ref{remarkretractTL} echoes some
special phenomenon in $\mathcal{B}_n^{\pm}$ that needs to be
understood further.

\section{Knot invariants}

\subsection{Number of connected components, special trace and change of variables}

If $\beta$ is a braid on $n$ strands, then its closure $L$ is an oriented link. As before,
we denote
$\ell : B_n \to \Z$
the abelianization morphism which maps $\sigma_i$ to $1$.
We will use the following
classical fact :

\begin{lemma} \label{lemwrithe} If $\beta \in B_n$ and $L$ is the closure of $\beta$, then
$n+\ell(\beta) \equiv (\# L)  \mod 2$, where $\# L$ denote the number of
components of the link $L$.
\end{lemma}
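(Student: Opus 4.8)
Lemma \ref{lemwrithe} is a parity statement, so the plan is to prove it by a straightforward induction on the number of strands $n$, using the fact that $B_n$ is generated by $\sigma_1,\dots,\sigma_{n-1}$ together with the two elementary operations relating a braid to a shorter one or one with fewer strands. First I would establish the base case $n=1$: then $\beta$ is trivial, $\ell(\beta)=0$, the closure is the unknot with $\#L = 1$, and $1+0 \equiv 1 \pmod 2$. For the inductive step I would argue at the level of the word length $\ell(\beta)$ inside a fixed $B_n$: adding a single generator $\sigma_i^{\pm 1}$ to a braid word changes $\ell(\beta)$ by $\pm 1$, hence flips its parity, and correspondingly the closure changes by an application of the first Reidemeister-free move at one crossing, which merges two components into one or splits one into two, changing $\#L$ by $\pm 1$ and flipping its parity as well. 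Thus the quantity $n+\ell(\beta)-\#L \bmod 2$ is invariant under multiplying $\beta$ by a generator.

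More precisely, I would phrase it as follows. Fix $n$ and consider the statement $P(n)$: for all $\beta\in B_n$, $n+\ell(\beta)\equiv \#\hat\beta \pmod 2$. It suffices to check $P(n)$ on a generating set closed under the stated invariance, so I reduce to the trivial braid $1\in B_n$, whose closure is the $n$-component unlink: there $\ell(1)=0$ and $\#L = n$, so $n+0\equiv n\pmod 2$, confirming $P(n)$ for the identity. Then any $\beta$ is obtained from $1$ by successively right-multiplying by generators $\sigma_i^{\pm1}$; each such multiplication changes $\ell$ by $\pm1$ and, by the elementary observation on how smoothing/adding a crossing between the two strands at position $i$ affects the number of closed components (it is exactly the statement that a transposition composed with a permutation changes its number of cycles by one — indeed $\#\hat\beta$ depends only on the underlying permutation and equals its number of cycles), changes $\#\hat\beta$ by $\pm1$. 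Hence the parity of $n+\ell(\beta)-\#\hat\beta$ is preserved, and $P(n)$ follows for all $\beta$.

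In fact the cleanest route bypasses any picture entirely: the number of components $\#\hat\beta$ of the closure of a braid $\beta\in B_n$ is the number of cycles of the image $\bar\beta\in\mathfrak{S}_n$ of $\beta$ under $B_n\to\mathfrak{S}_n$, while $\ell(\beta)\bmod 2$ equals the sign of $\bar\beta$, i.e. $\ell(\beta)\equiv n-c(\bar\beta)\pmod 2$ where $c$ denotes the number of cycles (since a permutation with $c$ cycles on $n$ letters is even iff $n-c$ is even). Therefore $n+\ell(\beta)\equiv n+n-c(\bar\beta)\equiv -c(\bar\beta)\equiv c(\bar\beta)\equiv \#\hat\beta\pmod 2$, which is exactly the claim. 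I would present this second argument as the main proof, with the induction as an optional remark.

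The main obstacle here is essentially nil — this is a genuinely classical and elementary fact — so the only care needed is to state cleanly the identity $\ell(\beta)\equiv n-c(\bar\beta)\pmod 2$ relating word length parity, the sign character of $\mathfrak{S}_n$, and the cycle count, and to note that $\#\hat\beta = c(\bar\beta)$, which is immediate from the definition of the closure of a braid. Both of these are standard and can be cited or dispatched in a line.
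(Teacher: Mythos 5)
Your main argument is correct and is essentially the paper's proof: both pass to the underlying permutation $\bar\beta\in\mathfrak{S}_n$, identify $\#\hat\beta$ with its number of cycles, and read off the parity of $\ell(\beta)$ from the sign character. The paper merely does the bookkeeping by counting even- and odd-length cycles separately ($\#L=r+s$, $n\equiv s$, $\ell(\beta)\equiv r \bmod 2$), whereas you use the equivalent identity $\ell(\beta)\equiv n-c(\bar\beta)\pmod 2$; the difference is cosmetic.
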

\begin{proof}
Decomposing the projection
$\overline{\beta} \in \mathfrak{S}_n$ 
of $\beta$ under $B_n \to \mathfrak{S}_n$
into a product of disjoint cycles,
we get
$r$ cycles of even lengths $2 a_1,\dots,2 a_r$
and $s$ cycles of odd lengths $b_1,\dots,b_s$.
Then $\# L = r+s$, $n = 2 \sum a_i + \sum b_i \equiv s \mod 2$
and $\ell(\beta) \equiv r \mod 2$. This proves the claim.

\end{proof}

This provides a simple interpretation of the trace $t_n^{\dagger\dagger}$, with values in $S^{\dagger\dagger}$. Recall that $a^2 = 1$
inside $S^{\dagger \dagger}$.

\begin{prop} \label{propinvdagdag} If $\beta \in B_n$ and $L$ is the closure of $\beta$, then $t_n^{\dagger \dagger}(\beta) = a^{\# L}$.
\end{prop}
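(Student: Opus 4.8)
The plan is to combine the algebraic description of $t_n^{\dagger\dagger}$ from Proposition \ref{propclassmarkovddagger3}(i) with the parity statement of Lemma \ref{lemwrithe}. Recall $t_n^{\dagger\dagger}(\beta) = a^n \psi_n(\beta)$, where $\psi_n : \WBMW_n \to S^{\dagger\dagger}$ is the algebra morphism sending each $s_i$ to $a$. Since $\psi_n$ is an algebra morphism and $\beta$ is a product of Artin generators $\sigma_{i_1}^{\epsilon_1}\cdots\sigma_{i_r}^{\epsilon_r}$ (with the convention $\sigma_i = a s_i$ used throughout \S\ref{secttwobmwagebras}, so that the image of $\sigma_i$ under $\psi_n$ composed with the normalisation is just $a \cdot a = a^2 = 1$, equivalently $s_i \mapsto a$ gives $\psi_n(\sigma_i^{\pm 1}) = a^{\pm 1}$ up to the $\alpha$-twist), one computes $\psi_n(\beta) = a^{\ell(\beta)}$ where $\ell : B_n \to \Z$ is the abelianisation morphism. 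Here I use $a^2 = 1$ in $S^{\dagger\dagger}$, so the sign of each exponent $\epsilon_j$ is irrelevant and only $\ell(\beta) = \sum_j \epsilon_j$ matters modulo $2$.

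First I would make precise the relation between $t_n^{\dagger\dagger}(\beta)$ and $\ell(\beta)$: from $t_n^{\dagger\dagger}(\beta) = a^n \psi_n(\beta)$ and $\psi_n(\beta) = a^{\ell(\beta)}$ we get $t_n^{\dagger\dagger}(\beta) = a^{n + \ell(\beta)}$. One should double-check the normalisation conventions here — the trace $t_n^{\pm}$ was defined in \S\ref{secttwobmwagebras} with a writhe-correction factor $\alpha^r$, and $t_n^{\dagger\dagger}$ is the specialisation of the patched Kauffman-type trace; but Proposition \ref{propclassmarkovddagger3}(i) already records $t_n^{\dagger\dagger}(\beta) = a^n\psi_n(\beta)$ directly, and the Markov-property computation in its proof shows this is consistent with $t_n^{\dagger\dagger}(\sigma_i^{\pm 1}\text{-type generators})$ contributing a factor $a^{\pm 1} = a^{\mp 1}$ each. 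So the bookkeeping gives $t_n^{\dagger\dagger}(\beta) = a^{n+\ell(\beta)}$ with no ambiguity once $a^2 = 1$.

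Then I would invoke Lemma \ref{lemwrithe}, which states $n + \ell(\beta) \equiv \# L \pmod 2$ where $L = \hat\beta$. Since $a^2 = 1$ in $S^{\dagger\dagger}$, the element $a^m$ depends only on $m \bmod 2$, so $a^{n+\ell(\beta)} = a^{\#L}$. Combining, $t_n^{\dagger\dagger}(\beta) = a^{n+\ell(\beta)} = a^{\#L}$, which is exactly the claim.

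The only genuine point requiring care — and hence the main (minor) obstacle — is the normalisation: verifying that $t_n^{\dagger\dagger}(\beta)$ really equals $a^{n+\ell(\beta)}$ and not, say, $a^{n}$ times an untwisted evaluation, or $a^{\ell(\beta)}$ alone. This is settled by unwinding the definition $t_n^{\dagger\dagger}(\beta) = a^n\psi_n(\beta)$ together with $\psi_n$ being an algebra morphism with $\psi_n(s_i) = a$, so that $\psi_n(s_i^{-1}) = a^{-1} = a$ and each Artin generator $\sigma_i^{\pm 1}$ (which equals $a^{\pm 1} s_i^{\pm 1}$ in our conventions, or after the $\alpha$-renormalisation simply contributes a power of $a$) produces exactly one factor of $a$ up to sign, which is irrelevant mod $2$. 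Everything else is immediate from the two cited results.
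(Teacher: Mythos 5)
Your proof is correct and follows exactly the paper's own argument: write $t_n^{\dagger\dagger}(\beta)=a^n\psi_n(\beta)=a^{n+\ell(\beta)}$ using Proposition \ref{propclassmarkovddagger3}(i), then apply Lemma \ref{lemwrithe} together with $a^2=1$ in $S^{\dagger\dagger}$. The extended worrying about the $\sigma_i$ versus $s_i$ normalisation is harmless but unnecessary, since the braid is sent to $\WBMW_n$ via $s_i\mapsto s_i$ and the sign ambiguity disappears modulo $2$.
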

\begin{proof}
By construction, we have $t_n^{\dagger\dagger}(\beta) = a^n a^{\ell (\beta)} = a^{n + \ell(\beta)}$. Since $a^2 = 1$
the conclusion follows from lemma \ref{lemwrithe}.
\end{proof}

There is a well-known connection between `two versions' of the
Kauffman polynomial, one being connected to the
other through a sign depending on the number of connected
components of the link. In our formalism this is seen as
follows, using
the automorphism $\overline{E}$ 
of $\overline{BMW}_n \otimes_{\overline{R}} \overline{S} = BMW_n^+ \oplus BMW_n^-$. 
First recall that the values of $t_n^+$ and $t_n^-$ on braids belong to
a submodule $S_0 = \Q[a^{\pm 1},x^{\pm 1}]$ of $S_+$ and $S_-$, respectively.
Also recall that $S_0$ can be considered as a submodule of $\Q(\alpha,q)$
under $a \mapsto \alpha^{-2}$, $x \mapsto \alpha^{-1}(q\pm q^{-1})$.
We let $\tau_n^{\pm}$ denote the Markov traces $t_n^{\pm}$ viewed
as functions with values in the same ring $S_0$.

We can now show how this well-known connection between the two versions
fits into our setting. This connection can be stated as follows (see e.g. \cite{LICKO}, p. 177).

\begin{prop} \label{invlcorrespBMWplusmoins} If $\beta$ is a braid on $n$ strands whose closure is the link $L$,
then 
$$
 \left.  \tau_n^-
  (\beta)
   \right|_{\stackrel{a\mapsto -a}{x \mapsto -x}} = (-1)^{\# L -1} \tau_n^+(\beta)$$
where we identified $\beta$ with its image in $BMW_n^+$ on the LHS, and its image in $BMW_n^-$ in the RHS,
and $\# L$ denotes the number of
components of $L$.
\end{prop}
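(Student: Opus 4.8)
The plan is to use the characterization of Markov traces obtained earlier, reducing the identity to a check on few elements, and then to exploit the automorphism $\overline{E}$ together with Corollary \ref{cortnplusettnmoins}.

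\begin{proof}[Proof sketch]
First I would recast the left-hand side in terms of $\overline{E}$. By definition, $\left. \tau_n^-(\beta) \right|_{a \mapsto -a, x \mapsto -x}$ is nothing but $\eta \circ t_n^-$ evaluated at $\beta$, once we recall that $\eta$ acts on $S_0 = \Q[a^{\pm 1},x^{\pm 1}]$ by $a \mapsto -a$ and $b,c \mapsto -b,-c$ (so $x = b+c \mapsto -x$). Moreover the image of $\beta \in B_n$ inside $BMW_n^-$ is the image under $\overline{E}$ of the image of $\beta$ inside $BMW_n^+$, since $\overline{E}$ is induced by $s_i \mapsto -s_i$ and $\beta$ is a word in the $s_i^{\pm 1}$: here one must track carefully the sign $(-1)^{\ell(\beta)}$ coming from $\beta \mapsto (-1)^{\ell(\beta)} \overline{E}(\beta)$ when $\beta$ is written as a positive-or-negative braid word, because $\overline{E}(s_i) = -s_i$ and $\overline{E}(s_i^{-1}) = -s_i^{-1}$. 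Thus $\left. \tau_n^-(\beta) \right|_{a \mapsto -a, x \mapsto -x} = (-1)^{\ell(\beta)} \eta( t_n^-(\overline{E}(\hat\beta)))$, where $\hat\beta$ denotes the image of $\beta$ in $BMW_n^+$.

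Next I would invoke Corollary \ref{cortnplusettnmoins}, which states precisely that over $BMW_n^+$ one has $t_n^+ = (-1)^{n-1} \eta \circ t_n^- \circ \overline{E}$. Applying this to $\hat\beta$ gives $\eta(t_n^-(\overline{E}(\hat\beta))) = (-1)^{n-1} t_n^+(\hat\beta) = (-1)^{n-1} \tau_n^+(\beta)$. Combining with the previous paragraph yields $\left. \tau_n^-(\beta) \right|_{a \mapsto -a, x \mapsto -x} = (-1)^{\ell(\beta)} (-1)^{n-1} \tau_n^+(\beta) = (-1)^{n + \ell(\beta)} (-1)^{-1} \tau_n^+(\beta)$. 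Now Lemma \ref{lemwrithe} tells us $n + \ell(\beta) \equiv \#L \pmod 2$, so $(-1)^{n+\ell(\beta)} = (-1)^{\#L}$, and the prefactor becomes $(-1)^{\#L - 1}$, as claimed.

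The main obstacle I expect is bookkeeping the sign conventions consistently: there are three independent sources of signs ($\ell(\beta)$ from rewriting $\beta$ in terms of $\overline{E}$, the $(-1)^{n-1}$ in Corollary \ref{cortnplusettnmoins}, and the parity identity of Lemma \ref{lemwrithe}), and it is easy to be off by one. To keep this clean I would first verify the formula directly for $n = 2$ on the generating braids $1$ and $\sigma_1$ — using $t_2^\pm(1) = \delta_K^\pm = (y \mp x + 1)/x$ and $t_2^\pm(\sigma_1) = 1$, together with $\eta(\delta_K^-) = -\delta_K^+$ — which anchors the sign, and then note that both sides define Markov traces (in $\beta$) so that, by Proposition \ref{prop:unicitetracegenplus} or by the inductive Markov argument, equality for all $n$ and all $\beta$ follows from the $n=2$ case plus compatibility with the Markov move $\beta \mapsto \beta \sigma_n^{\pm 1}$, where $\#L$ and the parity of $n + \ell(\beta)$ both behave correctly.
\end{proof}
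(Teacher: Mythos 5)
Your argument is correct and is essentially the paper's own proof: both rewrite the specialized $\tau_n^-$ as $\eta\circ t_n^-$ applied to the image of $\beta$, transfer it to $BMW_n^+$ via $\overline{E}$ (picking up the sign $(-1)^{\ell(\beta)}$), apply corollary \ref{cortnplusettnmoins}, and conclude with the parity identity of lemma \ref{lemwrithe}. The closing $n=2$/Markov-uniqueness check is a harmless extra sanity check, not needed once the corollary is invoked.
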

\begin{proof}
The LHS can be viewed inside $S_+$ as $\eta \circ t_n^- \circ \overline{E}( \overline{E}(\beta))$, which is equal to
$(-1)^{n-1} t_n^+(\overline{E}(\beta))$ by corollary \ref{cortnplusettnmoins}. 
Now $(-1)^{n-1} t_n^+(\overline{E}(\beta)) = (-1)^{n-1} (-1)^{\ell(\beta)} t_n^+(\beta)$
where $\ell : B_n \to \Z$ denotes as before the abelianization
morphism.
The conclusion then follows from the identity
$n+\ell(\beta) \equiv (\# L)  \mod 2$, proved in lemma \ref{lemwrithe}.
\end{proof}

\subsection{The special case $a = \pm 1$, $b = \mp j$, $c = \mp j^2$}

Recall that $j$ denotes a primitive $3$-root of $1$. We have $y = 1$, $x = a$. In this case, the link invariant afforded
by the Kauffman trace $t_n^+$ is constant equal to 1 (that is, it takes the value 1 on every link), see \cite{LICKO} p. 186, table 16.3 row A,
and this is reproved by the observation that, in this case, $\delta_K = a$ hence $t_n^K = t_n^{\dagger\dagger}$.
For $y= 1$ and $x = a = -1$, according to \cite{LICKO} p. 186, table 16.3 row D, the link invariant associated to $t_n^H$
maps a link $L$ to $(\ii \sqrt{2})^{d_2(T(L))}$, where $T(L)$ is the 3-fold cyclic cover of $S^3$ branched over $L$,
and $d_2(T(L)) = \dim_{\mathbbm{F}_2} H_1(T(L),\mathbbm{F}_2)$. When $y=1$, $x=a=1$, we consider the automorphism $\varphi$
of the group algebra of the braid group defined by $s_i \mapsto s_i^{-1}$, and show that, as in the proposition \ref{invlcorrespBMWplusmoins}  and
corollary \ref{cortnplusettnmoins}, the formula
$T_n(b) = (-1)^{n-1} t_n(\varphi(b))$ induces a bijection between the traces factorizing through the two Hecke algebras
with quadratic conditions $s^2 + s + 1 = 0$ and $s^2 - s+1 = 0$, hence the two invariants are related by multiplication by $(-1)^{\# L -1}$.
As a consequence we get the general formula $t_n^H(\beta) = a^{\# \hat{\beta} - 1} (\ii \sqrt{2})^{d_2(T(L))}$.

We are looking for a Markov trace $t_n^0$ on $\WBMW_n \otimes_R R/(a=\pm 1, b= \mp j, c = \mp j^2)$
such that $t_3^0(1) = t_3^0(s_1) = 0$ and $t_3^0(s_1s_2) = 1$. This implies $t_3^0(s_1^{\alpha} s_2^{\beta}) = 1$ for all $\alpha,\beta \in \{-1,1 \}$.

We checked by computer that there is a (necessarily unique) extension of $t_3^0$ to $F_4$ satisfying the
Markov property. If there is an obstruction to these traces to genuine Markov traces one
thus needs to look for it on at least 5 strands.

\begin{conj} The trace $t_3^0$ can be extended to a Markov trace on the tower $(F_n)$.
\end{conj}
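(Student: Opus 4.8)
The plan is to adapt to the tower $(F_n)$, specialised at $x=a$, $y=1$, $a^{2}=1$, the inductive argument by which Jones established the existence of the Ocneanu trace and Birman--Wenzl that of the Kauffman trace on the $BMW$ algebras. A naive deformation argument from generic $x$ on the line $a^{2}=y=1$ is not available: there the space of Markov traces on $(F_n)$ is exactly $3$-dimensional, spanned by $t_n^H$, $t_n^K$, $t_n^{\dagger\dagger}$, and the unique combination with $3$-strand values $(t_3(1),t_3(s_1),t_3(s_1s_2))=(0,0,1)$ has coefficients with a pole at $x=a$, precisely where those three traces become linearly dependent on $3$ strands. So the $t^0$ we want is genuinely new and must be built by hand. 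Uniqueness, on the other hand, is free: by proposition \ref{propbasiqueFn}(iv) (the $F_n$-analogue of propositions \ref{prop:MT3bmw} and \ref{unicitetn1}) a Markov trace through $F_n$ is determined by its values on $1$, $s_1$, $s_1s_2$, so at most one extension of $t_3^0$ exists, and its numerics are already forced: since $C=e_1s_2e_1-e_1\in F_3$ is central with $s_iC=Cs_i=aC$, the Markov condition gives $t_{n+1}^0(C)=a\,t_n^0(C)$, and feeding this together with $t_3^0(1)=t_3^0(s_1)=0$, $t_3^0(s_1s_2)=1$ into proposition \ref{proprelrecutraceFn} yields $t_2^0(1)=0$, $t_1^0(1)=1$, $t_3^0(C)=2a$, then the whole sequence $n\mapsto t_n^0(1)$ via the recursion \ref{proprelrecutraceFn}(iii) (which here reads $t_{n+3}^0(1)=4a\,t_{n+2}^0(1)-5\,t_{n+1}^0(1)+2a\,t_n^0(1)$), and finally $t_n^0(C)=2a^{\,n-2}$ and the values on the $e_i$'s via \ref{proprelrecutraceFn}(ii).

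One then constructs $(t_n^0)$ by induction on $n$. Using proposition \ref{propstructgenWBMW}(ii), whose proof applies verbatim to $F_n$, we have $F_{n+1}=F_n+F_ns_nF_n+F_ns_n^{-1}F_n$; assuming $t_n^0$ built, one defines $t_{n+1}^0$ by the formulas imposed by the Markov condition, namely $t_{n+1}^0|_{F_n}=a\,t_n^0(\cdot)+a\,t_n^0(\cdot\,C)$ (from the identity $1=\tfrac a2(s_n+s_n^{-1})+aC$), $t_{n+1}^0(us_n^{\pm1}v)=t_n^0(uv)$ for $u,v\in F_n$ (from the Markov condition and the trace identity at level $n$, after conjugating the occurrence of $s_n$ to the end by an element of the braid group), and $t_{n+1}^0(C)=a^{-1}t_n^0(C)$. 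Three things must then be checked: (a) consistency, i.e. that $t_{n+1}^0$ is well defined on $F_{n+1}$ (the point being that the three summands above are not in direct sum); (b) that $t_{n+1}^0$ is a trace; and (c) the Markov condition $t_{n+2}^0(xs_{n+1}^{\pm1})=t_{n+1}^0(x)$. Parts (a) and (c) are handled by the same conjugation arguments that underlie the uniqueness statements \ref{propbasiqueFn}(iv) and the proof of proposition \ref{prop:unicitetracegenplus}.

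Part (b) is the substance and is organised as in Jones's proof and in the proof of theorem \ref{theoexistencetracedeltanul}: by bilinearity it suffices to verify $t_{n+1}^0(uv)=t_{n+1}^0(vu)$ with $u$, $v$ running over the generators $s_i^{\pm1},e_i,C$ and over a basis of $F_{n+1}$ as an $F_n$-bimodule (a Wenzl-type monomial basis of $BMW_{n+1}$ over $BMW_n$, lifted through $F_{n+1}\to BMW_{n+1}$, together with $C$), the analysis splitting into cases according to whether each of $u,v$ lies in $F_n$ or genuinely involves the last strand. The only cases not present in the classical $BMW$ computation are those in which a factor $C$, or the combination $e_{n-1}s_ne_{n-1}=e_{n-1}+C$, appears; these are dealt with using the centrality of $C$ with $s_iC=aC$, the quadratic and cubic relations, and the four identities of lemma \ref{lemformulesFsee} ($s_js_ie_j=e_is_js_i$, $s_ie_je_i=as_j^{-1}e_i+C$, $e_ie_je_i=e_i+\tfrac{2a}{x}C$, $e_is_js_i=ae_ie_j-C$ for $|i-j|=1$), which reduce every product to level $n$. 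The specialisation $x=a$, $y=1$, $a^2=1$ keeps all coefficients small, and, exactly as in the study of $F_n$, it is enough to carry out the verification for $a=1$: the specialisation of the automorphism $E$ of proposition \ref{propdefE} exchanges the two cases $a=\pm1$.

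The main obstacle is precisely this verification of the trace property, and within it the cases where the last strand occurs on both sides of a triple product — typically $t_{n+1}^0(s_nus_nv)$ against $t_{n+1}^0(us_nvs_n)$ with $u,v$ themselves of the form $u's_{n-1}v'$ — where one must rewrite repeatedly with the braid relations, the cubic relation and lemma \ref{lemformulesFsee}, and check that the two reductions to level $n$, including the $C$-contributions, agree. As the excerpt observes, this combinatorial bookkeeping has never been written out even for the Kauffman trace $t_n^K$ on $BMW_n$ itself; the natural route is therefore to first carry out the Jones-style argument for $t_n^K$ and then rerun it with the single central element $C$ switched on, the only new inputs being the identities of lemma \ref{lemformulesFsee} and the forced values of $t_n^0(C)$. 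The base of the induction is already in hand, since the excerpt records the computer verification that $t_3^0$ extends (necessarily uniquely) to a Markov trace on $F_4$; what remains is exactly the inductive step for $n\ge 4$, equivalently the absence of an obstruction on five or more strands.
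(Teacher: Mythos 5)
The statement you are addressing is stated in the paper as a \emph{conjecture}: the authors give no proof, only a computer verification that $t_3^0$ extends (uniquely) to a Markov trace on $F_4$, together with the heuristic remark that a Jones-style existence argument ``should'' generalize but has never been written down even for the Kauffman trace on $BMW_n$ and appears substantially harder than the Hecke case. Your proposal reproduces exactly this heuristic, and it is correct as far as it goes: the uniqueness and the forced numerical values ($t_{n+1}^0(C)=a\,t_n^0(C)$, the recursion $t_{n+3}^0(1)=4a\,t_{n+2}^0(1)-5\,t_{n+1}^0(1)+2a\,t_n^0(1)$ at $x=a$, etc.) do follow from propositions \ref{propbasiqueFn} and \ref{proprelrecutraceFn}, and the reduction to $a=1$ via the automorphism of proposition \ref{propdefE} is legitimate. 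But what you have written is a plan, not a proof.

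The genuine gap is the one you yourself flag in your last two paragraphs: steps (a) and (b) of your induction --- well-definedness of $t_{n+1}^0$ on the non-direct sum $F_n+F_ns_nF_n+F_ns_n^{-1}F_n$, and the trace identity $t_{n+1}^0(uv)=t_{n+1}^0(vu)$, in particular in the mixed cases $t_{n+1}^0(s_nus_nv)$ versus $t_{n+1}^0(us_nvs_n)$ with $u,v$ involving $s_{n-1}$ --- are precisely the content of the conjecture, and you do not carry them out. Uniqueness statements such as proposition \ref{propbasiqueFn}(iv) cannot ``handle'' well-definedness: they say that a Markov trace, \emph{if it exists}, is determined by three values, which is of no help in showing that the prescribed assignments are consistent on overlapping expressions of the same element (in the paper's own existence proof, theorem \ref{theoexistencetracedeltanul}, this is resolved by working with the explicit basis $\{E_w\}\cup\{C\}$ of $\overline F_n$, and even there the trace verification occupies the bulk of the argument). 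For $F_n$ at $x=a$ no analogous basis computation is performed, lemma \ref{lemformulesFsee} alone does not close the rewriting system you would need, and the computer check on $F_4$ only shows that any obstruction lives on $\ge 5$ strands. So your proposal does not establish the statement; it restates the strategy the authors already propose and leaves open exactly the step they identify as the difficulty.
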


Assuming that conjecture, we get that
the corresponding link
invariant would take the value $a^n \times(4.2^n - 4 - 2n)$ on the $(n+3)$-components unlink.

\subsection{The special case $a =b=c= \pm 1$}

We have $y = 1$, $x = 2a=\pm2 $. In this case, $\delta_H = a$, hence $t_n^H$ coincides
with $t_n^{\dagger\dagger}$ (see also \cite{LICKO} table 6.3, row A), and, for $a=y=1$,
the Kauffman invariant maps a link $L$ to $(\det L)^2$, where $\det L = \Delta_{L}(-1) = \nabla_L(0)$
(see \cite{LICKO} table 6.3, row B). Since this invariant is afforded by $t_n^+$, and because of proposition \ref{invlcorrespBMWplusmoins}, this implies
that $t_n^K(\beta) = a^{\# \hat{\beta} -1} (\det \hat{\beta} )^2$ for every braid $\beta \in B_n$.

 We are looking for a Markov trace $t_n^0$ on $\WBMW_n \otimes_R R/(a=b=c=\pm 1)$
such that $t_3^0(1) = 1$, $t_3^0(s_1) = t_3^0(s_1s_2) = 0$. This implies $t_3^0(s_1^{\alpha} s_2^{\beta}) = 0$ for all $\alpha,\beta \in \{-1,1 \}$.
By theorem \ref{theoexistencetracedeltanul} we know that such a Markov trace exists, factoring through $\overline{F}_n$.
Moreover, by induction we easily get that $t_{n+3}^0(1) = a^n \times(n+1)$, and therefore the
corresponding link invariant takes the value $a^n \times(n+1)$ on the $(n+3)$-components unlink.
Finally, we can easily identify its restriction to the subalgebra $\widetilde{TL}_n(0)$, as follows.

\begin{prop} \label{proptracet0x2a} There exists a family of traces 
$t_n : \widetilde{TL}_n(0) \to A_0$
satisfying $t_{n}(C) =a^{n}$, $t_1(1) = 0$, $t_n(1) = (n-2)a^{n+1}$ if $n \geq 2$,
that would coincide with the restriction of $t_n^0$ to $\widetilde{TL}_n(0)$
if $(t_n^0)$ is well-defined.
\end{prop}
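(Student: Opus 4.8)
The plan is to construct the family of traces $t_n$ on $\widetilde{TL}_n(0)$ directly by prescribing the value on a diagrammatic basis, exactly as was done in Proposition \ref{proptracesTL0}, and then to check compatibility with the trace on $\overline{F}_n$ afforded by Theorem \ref{theoexistencetracedeltanul}. Recall from Proposition \ref{proptracesTL0} that inside $\widetilde{TL}_n(0)$ one has $\tilde\delta = 0$, whence $e_i^2 = e_i C = C e_i = 0$ and $e_i e_j e_i = e_i + C$ for $|i-j| = 1$. The set $\mathcal{B}$ of ``increasing products of increasing strings'' together with $1$ and $C$ spans $\widetilde{TL}_n(0)$ over $A_0$, and by Proposition \ref{proptracesTL0} (with $u_n = a^n$ and $v_n = (n-2)a^{n+1}$ for $n\geq 2$, $v_1 = 0$) the formulas $t_n(1) = v_n$, $t_n(C) = -u_n = -a^n$, $t_n(e_i) = u_n = a^n$, and $t_n(e_{i_1}\cdots e_{i_k}) = 0$ for $k\geq 2$ do define a trace. \emph{However}, the statement as written asks for $t_n(C) = a^n$ rather than $-a^n$, so first I would reconcile signs: either the intended normalisation is $t_n(C) = -u_n$ with $u_n = -a^n$, in which case $t_n(e_i) = -a^n$, or one simply replaces $C$ by $-C$; I would adopt the normalisation consistent with Theorem \ref{theoexistencetracedeltanul}, where $t_2(C) = 1$ and hence $t_n(C) = a^n$, and correspondingly set $u_n = -a^n$ throughout in the recipe of Proposition \ref{proptracesTL0}. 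With that choice $t_n(1) = v_n$ is a free parameter at the level of $\widetilde{TL}_n(0)$ alone.

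Next I would pin down $v_n$. The restriction of $t_n^0$ (if it exists) to $\widetilde{TL}_n(0)$ must satisfy $t_n^0(1) = t_n(1)$, and the recursion of Proposition \ref{proprelrecutraceFn} (iii), specialised at $x = 2a$, $\tilde\delta = 0$, collapses to $t_{n+3}(1) = a\, t_{n+2}(1)$ for... no: with $\tilde\delta = 0$ the recursion reads $t_{n+3}(1) = a\, t_{n+2}(1)$, which combined with the normalisation $t_3^0(1) = 1$ and $t_{n+1}(C) = a t_n(C)$, $t_{n+1}(C) = a t_{n+1}(1) - 2a x^{-1} t_n(1)$ (Proposition \ref{proprelrecutraceFn} (ii) with $\tilde\delta = 0$) gives $a^{n+1} = a t_{n+1}(1) - a t_n(1)$, i.e. $t_{n+1}(1) = t_n(1) + a^n$. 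Starting from $t_3^0(1) = 1 = a^3 \cdot a^{-3}$, one solves this to get $t_{n+3}(1) = a^n(n+1) = (n+1) a^n$; rewriting in terms of $n$ this is $t_n(1) = (n-2) a^{n+1}$ for $n\geq 3$, matching the claimed formula, and one checks the small cases $n = 1, 2$ by hand against the recursion. So the value $v_n = (n-2)a^{n+1}$ (with $v_1 = 0$) is forced, and this is precisely the value I would feed into the construction of Proposition \ref{proptracesTL0}.

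The remaining, and main, point is to verify that the trace so defined on $\widetilde{TL}_n(0)$ \emph{is} the restriction of $t_n^0$ — but here one must be careful: $t_n^0$ itself is only conjectural (this is the case $x = a$... no, $x = 2a$, which is covered by Theorem \ref{theoexistencetracedeltanul}). Indeed $\widetilde{TL}_n(0) \hookrightarrow F_n(0)$ factors through $\overline{F}_n = F_n(0)/(A_0 F_n^+)^2$ after further quotienting, since in $\overline{F}_n$ one has $e_i = -C$; and the trace of Theorem \ref{theoexistencetracedeltanul} restricts along $\widetilde{TL}_n \to F_n \to \overline{F}_n$. So the honest statement is that the trace $t_n$ of Proposition \ref{proptracesTL0} with the parameters above agrees with the pullback to $\widetilde{TL}_n(0)$ of the $\overline{F}_n$-trace of Theorem \ref{theoexistencetracedeltanul}, and the phrase ``would coincide with $t_n^0$ if $t_n^0$ is well-defined'' is then automatic because any genuine $t_n^0$ refining $t_3^0$ is, by Proposition \ref{propbasiqueFn} (iv), uniquely determined by its values on $\widetilde{BMW}_3$, hence agrees on $\widetilde{TL}_n(0)$ with the trace we have just built. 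Thus the proof reduces to: (i) re-running Proposition \ref{proptracesTL0} with $u_n = -a^n$, $v_n = (n-2)a^{n+1}$ to get a well-defined trace on $\widetilde{TL}_n(0)$; (ii) checking that pulling back the $\overline{F}_n$-trace of Theorem \ref{theoexistencetracedeltanul} along $\widetilde{TL}_n(0) \to \overline{F}_n$ yields exactly these values, which is a direct computation using $e_i = -C$, $t_n(C) = a^n$, and the formula $t_n(w) $ on reduced words from the proof of that theorem; and (iii) invoking the uniqueness in Proposition \ref{propbasiqueFn} (iv) for the last clause. I expect step (ii) to be the only place requiring genuine (though short) computation: one must match the combinatorial count $\mathcal{N}(\bar e_{i_1}\cdots \bar e_{i_k}) - k$ against the values the $\overline{F}_n$-trace assigns, which for products of length $\geq 2$ must vanish — this is where the relation $e_i e_j e_i = e_i + C$ together with $e_i = -C$, $C^2 = 0$ does the work, collapsing every length-$\geq 2$ monomial to a multiple of $C$ whose trace is then computed via $t_n(C) = a^n$; one checks the numerics force the stated $t_n(1)$.
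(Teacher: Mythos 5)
Your construction of the trace itself is exactly the paper's: both invoke proposition \ref{proptracesTL0} with $u_n=-a^n$, $v_1=0$, $v_n=(n-2)a^{n+1}$ (your sign reconciliation, giving $t_n(C)=a^n$ and $t_n(e_i)=-a^n$, is the right one). Where you genuinely diverge is the coincidence step. The paper does not go through $\overline{F}_n$ or theorem \ref{theoexistencetracedeltanul} at all: it argues directly that \emph{any} putative Markov trace $t_n^0$ with the prescribed three-strand values restricts to the constructed trace, by (a) the recursion of proposition \ref{proprelrecutraceFn}(iii) (available because $t_n^0$ factors through $F_n$ by theorem \ref{theoisomddaggerFn}) forcing $t_n^0(1)$, (b) forcing $t_n^0(C)$ and $t_n^0(e_i)$ from $e_i=(s_i+s_i^{-1})/2-a$ and the Markov property, and (c) an induction over the Temperley--Lieb basis in which the top-index generator occurs exactly once, via $t_{n+1}^0(Ae_nB)=t_n^0(AB)-a\,t_{n+1}^0(AB)$, which kills every word of length $\geq 2$. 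Your route instead identifies $t_n^0$, through the uniqueness of proposition \ref{propbasiqueFn}(iv), with the pullback of the $\overline{F}_n$-trace of theorem \ref{theoexistencetracedeltanul}, and reads off the restriction from $e_i=-C$ and $(F_n^+)^2=0$ in $\overline{F}_n$. That is legitimate and makes the vanishing on words of length $\geq 2$ immediate, but it shifts the burden to verifying that the theorem's trace (normalized by $t_2(C)=1$) really has $t_3(1)=1$, $t_3(s_1)=t_3(s_1s_2)=0$ and the stated values $t_n(1)$ --- a computation you only gesture at --- whereas the paper's direct argument needs nothing beyond the Markov property of the putative $t_n^0$, which is also why it handles the conditional wording (``would coincide \dots if well-defined'') without appealing to the theorem.

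Two computational slips in your determination of $t_n(1)$ should be repaired: at $x=2a$, $\tilde\delta=0$, proposition \ref{proprelrecutraceFn}(iii) reads $t_{n+3}(1)=2a\,t_{n+2}(1)-t_{n+1}(1)$, not $a\,t_{n+2}(1)$; and in (ii) the coefficient $2ax^{-1}$ specializes to $1$, not $a$, so the correct relation is $a^{n+1}=a\,t_{n+1}(1)-t_n(1)$, i.e.\ $t_{n+1}(1)=a\,t_n(1)+a^n$, rather than your $t_{n+1}(1)=t_n(1)+a^n$ (which fails for $a=-1$). Your stated closed form $t_n(1)=(n-2)a^{n+1}$ is nevertheless the correct solution of the correct recursion.
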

\begin{proof} Letting $u_n = - a^n$,$v_1 = 0$, $v_n = (n-2) a^{n+1}$
in proposition \ref{proptracesTL0} provides a family of traces $t_n$. We know prove that
$t_n$ necessarily coincides with the restriction of the putative Markov trace $t_n^0$.
We have $t_n(1) = t_n^0(1)$ for $n \leq 3$ and, by proposition \ref{proprelrecutraceFn} (iii), 
$t_{n+3}^0(1) = 2a t_{n+2}^0(1) - t_{n+1}^0(1)$ since, we know that $t_n^0$ factors through $(F_n)$ by theorem \ref{theoisomddaggerFn}.
Because of this, we have $t_n^0(1) = t_n(1)$ for all $n$. By proposition \ref{proprelrecutraceFn} we also get $t_n(C) = t_n^0(C)$
for all $n$. From the relation $e_i = (s_i+s_i^{-1})/2 - a$ and the Markov property we get $t_n^0(e_i) = t_n(e_i)$ for all $i$.
It is well-known that the Temperley-Lieb algebra admits a basis made of words in the $e_i$ which has the property
that the $e_i$ of maximal index appears exactly once. Let us consider such a basis element of $\widetilde{TL}_n$ of the form $Ae_iB$ with $A,B$ words in the $e_j$ for $j < i$.
Expanding $e_i$ as before, we get from the Markov property that $t_{n+1}^0(A e_i B) = t_n^0(A B) - at_{n+1}^0(AB)$.
By induction on the length of the words in the $e_i$'s, we get from this formula that $t_{n+1}^0(Ae_i B) = 0$  as soon as $A$ or $B$
has length $\geq 1$, and therefore $t_n^0$ coincides on $t_n$ on every basis element of $\widetilde{TL}_n(0)$, which proves the claim.

\end{proof}

\subsection{Tables}

We gather here some computations that we made of these two invariants, one of them (for $x =a$) being still conjectural.
$$
\begin{array}{|lcc|lcc|lcc|}
\hline
Knot & x=a & x=2a &Knot & x=a & x=2a &Knot & x=a & x=2a  \\
\hline
0_{1} & &   & 3_{1} & 4 & 0  & 4_{1} &10 & 16  \\ \hline
5_{1} & 1 & 0  & 5_{2} &13 &48   & 6_{1} & 7& 80  \\ \hline
6_{2} &13 &96   & 6_{3} & 25 & 144  & 7_{1} & 1 & 0  \\ \hline
7_{2} & -2 & 112  & 7_{3} & 10 & 160  & 7_{4} & 7 & 224   \\ \hline
7_{5} & 25 & 288  & 7_{6} & 25 & 336  & 7_{7} & 31 & 416  \\ \hline
\end{array}
$$
{}

$$
\begin{array}{|lcc|lcc|lcc|}
\hline
Knot & x=a & x=2a &Knot & x=a & x=2a &Knot & x=a & x=2a  \\
\hline
8_{1} & -2 & 160  & 8_{2} & 1 & 240  & 8_{3} & 13 & 288  \\ \hline
8_{4} & -2 & 352  & 8_{5} & -8 & 384  & 8_{6} & 25 & 528   \\ \hline
8_{7} & 13 & 480  & 8_{8} & 13 & 624  & 8_{9} & 25 & 576  \\ \hline
8_{10} & 16 & 672  & 8_{11} & 28 & 720  & 8_{12} & 37& 816  \\ \hline
8_{13} & 22 &832   & 8_{14} & 37 & 960  & 8_{15} & 40 & 1104   \\ \hline
8_{16} & 25 & 1152  & 8_{17} & 49 & 1296  & 8_{18} & 64 & 1936   \\ \hline
8_{19} & -8 & -48  & 8_{20} & -8  & 96  & 8_{21} & 16 & 240  \\ \hline
\end{array}
$$
{}

$$
\begin{array}{|lcc|lcc|lcc|}
\hline
Knot &x=a & x=2a &Knot & x=a & x=2a&Knot & x=a & x=2a  \\
\hline
9_{1} & 4 & 0  & 9_{2} & 7  & 224  & 9_{3} & 1 & 336  \\ \hline
9_{4} & 7 & 416  & 9_{5} & 1 & 528  & 9_{6} & 4  & 720  \\ \hline
9_{7} & 13 & 816  & 9_{8} & 1 & 960   & 9_{9} & 13 & 960  \\ \hline
9_{10} & 31 & 1088  & 9_{11} & 7 & 1040   & 9_{12} & 22 & 1216  \\ \hline
9_{13} & 22 & 1360   & 9_{14} & 10 & 1360   & 9_{15} & 31  & 1520   \\ \hline
9_{16} & 16 & 1536  & 9_{17} & 7 & 1472  & 9_{18} &  37 & 1680  \\ \hline
9_{19} & 25 & 1680   & 9_{20} & 13 & 1632  & 9_{21} & 34  & 1840  \\ \hline
9_{22} & 10 & 1792   & 9_{23} & 28  & 2016  & 9_{24} & 40 & 1968   \\ \hline
9_{25} & 34 & 2224  & 9_{26} & 37 & 2160   & 9_{27} & 37 & 2352   \\ \hline
9_{28} & 40 & 2544   & 9_{29} & 4 & 2544   & 9_{30} & 46 & 2752   \\ \hline
9_{31} & 49 & 2976  & 9_{32} & 49 & 3408   & 9_{33} & 61 & 3648   \\ \hline
9_{34} & 67 & 4688   & 9_{35} & -14  & 720   & 9_{36} & -2 & 1312   \\ \hline
9_{37} & 34 &2016   & 9_{38} & 52 & 3264   & 9_{39} & 46 & 3040   \\ \hline
9_{40} & 70 & 5536   & 9_{41} & -2 & 2416  & 9_{42} & -14  & 64   \\ \hline
9_{43} & -14 & 112  & 9_{44} & -2 & 304   & 9_{45} & 10 & 544   \\ \hline
9_{46} & -11 & 80   & 9_{47} & 13 & 656   & 9_{48} & 37 & 704   \\ \hline
9_{49} & 34 & 640 & & & & & &   \\ \hline
\end{array}
$$

{}
$$
\begin{array}{|lcc|lcc|lcc|}
\hline
Knot & x=a & x=2a &Knot & x=a & x=2a &Knot & x=a & x=2a  \\
\hline
3_1 \# 3_1 & 16 & 64 & 3_1 \# 3_1 \# 3_1 & 10 & 704 & 3_1 \# 3_1 \# 3_1 \# 3_1 & 40  & 6528 \\
3_1 \# 4_1 & 22 & 208 & 4_1 \# 4_1 & 28 & 608 & & &  \\
\hline
\end{array}
$$
{}
For knots of at most 10 crossings as well as small links we use the notations in Rolfsen's book, see \cite{ROLFSEN}.
For knots of 11 crossings we use the notation of the KnotScape software, and give at the same time a braid description in order not to avoid
possible ambiguities.
It can be checked that the knots $8_2$ and $11_{373}$, the latter one being the braid closure of $\bar{1}2\bar{1}2\bar{3}\bar{3}\bar{4}223\bar{4}\bar{4}$, have the same Alexander polynomial, but both invariants distinguish them (and so do the Homfly polynomial).
The knots $10_{41}$ and $10_{94}$ have the same Jones polynomial, but our
invariant for $x = 2a$ distinguishes them ($4992$ on $10_{41}$, $4896$ on $10_{94}$),
while we get $25$ on both for $x = a$.
We did not manage to find a pair of knots with the same Homfly invariant wich are distinguished by at least one of our
two invariants. On the 11-crossings knots  $11_{280}$ and $11_{439}$ with the same Kauffman polynomials, which are the braid closures of
$112\bar{1}2\bar{3}\bar{2}1\bar{2}\bar{2}3\bar{4}3\bar{4}$ and $\bar{1}\bar{1}22\bar{1}2\bar{1}2\bar{3}2\bar{3}$,
our invariants for $x=a$ are $85$ and $82$, and for $x = 2a$ they are $22176$ and $22048$.
On the other hand, we computed our invariants on Kanenobu's example of 4 distinct knots, presented as the closure of 3-braids,
with the same Homfly and Kauffman polynomial, see \cite{KANENOBU}, example at the end of \S 3; our invariants cannot distinguish them, getting as value
$84$ for $x=a$ and
$734157650613659985$ for $x=2a$. One possibility is therefore that these invariants both depend on the HOMFLY polynomial,
in a way still to be discovered.

{}
$$
\begin{array}{|l|l|c|c|l|}
\hline
Rolfsen   & Braid  & x=a & x=2a & Name \\
\hline
2_1^2 & \bar{1}\bar{1} & 3a & 0 & \mbox{Hopf link} \\
\hline
4_1^2 & 3\bar{2}1\bar{2} \bar{3}\bar{2}\bar{1}\bar{2} & 6a & 16a & \mbox{Solomon's knot} \\
\hline
5_1^2 & \bar{2}1\bar{2}1\bar{2} & 15a & 48a & \mbox{Whitehead link} \\
\hline
6_3^2 & 3\bar{2}1\bar{2}3\bar{2}\bar{1}\bar{2} & 21a & 128a &  \\
\hline
7_1^3 & 3\bar{2}\bar{1}23\bar{2}1\bar{2}3 & 9 & 9 &  \\
\hline
6_2^3 & 2\bar{1}2\bar{1}2\bar{1} & 33 & 225 & \mbox{Borromean link} \\
\hline
6_3^3 & 2\bar{1}21\bar{2}1 & -3 & 21 &  \\
\hline
8_1^4 & 5\bar{4}\bar{3} 21\bar{4} 32 \bar{4} 3 \bar{4} \bar{5} \bar{4} \bar{3}\bar{2}\bar{1}\bar{2}3 \bar{4} & 6 & 61 &  \\
\hline
8_2^4 & 54\bar{3}2143243\bar{4}\bar{5}\bar{4}\bar{3}\bar{2}\bar{1}\bar{2}34 & 0 & -3 & \mbox{Whitehead link} \\
\hline
\end{array}
$$

\end{document}